\theoremstyle{plain}
\newtheorem{proposition}{Proposition}[section]
\newtheorem{theorem}[proposition]{Theorem}
\newtheorem*{theorem*}{Theorem}
\newtheorem*{conjecture*}{Conjecture}
\newtheorem{lemma}[proposition]{Lemma}
\newtheorem{corollary}[proposition]{Corollary}
\theoremstyle{definition}
\newtheorem{definition}[proposition]{Definition}
\newtheorem{introthm}{Theorem}
\newtheorem{introcor}[introthm]{Corollary}
\theoremstyle{remark}
\newtheorem{remark}[proposition]{Remark}
\numberwithin{equation}{section}
\let\oldtocsection=\tocsection
\let\oldtocsubsection=\tocsubsection
\let\oldtocsubsubsection=\tocsubsubsection
\renewcommand{\tocsection}[2]{\hspace{0em}\oldtocsection{#1}{#2}}
\renewcommand{\tocsubsection}[2]{\hspace{1em}\oldtocsubsection{#1}{#2}}
\renewcommand{\tocsubsubsection}[2]{\hspace{2em}\oldtocsubsubsection{#1}{#2}}
\DeclareMathOperator{\SL}{\mathsf{SL}}
\DeclareMathOperator{\GL}{\mathsf{GL}}
\DeclareMathOperator{\PGL}{\mathsf{PGL}}
\DeclareMathOperator{\Hom}{Hom}
\DeclareMathOperator{\End}{End}
\DeclareMathOperator{\id}{id}
\def\co{\colon\thinspace} 
\DeclareMathOperator{\Ac}{\mathcal{A}}
\DeclareMathOperator{\Bc}{\mathcal{B}}
\DeclareMathOperator{\Fc}{\mathcal{F}}
\DeclareMathOperator{\Rc}{\mathcal{R}}
\DeclareMathOperator{\Tc}{\mathcal{T}}
\DeclareMathOperator{\Yc}{\mathcal{Y}}
\DeclareMathOperator{\Cb}{\mathbb{C}}
\DeclareMathOperator{\Rb}{\mathbb{R}}
\DeclareMathOperator{\Zb}{\mathbb{Z}}
\DeclareMathOperator{\bsf}{\mathsf{b}}
\DeclareMathOperator{\csf}{\mathsf{c}}
\DeclareMathOperator{\gsf}{\mathsf{g}}
\DeclareMathOperator{\ksf}{\mathsf{k}}
\DeclareMathOperator{\tsf}{\mathsf{t}}
\newcommand{\abs}[1]{\left|#1\right|}
\newcommand{\wt}[1]{\widetilde{#1}}
\newcommand{\wh}[1]{\widehat{#1}}
\newcommand{\vertiii}[1]{{\left\vert\kern-0.25ex\left\vert\kern-0.25ex\left\vert #1 
    \right\vert\kern-0.25ex\right\vert\kern-0.25ex\right\vert}}
	\newcounter{notes}
\DeclareSymbolFont{extraup}{U}{zavm}{m}{n}
\DeclareMathSymbol{\varheart}{\mathalpha}{extraup}{86}
\DeclareMathSymbol{\vardiamond}{\mathalpha}{extraup}{87}
\begin{document}

\title[Topology of the space of $d$-pleated surfaces]{Topology of the space of $d$-pleated surfaces}

\author[Maloni S.]{Sara Maloni}
\address{Department of Mathematics, University of Virginia}
\email{sm4cw@virginia.edu}
\urladdr{sites.google.com/view/sara-maloni/}

\author[Martone G.]{Giuseppe Martone}
\address{Department of Mathematics and Statistics, Sam Houston State University}
\email{gxm120@shsu.edu}
\urladdr{sites.google.com/view/giuseppemartone}

\author[Mazzoli F.]{Filippo Mazzoli}
\address{Department of Mathematics, University of California Riverside}
\email{filippo.mazzoli@ucr.edu}
\urladdr{filippomazzoli.github.io/index.html}

\author[Zhang T.]{Tengren Zhang}
\address{Department of Mathematics, National University of Singapore}
\email{matzt@nus.edu.sg}
\urladdr{sites.google.com/site/tengren85/}

\thanks{S.M. and F.M. were partially supported by U.S. National Science Foundation grant DMS-1848346 (NSF CAREER). F.M. acknowledges support from the European Research Council (ERC) under the European Union’s Horizon 2020 research and innovation programme (grant agreement No 101018839). G.M. acknowledges partial support by an American Mathematical Society (AMS)-Simons Research Enhancement Grant for PUI Faculty. T.Z. was partially supported by the NUS-MOE grant A-8001950-00-00 and A-8000458-00-00. The authors also acknowledge support from the GEAR Network, funded by the National Science Foundation under grant numbers DMS 1107452, 1107263, and 1107367 (``RNMS: GEometric structures And Representation varieties") and from the Institut Henri Poincar\'e (UAR 839 CNRS-Sorbonne Universit\'e) and LabEx CARMIN (ANR-10-LABX-59-01).} 

\date{\today}

\begin{abstract} Given a maximal geodesic lamination $\lambda$ on a closed oriented surface $S$ of genus $g$, the space of $d$--pleated surfaces with pleating locus $\lambda$ is an open subset of $\Hom(\pi_1(S),\PGL_d(\Cb))$ obtained by applying generalized bending along $\lambda$ to Hitchin representations. When $d=2$, one recovers abstract pleated surfaces in $\mathbb H^3$. In this paper, we study the topology of the space $\mathfrak R(\lambda,d)$ of conjugacy classes of $d$--pleated surfaces with pleating locus $\lambda$. Firstly, we prove that $\mathfrak R(\lambda,d)$ is real-analytically diffeomorphic to $\Rb^{(d^2-1)(2g-2)}\times(\Rb/2\pi\Zb)^{(d^2-1)(2g-2)}\times \Zb_d$, where $\Zb_d$ denotes the finite cyclic group of order $d$. Furthermore, we show that each connected component of the space of conjugacy classes in $\Hom(\pi_1(S),\PGL_d(\Cb))$ contains exactly one component of $\mathfrak R(\lambda,d)$.
\end{abstract}

\maketitle

\tableofcontents

\section{Introduction}\label{introduction}

Let $S$ be a closed, connected, oriented hyperbolic surface, and let $\Gamma$ denote its fundamental group. Given a maximal geodesic lamination $\lambda$ on $S$ and any integer $d\geq 2$, in \cite{MMMZ1} we introduced the notion of a \emph{$d$--pleated surface with pleating locus $\lambda$}. These are representations in $\Hom(\Gamma,\PGL_d(\Cb))$ which satisfy an Anosov contraction property along the leaves of $\lambda$ and a hyperconvexity property along the plaques of $\lambda$. When $d=2$, the notion of $2$--pleated surfaces coincides with that of (abstract) pleated surfaces described by Thurston \cite{thurston-notes} and Bonahon \cite{bonahon-toulouse} as bending deformations of hyperbolic surfaces. For general $d$, a similar interpretation holds: in \cite{MMMZ1} we defined a family of explicit deformations of $\PGL_d(\Rb)$--Hitchin representations into $\PGL_d(\Cb)$, called \emph{generalized bending along $\lambda$}, and showed that in this way we obtain exactly the set of $d$--pleated surfaces with pleating locus $\lambda$.

In this article, we focus on the space $\mathfrak R(\lambda,d)$ of conjugacy classes of $d$--pleated surfaces with pleating locus $\lambda$. Our two main results are as follows. First, we prove that the space $\mathfrak R(\lambda,d)$ has $d$ connected components, each of which is real-analytically diffeomorphic to the product 
\[\Rb^{(d^2-1)(2g-2)}\times(\Rb/2\pi\Zb)^{(d^2-1)(2g-2)},\] 
where $g$ is the genus of $S$. Second, we show that each connected component of the space $\mathfrak X(\Gamma,\PGL_d(\Cb))$ of conjugacy classes in $\Hom(\Gamma,\PGL_d(\Cb))$ contains exactly one component of $\mathfrak R(\lambda,d)$. 

\medskip

We will now describe our main results and the objects therein in greater detail. Given a maximal geodesic lamination $\lambda$ on $S$, let $T^1\lambda\subset T^1S$ denote the set of vectors that are tangent to the leaves of $\lambda$, let $\widetilde\lambda$ denote the lift of $\lambda$ to the universal cover $\widetilde S$ of $S$, and let $\partial\widetilde\lambda\subset\partial\widetilde S$ denote the set of endpoints of the leaves of $\widetilde\lambda$.

If $\rho:\Gamma\to\PGL_d(\Cb)$ is a representation that admits a $\rho$--equivariant, $\lambda$--continuous, $\lambda$--transverse map $\xi:\partial\widetilde\lambda\to\mathcal F(\Cb^d)$ (see Section \ref{d-pleated} for precise definitions), one can construct for each integer in $\{1,\dots,d-1\}$, a certain line bundle over $T^1\lambda$ that admits a flow which covers the geodesic flow on $T^1\lambda$, and is linear when restricted to the fibers. We say that $\rho$ is \emph{$\lambda$--Borel Anosov} if it admits a $\rho$--equivariant, $\lambda$--transverse, $\lambda$--continuous map $\xi:\partial\widetilde\lambda\to\mathcal F(\Cb^d)$, and the induced line bundles over $T^1\lambda$ are uniformly contracted by the flow, see \cite[Section 3.1]{MMMZ1} for more details. If $\rho$ is a $\lambda$--Borel Anosov representation, then the map $\xi$ is uniquely determined by $\rho$, and so we refer to it as the \emph{$\lambda$--limit map} of $\rho$. 

A pair $(\rho,\xi)$ is a \textit{$d$--pleated surface with pleating locus $\lambda$} if $\rho:\Gamma\to\PGL_d(\Cb)$ is a $\lambda$--Borel Anosov representation with $\lambda$--limit map $\xi:\partial\wt\lambda\to\Fc(\Cb^d)$, and $\xi$ is \emph{$\lambda$--hyperconvex}, i.e. for every plaque of $\widetilde\lambda$, the triple of flags assigned by $\xi$ to the vertices of the plaque are in general position. By the uniqueness of $\lambda$--limit maps, we may embed the set of $d$--pleated surfaces with pleating locus $\lambda$ onto an open subset of $\Hom(\Gamma,\PGL_d(\Cb))$ that is invariant under conjugation, and avoids the singular locus \cite[Section 3.2]{MMMZ1}.  As such, the set $\mathfrak R(\lambda,d)$ of conjugacy classes of $d$--pleated surfaces with pleating locus $\lambda$ is naturally an open subset of $\mathfrak X(\Gamma,\PGL_d(\Cb))$. It also follows from \cite[Theorem A]{MMMZ1} that $\mathfrak R(\lambda,d)$ is naturally a complex manifold.

Examples of $d$--pleated surfaces include the \emph{$\PGL_d(\Rb)$--Hitchin representations}. These are representations from $\Gamma$ to $\PGL_d(\Rb)$ that can be continuously deformed to a representation of the form $\iota\circ j$, where $j:\Gamma\to\PGL_2(\Rb)$ is some Fuchsian (i.e. discrete and faithful) representation, and $\iota:\PGL_2(\Rb)\to\PGL_d(\Rb)$ is some irreducible representation. They are central objects studied in Higher Teichm\"uller theory. It follows from the seminal results of Labourie \cite{labourie-anosov} that the $\PGL_d(\Rb)$--Hitchin representations are $d$--pleated surfaces with any pleating locus. Furthermore, Hitchin \cite{hit_lie} (also see Bonahon-Dreyer \cite{BoD}) proved that the set ${\rm Hit}_d(S)$ of conjugacy classes of $\PGL_d(\Rb)$--Hitchin representations is homeomorphic to $\Rb^{(d^2-1)(2g-2)}$. As a consequence of \cite{MMMZ1}, the Bonahon-Dreyer parametrization of ${\rm Hit}_d(S)$ is a real analytic map.

The first main result of this article describes $\mathfrak R(\lambda,d)$ as a real analytic manifold.
\begin{introthm} \label{ThmA} 
	As real analytic manifolds, 
	\[\mathfrak R(\lambda,d)\cong\Rb^{(d^2-1)(2g-2)}\times (\Rb/2\pi \Zb)^{(d^2-1)(2g-2)}\times\Zb_d,\]
	where $\Zb_d$ is the cyclic group of order $d$. In particular, $\mathfrak R(\lambda,d)$ has $d$ connected components.
\end{introthm}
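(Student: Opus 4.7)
The plan is to construct a real-analytic parametrization of $\mathfrak{R}(\lambda,d)$ by complexifying the Bonahon-Dreyer coordinates on the Hitchin component through the generalized bending construction of \cite{MMMZ1}. Set $N:=(d^2-1)(2g-2)$. The Bonahon-Dreyer parametrization realizes ${\rm Hit}_d(S)$ as $\Rb^N$ via triangle and shearing invariants along $\lambda$, and extending each real parameter to a complex one via bending should yield a holomorphic map
\[
\Psi\co\Cb^N\longrightarrow\mathfrak{R}(\lambda,d)
\]
whose real slice recovers the Bonahon-Dreyer diffeomorphism onto ${\rm Hit}_d(S)$.

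The first step is to verify that $\Psi$ is a local real-analytic diffeomorphism onto its image. Along $\Rb^N$ this is the Bonahon-Dreyer theorem; at general complex points it follows from the complex structure on $\mathfrak{R}(\lambda,d)$ furnished by \cite[Theorem A]{MMMZ1}, together with the holomorphic dependence of bending on its complex parameters and a Jacobian computation extending the real one by analytic continuation.

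The second step is to determine the period lattice of $\Psi$. Each elementary bending generator acts by conjugation by an element of the form $\exp(zH)$, where $H$ is a traceless diagonal matrix with eigenvalues in $\frac{1}{d}\Zb$, so $\exp(2\pi i H)$ lies in the center $Z(\SL_d(\Cb))\cong\Zb_d$ and is trivial in $\PGL_d(\Cb)$. This forces the period lattice of $\Psi$ to contain $2\pi i\Zb^N$, giving rise to the factor $(\Rb/2\pi\Zb)^N$ upon passage to the quotient. Injectivity modulo $2\pi i\Zb^N$ onto a single component of $\mathfrak{R}(\lambda,d)$ should follow from the uniqueness of the $\lambda$-limit map for $\lambda$-Borel Anosov representations, since the seed Hitchin representation and the bending increments can be read off from the boundary data.

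Finally, the $\Zb_d$ factor is to be identified with the $d$ connected components of $\mathfrak{X}(\Gamma,\PGL_d(\Cb))$, which are indexed by the obstruction class in $H^2(\Gamma;\Zb_d)\cong\Zb_d$ to lifting from $\PGL_d(\Cb)$ to $\SL_d(\Cb)$. This class is locally constant on $\Hom(\Gamma,\PGL_d(\Cb))$, hence constant along any bending path, so the image of $\Psi$ sits in a single component of the character variety. The remaining $d-1$ components of $\mathfrak{R}(\lambda,d)$ are then constructed by twisting each such bending family by a character $\Gamma\to Z(\SL_d(\Cb))\cong\Zb_d$, which shifts the obstruction by a prescribed amount while preserving $\lambda$-Borel Anosov-ness and $\lambda$-hyperconvexity. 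The main obstacle I anticipate is this final step: verifying that twisting produces exactly $d$ disjoint components of $\mathfrak{R}(\lambda,d)$, each in bijective correspondence with a component of $\mathfrak{X}(\Gamma,\PGL_d(\Cb))$, which is closely entangled with the companion statement that each component of the character variety contains a unique component of $\mathfrak{R}(\lambda,d)$.
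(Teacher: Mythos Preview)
Your proposal has a genuine gap in the final step, and that gap propagates backward to undermine the period-lattice computation in Step~2.

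The twisting mechanism you propose does not produce new points of $\mathfrak{R}(\lambda,d)$. A character $\chi\colon\Gamma\to Z(\SL_d(\Cb))\cong\mu_d$ projects trivially to $\PGL_d(\Cb)$, so for any $\rho\colon\Gamma\to\PGL_d(\Cb)$ the ``twisted'' object coincides with $\rho$ itself at the $\PGL_d$ level. Central twists therefore cannot move $[\rho]$ inside $\mathfrak{X}(\Gamma,\PGL_d(\Cb))$, let alone change its obstruction class. The $d$ connected components of $\mathfrak{R}(\lambda,d)$ are not obtained by any external operation on a single component; they are already present in the bending parameter space, and exhibiting them there is the actual content of the theorem.

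This is where your Step~2 goes wrong. The parametrization from \cite{MMMZ1} (Theorem~\ref{thm: parameterization} here) identifies $\mathfrak{R}(\lambda,d)$ with an open cone in $\Yc(\lambda,d;\Rb)$ times the full group $\Yc(\lambda,d;\Rb/2\pi\Zb)$ of $\lambda$--cocyclic pairs. For $G=\Rb$ this group is a vector space of dimension $N$ (Bonahon--Dreyer), but for $G=\Rb/2\pi\Zb$ it is \emph{not} simply $(\Rb/2\pi\Zb)^N$: the cocycle and symmetry constraints defining $\Yc(\lambda,d;G)$ interact with the torsion of $G$ to produce an additional factor $G_d=\{g\in G:d\cdot g=0\}\cong\Zb_d$. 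The paper establishes this by embedding $\Yc(\lambda,d;G)$ into a finite product of copies of $G$ via train-track and homological data (Theorem~\ref{thm: hom}), and then rearranging the defining linear relations so that all but one become free coordinates while the last collapses to a single equation of the form $d\cdot x=0$ (Theorem~\ref{thm: topology}, Lemma~\ref{lem: change equations}). Your period-lattice heuristic---each bending coordinate has period $2\pi i$---detects the torus directions but misses this torsion, because it implicitly treats the coordinates as independent rather than as constrained by the cocyclic relations. Finally, note that your attempt to source the $\Zb_d$ from the obstruction class is effectively invoking Theorem~\ref{ThmB} to prove Theorem~\ref{ThmA}; in the paper the logical order is the reverse.
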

When $d=2$, Theorem \ref{ThmA} is due to Bonahon \cite{bonahon-toulouse}. 

It is a classical result of Li \cite{Li} that $\mathfrak X(\Gamma,\sf{PGL}_d(\Cb))$ also has $d$ connected components. Our second main result is the following:
\begin{introthm} \label{ThmB}
	Each connected component of $\mathfrak X(\Gamma,\sf{PGL}_d(\Cb))$ contains exactly one connected component of $\mathfrak R(\lambda,d)$.
\end{introthm}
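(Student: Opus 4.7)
The plan is to exploit the coincidence $|\pi_0(\mathfrak R(\lambda,d))|=d=|\pi_0(\mathfrak X(\Gamma,\PGL_d(\Cb)))|$, provided by Theorem~\ref{ThmA} and Li's theorem \cite{Li}, and to identify the discrete $\Zb_d$ factor of Theorem~\ref{ThmA} with the obstruction class indexing the components of the character variety. Recall that the components of $\mathfrak X(\Gamma,\PGL_d(\Cb))$ are distinguished by the obstruction $o(\rho)\in H^2(\Gamma,\Zb_d)\cong\Zb_d$ arising from the central extension
\[1\to\Zb_d\to\SL_d(\Cb)\to\PGL_d(\Cb)\to 1.\]
Since $o$ is locally constant on $\mathfrak X$, its restriction to the open subset $\mathfrak R(\lambda,d)$ is locally constant as well; by the pigeonhole principle, Theorem~\ref{ThmB} reduces to showing that this restriction is surjective onto $\Zb_d$.

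The class $o=0$ is already realized: any Hitchin representation $\rho_0\in{\rm Hit}_d(S)\subset\mathfrak R(\lambda,d)$ factors as the composition of a Fuchsian representation $\Gamma\to\PSL_2(\Rb)$ with the principal embedding $\PSL_2(\Rb)\hookrightarrow\PGL_d(\Rb)$, and therefore admits a lift to $\SL_d(\Rb)$ (after first lifting the Fuchsian factor to $\SL_2(\Rb)$ when $d$ is even). To produce $d$--pleated surfaces realizing the remaining $d-1$ classes, I would examine the proof of Theorem~\ref{ThmA} and track the origin of the $\Zb_d$ factor appearing in its parametrization. The expected picture, modeled on Bonahon's treatment of the case $d=2$, is that this factor records a $\Zb_d$--valued ambiguity in integrating the $\mathfrak{sl}_d(\Cb)$--valued shearing-bending cocycle to a genuine $\PGL_d(\Cb)$--homomorphism, inherited from the kernel of $\SL_d(\Cb)\to\PGL_d(\Cb)$, and that the corresponding $d$ representations have pairwise distinct images under the connecting map $H^1(\Gamma,\PGL_d(\Cb))\to H^2(\Gamma,\Zb_d)$ of the central extension above.

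The main obstacle is this cocycle-level identification. Concretely, one must trace the discrete parameter from the normal form of Theorem~\ref{ThmA} through the bending construction of \cite{MMMZ1}, extract an explicit $\Zb_d$--valued $2$--cocycle on $\Gamma$ whose class in $H^2(\Gamma,\Zb_d)$ detects whether the holonomy lifts to $\SL_d(\Cb)$, and verify that this cocycle realizes all $d$ cohomology classes as the discrete parameter varies over $\Zb_d$. In the case $d=2$, the corresponding statement is Bonahon's identification of the two components of the space of abstract pleated surfaces in $\Hb^3$ according to whether the holonomy lifts to $\SL_2(\Cb)$, and the argument sketched above should be viewed as its higher-rank analogue.
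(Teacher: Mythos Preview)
Your reduction is correct and coincides with the paper's: both spaces have $d$ components, the obstruction map ${\rm ob}_d$ is locally constant, and by pigeonhole it suffices to show that ${\rm ob}_d|_{\mathfrak R(\lambda,d)}$ hits every value in $\Zb_d$. You also correctly anticipate that the way to do this is to identify the $\Zb_d$ factor produced by Theorem~\ref{ThmA} with the obstruction class. This is exactly Theorem~\ref{ThmD} of the paper, and it is the whole content of the proof of Theorem~\ref{ThmB}.

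The gap is that you stop precisely where the work begins. Everything after ``The main obstacle is\ldots'' is a description of a task, not an argument; and that task is substantial. The paper devotes Sections~\ref{tor=ob}--\ref{sec: final} to it. The identification is \emph{not} a soft cohomological matching: one has to compute ${\rm ob}_d(\rho)$ concretely from the shear--bend cocyclic pair. The mechanism the paper uses is the slithering map $\Sigma$ (Theorem~\ref{thm: slithering map}). One chooses a graph $\mathcal G\subset S$ dual to the maximal tree $M\subset N$, lifts the boundary loop $\mathsf c=\partial M$ to $\widetilde{\mathsf c}$ in $\widetilde S$, and extracts a finite \emph{cutting sequence} of leaves $g_0,\dots,g_\ell$ of $\widetilde\lambda$. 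Two facts are then proved: (a) the composite slithering $\Sigma_\rho(\widetilde{\mathsf c})=\Sigma(g_\ell,g_{\ell-1})\cdots\Sigma(g_1,g_0)$ is the identity (Proposition~\ref{prop: slithering is trivial}, by induction on subtrees of $M$); and (b) there is a carefully constructed sequence of bases ${\bf v}(0),\dots,{\bf v}(\ell)$ of $\Cb^d$ along the cutting sequence such that ${\bf v}(\ell)=\exp({\rm ob}_d(\rho))\,{\bf v}(0)$ and each $\Sigma(g_j,g_{j-1})$ acts diagonally with explicit \emph{slithering coefficients} $a_m(j)$ (Proposition~\ref{prop: properties of sequences}). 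Combining (a) and (b) gives ${\rm ob}_d(\rho)=-\log\prod_j a_m(j)$ for every $m$. The final step (Proposition~\ref{cor final}) is to evaluate $\log\prod_j a_{\lfloor (d+1)/2\rfloor}(j)$ explicitly in terms of $(\alpha_\rho,\theta_\rho)$ and recognise the result as the torsion formula of Remark~\ref{tor formula}. None of this is formal; it requires the unipotent change-of-basis computation of Proposition~\ref{prop: unipotent}, a careful bookkeeping of orientable versus unorientable rectangles (Lemma~\ref{lem: amj on a rectangle}), and a parity count (Lemma~\ref{prop: count}).

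A minor point: your claim that Hitchin representations realise $o=0$ is correct but not needed once you prove ${\rm ob}_d={\rm tor}_d$, since ${\rm tor}_d$ is already known to be surjective from Theorem~\ref{Thm-general}. In summary, your outline is the right one and matches the paper's strategy, but the proposal as written is not a proof: the step you flag as the ``main obstacle'' is the entire theorem.
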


Theorem \ref{ThmB} implies that for fixed $\lambda$, one can reach every connected component of $\mathfrak X(\Gamma,\sf{PGL}_d(\Cb))$ by applying generalized bending deformations along $\lambda$ to the Hitchin representations. One can interpret this as a first step towards the following conjecture.

\begin{conjecture*}
	For every maximal geodesic lamination $\lambda$, the space $\mathfrak R(\lambda,d)$ of conjugacy classes of $d$--pleated surfaces with pleating locus $\lambda$ is dense in $\mathfrak X(\Gamma,\PGL_d(\Cb))$.
\end{conjecture*}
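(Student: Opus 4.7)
The plan is to reduce Theorem B to a $\pi_0$-level statement and then identify the discrete invariants involved. By Theorem A, $|\pi_0(\mathfrak{R}(\lambda,d))|=d$, and by Li's theorem, $|\pi_0(\mathfrak{X}(\Gamma,\PGL_d(\Cb)))|=d$. Since $\mathfrak{R}(\lambda,d)$ is open in $\mathfrak{X}(\Gamma,\PGL_d(\Cb))$, each of its components lies in a unique component of the character variety, so inclusion induces a map
\[\iota_*\co\pi_0(\mathfrak{R}(\lambda,d))\to\pi_0(\mathfrak{X}(\Gamma,\PGL_d(\Cb)))\]
between two sets of cardinality $d$. It would therefore suffice to show that $\iota_*$ is surjective, hence bijective.

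The components of $\mathfrak{X}(\Gamma,\PGL_d(\Cb))$ are separated by the topological obstruction class
\[w\co\mathfrak{X}(\Gamma,\PGL_d(\Cb))\to H^2(\Gamma,\Zb_d)\cong\Zb_d\]
coming from the central extension $1\to\Zb_d\to\SL_d(\Cb)\to\PGL_d(\Cb)\to 1$; concretely, $w(\rho)$ measures the obstruction to lifting $\rho$ to $\SL_d(\Cb)$ and can be computed as a $2$-cocycle on a standard presentation of $\Gamma$. Since $w$ is locally constant, the composition $w\circ\iota$ descends to a map $\pi_0(\mathfrak{R}(\lambda,d))\to\Zb_d$, which via the identification from Theorem A becomes a self-map of $\Zb_d$. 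The heart of the proof would be to show that this self-map is a bijection.

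I would carry this out by inspecting the construction of the parametrization underlying Theorem A. The natural expectation is that its $\Zb_d$ factor arises from a discrete choice of central lift in $\SL_d(\Cb)$ when assembling a $d$-pleated surface from its generalized shear-bend data; shifting the $\Zb_d$ coordinate by $k$ would then multiply a fixed lift of the holonomy by the $k$-th power of a central $d$-th root of unity, and correspondingly shift $w$ by $k$. Anchoring the Hitchin component at its $w$-value via Labourie's theorem would then pin down the identification. The main obstacle is the bookkeeping required to make this rigorous: one needs a precise description of how the $\Zb_d$ factor enters the parametrization of Theorem A, together with a careful check that shifts in this factor correspond to modifying the $\SL_d(\Cb)$-valued $2$-cocycle for $\rho$ by a central element. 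Should this direct identification prove delicate, a fallback is to exhibit, for each $k\in\Zb_d$, an explicit $d$-pleated surface with obstruction class $k$ by applying the $\Zb_d$-shift from Theorem A to a Hitchin representation and verifying the expected change in $w$.
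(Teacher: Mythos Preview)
The statement you were asked to address is the \emph{Conjecture} that $\mathfrak R(\lambda,d)$ is dense in $\mathfrak X(\Gamma,\PGL_d(\Cb))$. The paper does not prove this; it is explicitly left open, with Theorem~B described as ``a first step towards'' it. There is thus no proof in the paper to compare against.

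Your proposal does not address density at all. What you outline is a strategy for Theorem~B: that the inclusion $\mathfrak R(\lambda,d)\hookrightarrow\mathfrak X(\Gamma,\PGL_d(\Cb))$ induces a bijection on $\pi_0$. That is a far weaker assertion than density. Knowing that each component of $\mathfrak X$ contains some point of $\mathfrak R(\lambda,d)$ tells you nothing about whether $\mathfrak R(\lambda,d)$ accumulates on an arbitrary conjugacy class; the latter would require, for instance, approximating reducible representations, representations with small limit sets, or non-Anosov representations by $d$--pleated surfaces, and none of the $\pi_0$ bookkeeping you describe bears on that. Indeed, $\mathfrak R(\lambda,d)$ is an open set of real dimension $2(d^2-1)(2g-2)$ inside a space of the same real dimension, so the density question is genuinely about the boundary behavior of the shear-bend parametrization, not about which component one lands in.

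As a separate remark, your sketch for Theorem~B is in the right spirit and is essentially the approach the paper takes (Theorem~D shows $\mathrm{ob}_d=\mathrm{tor}_d$ on $\mathfrak R(\lambda,d)$), though the paper's execution is considerably more involved than ``shifting the $\Zb_d$ coordinate multiplies a lift by a central element'': the $\Zb_d$ factor in Theorem~A arises from a nontrivial combination of shear-bend invariants (see Remark~5.2), and matching it to the obstruction class requires the slithering computations of Sections~7--9. But again, none of this touches the density conjecture.
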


In other words, we conjecture that for every $\lambda$, one can approximate every representation from $\Gamma$ to $\PGL_d(\Cb)$ arbitrarily well by generalized bending deformations along $\lambda$ of Hitchin representations. 
\medskip

\noindent{\bf A parameterization of $\mathfrak R(\lambda,d)$.}\,
The proofs of both Theorem \ref{ThmA} and Theorem \ref{ThmB} rely heavily on the main parameterization theorem we proved in \cite{MMMZ1}, which we will now briefly describe. 

Let $\Ac$ denote the set of pairs of positive integers that sum to $d$ and let $\Bc$ denote the set of triples of positive integers that sum to $d$. Denote by $\wt\Delta^{2*}$ the set of distinct pairs of plaques of $\wt\lambda$ and by $\wt\Delta^o$ the set of plaques of $\wt\lambda$ equipped with an ordering of their vertices. For any Abelian group $G$, a $\lambda$--\textit{cocyclic pair of dimension $d$ with values in $G$} is a pair $(\alpha, \theta)$ of $\Gamma$--invariant maps
$$\alpha \co \wt\Delta^{2*} \times \mathcal{A} \to G, \;\;\text{ and }\;\; \theta \co \wt\Delta^o  \times \mathcal{B} \to G$$
that satisfy certain symmetries and a cocycle boundary condition, see Definition~\ref{def_cocycle} for a precise definition. From the definitions, it follows that the set $\mathcal Y(\lambda,d;G)$ is an Abelian group. Furthermore, if $G$ is a Lie group, then so is $\mathcal Y(\lambda,d;G)$. 

\begin{theorem}\label{thm:main-old}\cite[Theorem A]{MMMZ1}
	There is a real analytic diffeomorphism 
	\[\Phi:\mathfrak{R}(\lambda,d)\to{\rm Hit}_d(S)\times\Yc(\lambda,d;\Rb/2\pi\Zb).\]
\end{theorem}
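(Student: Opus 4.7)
The plan is to construct $\Phi$ by extracting from each $d$-pleated surface a complex shear-triangle cocycle, generalizing Bonahon's shear-bend parameterization of pleated surfaces in $\Hb^3$ and the Bonahon-Dreyer real-valued parameterization of ${\rm Hit}_d(S)$. Given a $d$-pleated surface $(\rho,\xi)$, for each pair $(\Delta,\Delta') \in \wt\Delta^{2*}$ of plaques sharing a leaf and each $(a,b) \in \Ac$, we define a complex shear $\wt\alpha(\Delta,\Delta';a,b) \in \Cb/2\pi i\Zb$ as the logarithm of a double-ratio of the four flags $\xi$ assigns to the relevant ideal vertices; for each ordered plaque $\Delta \in \wt\Delta^o$ and each $(a,b,c) \in \Bc$, we define a complex triangle invariant $\wt\theta(\Delta;a,b,c) \in \Cb/2\pi i\Zb$ as the logarithm of the corresponding Fock-Goncharov triple ratio. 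The $\Gamma$-equivariance, together with the symmetries and cocycle boundary conditions of Definition \ref{def_cocycle}, would be formal consequences of these flag-theoretic definitions, which complexify directly from the known real case.

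We then define $\Phi$ by separating real and imaginary parts: $(\Real\wt\alpha,\Real\wt\theta)$ form a real $\lambda$-cocyclic pair which Bonahon-Dreyer matches with a unique $\PGL_d(\Rb)$-Hitchin representation, while $(\Imag\wt\alpha \bmod 2\pi,\Imag\wt\theta \bmod 2\pi)$ define an element of $\Yc(\lambda,d;\Rb/2\pi\Zb)$. The inverse is provided by generalized bending along $\lambda$: starting from the chosen Hitchin limit map, we deform the flag assignment by applying, on each side of each leaf, the purely imaginary element of the torus stabilizing the adjacent flag pair prescribed by $(\alpha,\theta)$, and the cocycle condition guarantees that this flag deformation is equivariant under a well-defined $\rho\co \Gamma \to \PGL_d(\Cb)$. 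That $\Phi$ and its inverse are mutually inverse then reduces to the direct computation that bending by $(\alpha,\theta)$ adds exactly $(i\alpha,i\theta)$ to the complex shear-triangle coordinates of the starting Hitchin limit map.

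The main obstacle is showing that generalized bending produces a $\lambda$-Borel Anosov representation for \emph{every} cocyclic pair, not only in a neighborhood of the Hitchin locus. Two points must be verified: first, that the bent limit map remains $\lambda$-hyperconvex on each plaque, which follows because bending acts through the torus stabilizing a triple of flags in general position and hence preserves transversality globally; second, that the induced line bundles over $T^1\lambda$ stay uniformly contracted by the flow, which relies on purely imaginary bending preserving the moduli of the spectral ratios that control the original Hitchin contraction, upgraded to a uniform bound using compactness of $T^1\lambda$. Once these are established, real-analyticity of $\Phi$ and $\Phi^{-1}$ follows because both the extraction of invariants and the bending reconstruction are given by explicit rational expressions in local flag coordinates, hence are real analytic.
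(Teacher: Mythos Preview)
This theorem is not proved in the present paper; it is quoted from the authors' companion paper \cite{MMMZ1}, and only the construction of the shear-bend map $\mathfrak{sb}_d$ is recalled here (Section~\ref{d-pleated}). So the relevant comparison is between your sketch and that construction.

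There is a genuine gap in your definition of $\alpha$. You define the complex shear only for pairs of plaques \emph{sharing a leaf}, but the $\lambda$--cocyclic pair in Definition~\ref{def_cocycle} requires $\alpha$ on \emph{all} of $\wt\Delta^{2*}$, i.e.\ on every ordered pair of distinct plaques. For a general maximal lamination $\lambda$ this cannot be recovered from adjacent pairs by finite additivity: two plaques may be separated by uncountably many leaves, none of which is the edge of any plaque, so there is no finite chain of adjacent plaques connecting them. The paper's construction handles this via the \emph{slithering map} $\Sigma$ compatible with $\xi$ (Theorem~\ref{thm: slithering map}), which transports flags across the lamination and allows one to set
\[
\alpha_\rho^{\bf i}(T_1,T_2) = \sigma^{\bf i}\bigl(\xi(y_2),\xi(y_1),\xi(y_3),\Sigma(g_1,g_2)\cdot\xi(z)\bigr),
\]
where $(g_1,g_2)$ is the separating pair of edges for $(T_1,T_2)$. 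The existence and H\"older continuity of $\Sigma$ is itself a nontrivial consequence of the $\lambda$--Borel Anosov hypothesis, and the cocycle boundary condition (5) of Definition~\ref{def_cocycle} then follows from the composition law $\Sigma(g_1,g_3)=\Sigma(g_1,g_2)\Sigma(g_2,g_3)$ for separating triples, not from formal symmetries alone.

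The same omission undermines your inverse construction: ``applying on each side of each leaf the purely imaginary torus element prescribed by $(\alpha,\theta)$'' is well-defined across a single leaf, but to bend a Hitchin representation along an arbitrary maximal $\lambda$ one must compose infinitely many such elementary bends in a controlled way, and it is again the slithering formalism (or an equivalent infinite-product construction) that makes this limit exist and depend analytically on the data. Your remarks on preservation of hyperconvexity and of the Anosov contraction are correct in spirit, but both sit downstream of having a well-defined bent limit map in the first place.
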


Geometrically, one should think of a $\lambda$--cocyclic pair $(\alpha,\theta)\in \Yc(\lambda,d;\Rb/2\pi\Zb)$ as the ``bending data" with which one can bend a conjugacy class of Hitchin representations along the plaques and leaves of $\lambda$, see \cite[Section 10]{MMMZ1} for more details. Thus, the bijectivity of $\Phi$ in Theorem \ref{thm:main-old} says that every conjugacy class of $d$--pleated surfaces in $\mathfrak{R}(\lambda,d)$ is uniquely realized as a conjugacy class of Hitchin representations in ${\rm Hit}_d(S)$ that is bent along the leaves and plaques of $\lambda$ according to a $\lambda$--cocyclic pair in $\Yc(\lambda,d;\Rb/2\pi\Zb)$.
\medskip

\noindent{\bf The proof of Theorem \ref{ThmA}.}\, Given Theorem \ref{thm:main-old}, in order to prove Theorem~\ref{ThmA}, it suffices to study $\Yc(\lambda,d;\Rb/2\pi\Zb)$. 

\begin{introthm} \label{Thm-general} 
For any Abelian Lie group $G$, there is an isomorphism 
\[
I:\mathcal Y(\lambda,d;G)\to G^{(d^2-1)(2g-2)}\times G_d,
\]
where $G_d=\{g\in G\colon d\cdot g=e\}$.
\end{introthm}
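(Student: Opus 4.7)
The plan is to parameterize $\mathcal Y(\lambda,d;G)$ by the values of $(\alpha,\theta)$ on a finite $\Gamma$-fundamental domain for the plaques and plaque adjacencies of $\wt\lambda$, and then to analyse the resulting finite linear system to extract both a free $G^{(d^2-1)(2g-2)}$ summand and a $d$-torsion summand $G_d$. Since $(\alpha,\theta)$ is $\Gamma$-invariant and $\lambda$ is maximal, there are finitely many $\Gamma$-orbits of plaques ($4g-4$ of them) and of adjacent plaque pairs. The cocycle boundary condition lets one reconstruct $\alpha$ on any pair $(\wt P,\wt Q)\in\wt\Delta^{2*}$ by summing contributions along any chain of adjacent plaques joining $\wt P$ to $\wt Q$, with corrections involving $\theta$ at each intermediate plaque. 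Hence $(\alpha,\theta)$ is determined by its restriction to a chosen set of plaque and adjacency representatives.

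I would then define the evaluation map
\[ I_0:\mathcal Y(\lambda,d;G)\longrightarrow G^{(d^2-1)(2g-2)} \]
by recording the $|\mathcal B|=\binom{d-1}{2}$ values of $\theta$ per plaque representative and the $|\mathcal A|=d-1$ values of $\alpha$ per adjacency representative; the dimension $(d^2-1)(2g-2)$ then matches the standard Euler-characteristic total that already parameterizes $\mathrm{Hit}_d(S)$ in Bonahon--Dreyer. Surjectivity of $I_0$ would follow by building $(\alpha,\theta)$ from arbitrary local data via the reconstruction recipe above and verifying that the cocycle and symmetry axioms hold automatically. For injectivity, the kernel arises only from the failure of $\Gamma$-equivariance of the extension around loops of the quotient dual graph: after applying the antisymmetry of $\alpha$ in $\mathcal A$ and the cyclic symmetry of $\theta$ in $\mathcal B$, the loop-obstruction should collapse to a single equation of the form $dx=e$ in $G$, so that $\ker I_0\cong G_d$. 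I would then split the resulting short exact sequence
\[ 0\longrightarrow G_d \longrightarrow \mathcal Y(\lambda,d;G) \longrightarrow G^{(d^2-1)(2g-2)} \longrightarrow 0 \]
by reading the $G_d$--coordinate from a trace-type invariant along a preferred loop. Real-analyticity in $G$ is automatic because all constructions are polynomial in the local coordinates, so the resulting bijection is a Lie group isomorphism.

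The main obstacle I anticipate is the loop-obstruction analysis leading to $\ker I_0\cong G_d$: tracking precisely how the $\mathcal A$- and $\mathcal B$-symmetries combine around loops of the quotient dual graph to force exactly a multiplication-by-$d$ relation, rather than a larger or smaller torsion condition. For $d=2$ this should reduce to the $\mathbb Z/2$-ambiguity encountered in Bonahon's pleated-surface theorem, and for general $d$ it reflects the arithmetic that elements of $\mathcal A$ and $\mathcal B$ are constrained to sum to $d$, so that the net contribution of a full loop in the quotient dual graph is an integer multiple of $d$ acting on some distinguished $G$-ambiguity.
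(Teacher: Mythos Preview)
Your proposal has the architecture inverted, and this is a genuine gap rather than a cosmetic one. You claim that $I_0$ is surjective because ``the cocycle and symmetry axioms hold automatically'' when extending from local data, and that its kernel is $G_d$ coming from loop obstructions. Both claims are wrong as stated. The reconstruction of $\alpha$ on an arbitrary pair $(T_1,T_2)$ via a chain of adjacent plaques is path-dependent: going around a loop in the dual graph imposes nontrivial closed-leaf relations among the local data, so arbitrary values on your chosen representatives will \emph{not} in general extend to a $\Gamma$-invariant cocyclic pair. These loop relations are obstructions to \emph{surjectivity}, not sources of kernel. Conversely, if your evaluation set is chosen so that reconstruction is possible at all, then evaluation is injective and the kernel is trivial---there is no mechanism by which a nonzero $(\alpha,\theta)$ could vanish on all representatives yet still be nonzero. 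So your short exact sequence $0\to G_d\to \mathcal Y(\lambda,d;G)\to G^{(d^2-1)(2g-2)}\to 0$ is not what actually arises; the $G_d$ is a direct factor visible as a \emph{quotient} (via a trace-type torsion map), not as a subgroup appearing as a kernel of evaluation.

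The second gap is that ``the loop-obstruction should collapse to a single equation of the form $dx=e$'' is exactly the heart of the theorem and is not a consequence of the vague remark that indices in $\Ac$ and $\Bc$ sum to $d$. The paper's route is: first embed $\mathcal Y(\lambda,d;G)$ explicitly into $(G^{\Ac})^{\mathcal O\sqcup\mathcal U}\times (G^{\Bc})^{\mathcal S}$ using a train track neighborhood and a maximal tree in it (Theorem~\ref{thm: hom}), with image cut out by the cyclic-symmetry equations $\blacklozenge(t,{\bf j})$ and the closed-leaf equations $\clubsuit({\bf i})$ for ${\bf i}\in\Ac$. There are $d-1$ equations $\clubsuit({\bf i})$, and a priori one would expect them simply to cut the rank by $d-1$. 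The key computation (Lemma~\ref{lem:a nice combination of triangle invariants}) is that the specific integer combination $\sum_{{\bf i}\in\Ac'} i_1\,\spadesuit({\bf i})$ of these relations collapses, via a nontrivial inclusion--exclusion over the three ``half-triangles'' of $\Bc$, to $d$ times a sum of $\theta$-values over $\Bc^*$ (with an extra $\tfrac{d}{2}$-weighted correction along $\Bc^0$ when $d$ is even). Only after this manipulation does one see that exactly one of the $d-1$ relations is a multiplication-by-$d$ condition rather than a free cut, yielding the $G_d$ factor. Your proposal does not engage with this step at all, and without it there is no argument that the torsion is $G_d$ rather than, say, $G_2\times G_{d-2}$ or some other configuration.
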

We refer to $G_d$ as the \emph{$d$--torsion} of $G$. For our purposes, we only require Theorem~\ref{Thm-general} for $G=\Rb/2\pi \Zb$ and $G=\Cb/2\pi i\Zb$. However, since its proof is completely algebraic, it goes through for arbitrary $G$. In the case when $G=\Rb$, Theorem \ref{Thm-general} was also proven previously by Bonahon and Dreyer \cite{BoD} (where the $d$--torsion of $\Rb$ is the trivial group).  

We prove Theorem \ref{Thm-general} by explicitly constructing the isomorphism $I$ in two steps. Choose a (trivalent) train track neighborhood $N$ of $\lambda$ and a maximal tree $M\subset N$, see Section \ref{sec:surjectivity} for definitions. For the first step, we use the homological realization of $\mathcal Y(\lambda,d;G)$ to explicitly construct, given our choices, an embedding 
\[\mathcal Y(\lambda,d;G)\to(G^{\mathcal A})^{6g-5}\times(G^\mathcal B)^{12g-12},\] 
and explicitly describe its image by specifying its defining equations, see Theorem~\ref{thm: hom}.  (Note that $6g-5$ is the number of rectangles in $N$ but not in $M$, while $12g-12$ is the number of vertical boundary components of $N$.) 
Then, we specify a procedure to rearrange these defining equations to see that this image is abstractly isomorphic to $G^{(d^2-1)(2g-2)}\times G_d$, see Theorem \ref{thm: topology}. \\

\noindent{\bf The proof of Theorem \ref{ThmB}.}\,
Li \cite{Li} proved that $\mathfrak X(\Gamma,\PGL_d(\Cb))$ has $d$ connected components by defining a surjective \emph{obstruction map}
\[{\rm ob}_d:\mathfrak X(\Gamma,\PGL_d(\Cb))\to\Zb_d\]
whose fibers are the connected components of $\mathfrak X(\Gamma,\PGL_d(\Cb))$. At the same time, we can define the \emph{torsion map}
\[{\rm tor}_d:\mathfrak R(\lambda,d)\to\Zb_d\]
that assigns to every $[\rho]\in\mathfrak R(\lambda,d)$ the $d$--torsion factor of the bending data $(\alpha,\theta)$ of $[\rho]$, i.e. if $\Psi([\rho])=([\rho_0],(\alpha,\theta))$,
\[\pi_{\rm tor}: (\Rb/2\pi\Zb)^{(d^2-1)(2g-2)} \times \Zb_d\to \Zb_d\] 
is the obvious projection, and if $I$ is the isomorphism given in Theorem \ref{Thm-general} (specialized to the case when $G=\Rb/2\pi\Zb$), then ${\rm tor}_d([\rho])=\pi_{\rm tor}\circ I(\alpha,\theta)$. Observe that the fibers of ${\rm tor}_d$ are the connected components of $\mathfrak R(\lambda,d)$, so to prove Theorem \ref{ThmB}, it suffices to prove the following theorem.

\begin{introthm}\label{ThmD}
For any $d$--pleated surface $[\rho]\in\mathfrak R(\lambda,d)\subset\mathfrak X(\Gamma,\PGL_d(\Cb))$, we have 
\[{\rm ob}_d([\rho])={\rm tor}_d([\rho]).\]
\end{introthm}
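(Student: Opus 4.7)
Both ${\rm ob}_d$ and ${\rm tor}_d$ are locally constant functions on $\mathfrak R(\lambda,d)$: the former because the obstruction to lifting a $\PGL_d(\Cb)$-representation to $\SL_d(\Cb)$ is a homotopy invariant of the associated flat bundle (this is what underlies Li's component count), and the latter by construction as the composition of the real-analytic parameterization $\Psi$ of Theorem~\ref{thm:main-old} with the discrete projection $\pi_{\rm tor}$. Since Theorem~\ref{ThmA} gives exactly $d$ connected components of $\mathfrak R(\lambda,d)$, it suffices to verify the identity on one representative per component.

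The identifications of Theorems~\ref{thm:main-old} and \ref{Thm-general} endow $\mathfrak R(\lambda,d)$ with a free and transitive action of the $d$-torsion subgroup $\Zb_d\subset\Yc(\lambda,d;\Rb/2\pi\Zb)$ by translation of the bending data, and this action permutes the $d$ connected components. By definition ${\rm tor}_d$ is $\Zb_d$-equivariant. Moreover ${\rm Hit}_d(S)\subset\mathfrak R(\lambda,d)$ corresponds to trivial bending data (so ${\rm tor}_d=0$), and consists of representations with vanishing obstruction (${\rm ob}_d=0$, since Hitchin representations lift to $\SL_d(\Cb)$). The theorem therefore reduces to showing that ${\rm ob}_d$ is $\Zb_d$-equivariant under the same action.

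To establish equivariance, I would unwind the generalized bending construction of \cite{MMMZ1}. Given $[\rho]=\Psi^{-1}([\rho_0],(\alpha,\theta))$, each $\rho(\gamma)\in\PGL_d(\Cb)$ can be written as a product of $\rho_0(\gamma)\in\SL_d(\Rb)$ with factors $\exp(ix_j)$, where the $x_j\in\aL$ are real Cartan elements determined by $(\alpha,\theta)$ modulo the $\PGL_d$-cocharacter lattice $\Lambda_{\PGL_d}$. Choosing representatives modulo the finer $\SL_d$-cocharacter lattice $\Lambda_{\SL_d}$ defines a set-theoretic lift $\widetilde\rho\co\Gamma\to\SL_d(\Cb)$, and the 2-cocycle measuring the failure of $\widetilde\rho$ to be a homomorphism takes values in $\Lambda_{\PGL_d}/\Lambda_{\SL_d}\cong\Zb_d$; its class is ${\rm ob}_d([\rho])$. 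Replacing $(\alpha,\theta)$ by $(\alpha,\theta)+(\alpha_k,\theta_k)$, with $(\alpha_k,\theta_k)$ a $d$-torsion pair of torsion $k$, shifts the Cartan exponents by elements in $\tfrac{2\pi}{d}\Lambda_{\PGL_d}/2\pi\Lambda_{\PGL_d}$, in a manner controlled by $I(\alpha_k,\theta_k)$. The goal is then to show that this translation modifies the obstruction 2-cocycle by a coboundary plus a canonical class representing $k\in H^2(\Gamma,\Zb_d)\cong\Zb_d$.

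The main difficulty is precisely this last cohomological bookkeeping: passing from the lift-level calculation along the combinatorial data of a train-track neighborhood $N$ of $\lambda$ to a statement about a class in $H^2(\Gamma,\Zb_d)$, and matching the resulting class with the combinatorial $\Zb_d$-invariant extracted by $I$ in Theorem~\ref{Thm-general}. I would exploit the freedom to choose $(\alpha_k,\theta_k)$ as a concrete representative of its $\Zb_d$-coset, supported on a minimal combinatorial subpiece of $N$, so that the lift-level calculation becomes essentially local and one-dimensional, and the global matching reduces to an application of the explicit homological description of $I$ given in Theorem~\ref{thm: hom}.
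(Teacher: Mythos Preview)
Your reduction is sound: both maps are locally constant, both vanish on ${\rm Hit}_d(S)$, and ${\rm tor}_d$ is tautologically equivariant under the $\Zb_d$-translation on the torsion factor of $\Yc(\lambda,d;\Rb/2\pi\Zb)$, so everything does reduce to showing that ${\rm ob}_d$ is equivariant under that same action. This is a genuinely different organization from the paper's, which computes ${\rm ob}_d(\rho)$ directly for \emph{every} $\rho$ by tracking a sequence of adapted bases around the boundary of the maximal tree $M$, proving that the total slithering $\Sigma_\rho(\widetilde{\csf})$ is the identity, and then evaluating the ``middle'' slithering coefficient $\prod_j a_{\lfloor (d+1)/2\rfloor}(j)$ explicitly in terms of $(\alpha_\rho,\theta_\rho)$ until it visibly matches the formula for ${\rm tor}_d$ in Remark~\ref{tor formula}.

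The gap in your proposal is that the equivariance step is not actually carried out, and your sketch of it is too optimistic. Generalized bending along $\lambda$ does \emph{not} express $\rho(\gamma)$ as $\rho_0(\gamma)$ times a single Cartan factor $\exp(ix)$; rather it inserts diagonal blocks in a \emph{moving} family of bases determined by the slithering map, one for each leaf of $\widetilde\lambda$ that $\gamma$ crosses. The central ambiguities you need to track therefore accumulate along a cutting sequence, and there is no way to collapse this to a local calculation without first proving a global cancellation statement. In the paper this cancellation is exactly Proposition~\ref{prop: slithering is trivial} ($\Sigma_\rho(\widetilde{\csf})=\id$), and extracting the residual $\Zb_d$ contribution is exactly Proposition~\ref{cor final}. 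Even if you only want the \emph{change} in ${\rm ob}_d$ under translation by a single $d$-torsion cocyclic pair, you still need to identify which leaves and plaques the torsion element touches (it is not supported on one plaque---the relations $\blacklozenge$ and $\clubsuit$ tie the coordinates together), trace that through the bending construction, and match the result with the generator of $H^2(\Gamma,\Zb_d)$ via a CW model of $S$ built from $N$. That is precisely the content of Sections~\ref{sec: cutting}--\ref{sec: final}. So while your reduction is correct, it does not bypass the slithering computation; it repackages it. The paper's direct route has the additional payoff of producing the explicit formula of Corollary~\ref{ThmE}, which your equivariance argument would not yield without doing the same work.
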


Since the map $I$ in Theorem \ref{Thm-general} is given explicitly, as a consequence of Theorem~\ref{ThmD}, one obtains a formula for ${\rm ob}_d([\rho])$ in terms of the bending data of $[\rho]$. We only give the formula here in the case when $d$ is odd as it is simpler. See Remark \ref{tor formula} (combined with Theorem \ref{ThmD}) for the general formulae.

\begin{introcor}\label{ThmE}
If $d$ is odd, $[\rho]\in\mathfrak{R}(\lambda,d)$, and $\Phi([\rho])=([\rho_0],(\alpha,\theta))$, then
\[{\rm ob}_d([\rho])=-\sum_{T\in\Delta}\sum_{{\bf j}\in\mathcal B^*}\theta({\bf x}_T,{\bf j}),\]
where $\Delta$ is the set of plaques of $\lambda$, ${\bf x}_T$ is some (any) clockwise labelling of the vertices of some (any) lift of $T$ in $\widetilde S$, and $\Bc^*:=\left\{{\bf j}=(j_1,j_2,j_3)\in\Bc:j_1,j_2,j_3\le \frac{d-1}{2}\right\}$.
\end{introcor}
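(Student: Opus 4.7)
The plan is to derive the formula as an immediate consequence of Theorem \ref{ThmD} together with the explicit description of the isomorphism \(I\) from Theorem \ref{Thm-general}, specialized to \(G=\Rb/2\pi\Zb\). By Theorem \ref{ThmD} one has \({\rm ob}_d([\rho])={\rm tor}_d([\rho])=\pi_{\rm tor}\circ I(\alpha,\theta)\), so the corollary amounts to writing the \(\Zb_d\)-component of \(I(\alpha,\theta)\) as the stated sum.

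Concretely, I would first unpack the construction of \(I\) from the proofs of Theorems \ref{thm: hom} and \ref{thm: topology}: one fixes a trivalent train track neighborhood \(N\) of \(\lambda\) and a maximal tree \(M\subset N\), embeds \(\Yc(\lambda,d;G)\) into \((G^{\Ac})^{6g-5}\times(G^{\Bc})^{12g-12}\), and then applies a sequence of elementary substitutions isolating the \(G_d\)-factor as a specific linear combination of the \(\alpha\)- and \(\theta\)-values. Tracing this combination produces the general formula anticipated in Remark \ref{tor formula}. For odd \(d\) the formula collapses to the one stated for two reasons. First, the cocycle boundary relation in Definition \ref{def_cocycle} expresses each \(\alpha\)-value as a difference of \(\theta\)-values on adjacent plaques, allowing every \(\alpha\)-contribution to be absorbed into the \(\theta\)-sum. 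Second, when \(d\) is odd, each triple \({\bf j}\in\Bc\setminus\Bc^*\) has a unique coordinate strictly greater than \((d-1)/2\); exploiting the permutation symmetries of \(\theta\) under cyclic relabelling of the vertices of a plaque, the \(\theta\)-terms indexed by \(\Bc\setminus\Bc^*\) combine with those indexed by \(\Bc^*\) modulo \(d\Zb\), leaving precisely \(-\sum_{T\in\Delta}\sum_{{\bf j}\in\Bc^*}\theta({\bf x}_T,{\bf j})\).

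The main obstacle is verifying that this right-hand side is well-defined: \({\rm ob}_d([\rho])\) is intrinsic, whereas the formula apparently depends on a choice of lift \(\widetilde T\subset\widetilde S\) of each \(T\in\Delta\) and a choice of clockwise labelling \({\bf x}_T\) of its three vertices. Independence from the lift follows immediately from the \(\Gamma\)-invariance of \(\theta\). Independence from the labelling requires that a cyclic permutation of \({\bf x}_T\) leaves \(\sum_{{\bf j}\in\Bc^*}\theta({\bf x}_T,{\bf j})\) unchanged modulo \(d\Zb\), which follows from the cyclic symmetry of \(\theta\) built into Definition \ref{def_cocycle} together with the observation that, for odd \(d\), no \({\bf j}\in\Bc\) has a coordinate equal to \(d/2\), so the \(\Zb/3\Zb\)-action on coordinates preserves \(\Bc^*\). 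Once these are checked, the equality in the corollary follows directly from the simplification outlined above.
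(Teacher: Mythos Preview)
Your high-level strategy is correct and matches the paper: Corollary \ref{ThmE} is deduced from Theorem \ref{ThmD} together with the explicit odd-$d$ formula for ${\rm tor}_d$ recorded in Remark \ref{tor formula}. The well-definedness discussion is also fine; since $\Bc^*$ is invariant under cyclic permutation of coordinates, the sum is unchanged (exactly, not merely modulo $d$) under a cyclic relabelling of ${\bf x}_T$.

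However, the mechanism you sketch for \emph{why} the odd-$d$ formula takes this simple form is wrong on both counts. First, there are no $\alpha$-contributions to absorb: in the paper the map ${\rm tor}_d'$ is \emph{defined} for odd $d$ as $-\sum_{T\in\Delta}\sum_{{\bf j}\in\Bc^*}z^{\bf j}_{t(T)}$, involving only the $\theta$-coordinates, and translating back through $I_1$ gives Remark \ref{tor formula} immediately. (The $\alpha$-terms that appear for even $d$ are not eliminated there either.) Moreover, your claim that the cocycle boundary relation ``expresses each $\alpha$-value as a difference of $\theta$-values on adjacent plaques'' is false: condition (5) of Definition \ref{def_cocycle} relates $\alpha$ on three different pairs of plaques plus a $\theta$-correction, and does not let one solve for $\alpha$ in terms of $\theta$ alone. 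Second, the restriction to $\Bc^*$ does not arise by combining $\theta$-terms over $\Bc\setminus\Bc^*$ with those over $\Bc^*$ modulo $d$. The actual reason this particular expression is $d$-torsion on $Y$---and hence can serve as the $G_d$-coordinate of $I_2$---is the computation in Lemma \ref{lem:a nice combination of triangle invariants}: the weighted sum $\sum_{{\bf i}\in\Ac'} i_1\,\spadesuit({\bf i})$ evaluates to $d\sum_{T}\sum_{{\bf j}\in\Bc^*}z^{\bf j}_{t(T)}$ when $d$ is odd, with the contributions from ${\bf j}\notin\Bc^*$ receiving net coefficient $0$ (not combining with $\Bc^*$ terms). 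If you actually trace through the construction of $I_2$ in the proof of Theorem \ref{thm: topology} you will arrive at the correct formula, but the reasoning you have outlined to get there does not reflect what happens.
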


The proof of Theorem \ref{ThmD} requires several ingredients. The first is the fact proven in \cite{MMMZ1} that the $\lambda$--limit maps of $\lambda$--Borel-Anosov representations admit unique compatible slithering maps. Theorem~\ref{thm: slithering map} gives the precise statement for what this means, but one should think of the slithering map $\Sigma$ compatible with $\xi$ as a canonical way to assign to every pair of leaves $(g_1,g_2)$ of $\widetilde\lambda$ a linear map $\Sigma(g_1,g_2)$ that sends the pair of flags assigned by $\xi$ to the endpoints of $g_2$ to the pair of flags assigned to the endpoints of $g_1$. In the case when $d=2$, this slithering map is induced by the horocyclic foliation, as described by Bonahon \cite[Section 2]{bonahon-toulouse}. 

Another important ingredient is an alternative description of Li's obstruction map that is adapted to a choice of a maximal tree $\mathcal G'$ in a graph $\mathcal G\subset S$ such that $S - \mathcal G$ is homeomorphic to an open ball, see Section \ref{sec:connected}.

Given these ingredients, a detailed outline of the strategy of the proof of Theorem~\ref{ThmD} is given in Section \ref{tor=ob}. Briefly, the boundary of the maximal tree $M$ in $N$, when viewed as a based loop $\mathsf{c}$ in $S$ oriented counterclockwise about $M$, lifts to a loop $\widetilde{\mathsf c}$ in $\widetilde S$, which determines a \emph{cutting sequence}
\[g_0,g_1,\dots,g_\ell=g_0,\]
where each $g_j$ is a leaf of $\widetilde\lambda$. The precise construction of the cutting sequence is given in Section~\ref{sec: cutting}, but one should think of this as a particular sequence of leaves that $\widetilde{\mathsf c}$ intersects. Using this, we define
\[\Sigma_\rho(\widetilde{\mathsf c}):=\Sigma(g_\ell,g_{\ell-1})\dots\Sigma(g_2,g_1)\Sigma(g_1,g_0)\in\SL(d,\Cb),\]
and prove using an inductive argument that $\Sigma_\rho(\widetilde{\csf})=\id$, see Proposition \ref{prop: slithering is trivial}.

At the same time, using the maximal tree $M\subset N$, we may also construct a graph $\mathcal G\subset S$ that has one vertex in every plaque of $\lambda$ and one edge for every rectangle in $N$ but not in $M$, see Section \ref{sec: family}. Then by choosing a maximal tree $\mathcal G'\subset\mathcal G$, and applying the alternative description of the obstruction map, we may construct, for each leaf $g_j$ along the cutting sequence, a certain basis ${\bf v}(j)=(v_1(j),\dots,v_d(j))$ of $\Cb^d$ that has the following properties:
\begin{enumerate}
\item[(I)] $\exp({\rm ob}_d([\rho]))\,{\bf v}(0)={\bf v}(\ell)$.
\item[(II)] For each $j\in\{1,\dots,\ell\}$ and $m\in\{1,\dots,d\}$, $\Sigma(g_j,g_{j-1})$ sends the vector $v_m(j-1)$ of the basis ${\bf v}(j-1)=(v_1(j-1),\dots,v_d(j-1))$ to a multiple $a_m(j)\in\Cb\setminus\{0\}$ of the vector $v_m(j)$ in the basis ${\bf v}(j)=(v_1(j),\dots,v_d(j))$, i.e.
\begin{align*}
	\Sigma(g_j,g_{j-1})\, v_m(j-1)=a_m(j)\,v_m(j).
\end{align*}
\end{enumerate}
We refer to the $a_m(j)$'s as the \emph{slithering coefficients} of ${\bf v}(0),\dots,{\bf v}(\ell)$. From the triviality of $\Sigma_\rho(\widetilde{\csf})$, together with properties (I) and (II) above, one deduces that for all $m\in\{1,\dots,d\}$, we have
\begin{equation*}
{\rm ob}_d([\rho])=-\log \left(\prod_{j=1}^\ell a_m(j)\right).
\end{equation*} 

To finish the proof, we compute
\[\log\left(\prod_{j=1}^\ell a_{\lfloor\frac{d+1}{2}\rfloor}(j)\right)\] 
explicitly in terms of the \emph{shear-bend data of $[\rho]$}, which is a $\lambda$--cocyclic pair in $\mathcal Y(\lambda,d;\Cb/2\pi i\Zb)$ whose imaginary part is the bending data of $[\rho]$, see Proposition~\ref{cor final}. Then, we observe that the expression we obtain is identical to the explicit expression for $-{\rm tor}_d(\rho)$ in terms of the bending data of $[\rho]$ given in Remark~\ref{tor formula}.

\section{Preliminaries of $d$--pleated surfaces}\label{background}

In this section, we recall the definition of $d$--pleated surfaces and the results from \cite{MMMZ1} that will be needed in our exposition. To this purpose, we start by briefly reviewing the collection of projective invariants for triples and quadruples of flags introduced by Fock and Goncharov \cite{fock-goncharov-1} (see Section \ref{sec: flags}). Section \ref{d-pleated} summarizes the background and the necessary terminology on maximal geodesic laminations that will be used throughout the paper. We conclude the section with the definition of $d$--pleated surfaces, their associated shear-bend $\lambda$--cocyclic pairs and the parametrization result established in \cite{MMMZ1} (see Theorem \ref{thm: parameterization}).

Throughout our exposition, $S$ will always denote a closed, connected, oriented surface of genus larger than $1$, endowed with some fixed hyperbolic metric, and $\Gamma$ will denote the deck group of the universal cover $\pi_S\colon \wt S\to S$.

\subsection{Flags and their invariants} \label{sec: flags}

Let $\Fc(\Cb^d)$ denote the space of complete flags in $\Cb^d$. For any $m\geq 2$, an $m$--tuple of flags $F_1,\dots, F_m$ in $\mathcal F(\Cb^d)$ is in {\em general position} if for any non-negative integers $k_1,\dots,k_m$ such that $\sum_{i=1}^mk_i=d$, we have
\[
F_1^{k_1}+\dots+F^{k_m}_m=\Cb^d,
\] 
where for any flag $F\in \Fc(\Cb^d)$ and any integer $k\in\{0,1,\dots, d\}$, $F^k$ denotes the $k$--dimensional subspace determined by $F$.
In particular, a pair of flags in $\Fc(\Cb^d)$ is {\em transverse} if they are in general position. 

We now describe the projective invariants for triples and quadruples of flags in $\Fc(\Cb^d)$ that were introduced by Fock and Goncharov \cite{fock-goncharov-1}. Let $\mathcal{A}$ (respectively, $\mathcal {B}$) be the set of pairs (respectively, triples) of positive integers that sum to $d$. Let also $\mathcal F(\Cb^d)^{[4]}$ denote the set of quadruples $({\bf G},{\bf H})=(G_1, G_2, H_1,H_2)$ of flags in $\Fc(\Cb^d)$ such that $G_1$, $G_2$, and $H_k$ are in general position for both $k=1,2$. Fix a $\Cb$--linear isomorphism $\bigwedge^{d}(\Cb^d) \cong \Cb$. For any flag $F\in\Fc(\Cb^d)$ and any $k\in\{1,\dots,d-1\}$, choose a non-zero element
\[f^{k}\in \bigwedge^kF^{k}.\] 
With this, we define the following collection of projective invariants:
\begin{itemize}
	\item For any triple ${\bf j}=(j_1,j_2,j_3) \in \mathcal{B}$ and any triple ${\bf F}=(F_1,F_2,F_3)$ in general position, the ${\bf j}$--\textit{triple ratio} of ${\bf F}$ is defined by
	$$T^{{\bf j}}({\bf F}):= \frac{f_1^{j_1+1}\wedge f_2^{j_2}\wedge f_3^{j_3-1}}{f_1^{j_1-1}\wedge f_2^{j_2}\wedge f_3^{j_3+1}} \frac{f_1^{j_1}\wedge f_2^{j_2-1}\wedge f_3^{j_3+1}}{f_1^{j_1}\wedge f_2^{j_2+1}\wedge f_3^{j_3-1}}  \frac{f_1^{j_1-1}\wedge f_2^{j_2+1}\wedge f_3^{j_3}}{f_1^{j_1+1}\wedge f_2^{j_2-1}\wedge f_3^{j_3}}.$$
	\item For any pair ${\bf i}=(i_1,i_2) \in \mathcal{A}$ and any $({\bf G},{\bf H})\in\Fc(\Cb^d)^{[4]}$, the ${\bf i}$--\textit{double ratio} of $({\bf G},{\bf H})$ is defined by
	$$D^{\bf i}({\bf G},{\bf H}):= -\frac{g_1^{i_1}\wedge g_2^{i_2-1}\wedge h_1^{1}}{g_1^{i_1}\wedge g_2^{i_2-1}\wedge h_2^{1}} \frac{g_1^{i_1-1}\wedge g_2^{i_2}\wedge h_2^{1}}{g_1^{i_1-1}\wedge g_2^{i_2}\wedge h_1^{1}}.$$
\end{itemize}

Observe that these invariants are well-defined (they do not depend on the choice of $f^{k}\in \bigwedge^kF^{k}$) and take values in $\Cb - \{0\}$. Thus we may define
\begin{align*}
	\tau^{\bf j}({\bf F}):=\log T^{\bf j}({\bf F})\in\Cb/2\pi i\Zb
\end{align*}
and 
\begin{align*}
	\label{eqn: sigmadef}\sigma^{\bf i}({\bf G},{\bf H}):=\log D^{\bf i}({\bf G},{\bf H})\in\Cb/2\pi i\Zb.
\end{align*}

Given two $m$--tuples of flags in general position $(F_1, \dots, F_m)$ and $(G_1, \dots, G_m)$, we say that $(F_1, \dots, F_m)$ and $(G_1, \dots, G_m)$ are projectively equivalent if there exists a transformation $A \in \PGL_d(\Cb)$ such that $A(F_i) = G_i$ for every $i \in \{1, \dots, m\}$. Double and triple ratios provide a complete set of invariants on the space of triples of flags in general position and on the set of quadruples $\mathcal{F}(\Cb^d)^{[4]}$, as described by the following statement:

\begin{proposition}\label{prop: flag invariants}
	The following properties hold:
	\begin{enumerate}
		\item Two triples of flags in general position ${\bf F} = (F_1,F_2,F_3)$ and ${\bf G} = (G_1, G_2, G_3)$ are projectively equivalent if and only if
		\[
		T^{\bf j}({\bf F}) = T^{\bf j}({\bf G}) \qquad \forall {\bf j} \in \Bc .
		\]
		\item Two quadruples $(F_1,F_2,G_1,G_2),  (H_1,H_2,K_1,K_2) \in \mathcal{F}(\Cb^d)^{[4]}$ are projectively equivalent if and only if
		\begin{gather*}
			T^{\bf j}(F_1,F_2,G_1) = T^{\bf j}(H_1,H_2,K_1), \quad T^{\bf j}(F_1,F_2,G_2) = T^{\bf j}(H_1,H_2,K_2) \qquad \forall {\bf j} \in \Bc , \\
			D^{\bf i}(F_1,F_2,G_1,G_2) = D^{\bf i}(H_1,H_2,K_1,K_2) \qquad \forall {\bf i} \in \Ac .
		\end{gather*}
	\end{enumerate}
\end{proposition}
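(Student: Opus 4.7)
The ``only if'' directions of both statements are immediate, since $T^{\bf j}$ and $D^{\bf i}$ are built from wedge products that transform trivially under $\PGL_d(\Cb)$. The content lies in the converses, for which the plan is to reduce to explicit coordinate computations on standard configurations of flags.

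For part (1), the first step is to use transitivity of $\PGL_d(\Cb)$ on ordered transverse pairs of flags to assume $F_1 = G_1$ is the standard flag and $F_2 = G_2$ is the opposite flag in a fixed basis of $\Cb^d$. The stabilizer of this pair is the image $T \subset \PGL_d(\Cb)$ of the diagonal torus, of complex dimension $d - 1$. Parameterizing the remaining flag $F_3$ in general position with $(F_1, F_2)$ via the big Bruhat cell (unipotent parameterization), one can write the $T$-action and the triple ratios $T^{\bf j}(F_1, F_2, F_3)$ explicitly as Laurent monomials in the coordinates. A dimension count, $\binom{d}{2} - (d-1) = \binom{d-1}{2} = |\Bc|$, suggests that the triple ratios provide coordinates on the $T$-quotient; the key step is to verify that the assembled map
\[
\{F_3 \text{ in general position with } (F_1,F_2)\}/T \longrightarrow (\Cb \setminus \{0\})^{|\Bc|}
\]
is a biregular isomorphism. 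This identifies the $T$-orbit of $F_3$ with the tuple of its triple ratios, so equality of triple ratios forces $F_3$ and $G_3$ to be $\PGL_d(\Cb)$-equivalent.

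For part (2), the strategy is to bootstrap from part (1). First, apply part (1) to the triples $(F_1, F_2, G_1)$ and $(H_1, H_2, K_1)$ to reduce to the case where these two triples coincide. The stabilizer $T' \subset T$ of such a triple in general position is finite, as follows from the orbit-dimension count $\dim \PGL_d(\Cb) = d^2 - 1 = 3\binom{d}{2} - \binom{d-1}{2}$. The triple ratio condition $T^{\bf j}(F_1, F_2, G_2) = T^{\bf j}(H_1, H_2, K_2)$ combined with part (1) then forces $K_2$ to lie in the $T$-orbit of $G_2$. It remains to show that, for $G_1$ fixed, the map
\[
t \longmapsto \bigl( D^{\bf i}(F_1, F_2, G_1, t \cdot G_2) \bigr)_{{\bf i} \in \Ac}
\]
from $T/T'$ to $(\Cb \setminus \{0\})^{d-1}$ is injective. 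In the standard coordinates, $D^{\bf i}$ is a Laurent monomial in the entries of $t$, and a direct exponent calculation shows that the $|\Ac| = d-1$ monomials are linearly independent as characters of $T$. Equality of the double ratios then forces $G_2 = K_2$ modulo $T'$, producing the desired projective equivalence.

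The main obstacle in both parts is the explicit verification that the Fock--Goncharov invariants form algebraic coordinate systems on the $\PGL_d(\Cb)$-quotients. Once a careful unipotent parameterization of the flag variety is fixed, this reduces to a linear-algebraic statement about the exponent matrix of the torus action on the wedge-product expressions defining $T^{\bf j}$ and $D^{\bf i}$; establishing that this matrix has full rank is essentially the content of the original argument in \cite{fock-goncharov-1}, which the proof can either reproduce or invoke directly.
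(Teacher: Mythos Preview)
The paper does not actually prove this proposition; immediately after the statement it writes ``(For a proof of Proposition \ref{prop: flag invariants}, see e.g.\ \cite[Lemmas 2.2.8, 2.3.7]{tengren_thesis}.)'' and moves on. Your outline is a correct sketch of the standard argument and is essentially what one finds in the cited reference and in \cite{fock-goncharov-1}: normalize a transverse pair of flags, identify the triple ratios as Laurent-monomial coordinates on the torus quotient, and then bootstrap to the quadruple case using the double ratios as characters of the residual torus. One small sharpening: the stabilizer $T'$ of a triple of flags in general position is in fact trivial in $\PGL_d(\Cb)$, not merely finite (once $F_1,F_2$ are normalized, a diagonal element fixing $F_3^1$ must already be scalar, since $F_3^1$ has all coordinates nonzero), which cleans up the end of your part (2) argument.
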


(For a proof of Proposition \ref{prop: flag invariants}, see e.g. \cite[Lemmas 2.2.8, 2.3.7]{tengren_thesis}.)
	
	\subsection{Maximal geodesic laminations} \label{sec: max geod}
	
	A {\em geodesic lamination} $\lambda$ is a closed subset of $S$ which can be decomposed into a disjoint union of simple (complete) geodesics which are called {\em leaves} of the lamination $\lambda$. We say that $\lambda$ is {\em maximal} if it is maximal with respect to inclusion among all geodesic laminations in $S$. A lamination $\lambda$ is maximal exactly when the connected components of $S-\lambda$, called {\em plaques} of $\lambda$, are hyperbolic ideal triangles. We refer to \cite[Chapter~I.4]{MR2235710} (see also \cite[Section~8.5]{thurston-notes}) for a description of the structure of geodesic laminations of finite type hyperbolic surfaces.
	
	Fix, once and for all, a maximal geodesic lamination $\lambda$ in $S$. Denote by $\Lambda$ the set of leaves of $\lambda$, and by $\Delta$ the set of plaques of $\lambda$. Let $\wt \lambda$ be the preimage of $\lambda$ in the universal cover $\wt S\cong \mathbb H^2$, and let $\wt \Lambda$ and $\wt \Delta$ be the set of leaves and the set of plaques of $\wt \lambda$, respectively. We denote by $\partial \wt \lambda$ the subset of the Gromov boundary of $\wt S$ that consists of all endpoints of leaves of the lifted lamination $\wt \lambda$.
	
	The set
	\[
	\wt \Lambda^o:=\{(x,y)\in(\partial\wt \lambda)^2\mid\text{ $x$ and $y$ are the endpoints of a leaf in }\wt \Lambda\}
	\]
	is naturally in bijection with the set of leaves of $\wt\lambda$ equipped with an orientation. Similarly, the set
	\[
	\wt \Delta^o:=\{(x,y,z)\in(\partial\wt \lambda)^3\mid\text{ $x$, $y$, and $z$ are the vertices of a plaque in }\wt \Delta\}
	\]
	is naturally in bijection with the the set of plaques in $\wt \Delta$ equipped with a labelling of their vertices. Recall that $\Gamma$ is the group of deck transformations of the universal cover $\pi_S\colon\wt S\to S$. The quotients $\Lambda^o:=\wt\Lambda^o/\Gamma$ and $\Delta^o:=\wt\Delta^o/\Gamma$ identify with the set of oriented leaves of $\lambda$ and the set of plaques of $\lambda$ endowed with a labeling of their vertices, respectively. 
	
	We denote the natural forgetful projections (that forget either the orientation of the leaves or the labeling of the vertices) as follows:
	\begin{gather*}
		\pi_{\wt\Lambda}\colon\wt\Lambda^o\to\wt\Lambda\qquad \pi_{\Lambda}\colon\Lambda^o\to\Lambda\qquad
		\pi_{\wt\Delta}\colon\wt\Delta^o\to\wt\Delta\qquad \pi_{\Delta}\colon\Delta^o\to\Delta.
	\end{gather*}
	
	Given a plaque $T \in \widetilde{\Delta}$ and a leaf $g \in \widetilde{\Lambda}$, we say that $g$ is an \emph{edge} of $T$ if both endpoints of $g$ are vertices of $T$. Moreover, given $g_1,g_2\in\widetilde{\Lambda}$, we say that a leaf $h\in\widetilde{\Lambda}$ (respectively, a plaque $T\in\widetilde\Delta$) \emph{separates} $g_1$ and $g_2$ if $h$ (respectively, $T$) intersects some (equivalently, any) closed geodesic segment in $\wt S$ with one endpoint in $g_1$ and the other endpoint in $g_2$.
	Notice that given a pair of distinct plaques $T_1,T_2\in\wt\Delta$, there are unique edges $g_1$ of $T_1$ and $g_2$ of $T_2$ such that both $T_1$ and $T_2$ do not separate $g_1$ and $g_2$. We refer to $(g_1,g_2)$ as the \emph{separating pair of edges} for $(T_1,T_2)$. Then, we say that a leaf $h\in\widetilde{\Lambda}$ or a plaque $T\in\widetilde\Delta$ \emph{separates} $T_1$ and $T_2$ if it separates $g_1$ and $g_2$.
	
	For any oriented leaf $\gsf\in\wt\Lambda^o$, we denote by $\gsf^+$ and $\gsf^-$ its forward and backward endpoints respectively. Then we say that $\gsf_1,\gsf_2\in\wt\Lambda^o$ are \emph{coherently oriented} if 
	\[\gsf_2^+ \preceq \gsf_1^+ \prec \gsf_1^-\preceq \gsf_2^- \prec \gsf_2^+\quad\text{ or }\quad \gsf_1^+ \preceq \gsf_2^+ \prec \gsf_2^-\preceq \gsf_1^- \prec \gsf_1^+,\]
	where $\prec$ denotes the counterclockwise order of $\partial\widetilde{S}$ induced by the orientation on $S$. More informally, this is requiring $\gsf_1$ and $\gsf_2$ to be oriented ``in the same direction".
	
	\subsection{$\lambda$--Borel Anosov representations and $d$--pleated surfaces}\label{d-pleated}
	
	Let $\lambda$ be a maximal geodesic lamination and let $\xi$ be a function from the set of endpoints $\partial \widetilde{\lambda}$ to the space $\Fc(\Cb^d)$ of complete flags of $\Cb^d$. Notice that $\xi$ has an associated map $\xi \times \xi : \widetilde{\Lambda}^o\to\Fc(\Cb^d)^2$ on the space of leaves of $\widetilde{\lambda}$ given by $\gsf\mapsto(\xi(\gsf^+),\xi(\gsf^-))$. We say that $\xi$ is:
	\begin{enumerate}
		\item \emph{$\lambda$--continuous} if the associated map $\xi \times \xi : \widetilde{\Lambda}^o\to\Fc(\Cb^d)^2$ is continuous.
		\item \emph{$\lambda$--transverse} if the image of $\xi \times \xi$ lies in the set of transverse pairs of flags.
		\item \emph{$\lambda$--hyperconvex} if for every plaque of $\wt \lambda$ the image of its vertices under $\xi$ is in general position.
	\end{enumerate}
	
	If $\rho:\Gamma\to\PGL_d(\Cb)$ is a representation that admits a $\rho$--equivariant, $\lambda$--continuous, $\lambda$--transverse map $\xi:\partial\widetilde\lambda\to\mathcal F(\Cb^d)$, one can construct for each ${\bf i}\in\Ac$, a certain line bundle $\widehat{H}_{\bf i}$ over the subset $T^1 \lambda$ of the unit tangent bundle $T^1 S$ given by
	\[T^1\lambda:=\{v\in T^1S \mid v\text{ is tangent to a leaf of }\lambda\} . \]
	The geodesic flow of $S$ preserves the subset $T^1 \lambda$ and it lifts naturally to a flow on the total space $\widehat{H}_{\bf i}$, see \cite[Section 3.1]{MMMZ1} for more details. We then say that $\rho$ is \emph{$\lambda$--Borel Anosov} if for all ${\bf i}\in\Ac$, $\widehat{H}_{\bf i}$ is uniformly contracted by the flow. We omit the technical characterization of the uniform contraction of the flow on the line bundles $\wh H_{\bf i} \to T^1 \lambda$ as it will not be needed in the present paper. We refer the interested reader to \cite[Section 3.1]{MMMZ1}. The consequences of this dynamical condition will be recalled whenever needed.

	If $\rho$ is a $\lambda$--Borel Anosov representation, then the map $\xi$ is unique to $\rho$, and so we refer to it as the \emph{$\lambda$--limit map} of $\rho$. A pair $(\rho,\xi)$ is a \textit{$d$--pleated surface with pleating locus $\lambda$} if $\rho:\Gamma\to\PGL_d(\Cb)$ is a $\lambda$--Borel Anosov representation with $\lambda$--limit map $\xi:\partial\wt\lambda\to\Fc(\Cb^d)$, and $\xi$ is \emph{$\lambda$--hyperconvex}. Let $\Rc(\lambda,d)$ denote the set of $d$--pleated surfaces by with pleating locus $\lambda$.
		
	Since the $\lambda$--limit maps of $\lambda$--Borel Anosov representations are unique (see \cite[Theorem 1.1 and Remark~1.3]{wang2021anosov}), the map
	\[\Rc(\lambda,d)\to{\rm Hom}(\Gamma,\PGL_d(\Cb))\]
	given by $(\rho,\xi)\mapsto\rho$ is injective, so we may regard $\Rc(\lambda,d)$ as a subset of $\Hom(\Gamma,\PGL_d(\Cb))$. Also, since $\lambda$--Borel Anosov representations are open in $\Hom(\Gamma,\PGL_d(\Cb))$ (see \cite[Theorem 1.2]{wang2021anosov}), $\mathcal R(\lambda,d)\subset {\rm Hom}(\Gamma,\PGL_d(\Cb))$ is open. Furthermore, as a subset of ${\rm Hom}(\Gamma,\PGL_d(\Cb))$, $\mathcal R(\lambda,d)$ is invariant under conjugation and avoids the singular locus of ${\rm Hom}(\Gamma,\PGL_d(\Cb))$ (see \cite[Proposition 3.9]{MMMZ1}). As such, the complex structure on the set of smooth points on ${\rm Hom}(\Gamma,\PGL_d(\Cb))$ restricts to a complex structure on $\mathcal R(\lambda,d)$, which in turn descends to a complex structure on $\mathfrak R(\lambda,d):=\mathcal R(\lambda,d)/\PGL_d(\Cb)$, and the natural inclusion of the Hitchin component ${\rm Hit}_d(S)\to\mathfrak R(\lambda,d)$ is a real-analytic embedding (see \cite[Section 7.1]{MMMZ1}).
		
	The main result of \cite{MMMZ1} is a parameterization theorem for $\mathfrak R(\lambda,d)$ via $\Cb/2\pi i\Zb$--valued $\lambda$--cocyclic pairs, which we now define. Let $\wt\Delta^{2*}$ denote the set of distinct pairs of plaques of $\wt\lambda$. Recall also that $\widetilde\Delta^o$ is the set of plaques of $\wt \lambda$ equipped with a labelling of their vertices, $\Ac$ is the set of pairs of positive integers that sum up to $d$, and $\Bc$ is the set of triples of positive integers that sum up to $d$. It will be convenient to introduce the following notation. For any triple ${\bf j}=(j_1,j_2,j_3) \in \mathcal{B}$ and any labeled plaque ${\bf x}=(x_1,x_2,x_3) \in \widetilde{\Delta}^o$, denote
		\[\wh{\bf j}:=(j_2,j_1,j_3),\quad{\bf j}_+:=(j_2,j_3,j_1),\quad{\bf j}_-:=(j_3,j_1,j_2),\]
		\[\quad\wh{\bf x}:=(x_2,x_1,x_3),\quad{\bf x}_+:=(x_2,x_3,x_1),\quad{\bf x}_-:=(x_3,x_1,x_2).\] 
		Similarly, for any pair ${\bf i} =(i_1,i_2) \in \Ac$ and any pair of distinct plaques ${\bf T}=(T_1,T_2) \in \widetilde{\Delta}^{2*}$, denote
		\[\wh{\bf i}:=(i_2,i_1)\quad\text{ and }\quad\wh{\bf T}:=(T_2,T_1).\]
		
		\begin{definition}\label{def_cocycle}
			For any Abelian group $G$, a $\lambda$--\textit{cocyclic pair of dimension $d$ with values in $G$} is a pair $(\alpha, \theta)$, where $$\alpha \co \wt\Delta^{2*} \times \mathcal{A} \to G, \;\;\text{ and }\;\; \theta \co \wt\Delta^o  \times \mathcal{B} \to G$$
			satisfy the following properties:
			\begin{enumerate}
				\item(Symmetry of $\alpha$) For all pairs of distinct plaques ${\bf T}\in \wt\Delta^{2*}$ and for all ${\bf i}\in \mathcal{A}$ $$\alpha\left({\bf T}, {\bf i}\right) = \alpha(\widehat{\bf T}, \widehat{\bf i});$$ 
				\item(Symmetry of $\theta$) For every labeled plaque ${\bf x}:=(x_1, x_2, x_3)\in \wt\Delta^o$ and ${\bf j}:=(j_1, j_2, j_3) \in \mathcal{B}$ $$\theta\left({\bf x}, {\bf j}\right) = \theta\left({\bf x}_+, {\bf j}_+\right)= \theta\left({\bf x}_-, {\bf j}_-\right)=-\theta(\widehat{\bf x},\widehat{\bf j});$$
				\item($\Gamma$--invariance of $\alpha$) For all ${\bf T} \in \wt\Delta^{2*}$, ${\bf i}\in \mathcal{A}$, and $\gamma\in\Gamma$, 
				$$\alpha\left(\gamma\cdot{\bf T}, {\bf i}\right) = \alpha({\bf T}, {\bf i});$$
				\item($\Gamma$--invariance of $\theta$) For every pair of distinct plaques ${\bf x}\in \wt\Delta^o$, triple of integers ${\bf j} \in \mathcal{B}$, and $\gamma\in\Gamma$, 
				$$\theta\left(\gamma\cdot {\bf x}, {\bf j}\right) = \theta({\bf x}, {\bf j});$$
				\item(Cocycle boundary condition) Let $T_1$, $T_2$, and $T$ be pairwise distinct plaques of $\wt\lambda$ such that $T$ separates $T_1$ and $T_2$, and let ${\bf x}_T = (x_{T,1}, x_{T,2}, x_{T,3})$ be the labelling of the vertices of $T$ such that the geodesic with endpoints $x_{T,1}$ and $x_{T,2}$ separates $T_1$ and $T$, while the geodesic with endpoints $x_{T,2}$ and $x_{T,3}$ separates $T$ and $T_2$. Then for all ${\bf i}\in \mathcal{A}$, we have that 
				\[
				\alpha((T_1,T_2), {\bf i}) = \alpha((T_1,T), {\bf i}) + \alpha((T,T_2), {\bf i}) + \sum_{{\bf j} \in \Bc : j_2 = i_1} \theta({\bf x}_T,{\bf j})
				\]	
				if $x_{T,3}\prec x_{T,2}\prec x_{T,1}$, and
				\[
				\alpha((T_1,T_2), {\bf i}) = \alpha((T_1,T), {\bf i}) + \alpha((T,T_2), {\bf i}) - \sum_{{\bf j} \in \Bc : j_2 = i_2} \theta({\bf x}_T,{\bf j})
				\]
				if $x_{T,1}\prec x_{T,2}\prec x_{T,3}$. Here, recall that $\prec$ denotes the counterclockwise order of $\partial\widetilde{S}$ induced by the orientation on $S$.
			\end{enumerate}	
		\end{definition} 
		
		Let $\Yc(\lambda,d;G)$ denote the set of $\lambda$--cocyclic pairs of dimension $d$ with values in $G$. We will often write $\alpha^{\bf i}({\bf T}):=\alpha({\bf T},{\bf i})$ and $\theta^{\bf j}({\bf x}):=\theta({\bf x},{\bf j})$ when convenient. We will also denote
		\[\alpha({\bf T}):=(\alpha^{\bf i}({\bf T}))_{{\bf i}\in\Ac}\in G^{\Ac}\quad\text{and}\quad\theta({\bf x}):=(\theta^{\bf j}({\bf x}))_{{\bf j}\in\Bc}\in G^{\Bc}.\]
		Observe that $\Yc(\lambda,d;G)$ is naturally an Abelian group. Furthermore, $\Yc(\lambda,d;\Rb)$ is a real vector space, $\Yc(\lambda,d;\Rb/2\pi\Zb)$ is a compact, Abelian Lie group, and 
		\[\Yc(\lambda,d;\Cb/2\pi i\Zb)=\Yc(\lambda,d;\Rb)+i\Yc(\lambda,d;\Rb/2\pi\Zb) \]
		is a complex, Abelian Lie group.
		
		To associate to every $(\rho,\xi)\in\mathcal R(\lambda,d)$ a point in $\Yc(\lambda,d;\Cb/2\pi i\Zb)$, we will use the fact that the $\lambda$--limit maps of $\lambda$--Borel Anosov representations admit unique slithering maps. More precisely, given a pair of leaves $g_1$ and $g_2$ of $\widetilde\lambda$, let $Q(g_1,g_2)$ denote the set of leaves of $\wt\lambda$ that separate $g_1$ and $g_2$. Following Bonahon and Dreyer \cite{BoD}, we proved the following theorem. 
		
		\begin{theorem}\cite[Theorem 6.2]{MMMZ1}\label{thm: slithering map} 
			For any $\lambda$--Borel Anosov representation $\rho\colon\Gamma\to\sf{PGL}_d(\Cb)$ with $\lambda$--limit map $\xi:\partial\wt\lambda\to\Fc(\Cb^d)$, there exists a unique $\rho$--equivariant map 
			\[
			\Sigma\co\wt\Lambda^2\to\SL_d(\Cb)
			\] 
			satisfying the following properties:
			\begin{enumerate}
				\item $\Sigma(g,g)=\id$ for all $g\in\wt\Lambda$, $\Sigma(g_1,g_2)=\Sigma(g_2,g_1)^{-1}$ for all $g_1,g_2\in\wt\Lambda$, and $\Sigma(g_1,g_2)\Sigma(g_2,g_3)=\Sigma(g_1,g_3)$ for all $g_1,g_2,g_3\in \wt\Lambda$ such that $g_2$ separates $g_1$ and $g_3$.
				\item For all leaves $g_1$ and $g_2$ of $\wt\lambda$, $\Sigma|_{Q(g_1,g_2)}$ is H\"older continuous with respect to the metric on $Q(g_1,g_2)$ induced by a Riemannian metric on $\partial\wt S\cong\mathbb{S}^1$ and the metric on $\SL_d(\Cb)$ induced by the operator norm on $\End(\Cb^d)$.
				\item If $g_1,g_2\in\wt\Lambda$ share an endpoint, then $\Sigma(g_1,g_2)$ is unipotent. 
				\item If $\gsf_1, \gsf_2\in\wt\Lambda^o$ are coherently oriented and $\pi_{\wt\Lambda}(\gsf_j) = g_j$ for $j = 1, 2$, then $\Sigma(g_1,g_2)$ sends $\xi(\gsf_2)$ to $\xi(\gsf_1)$.
			\end{enumerate}
		\end{theorem}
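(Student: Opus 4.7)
The plan is to extend the construction of Bonahon and Bonahon--Dreyer, replacing the positivity used in the real Hitchin setting with the uniform contraction available from the $\lambda$--Borel Anosov hypothesis. I would begin by defining $\Sigma(g_1,g_2)$ on pairs sharing a common endpoint $x$. In that case the flags $\xi(x)$, $\xi(g_1^-)$, $\xi(g_2^-)$ (orienting both leaves towards $x$) are pairwise transverse, and the unipotent radical $U_{\xi(x)}$ of the Borel subgroup stabilizing $\xi(x)$ acts simply transitively on flags transverse to $\xi(x)$. I would let $\Sigma(g_1,g_2)$ be the unique element of $U_{\xi(x)}$ sending $\xi(g_2^-)$ to $\xi(g_1^-)$, which automatically gives property (3), the first identities in (1), and property (4) on this subset.

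For general $g_1,g_2\in\wt\Lambda$, I would extend by approximation. Choosing a finite chain $g_1=h_0,h_1,\ldots,h_n=g_2$ of leaves in $Q(g_1,g_2)\cup\{g_1,g_2\}$ in which consecutive leaves share an endpoint or co-bound a plaque, I would form the product $\Sigma(h_0,h_1)\cdots\Sigma(h_{n-1},h_n)$ and define $\Sigma(g_1,g_2)$ as the limit over increasingly fine chains exhausting $Q(g_1,g_2)$.

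The central technical step is the convergence of these products, and this is where the $\lambda$--Borel Anosov hypothesis is essential. The uniform contraction of each line bundle $\widehat{H}_{\bf i}\to T^1\lambda$ along the geodesic flow, combined with compactness of $S$, should yield the estimate that each unipotent factor associated to a leaf lying deep between $g_1$ and $g_2$ differs from the identity by an operator-norm quantity decaying exponentially in the geodesic distance from the endpoints. This gives absolute convergence of the infinite product and, by the same estimate, the H\"older regularity (2). I expect this quantitative step to be the main obstacle, since it requires carefully relating the intrinsic geodesic time along leaves of $\lambda$ to the chord metric on $Q(g_1,g_2)\subset\partial\wt S$.

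The remaining properties then follow as follows: the cocycle and inversion identities in (1) come from telescoping of the approximating products and the corresponding identities for shared-endpoint pairs; property (3) is preserved in the limit because $U_{\xi(x)}$ is a closed subgroup; property (4) is preserved because each factor maps flags compatibly, and coherent orientation globalizes this compatibility. For uniqueness, given two such maps $\Sigma,\Sigma'$, their discrepancy $A(g_1,g_2):=\Sigma'(g_1,g_2)\Sigma(g_1,g_2)^{-1}$ satisfies a cocycle identity by (1), is unipotent on shared-endpoint pairs by (3), and intertwines the relevant flags by (4); combining the density of endpoints of separating leaves in $\partial\wt\lambda$ with the H\"older continuity (2) forces $A\equiv\id$.
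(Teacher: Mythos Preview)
This theorem is not proven in the present paper: it is quoted verbatim from \cite[Theorem~6.2]{MMMZ1}, with the paper noting only that the construction follows Bonahon--Dreyer \cite{BoD}. So there is no ``paper's own proof'' here to compare against beyond that pointer.

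That said, your proposal is the right outline and matches the Bonahon--Dreyer strategy that \cite{MMMZ1} adapts: define $\Sigma$ first on asymptotic pairs via the unipotent radical of the stabilizer of the common flag, then extend to arbitrary pairs as a limit of finite products over separating leaves, with convergence and H\"older regularity coming from exponential decay furnished by the Anosov contraction. Your identification of the quantitative estimate as the crux is accurate. One small caution on uniqueness: the argument you sketch is correct in spirit, but the cleanest way to run it is to observe that properties (1)--(3) already force $\Sigma$ to agree with the explicit unipotent on asymptotic pairs, and then (1) plus the H\"older continuity (2) propagate this to all pairs via the same limiting procedure; the flag-intertwining property (4) is then a consequence rather than an independent input to uniqueness.
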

		We refer to the map $\Sigma$ from Theorem \ref{thm: slithering map} as the \emph{slithering map compatible with $\xi$}. We sometimes write $\Sigma_{g_1,g_2}:=\Sigma(g_1,g_2)$ when convenient.
		
		Now, let $(\rho,\xi)$ be a $d$--pleated surface. Using the slithering map $\Sigma$ compatible with $\xi$, we may define a $\lambda$--cocyclic pair $(\alpha_\rho,\theta_\rho)\in\Yc(\lambda,d;\Cb/2\pi i\Zb)$ as follows:
		\begin{itemize}
			\item for all pairs of distinct plaques ${\bf T}=(T_1,T_2)\in\wt\Delta^{2*}$ and all ${\bf i}\in\Ac$, set
			$$\alpha_\rho ({\bf T}, {\bf i})=\alpha_\rho^{\bf i} ({\bf T}) := \sigma^{{\bf i}}\left(\xi(y_2), \xi(y_1), \xi(y_3), \Sigma(g_1,g_2)\cdot\xi(z)\right),$$
			where $(g_1,g_2)$ is the separating pair of edges for ${\bf T}=(T_1,T_2)$, $z$ is the vertex of $T_2$ that is not an endpoint of $g_2$, and $(y_1,y_2,y_3)$ are the vertices $T_1$, enumerated so that $y_1$ and $y_2$ are the endpoints of $g_1$ and $y_1\prec y_2\prec y_3$. 
			\item for all labeled plaque ${\bf x}\in\wt\Delta^o$ and all triple of indices ${\bf j}\in\Bc$, set
			$$\theta_\rho ({\bf x}, {\bf j})=\theta_\rho^{\bf j}({\bf x}) := \tau^{\bf j}\left(\xi({\bf x})\right).$$
		\end{itemize}
		We refer to $(\alpha_\rho,\theta_\rho)$ as the \emph{shear-bend $\lambda$--cocyclic pair of $\rho$}. The fact that $(\alpha_\rho,\theta_\rho)\in \Yc(\lambda,d;\Cb/2\pi i\Zb)$ follows from the symmetries of the triple and double ratios, see for example, in \cite[Proposition~2.4]{MMMZ1}. 
		
		Since the triple ratios and double ratios are projective invariants, we may define the shear-bend parameterization map
		\[\mathfrak{sb}_d:\mathfrak{R}(\lambda,d)\to\Yc(\lambda,d;\Cb/2\pi i\Zb)\]
		by setting $\mathfrak{sb}_d([\rho])=(\alpha_\rho,\theta_\rho)$. We then show the following.
		\begin{theorem}\cite[Theorem A]{MMMZ1}\label{thm: parameterization}
			The map $\mathfrak{sb}_d$
			is a biholomorphism onto $\mathcal{C}(\lambda, d) + i\, \mathcal{Y}(\lambda,d;\Rb/2\pi \Zb)$, where $\mathcal{C}(\lambda, d)\subset\Yc(\lambda,d;\Rb)$ is a convex, open polyhedral cone. Furthermore, $\mathfrak{sb}_d|_{{\rm Hit}_d(S)}:{\rm Hit}_d(S)\to\mathcal{C}(\lambda, d)$ is the Bonahon-Dreyer parameterization, which is a real analytic diffeomorphism.
		\end{theorem}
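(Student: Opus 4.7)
The plan is to establish that $\mathfrak{sb}_d$ is a biholomorphism by first checking well-definedness and injectivity directly from the defining formulas, then constructing an explicit inverse on the claimed image using the Bonahon--Dreyer parameterization combined with a bending procedure. Well-definedness reduces to verifying that $(\alpha_\rho, \theta_\rho)$ satisfies the five axioms of a $\lambda$--cocyclic pair: the symmetries follow from the classical symmetries of double and triple ratios (recalled in Proposition \ref{prop: flag invariants}), $\Gamma$--equivariance from $\rho$--equivariance of $\xi$ and the uniquely determined $\Sigma$, and the cocycle boundary condition from the multiplicativity $\Sigma(g_1,g_2)\Sigma(g_2,g_3)=\Sigma(g_1,g_3)$ of Theorem \ref{thm: slithering map} combined with the standard cocycle identity relating triple and double ratios across a triangle. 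Injectivity then follows from Proposition \ref{prop: flag invariants}: the shear-bend data records all triple ratios of the flags $\xi({\bf x})$ at plaques and all double ratios of the slithering-adjusted quadruples, hence determines the projective configuration of flags at any two plaques, and propagating along a chain of separating plaques reconstructs $\xi$ up to a global element of $\PGL_d(\Cb)$; equivariance then pins down $[\rho]$.

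Surjectivity onto $\mathcal{C}(\lambda,d) + i\,\Yc(\lambda,d;\Rb/2\pi\Zb)$ is the substantive content. Given $(\alpha,\theta) = (\alpha_0,\theta_0) + i(\alpha_1,\theta_1)$ with $(\alpha_0,\theta_0) \in \mathcal{C}(\lambda,d)$, I would apply the Bonahon--Dreyer parameterization to obtain a $\PGL_d(\Rb)$--Hitchin representation $\rho_0$ with real shear-bend data $(\alpha_0,\theta_0)$, and then construct the deformed limit map $\xi$ by fixing a base plaque $T_0$ of $\widetilde\lambda$, setting $\xi$ to agree with $\xi_{\rho_0}$ on the vertices of $T_0$, and propagating to any other plaque $T$ by applying the product of slithering transformations $\Sigma_{\rho_0}(g_k, g_{k-1})$ along the leaves separating $T_0$ and $T$, each twisted by the prescribed imaginary shear $i\alpha_1$ on the leaf and the imaginary triple ratios $i\theta_1$ at the intervening plaques. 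The cocycle boundary condition on $(\alpha,\theta)$ is exactly what is required to make this prescription independent of the chain chosen and $\Gamma$--equivariant for a uniquely determined $\rho\colon \Gamma \to \PGL_d(\Cb)$, while the symmetries guarantee $\lambda$--transversality and $\lambda$--hyperconvexity with the correct invariants.

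To conclude that $\rho$ is $\lambda$--Borel Anosov, the key observation is that the uniform contraction of the line bundles $\widehat H_{\bf i}\to T^1\lambda$ is governed by the real part of the shear-bend data: imaginary additions rotate flags in $\Cb^d$ but do not alter the contraction rates along the geodesic flow, so if $(\alpha_0,\theta_0)$ lies in $\mathcal{C}(\lambda,d)$, where by the Bonahon--Dreyer theorem the Hitchin representation $\rho_0$ already has the uniform contraction property, then the bent $\rho$ inherits it. Real analyticity of $\mathfrak{sb}_d$ in both directions follows from the fact that the double and triple ratios are analytic functions of the input flags, the flags depend real-analytically on $\rho$ (via the Anosov stable/unstable bundle formalism), and the bending construction is analytic in the cocycle data; holomorphicity with respect to the complex structures inherited from $\Hom(\Gamma, \PGL_d(\Cb))$ and from the complex structure on $\Yc(\lambda,d;\Cb/2\pi i\Zb)$ is immediate once one checks that the $i\,\Yc(\lambda,d;\Rb/2\pi\Zb)$ direction is tangent to the holomorphic bending deformations in $\PGL_d(\Cb)$.

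The main obstacle is the last part of surjectivity, namely verifying that the bent representation retains the uniform contraction property and that the propagated limit map is genuinely $\lambda$--continuous. This requires careful H\"older estimates on the slithering maps along the lamination, and in particular an argument that the open polyhedral cone $\mathcal{C}(\lambda,d)$ carves out exactly the set of real shear parameters for which these estimates survive under the bending procedure; everything else in the proof reduces to symmetry bookkeeping, invariant computations, and applying the Bonahon--Dreyer theorem to the real part.
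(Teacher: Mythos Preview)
This theorem is not proved in the present paper: it is quoted as \cite[Theorem~A]{MMMZ1} and used as a black box. There is therefore no ``paper's own proof'' to compare your proposal against. Your sketch is a plausible high-level outline of how one might prove such a result, and it is broadly consistent with the strategy advertised in the introduction of \cite{MMMZ1} (bending Hitchin representations along $\lambda$ by imaginary cocyclic data), but the present paper does not carry out any of these steps.

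That said, a few remarks on the sketch itself. Your identification of the main obstacle is correct: the hard analytic content is showing that the bent limit map is $\lambda$--continuous and that the $\lambda$--Borel Anosov contraction survives. Your heuristic that ``imaginary additions rotate flags but do not alter contraction rates'' is morally right but is not a proof; in \cite{MMMZ1} this requires genuine H\"older estimates on the slithering map (their Theorem~6.2, restated here as Theorem~\ref{thm: slithering map}) together with a careful construction of the bent $\xi$ plaque by plaque. Likewise, your injectivity argument glosses over why knowing all the double and triple ratios at adjacent plaques suffices to reconstruct $\xi$ globally up to $\PGL_d(\Cb)$: one needs the H\"older continuity of $\Sigma$ to pass to limits along infinite chains of plaques, not just finite propagation. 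Finally, the identification of the image as exactly $\mathcal{C}(\lambda,d)+i\,\Yc(\lambda,d;\Rb/2\pi\Zb)$, and in particular the characterization of $\mathcal{C}(\lambda,d)$ by explicit polyhedral inequalities, is a substantial piece of \cite{MMMZ1} (building on Bonahon--Dreyer) that your sketch does not address.
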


\section{Train track neighborhoods and their maximal trees}\label{sec:surjectivity}

The proofs of both Theorem \ref{Thm-general} and Theorem \ref{ThmD} rely heavily on the choice of a train track neighborhood for the maximal lamination $\lambda$. In this section, we recall the definition of train track neighborhood $N$ and of its orientation cover $N^o \to N$ (see Section \ref{ssub:tt_neighborhood}), we introduce the notion of a tree $L$ inside $N$ (see Section \ref{ssub:max tree}) and we describe the structure of the orientation cover $N^o$ of $N$ via the choice of a maximal tree (see Section \ref{sub:graph_mathcal_g_on_s}).

\subsection{Train track neighborhoods} \label{ssub:tt_neighborhood}

We now recall the notion of a train track neighborhood for the fixed maximal geodesic lamination $\lambda$. We work with the definition of train track neighborhood and the related terminology from the work of Bonahon and Dreyer \cite[Section~4.2]{BoD}. Classical references on the subject (which use slightly different definitions, better suited to the study of hyperbolic surfaces) are the work of Thurston \cite[Section~8.9]{thurston-notes} and of Penner and Harer \cite{penner-harer}.

First, let $r\co[0,1]\times[0,1]\to S$ (respectively, $r\co[0,1]\times[0,1]\to \wt S$) be the restriction of a smooth embedding from a neighborhood of $[0,1]\times [0,1]$ in $\Rb^2$ to $S$. We refer to the image $R$ of $r$ as a \emph{rectangle} in $S$ (respectively, $\wt S$). The boundary $\partial R$ of $R$ can be divided into the \emph{horizontal boundary} $\partial_hR:=r([0,1]\times\{0,1\})$ and the {\em vertical boundary} $\partial_vR:=r(\{0,1\}\times[0,1])$. A \emph{tie} of the rectangle $R$ is a subset of the form $r(\{x\}\times[0,1])$ for some $x\in [0,1]$. The points
\[r(0,0),\, r(0,1),\, r(1,0),\text{ and }r(1,1)\]
are called the \emph{vertices} of $R$.

A {\em(trivalent) train track neighborhood} for $\lambda$ is a closed neighborhood $N\subset S$ of $\lambda$ which can be written as a union of finitely many rectangles $\{R_1,\dots,R_n\}$, such that the following conditions are satisfied:
\begin{enumerate}
	\item If two rectangles $R_i$ and $R_j$ intersect, then every component of $R_i\cap R_j$ is, up to switching the roles of $i$ and $j$, a vertical boundary component of $R_i$, lies in a vertical boundary component of $R_j$, and contains exactly one vertex of $R_j$. Every vertical component of a rectangle $R_j$ that satisfies the properties above (with respect to some rectangle $R_i$) is called a \emph{switch} of $N$. 
	\item For each rectangle $R_i$, every vertex of $R_i$ is contained in some $R_j$ different from $R_i$.
	\item For each rectangle $R_i$, every tie of $R_i$ intersects some leaf of $\lambda$, and any such intersection is transverse.
	\item Each component of $S-N$ is a topological cell, and its boundary is the union of six smooth curves, three of which each lie in some switch, and the other three are each a union of horizontal boundary components of rectangles.
\end{enumerate}

Let $N$ be a train track neighborhood of $\lambda$, and let $g$ be the genus of $S$. Observe that there is a natural bijection between the set of switches of $N$ and the set of vertices of the plaques of $\lambda$. Since $S-\lambda$ is the union of $4g-4$ plaques, it follows that  $N$ has $12g-12$ switches. Furthermore, every switch of $N$ contains exactly three vertical boundary components of rectangles in $N$. Since every rectangle of $N$ has two vertical boundary components, it follows that $N$ has $18g-18$ rectangles.

The boundary of $N$ can be naturally decomposed into a union of connected segments, each of which either lies in some switch, or is the union of horizontal boundary components of some rectangles. The union of all (closed) segments in $\partial N$ that satisfy the former (respectively, latter) is the \emph{vertical boundary} (respectively, \emph{horizontal boundary}) of $N$, which we denote by $\partial_h N$ (respectively, $\partial_v N$). 

Notice that the train track neighborhood $N$ lifts to a closed neighborhood $\wt N\subset \wt S$ of $\wt\lambda$, which we refer to as a $\Gamma$--invariant \emph{train track neighborhood} of $\wt\lambda$. Clearly, $\wt N$ can be written as countable union of rectangles, each of which is a lift of one of the rectangles of $\wt N$. Also, the boundary of $\wt N$ can be written as the union of the \emph{vertical boundary} $\partial_v\wt N$ and \emph{horizontal boundary} $\partial_h\wt N$, which descend via the universal cover $\pi_S:\wt S\to S$ to $\partial_v N$ and $\partial_h N$ respectively. 

\subsubsection{Orientation covers}\label{sec: orientation covers}

If $N$ is a train track neighborhood of the geodesic lamination $\lambda$ in $S$, the \emph{orientation cover} $N^o$ of $N$ is the set of pairs $(x,u)$, where $x\in N$ and $u$ is an orientation of the tie of $N$ that contains $x$. We may endow $N^o$ with a topology by embedding $N^o$ into $T^1 S$, in which case the natural covering map $\pi_N:N^o\to N$ is a double cover.  We denote $\partial_h N^o:=\pi_N^{-1}(\partial_h N)$ and $\partial_v N^o:=\pi_N^{-1}(\partial_v N)$, and refer to these as the \emph{horizontal boundary} and \emph{vertical boundary} of $N^o$ respectively.

The preimage $\lambda^o := \pi_N^{-1}(\lambda)$ inside $N^o$, together with the projection map $\pi_N|_{\lambda^o} : \lambda^o \to \lambda$, can be thought as the $2$--fold orientation cover of the maximal lamination $\lambda$. Indeed, any point in $\lambda^o$ corresponds to the data of a point $x \in \lambda$ and of an orientation of the tie of $N$ that passes through $x$. Since the ties of $N$ are everywhere transverse to the leaves of $\lambda$ and since the surface $S$ is oriented, the orientation of the tie containing $x$ corresponds to the choice of an orientation of the leaf of $\lambda$ $g\in\Lambda$ that contains $x$ equipped with the orientation $o$, following the convention that the leaf $g$ is oriented so that the tie through $x$ crosses the leaf from left to right. Via this embedding, $\lambda^o$ is a geodesic lamination in $N^o$, and $N^o$ is a train track neighborhood of $\lambda^o$, and $\pi_N$ restricts to the $2$--fold cover $\lambda^o\to\lambda$. 

Hence, the leaves of $\lambda^o$ are naturally oriented, and can be identified with the set $\Lambda^o$ of oriented leaves of $\lambda$. Also, each vertical boundary component, tie, and switch of $N^o$ has a natural orientation, which can be viewed respectively as a vertical boundary component, tie, and switch of $N$ equipped with the induced orientation. Similarly, each rectangle of $N^o$ can be viewed as a rectangle of $N$ equipped with a continuous orientation on its ties.

\subsection{Trees in a train track neighborhood} \label{ssub:max tree}

Let $N$ be a train track neighborhood of the maximal geodesic lamination $\lambda$. For every rectangle $R$ of $N$, we fix once and for all a decomposition 
\[R=K_1\cup\check R\cup K_2\]
such that $K_1,K_2,\check R$ are closed, connected subsets of $R$ that are unions of ties, and satisfy the following properties:
\begin{enumerate}
	\item $K_1$ and $K_2$ each contain a vertical boundary component of $R$,
	\item $K_1\cap K_2$ is empty,
	\item $K_1\cap \check R$ and $K_2\cap\check R$ are both ties of $R$.
\end{enumerate}
We refer to $K_1$ and $K_2$ as the \emph{stumps} in $R$, and $\check R$ as a \emph{truncated rectangle} in $R$. Notice also that every switch $s$ of $N$ intersects exactly three stumps, exactly one of which contains $s$. We refer to the union of these three stumps as the \emph{stumpy switch} containing $s$.

We say that $L\subset N$ is a \emph{tree} if it is the union of stumpy switches and truncated rectangles, such that
\begin{enumerate}
	\item if a truncated rectangle $\check R$ lies in $L$, then the two stumpy switches that intersect $\check R$ also lie in $L$, and
	\item $L$ is connected and simply connected.
\end{enumerate}
One should think of the stumpy switches as vertices of $L$ and the truncated rectangles as edges of $L$. We also say that a tree is \emph{maximal} if it is maximal with respect to inclusion, see Figure \ref{fig:orientable.rectangles}. 

\begin{figure}[h!]
	\centering
	\includegraphics[width=\textwidth]{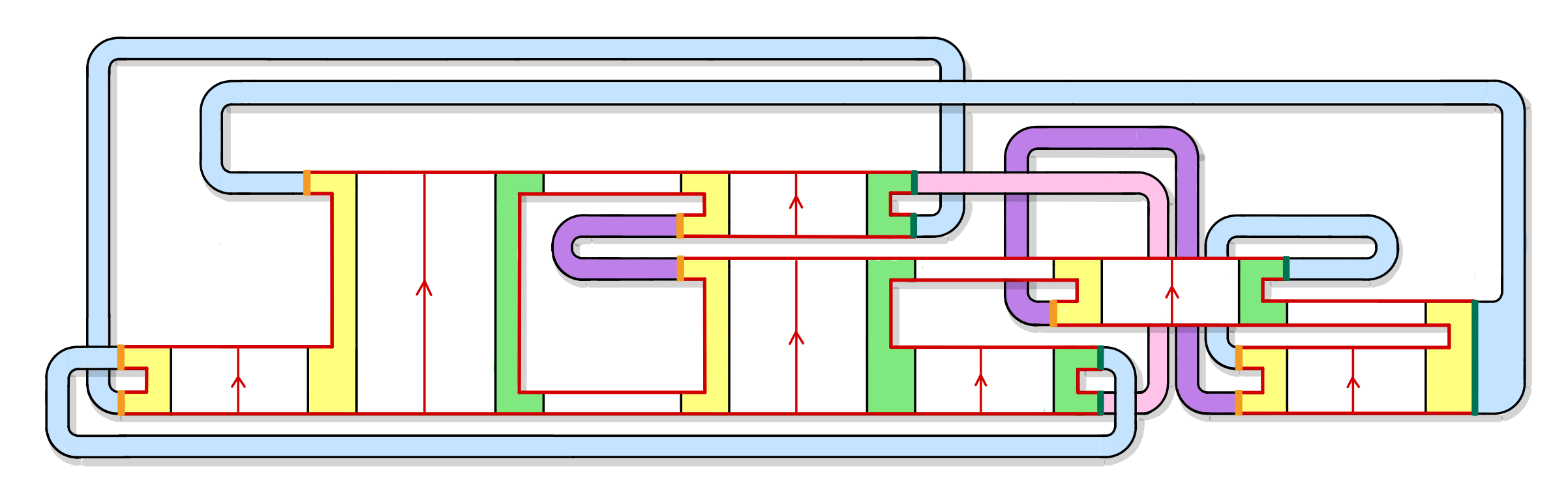}
	\caption{\label{fig:orientable.rectangles}\small A train track neighborhood $N$ of some lamination in a genus $2$ surface. Rectangles of a fixed maximal tree $M\subset N$ are in red, with arrows indicating the chosen orientation on its ties. Stumpy switches containing a left (resp. right) vertical boundary component of $N$ are in yellow (resp. green). Left (resp. right) exits of $M$ are in orange (resp. dark green). Orientable rectangles are in blue, left (resp. right) unorientable rectangles are in purple (resp. pink).}
\end{figure}

Suppose that $L\subset N$ is a tree endowed with a continuous orientation $o$ on its ties. Being the surface $S$ orientable and $L$ contractible, there exist exactly two choices of orientations. An \emph{exit} of $L$ is a tie of $N$ that lies in the boundary $\partial L$ of $L$. Moreover, we say that an exit of $L$ is \emph{left} (respectively, \emph{right}) if $L$ lies to the right (respectively, left) of the exit with respect to the chosen orientation. Similarly, we say that a vertical boundary component of $N$ that lies in $L$ is \emph{left} (respectively, \emph{right}) if $L$ lies to the right (respectively, left) of the vertical boundary component, see Figure \ref{fig:orientable.rectangles}. Let $\mathcal E(L)$, $\mathcal E^\ell(L)$, and $\mathcal E^r(L)$ denote the set of exits of $L$, left exists of $L$ and right exits of $L$. Similarly, we denote by $\mathcal S(L)$, $\mathcal S^\ell(L)$, and $\mathcal S^r(L)$ the set of vertical boundary components of $N$ that lie in $L$, the set of left vertical boundary components in $L$ and the set of right vertical boundary components of $N$. Clearly
\[
\mathcal E(L)=\mathcal E^\ell(L) \sqcup \mathcal E^r(L)\quad\text{and}\quad\mathcal S(L)=\mathcal S^\ell(L) \sqcup \mathcal S^r(L).
\] 

Next, suppose that $M\subset N$ is a maximal tree, together with the choice of a continuous orientation of its ties. If $R$ is a rectangle of $N$ that does not lie in $M$, we say that $R$ is \emph{orientable} (respectively, \emph{unorientable}) for $M$ if the orientation of the ties of $M$ extends (respectively, does not extend) to a continuous orientation of the ties of $M\cup R$, see Figure \ref{fig:orientable.rectangles}.
By definition, every rectangle of $N$ is either a rectangle of $M$, an orientable rectangle for $M$, or an unorientable rectangle for $M$. We denote the sets of orientable rectangles and the set of unorientable rectangles for $M$ by $\mathcal{O}(M)$ and $\mathcal{U}(M)$, respectively. Observe that $\mathcal U(M)$ is non-empty; indeed, for any plaque $T$ of $\lambda$, there is an unorientable rectangle that intersects an edge of $T$.

Every orientable rectangle $R\in\mathcal O(M)$ lies to the left of one of its vertical boundary components, and to the right of the other. On the other hand, an unorientable rectangle $R\in\mathcal U(M)$ has to lie either to the left of both of its vertical boundary components or to the right of both of its vertical boundary components. We call the former a \emph{left unorientable rectangle}, and the latter a \emph{right unorientable rectangle}. Denote by $\mathcal U^\ell(M)$ (respectively, $\mathcal U^r(M)$) the set of left (respectively, right) unorientable rectangles. 

Henceforth, we will fix once and for all a maximal tree $M\subset N$, together with a continuous orientation $o$ on its ties. Notice that, by maximality, $M$ must contain all vertical boundary components of $N$. In the remainder of the exposition, we will simplify notation and drop the dependence of the maximal tree $M$. In other words, we will write
\[\mathcal E:=\mathcal E(M),\quad\mathcal O:=\mathcal O(M),\quad\mathcal U:=\mathcal U(M), \]
\[\mathcal E^\ell:=\mathcal E^\ell(M),\quad \mathcal E^r:=\mathcal E^r(M),\quad\mathcal S^\ell:=\mathcal S^\ell(M), \quad\mathcal S^r:=\mathcal S^r(M),\]
\[\quad\mathcal U^\ell:=\mathcal U^\ell(M), \quad\text{and}\quad\mathcal U^r:=\mathcal U^r(M).
\]

\subsection{Lifts of $M$ to $N^o$} \label{sub:graph_mathcal_g_on_s}

Recall that $\pi_N : N^o \to N$ is the orientation double cover and $\iota : N^o \to N^o$ is the covering involution of $\pi_N$. Since the maximal tree $M$ is contractible, the preimage $\pi_N^{-1}(M)$ is the disjoint union of two path components, and the restriction of $\pi_N$ to each of them is a homeomorphism onto $M$. For exactly one of these path components, denoted $M^o$, the orientation on its ties inherited from the orientation on the ties of $N^o$ is mapped via $\pi_N$ to the chosen continuous orientation on the ties of $M$. Then note that the other connected component of $\pi_N^{-1}(M)$ is $\iota(M^o)$.

Observe that if $R$ is an orientable rectangle, then the two vertical boundary components of each connected component of $\pi_N^{-1}(R)$ either both lie in $M^o$ or both lie in $\iota(M^o)$. On the other hand, if $R$ is unorientable, then each connected component of $\pi_N^{-1}(R)$ has one vertical boundary component in $M^o$ and one in $\iota(M^o)$. Thus, for every rectangle $R$ of $N$, we may choose once and for all a connected component $R^o$ of $\pi_N^{-1}(R)$ as follows:
\begin{itemize}
	\item If $R$ is a rectangle of $M$, set $R^o$ to be the connected component of $\pi_N^{-1}(R)$ that lies in $M^o$.
	\item If $R$ is an orientable rectangle for $M$, set $R^o$ to be the connected component of $\pi_N^{-1}(R)$ whose boundary ties both lie in $M^o$.
	\item If $R$ is an unorientable rectangle for $M$, choose once and for all a connected component of $\pi_N^{-1}(R)$, and set it to be $R^o$.
\end{itemize}
We refer to $R^o$ as the \emph{orientation on $R$ chosen by $M^o$}. Notice that the set of rectangles of $N^o$ is equal to
\[
\{R^o: \text{$R$ rectangle of $N$} \}\cup\{\iota(R^o):\text{$R$ rectangle of $N$}\}.
\]

On the other hand, for every vertical boundary component $t\in\mathcal S$, let $P_t$ be the connected component of $S - N$ whose boundary $\partial P_t$ contains $t$. Then let 
\begin{itemize}
	\item $t^o$ be the connected component of $\pi_N^{-1}(t)$ that lies in $M^o$, and 
	\item $t^{cw}$ be the connected component of $\pi_N^{-1}(t)$ that, when viewed as $t$ equipped with an orientation, is an oriented subsegment of $\partial P_t$ equipped with the clockwise orientation about $P_t$.
\end{itemize}
We refer to $t^o$ as the \emph{orientation on $t$ chosen by $M^o$} and $t^{cw}$ as the \emph{orientation on $t$ induced by the clockwise orientation on $P_t$}.
Note that for all $t\in\mathcal S$, we have
\begin{align}\label{eqn: hs}
	t^o=\left\{ \begin{array}{ll}
		t^{cw}&\text{if }t\in\mathcal S^r,\\
		\iota(t^{cw})&\text{if }t\in\mathcal S^\ell .
	\end{array} \right.
\end{align}
Moreover, the set of vertical boundary components of $N^o$ can be expressed as
\[
\{t^o:t\in\mathcal S\}\cup\{\iota(t^o):t\in\mathcal S\}=\{t^{cw}:t\in\mathcal S\}\cup\{\iota(t^{cw}):t\in\mathcal S\}.
\]

\section{Describing $\Yc(\lambda,d;G)$ as a subgroup of $(G^{\Ac})^{\mathcal O  \sqcup \mathcal U } \times (G^{\Bc})^{\mathcal S }$}\label{sec:isomorphism}

Let $G$ be any Abelian Lie group. The main theorem of this section realizes the Lie group $\mathcal Y(\lambda,d;G)$ as a subgroup of a finite product of copies of $G$, cut out by an explicit set of equations. This is an intermediate step towards understanding the global topology of $\Yc(\lambda,d;G)$. 

\subsubsection{Notation and main statement}\label{subsubsec:notation for hom}

To state the aforementioned result formally, recall that we fixed once and for all:
\begin{itemize}
	\item a maximal geodesic lamination $\lambda$ of $S$, 
	\item a train track neighborhood $N$ of $\lambda$, 
	\item a maximal tree $M\subset N$, and 
	\item a connected component $M^o$ of $\pi_N^{-1}(M)$, where $\pi_N:N^o\to N$ is the orientation double cover. 
\end{itemize} 
We also introduced the following notations:
\begin{itemize}
	\item $\mathcal O$, $\mathcal U$, $\mathcal U^\ell$, and $\mathcal U^r$ denote the sets of orientable rectangles, unorientable rectangles, left unorientable rectangles, and right unorientable rectangles for $M$, respectively (see Section \ref{ssub:max tree}).
	\item $\mathcal S$, $\mathcal S^\ell$, $\mathcal S^r$ respectively denote the sets of vertical boundary components, left vertical boundary components, and right vertical boundary components of $N$ (see Sections \ref{ssub:tt_neighborhood} and \ref{ssub:max tree}).
	\item $\mathcal A$ denotes the set of pairs of positive integers that sum to $d$, and $\mathcal B$ denotes the set of triples of positive integers that sum to $d$, (see Section \ref{sec: flags}).
	\item For every triple of indices ${\bf j}=(j_1,j_2,j_3)\in\mathcal B$, we denote by ${\bf j}_+:=(j_2,j_3,j_1)$, ${\bf j}_-:=(j_3,j_1,j_2)$ its cyclic permutations, and we set $\widehat{\bf j}:=(j_3,j_2,j_1)$. Furthermore, for all ${\bf i}=(i_1,i_2)\in\mathcal A$, we set $\widehat{\bf i}:=(i_2,i_1)$, (see Section \ref{d-pleated}).
	\item For every rectangle $R$ of $N$, $R^o$ denotes the orientation on $R$ chosen by $M^o$, and for every vertical boundary component $t\in\mathcal S$, $t^{cw}$ denotes the orientation on $t$ induced by the clockwise orientation on $\partial P_t$ about $P_t$, which is the connected component of $S\setminus N$ that contains $t$ in its boundary (see Section \ref{sub:graph_mathcal_g_on_s}).
	\item $\pi_S:\widetilde S\to S$ denotes the universal cover of $S$.
\end{itemize}

We will also need to make the following technical choices:

\begin{itemize}
	\item For each vertical boundary component $\tsf$ of $N^o$, we select once and for all a lift $T \in \widetilde{\Delta}$ of the plaque that contains $\pi_N(\tsf)$ and a labeling of its vertices ${\bf x}_{\tsf}=(x_{\tsf,1},x_{\tsf,2},x_{\tsf,3})\in\widetilde\Delta^o$ so that the following property holds. If $\mathsf s$ denotes the tie of $N^o$ that contains $\tsf$ and $g_-$, $g_+$ are the edges of $T$ such that $x_{\tsf,1}$ is an endpoint of $g_-$, $x_{\tsf,3}$ is an endpoint of $g_+$, and $x_{\tsf,2}$ is the common endpoint of $g_-$ and $g_+$, then the backward and forward endpoints of $\mathsf s\cap \pi_S(T)$ (here we view $\mathsf s$ as a tie of $N$ equipped with an orientation) lie in $\pi_S(g_-)$ and $\pi_S(g_+)$ respectively. We refer to ${\bf x}_{\tsf}\in\widetilde\Delta^o$ as the \emph{triple of vertices chosen for $\tsf$}. Notice that any two such choices differ by an element of the deck group. 
	\item For each rectangle $\mathsf{R}$ of $N^o$, choose once and for all a distinct pair of plaques ${\bf T}_{\mathsf{R}}:=(T_{\mathsf{R},1},T_{\mathsf{R},2})\in\widetilde\Delta^2$ such that $\pi_S(T_{\mathsf{R},1})$ and $\pi_S(T_{\mathsf{R},2})$ respectively contain the backward and forward endpoints of some (every) tie of $\mathsf{R}$ (here we view ${\mathsf{R}}$ as a rectangle of $N$ equipped with a continuous orientation on its ties). We refer to ${\bf T}_{\mathsf{R}}$ as the \emph{pair of plaques chosen for $\mathsf{R}$}.
\end{itemize}

For any vertical boundary component $t$ of $N$, we will also denote by $t_+, t_-\in\mathcal S$ the two other vertical boundary components of $N$ that are contained in $\partial P_t$, so that $t<t_+<t_-<t$ in the clockwise orientation of $\partial P_t$ about $P_t$.

We can now state the main result of the current section:

\begin{theorem}\label{thm: hom}
	The map $I_1:\mathcal Y(\lambda,d;G)\to (G^{\Ac})^{\mathcal O  \sqcup \mathcal U } \times (G^{\Bc})^{\mathcal S }$ given by
	\begin{equation}\label{eq: I_1}
		I_1:\left(\alpha,\theta\right)\mapsto\Big((\alpha({\bf T}_{R^o}))_{R\in\mathcal O \sqcup\mathcal U},(\theta({\bf x}_{t^{cw}}))_{t\in\mathcal S}\Big)
	\end{equation}
	is an embedding of Lie groups, and its image is \[Y=Y(G):= \left\{({\bf v}, {\bf z}) \in (G^{\Ac})^{\mathcal O  \sqcup \mathcal U } \times (G^{\Bc})^{\mathcal S } \left| \,\,\, \parbox{5cm}{$\blacklozenge(t, {\bf j})$ for all ${\bf j} \in \Bc$ and $t \in \mathcal S $, and $\clubsuit({\bf i})$ for all ${\bf i} \in \Ac$.} \right. \right\} ,\]
	where
	\begin{gather*}
		\tag*{$\blacklozenge(t, {\bf j})$} {z}_{t}^{\bf j} = {z}_{t_+}^{{\bf j}_+} = {z}_{t_-}^{{\bf j}_-}, \\
		\tag*{$\clubsuit({\bf i})$} \sum_{R \in \mathcal U ^r} ( v_R^{\mathbf{i}} + v_R^{\widehat{\mathbf{i}}}) - \sum_{R \in \mathcal U ^\ell} (v_R^{\mathbf{i}} + v_R^{\widehat{\mathbf{i}}} ) = \sum_{t\in\mathcal S^\ell} \sum_{{\bf j} \in \Bc : j_2 = i_2} z^{\bf j}_{t} - \sum_{t\in\mathcal S^r} \sum_{{\bf j} \in \Bc : j_2 = i_1} z^{\bf j}_{t}.
	\end{gather*}
	Here, ${\bf v}=(v_R)_{R\in\mathcal O \sqcup\mathcal U}=((v_R^{\bf i})_{{\bf i}\in\Ac})_{R\in\mathcal O \sqcup\mathcal U}$ and ${\bf z}=(z_t)_{t\in\mathcal S}=((z_t^{\bf j})_{{\bf j}\in\Bc})_{t\in\mathcal S}$.
\end{theorem}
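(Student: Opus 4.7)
The plan is to verify that $I_1$ is a well-defined Lie group homomorphism with image exactly $Y$, by establishing injectivity directly and then constructing an inverse on $Y$. That $I_1$ is a continuous homomorphism of Abelian Lie groups is immediate from its definition by evaluation, since the group law on $\mathcal Y(\lambda,d;G)$ is pointwise addition. For injectivity, $\Gamma$-invariance combined with the cyclic and sign-flip symmetries of $\theta$ reduce $\theta$ to the datum of one cyclic labeling per plaque of $\lambda$, while the cocycle boundary condition of Definition~\ref{def_cocycle} allows one to determine $\alpha$ on any pair of plaques from its values on adjacent pairs. The labelings $\{{\bf x}_{t^{cw}}\}_{t\in\mathcal S}$ cover every plaque of $\lambda$ (since each hexagonal component $P_t$ of $S\setminus N$ has exactly three vertical boundary components $t,t_+,t_-$ in its boundary, yielding three cyclic permutations of the vertex-triple of the same lifted plaque), and the pairs $\{{\bf T}_{R^o}\}_{R\in\mathcal O\sqcup\mathcal U}$ together with the rectangles of $M$ suffice to reach any pair of plaques via a chain of adjacencies.

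For the containment of the image in $Y$, the equation $\blacklozenge(t,{\bf j})$ is precisely the cyclic symmetry $\theta({\bf x},{\bf j})=\theta({\bf x}_+,{\bf j}_+)=\theta({\bf x}_-,{\bf j}_-)$ applied to the three cyclically-related labelings attached to $t,t_+,t_-$, combined with $\Gamma$-invariance. The equation $\clubsuit({\bf i})$ is obtained by iterating the cocycle boundary condition around the boundary loop $\partial M^o$ of the orientation lift of $M$ inside $N^o$. Each rectangle of $M$ is traversed twice with opposite $\alpha$-contributions and cancels; each rectangle in $\mathcal O$ is not traversed at all, since both its vertical sides lie in the same sheet of $\pi_N^{-1}(M)$; each $R\in\mathcal U$ is traversed exactly once and contributes $v_R^{\bf i}+v_R^{\wh{\bf i}}$ with sign $+$ or $-$ according to whether $R\in\mathcal U^r$ or $\mathcal U^\ell$; and each vertical boundary component $t\in\mathcal S$ contributes a sum of $\theta$-triple ratios whose index range ($j_2=i_2$ for $t\in\mathcal S^\ell$, $j_2=i_1$ for $t\in\mathcal S^r$) and whose sign are dictated by the two cases of the cocycle boundary condition in Definition~\ref{def_cocycle}. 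The closure of the loop forces the total to vanish, which rearranges to precisely $\clubsuit({\bf i})$.

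Surjectivity onto $Y$ is proved by constructing an inverse. Given $({\bf v},{\bf z})\in Y$, define $\theta$ on each $\Gamma$-orbit of $\mathbf x_{t^{cw}}$ by $\theta({\bf x}_{t^{cw}},{\bf j}):=z_t^{\bf j}$ and extend via cyclic, sign-flip, and $\Gamma$-symmetries; condition $\blacklozenge$ ensures that the three extensions arising from $t,t_+,t_-$ about the same plaque $P_t$ agree. Set $\alpha({\bf T}_{R^o},{\bf i}):=v_R^{\bf i}$ for $R\in\mathcal O\sqcup\mathcal U$, and extend $\alpha$ to all of $\widetilde\Delta^{2*}$ by iterating the cocycle boundary condition along chains of plaques, where the unspecified values on tree-rectangle adjacencies are uniquely determined by fixing them via the cocycle condition from the $\theta$-data along chains that close around $M^o$. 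Independence of the chosen chain reduces to the vanishing of the total contribution around each closed loop in the plaque-adjacency graph; modulo tree edges of $M$, every such loop is homologous to an integer multiple of $\partial M^o$, and the vanishing is exactly $\clubsuit({\bf i})$ (one equation per ${\bf i}\in\Ac$), which is imposed by hypothesis.

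The main obstacle is the $\clubsuit$-verification in the second step (and equivalently the well-definedness of $\alpha$ in the third step), which requires a careful orientation-tracking argument identifying how the two cases of the cocycle boundary condition correspond to the left/right classification of vertical boundary components, and how the orientation-reversing traversal of unorientable rectangles accounts for the combination $v_R^{\bf i}+v_R^{\wh{\bf i}}$ appearing on the left of $\clubsuit$. These correspondences are dictated by the definitions of $t^{cw}$, ${\bf x}_{\mathsf t}$, and $R^o$ in terms of the orientation double cover $\pi_N\colon N^o\to N$, and the bookkeeping must respect the counterclockwise order $\prec$ on $\partial\widetilde S$ appearing in Definition~\ref{def_cocycle}.
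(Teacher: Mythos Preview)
Your outline attempts a direct argument, bypassing the homological machinery that the paper relies on. The overall shape is reasonable, but there are genuine gaps at exactly the points where the paper invokes the homological interpretation.

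\textbf{Injectivity.} You argue that $\theta$ is determined by one value per plaque (correct) and that $\alpha$ is determined by its values on adjacent pairs via the cocycle condition. But then you say the pairs ${\bf T}_{R^o}$ for $R\in\mathcal O\sqcup\mathcal U$ \emph{together with the rectangles of $M$} suffice. That is precisely the problem: $I_1$ does \emph{not} record the values on $\mathcal M$-rectangles, so you must show those are recoverable from the rest. You never do this. The cocycle boundary condition~(5) of Definition~\ref{def_cocycle} relates $\alpha$ on a pair to $\alpha$ on two sub-pairs; it does not by itself produce relations among the values on minimal (adjacent) pairs. The paper obtains such relations from the equation $\partial c=K(\theta)$ in Proposition~\ref{prop: Homological}, which gives one linear constraint at each switch, and then uses injectivity of $\partial'$ on the tree part (contractibility of $M^o\sqcup\iota(M^o)$) to solve uniquely for the $\mathcal M$-values; this is Lemma~\ref{lem: existence}.

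\textbf{Derivation of $\clubsuit$ and surjectivity.} Your phrase ``iterating the cocycle boundary condition around the boundary loop $\partial M^o$'' does not parse: the cocycle condition concerns triples of plaques in $\widetilde S$, whereas $\partial M^o$ is a curve in $N^o$. In the paper, $\clubsuit({\bf i})$ arises not from any loop integral but from an $H_0$ computation (Lemma~\ref{lem: in coordinates}): it is the condition that $\delta({\bf w})-\partial(\beta({\bf 0},{\bf v}))$ lie in $\ker i_*'=\mathrm{Image}(\partial')$, i.e., a connected-component balance in $H_0(M^o\sqcup\iota(M^o);G^{\mathcal A})$. Your subsequent claim that ``modulo tree edges of $M$, every such loop is homologous to an integer multiple of $\partial M^o$'' is false as stated---the quotient $N/M$ has first Betti number $6g-5$, not $1$---so the well-definedness of your extended $\alpha$ cannot reduce to a single relation in this way. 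What is actually true (and what the paper proves) is that once the $\mathcal M$-values are fixed by the switch equations, the resulting collection $(c,\theta)$ with $\partial c=K(\theta)$ and $\iota_*c=-\widehat c$ automatically comes from a cocyclic pair; this is the content of Proposition~\ref{prop: Homological}, established in \cite{MMMZ1}, and it is not a consequence of the cocycle condition alone.

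In short, the paper's route---Proposition~\ref{prop: Homological} together with Lemmas~\ref{lem: existence}, \ref{lem: Ktheta}, \ref{lem: in coordinates}---supplies exactly the two ingredients your sketch is missing: the local switch constraints that pin down the tree-rectangle values, and the guarantee that any solution of those constraints integrates to a genuine cocyclic pair.
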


The proof of this statement is outlined in Section \ref{proof of subgroup theorem}. Our argument heavily relies on the homological interpretation of the space of $\lambda$--cocyclic pairs of dimension $d$ with values in $G$ associated with $N$ (see Proposition \ref{prop: Homological}). For this reason, we start by recalling the necessary homological framework.

\subsection{Homology}

Let $N$ be a train track neighborhood of $\lambda$ and let $\pi_N : N^o \to N$ denote its orientation cover. We will discuss the homology groups of $N^o$, and their relationship with $\lambda$--cocyclic pairs. We refer to \cite[Section 4]{MMMZ1} and references within for more details.

\subsubsection{Relative homology of $N^o$}\label{subsubsec:relative homology of No}

Let $\mathcal S$ and $\mathcal S^o$ denote the set of vertical boundary components of the train track neighborhoods $N$ and $N^o$, respectively. Since the vertical boundary components are contractible, the preimage of any $t \in \mathcal{S}$ has two connected components that are vertical boundary components of $N^o$. In other words, the orientation cover projection determines a two-to-one, surjective map $\mathcal{S}^o \to \mathcal{S}$ that sends each $\mathsf{t} \in \mathcal{S}^o$ into $\pi_N(\mathsf{t}) \in \mathcal{S}$. Choose a point in each vertical boundary component of $N$ and let $\{q_{\mathsf{t}} \mid \mathsf{t} \in \mathcal{S}^o \}$ be the preimage inside $\partial_v N^o$ of such finite collection of points. By construction, the involution $\iota : N^o \to N^o$ of the orientation cover $\pi_N : N^o \to N$ satisfies $\iota(q_{\mathsf{t}}) = q_{\iota(\mathsf{t})}$ for every vertical boundary component $\mathsf{t}$ of $N^o$. We may view each $q_{\tsf}$ as a $0$--cycle of $\partial_v N^o$, and so it represents a homology class $[q_{\tsf}]$ in $H_0(\partial_vN^o;\Zb)$.

Similarly, let $\mathcal N$ and $\mathcal N^o$ be the sets of rectangles of $N$ and of $N^o$, respectively. The orientation cover map determines a two-to-one surjective map $\mathcal{N}^o \to \mathcal{N}$, sending $\mathsf{R} \in \mathcal{N}^o$ to $\pi_N(\mathsf{R})$. For each rectangle $R$ of $N$, choose a curve $r_R$ in $R$ that is transverse to every tie of $R$, and has endpoints in each of the two vertical boundary components of $R$. For each endpoint $p$ of $r_R$, let $t_p$ be the vertical boundary component of $N$ that lies in the switch containing $p$, and let $s_p$ be the subsegment of the switch containing $p$ whose endpoints are $p$ and $q_{t_p}$. Then let $k_R$ be the curve $r_R\cup s_{p_R}\cup s_{q_R}$, where $p_R$ and $q_R$ are the endpoints of $r_R$. Consider now the set of all the lifts of the curves $k_R$ inside $N^o$, as $R$ varies among the rectangles of $N$. We can index this set as $\{ k_{\mathsf{R}} \mid \mathsf{R} \in \mathcal{N}^o \}$, where $\mathcal{N}^o$ denotes the set of rectangles of $N^o$. Since the ties of $N^o$ are naturally oriented, $\ksf_{\mathsf{R}}$ can be oriented so that it passes from the right to the left of every tie of $\mathsf{R}$. Thus, we may also view $\ksf_{\mathsf{R}}$ as a $1$--cycle of $N^o$ relative to $\partial_v N^o$, and so it represents a relative homology class $[\ksf_{\mathsf{R}}]$ in $H_1(N^o, \partial_v N^o; \Zb)$. One can check that the cover involution $\iota$ exchanges the unoriented paths $k_{\mathsf{R}}$ and $k_{\iota(\mathsf{R})}$, and the orientation of $\iota(\ksf_{\mathsf R})$ is opposite to the orientation of $\ksf_{\iota(\mathsf R)}$.

Let $G$ be an Abelian group. Then the homology groups $H_1(N^o, \partial_v N^o; G)$ and $H_0(\partial_v N^o; G)$ decompose as
\begin{equation*}\label{eqn: H1}
	H_1(N^o, \partial_v N^o; G) =\bigoplus_{\mathsf{R}\in\mathcal N^o}G\cdot[\ksf_{\mathsf{R}}]
\end{equation*}
and
\begin{equation*}\label{eqn: H0}
	H_0(\partial_v N^o;G)=\bigoplus_{\tsf\in\mathcal S^o}G\cdot[q_{\tsf}].
\end{equation*}
Henceforth, we will refer to $\{[\ksf_{\mathsf{R}}]:\mathsf{R}\in\mathcal N^o\}$ and $\{[q_{\tsf}]:\tsf\in\mathcal S^o\}$ as the \emph{standard generating sets} of $H_1(N^o,\partial_v N^o; G)$ and $H_0(\partial_v N^o,G)$, respectively. 

Since the cover involution $\iota$ leaves $\partial_v N^o$ invariant, it induces involutions 
\begin{align*}
	\iota_*:H_1(N^o, \partial_v N^o; G^\mathcal{A})\to H_1(N^o, \partial_v N^o; G^\mathcal{A})
\end{align*}
and
\begin{align*}
	\iota_*:H_0(\partial_v N^o; G^\mathcal{A})\to H_0(\partial_v N^o; G^\mathcal{A}).
\end{align*}
More explicitly, if $\{[\ksf_{\mathsf{R}}]:\mathsf{R}\in\mathcal N^o\}$ and $\{[q_{\tsf}]:\tsf\in\mathcal S^o\}$ are the standard generating sets of $H_1(N^o,\partial_v N^o; G)$ and $H_0(\partial_v N^o,G)$ respectively, then 
\[\iota_*(g\cdot[\ksf_{\mathsf{R}}])=-g\cdot[\ksf_{\iota(\mathsf{R})}]\quad\text{and}\quad\iota_*(g\cdot[q_{\tsf}])=g\cdot[q_{\iota(\tsf)}]\]
for every rectangle $\mathsf{R}$ and vertical boundary compoennt $\tsf$ of $N^o$, and all $g\in G^{\mathcal A}$.

Given any $g = (g_{\bf i})_{{\bf i} \in \Ac}\in G^{\Ac}$, let $\widehat{g}\in G^{\Ac}$ be the element such that $\widehat{g}_{\bf i}=g_{\widehat{\bf i}}$ for all ${\bf i}\in\Ac$. Then let 
\begin{align*}
	\widehat{\cdot}:H_1(N^o, \partial_v N^o; G^\mathcal{A})\to H_1(N^o, \partial_v N^o; G^\mathcal{A})
\end{align*}
be the involution that sends $g\cdot [\ksf_{\mathsf{R}}]$ to $\widehat{g}\cdot [\ksf_{\mathsf{R}}]$ for all $\mathsf{R}\in\mathcal N^o$ and all $g\in G^\mathcal A$, and similarly let
\[\widehat{\cdot}:H_0(\partial_v N^o; G^\mathcal{A})\to H_0(\partial_v N^o; G^\mathcal{A})\]
be the involution that sends $g\cdot [q_{\tsf}]$ to $\widehat{g}\cdot [q_{\tsf}]$ for all $\tsf\in\mathcal S^o$ and all $g\in G^\mathcal A$.

\subsubsection{Homological interpretation of $\lambda$--cocyclic pairs}\label{subsubsec:homological interpretation of cocyclic pairs} 

We will now describe a homological interpretation of $\lambda$--cocyclic pairs. To do so, we need the notion of a triangle data function.

\begin{definition}\label{def: triangle data function}
	A $\lambda$--\textit{triangle data function of dimension $d$ with values in $G$} is a map
	$$\theta \co \wt\Delta^o  \times \mathcal{B} \to G$$
	satisfying the following conditions:
	\begin{enumerate}
		\item(Symmetry) for all ${\bf x}:=(x_1, x_2, x_3)\in \wt\Delta^o$ and ${\bf j}:=(j_1, j_2, j_3) \in \mathcal{B}$,
		$$\theta\left({\bf x}, {\bf j}\right) = \theta\left({\bf x}_+, {\bf j}_+\right)= \theta\left({\bf x}_-, {\bf j}_-\right)=-\theta(\widehat{\bf x},\widehat{\bf j});$$
		\item($\Gamma$--equivariance) for all ${\bf x}\in \wt\Delta^o$, all ${\bf j} \in \mathcal{B}$, and all $\gamma\in\Gamma$, 
		$$\theta\left(\gamma\cdot {\bf x}, {\bf j}\right) = \theta\left({\bf x}, {\bf j}\right).$$
	\end{enumerate}	
	We denote by $\mathcal{T}(\lambda,d;G)$ the set of $\lambda$--triangle data functions of dimension $d$ with values in $G$.
\end{definition}

We may associate to any $\theta\in \mathcal{T}(\lambda,d;G)$ a homology class $K(\theta)\in H_0(\partial_v N^o; G^{\Ac})$ as follows. Define 
\[\mathfrak{s}_\theta:\widetilde{\Delta}^o\to G^{\Ac}\] 
by setting
\[
\mathfrak{s}_\theta({\bf x})^{\mathbf{i}} = 
\begin{cases}
	\sum_{\mathbf{j} \in \mathcal{B} : j_2 = i_1} \theta(\mathbf{x},\mathbf{j}) &  \text{if $x_3 \prec x_2 \prec x_1$,}  \\
	\sum_{\mathbf{j} \in \mathcal{B} : j_2 = i_2} \theta(\mathbf{x},\mathbf{j}) & \text{if $x_1 \prec x_2 \prec x_3$},\end{cases}
\]
and define
\begin{align*}
	K(\theta) : = - \sum_{\tsf\in\mathcal S^o} \mathfrak{s}_\theta({\bf x}_{\tsf}) \cdot [q_{\tsf}] \in H_0(\partial_v N^o; G^\mathcal{A}),
\end{align*}
where for each $\tsf\in\mathcal S^o$, ${\bf x}_{\tsf}\in\widetilde\Delta^o$ is the triple of vertices chosen for $\tsf$ (see Section \ref{subsubsec:notation for hom}). The $\Gamma$--invariance of $\theta$ implies that $K(\theta)$ does not depend on the choices of the triples of vertices chosen for $\mathsf{t}$. 

Now, observe that, for any cocyclic pair $(\alpha,\theta)\in\Yc(\lambda,d;G)$, the component $\theta$ is a $\lambda$--triangle data function, so we may associate to $\theta$ the homology class $K(\theta)\in H_0(\partial_v N^o;G^{\Ac})$ as described above. At the same time, we may associate to $\alpha$ the homology class in $\varsigma_G(\alpha)\in H_1(N^o, \partial_v N^o; G^{\Ac})$ defined by
\[\varsigma_G(\alpha):=\sum_{\mathsf{R}\in\mathcal N^o}\alpha({\bf T}_{\mathsf{R}})\cdot[\ksf_{\mathsf{R}}]\in H_1(N^o, \partial_v N^o; G^{\Ac}),\]
where for each $\mathsf{R}\in\mathcal N^o$, ${\bf T}_{\mathsf{R}}$ is the pair of plaques chosen for $\mathsf{R}$ (see Section \ref{subsubsec:notation for hom}). Again, the $\Gamma$--invariance of $\alpha$ implies that $\varsigma_G(\alpha)$ does not depend on the choices of the pairs of plaques chosen for $\mathsf{R}$. 

The following proposition \cite[Proposition 4.14]{MMMZ1} gives a parameterization of $\Yc(\lambda,d;G)$ by a Lie subgroup of $H_1(N^o,\partial_v N^o;G^{\Ac})\times \mathcal{T}(\lambda,d;G)$. 

\begin{proposition}\label{prop: Homological}
	Let $N$ be a train track neighborhood of $\lambda$. The map
	\begin{align*}\label{eqn: E}
		E:\Yc(\lambda,d;G)\to H_1(N^o,\partial_v N^o;G^{\Ac})\times \mathcal{T}(\lambda,d;G)
	\end{align*}
	defined by $E:(\alpha,\theta)\mapsto (\varsigma_G(\alpha),\theta)$ is an embedding of Lie groups, whose image is
	\[\{ (c, \theta) \in H_1(N^o,\partial_v N^o;G^{\Ac}) \times \mathcal{T}(\lambda,d;G) \mid \iota_*(c)=-\widehat{c}\text{ and }\partial c = K(\theta) \}.\]
\end{proposition}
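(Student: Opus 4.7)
The plan is to verify, in turn, that $E$ is a well-defined homomorphism of Lie groups, that $E$ is injective, and that $\mathrm{image}(E)$ coincides with the subset in the statement. Well-definedness of $E$ is essentially formal: the $\Gamma$-invariance of $\alpha$ ensures that $\varsigma_G(\alpha)$ does not depend on the choice of representative plaque-pairs ${\bf T}_{\mathsf{R}}$, and the symmetry and $\Gamma$-invariance conditions on $\theta$ in Definition \ref{def_cocycle} say exactly that $\theta \in \mathcal{T}(\lambda,d;G)$. Linearity of $E$ in each of its arguments gives the homomorphism property and smoothness in the Lie group sense.

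For injectivity, suppose $E(\alpha,\theta)=0$. Then $\theta=0$, and since $\{[\ksf_{\mathsf{R}}]:\mathsf{R}\in\mathcal N^o\}$ is a free generating set of $H_1(N^o,\partial_v N^o;G^{\Ac})$, the vanishing of $\varsigma_G(\alpha)$ gives $\alpha({\bf T}_{\mathsf{R}})=0$ for every rectangle $\mathsf{R}$ of $N^o$. The key observation is then that the cocycle boundary condition of Definition \ref{def_cocycle}(5), applied inductively along a chain of plaques connecting the two plaques of any pair ${\bf T}\in\wt\Delta^{2*}$, expresses $\alpha({\bf T})$ as a sum of $\alpha$-values on pairs of plaques separated by a single switch of $\wt N$ together with $\theta$-contributions along the chain. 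Every single-switch pair is $\Gamma$-equivalent to some ${\bf T}_\mathsf{R}$, so with both inputs vanishing, $\alpha \equiv 0$.

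For the image containment $\subseteq$, I would translate each symmetry of $\alpha$ into a statement about $c=\varsigma_G(\alpha)$. The identity $\iota_*[\ksf_\mathsf{R}]=-[\ksf_{\iota(\mathsf{R})}]$, combined with the fact that ${\bf T}_{\iota(\mathsf{R})}$ is obtained by swapping the two plaques of ${\bf T}_{\mathsf{R}}$ (because reversing the orientation on the ties of a rectangle exchanges the roles of backward and forward endpoints), converts the symmetry $\alpha({\bf T},{\bf i})=\alpha(\widehat{\bf T},\widehat{\bf i})$ into $\iota_*(c)=-\widehat c$. For the boundary relation, one computes $\partial[\ksf_\mathsf{R}]$ as the difference $[q_{\tsf_+}]-[q_{\tsf_-}]$ of the $0$-cycles attached to the two endpoints of $\ksf_\mathsf{R}$, and applies the cocycle boundary condition switch by switch: the signs inherent to the two cases in Definition \ref{def_cocycle}(5) are absorbed into the case split defining $\mathfrak{s}_\theta$, and summing the contributions gathering at each switch yields exactly $\partial c = K(\theta)$.

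For the reverse containment, which will be the main obstacle, given $(c,\theta)$ with $\iota_*(c)=-\widehat c$ and $\partial c = K(\theta)$, I would write $c=\sum_{\mathsf{R}} c_{\mathsf{R}}\cdot[\ksf_{\mathsf{R}}]$, set $\alpha({\bf T}_{\mathsf{R}}):=c_{\mathsf{R}}$ on the pairs of plaques arising from rectangles, and extend $\alpha$ to all of $\wt\Delta^{2*}$ by iterating the cocycle boundary formula along a chain of intermediate plaques. The hard step is proving that this extension is well-defined, i.e.\ that summing the formal cocycle expression around any closed chain of plaques in $\wt\lambda$ yields zero; this is exactly the obstruction encoded by the pair of conditions $\partial c = K(\theta)$ (which handles closure in the $\alpha$-component) and $\iota_*(c)=-\widehat c$ (which propagates the symmetry condition of Definition \ref{def_cocycle}(1) from the rectangle pairs to all pairs of plaques). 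Once well-definedness is established, $\Gamma$-invariance of $\alpha$ is automatic from the $\Gamma$-invariance built into $c$ via the choice of representatives, the cocycle boundary condition holds on all pairs by construction, and the resulting $(\alpha,\theta) \in \Yc(\lambda,d;G)$ maps to $(c,\theta)$ under $E$.
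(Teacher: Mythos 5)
A preliminary remark: this proposition is not proved in the present paper at all — it is quoted from \cite[Proposition 4.14]{MMMZ1} — so there is no in-paper argument to compare yours against, and I can only assess your outline on its own terms. Your route (read off $\iota_*(c)=-\widehat c$ from symmetry (1), read off $\partial c=K(\theta)$ from the switch instances of the cocycle condition, and reconstruct $\alpha$ from the rectangle data) is the expected one, but the two places where the actual content of the proposition lies are asserted rather than argued. First, your elementary pairs are misidentified: what $c$ records are the pairs ${\bf T}_{\mathsf R}$ of plaques containing the two endpoints of a tie of a rectangle (the plaques along its two horizontal sides), not ``pairs of plaques separated by a single switch''. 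The reason an arbitrary $\alpha({\bf T})$, ${\bf T}\in\wt\Delta^{2*}$, is a \emph{finite} combination of such values is a geometric fact that must be proved: each plaque of $\wt\lambda$ contains exactly one component of $\wt S\setminus\wt N$, and the graph with one vertex per such component and one edge per rectangle of $\wt N$ is connected, so any two plaques are joined by a finite chain of rectangle-spanning pairs, which (taking the chain taut, so that each intermediate plaque separates) lets condition (5) of Definition \ref{def_cocycle} be applied finitely many times. Without this, finite additivity alone is useless — already for a single pair ${\bf T}_{\mathsf R}$ there are in general infinitely many separating plaques, so one cannot ``induct along the plaques in between''. This gap affects both your injectivity argument and your extension procedure.

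Second, and more seriously, the step you yourself flag as the main obstacle — well-definedness of the extension — is precisely the heart of the proposition, and declaring that closure around closed chains ``is exactly the obstruction encoded by'' $\partial c=K(\theta)$ and $\iota_*(c)=-\widehat c$ restates what has to be proven. The missing reduction is this: the lifted hexagon–rectangle graph gives a CW structure on $\wt S$ whose $2$–cells correspond to the switches of $\wt N$ (on $S$ one checks $(4g-4)-(18g-18)+(12g-12)=2-2g$), each bounded by the triangle formed by the three rectangles meeting at that switch; hence any two chains joining the same pair of plaques differ by a sum of these switch triangles, and the vanishing of the formal cocycle sum around a switch triangle is exactly the coefficient equation of $\partial c=K(\theta)$ at $[q_{\tsf}]$ and $[q_{\iota(\tsf)}]$, with $\iota_*(c)=-\widehat c$ reconciling the two lifts of the switch. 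Only after this reduction is the extension well defined, and one must then still verify that the extended $\alpha$ satisfies (5) for \emph{every} separating plaque (not only those lying on the chain actually used), satisfies symmetry (1) for all pairs, and is $\Gamma$–invariant; the sign bookkeeping between the two cases of (5), the case split in $\mathfrak s_\theta$, and the choices of labellings ${\bf x}_{\tsf}$ is exactly where such an argument can silently fail, and none of it is carried out in your proposal.
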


\subsection{Proof of Theorem \ref{thm: hom}}\label{proof of subgroup theorem}
Before going into details, we will explain the main difficulty of the proof of Theorem \ref{thm: hom}. Observe that the definition of the map $I_1$ given in equation (\ref{eq: I_1}) does not involve the values of $\alpha$ on ${\bf T}_{R^o}$ for the rectangles $R$ that lie in the maximal tree $M$. On the other hand, we know from Proposition \ref{prop: Homological} that $\alpha$ is determined by its values on ${\bf T}_{R^o}$ for all rectangles $R$ of $N$. As such, the crux of the proof of Theorem \ref{thm: hom} is to show that for any $(\alpha,\theta)\in\Yc(\lambda,d;G)$, one can recover from $I_1(\alpha,\theta)$ the components $\alpha({\bf T}_{R^o})$, as $R$ varies among the rectangles of $M$. We will prove this using the homological interpretation of $\Yc(\lambda,d;G)$. 

To set up the required homology machinery, recall that $\mathcal{N}$ denotes the set of rectangles of $N$, that $\iota:N^o\to N^o$ is the covering involution of $\pi_N$, and for every $t\in\mathcal S$, $t^o$ denotes the orientation on $t$ chosen by $M^o$. Then the standard generating set for $H_1(N^o, \partial_v N^o; G^\mathcal{A})$ decomposes as
\[\{[\ksf_{R^o}]\mid R\in\mathcal N\}\cup\{[\ksf_{\iota(R^o)}]\mid R\in\mathcal N\} , \] 
and, similarly, the standard generating set for $H_0(\partial_v N^o; G^\mathcal{A})$ decomposes as
\[\{[q_{ t^o}]\mid t\in\mathcal S\}\cup\{[q_{\iota( t^o)}]\mid t\in\mathcal S\} . \]

Let $\mathcal{M}$ be the set of rectangles of $M$. Observe that the maps
\begin{align*}
	\beta : (G^{\Ac})^{\mathcal N}=(G^{\Ac})^{\mathcal M}\times(G^{\Ac})^{\mathcal O \sqcup\mathcal U} \longrightarrow\{c\in H_1(N^o,\partial_v N^o;G^{\Ac})\mid \iota_*(c)=-\widehat{c}\}
\end{align*}
given by
\[\beta(\mathbf{u}) := \sum_{R \in \mathcal N} u_{R} \cdot [\mathsf{k}_{R^o}] + \widehat{u_{R}} \cdot [\mathsf{k}_{\iota(R^o)}]\quad\text{for all}\quad{\bf u}=(u_R)_{R\in\mathcal N}=((u_R^{\bf i})_{{\bf i}\in\mathcal A})_{R\in\mathcal N}\]
and 
\begin{align*}
	\delta : (G^{\Ac})^{\mathcal S } \longrightarrow \{e\in H_0(\partial_v N^o;G^{\Ac})\mid\iota_*(e)=-\widehat{e}\}
\end{align*}
given by
\[\delta(\mathbf{w}) := \sum_{t \in \mathcal S } w_t\cdot [q_{t^o}] - \widehat{w_t} \cdot [q_{\iota(t^o)}]\quad\text{for all}\quad{\bf w}=(w_t)_{t\in\mathcal S }=((w_t^{\bf i})_{{\bf i}\in\Ac})_{t\in\mathcal S }\]
are isomorphisms of groups. Indeed, the fact that $\beta$ and $\delta$ are injective group homomorphisms is clear from their definitions, while their surjectivity is a straightforward consequence of expressing the equation $\iota_*(c)=-\widehat{c}$ and $\iota_*(e)=-\widehat{e}$ in the standard generating sets of $H_1(N^o,\partial_v N^o;G^{\Ac})$ and $H_0(\partial_v N^o;G^{\Ac})$ respectively (see Section \ref{subsubsec:relative homology of No}).

The following lemma is the key technical step, which tells us the conditions needed to recover the components $(\alpha({\bf T}_{R^o}))_{R\in\mathcal M}$ from the image $I_1(\alpha,\theta)$.

\begin{lemma}\label{lem: existence}
	If ${\bf w} = ((w^{\bf i}_t)_{{\bf i} \in \Ac})_{t \in \mathcal{S}} \in (G^{\Ac})^{\mathcal S }$ and ${\bf v} = ((v^{\bf i}_R)_{{\bf i} \in \Ac})_{R \in \mathcal O \sqcup\mathcal U} \in (G^\mathcal A)^{\mathcal O \sqcup\mathcal U}$, then there is some ${\bf v}'\in (G^\mathcal A)^{\mathcal M}$ such that $\partial\beta({\bf v}',{\bf v})=\delta({\bf w})$ if and only if
	\[ \sum_{R \in \mathcal U ^r} ( v_R^{\mathbf{i}} + v_R^{\widehat{\mathbf{i}}}) - \sum_{R \in \mathcal U ^\ell} (v_R^{\mathbf{i}} + v_R^{\widehat{\mathbf{i}}} ) = \sum_{t\in\mathcal S}w^{\bf i}_t\]
	for all ${\bf i}\in\Ac$. Furthermore, if such ${\bf v}'$ exists, then it is unique.
\end{lemma}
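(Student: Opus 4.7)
The plan is to compute $\partial\beta({\bf v}', {\bf v})$ explicitly in the standard generating set $\{[q_{t^o}], [q_{\iota(t^o)}] : t\in\mathcal S\}$ of $H_0(\partial_v N^o; G^{\Ac})$, extract equations from $\partial\beta({\bf v}', {\bf v}) = \delta({\bf w})$ indexed by $t \in \mathcal S$, and analyze the resulting system using the tree structure of $M$.

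For the first step, I would determine $\partial[\ksf_{R^o}]$ for each rectangle $R$ of $N$. By the choices in Section \ref{sub:graph_mathcal_g_on_s}, if $R \in \mathcal M \sqcup \mathcal O$ then both vertical boundary components of $R^o$ lie in $M^o$, so $\partial[\ksf_{R^o}] = [q_{(t_R^+)^o}] - [q_{(t_R^-)^o}]$ for some $t_R^\pm \in \mathcal S$. If instead $R \in \mathcal U$, exactly one vertical boundary component of $R^o$ lies in $M^o$ while the other lies in $\iota(M^o)$, so $\partial[\ksf_{R^o}]$ mixes one generator from each side; the $\mathcal U^r/\mathcal U^\ell$ dichotomy controls the signs. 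The corresponding formulas for $\partial[\ksf_{\iota(R^o)}]$ follow from the identity $\iota_*(\ksf_{R^o}) = -\ksf_{\iota(R^o)}$.

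For necessity, I take the $[q_{t^o}]$--coefficient of both sides of $\partial\beta({\bf v}', {\bf v}) = \delta({\bf w})$ for each $t\in\mathcal S$ and sum over $\mathcal S$. The contributions from each $R \in \mathcal M \sqcup \mathcal O$ telescope to zero (one endpoint contributes $+v$, the other $-v$). The surviving contributions from $R \in \mathcal U$ consist of a term $\pm v_R$ on the $M^o$ side of $R^o$ plus a term $\pm\widehat{v_R}$ transported from the $\iota(M^o)$ side of $R^o$ via $\partial[\ksf_{\iota(R^o)}]$ back into the $[q_{t^o}]$ basis; a careful inspection of the orientation conventions shows the two signs coincide, producing $+(v_R + \widehat{v_R})$ when $R \in \mathcal U^r$ and $-(v_R + \widehat{v_R})$ when $R \in \mathcal U^\ell$. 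The right-hand side sums to $\sum_t w_t$, yielding the stated identity after extracting each ${\bf i}$--component.

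For sufficiency and uniqueness under the identity, I exploit the tree structure of $M$: viewed as a graph with vertex set $\mathcal S$ and edge set $\mathcal M$, $M$ is a tree by construction. Fix a root $t_0\in\mathcal S$; for every other $t \in \mathcal S$, there is a unique parent edge $R_t\in\mathcal M$. A post-order traversal of $M$ (children before parents) reduces the equation at $[q_{t^o}]$ to a linear equation whose only unknown is $v'_{R_t}$, since all previously visited tree-edge values and all the input data $v_R$, $w_t$ are known. This uniquely solves for $v'_{R_t}$ at each step, establishing both existence and uniqueness of ${\bf v}'$. The single equation never used in the traversal, at the root $t_0$, equals the negative of the sum of all the other $\mathcal S$--indexed equations plus the compatibility identity from Step 2, hence is automatically satisfied.

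The main technical obstacle is the sign bookkeeping in the first step: determining, for each $R \in \mathcal U$, which vertical boundary component of $R^o$ lies in $M^o$ and which in $\iota(M^o)$, and tracking how the orientation of $\ksf_{R^o}$ (which runs from right to left of each tie) interacts with the $\mathcal U^r/\mathcal U^\ell$ dichotomy to produce the asymmetric $\pm(v_R + \widehat{v_R})$ pattern. Once these sign conventions are pinned down, the remainder of the argument is a routine telescoping-plus-tree-traversal computation.
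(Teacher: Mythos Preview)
Your proposal is correct and the necessity half is essentially the paper's own computation: summing the $[q_{t^o}]$--coefficients over $t\in\mathcal S$ is exactly the projection $i_*'$ to the $[q_{M^o}]$--component of $H_0(M^o\sqcup\iota(M^o);G^{\Ac})$, and your case analysis for $R\in\mathcal U^r$ versus $R\in\mathcal U^\ell$ matches the paper's Lemma~\ref{lem: in coordinates}. Where you genuinely diverge is in sufficiency and uniqueness. The paper packages both into the long exact sequence of the pair $(M^o\sqcup\iota(M^o),\partial_vN^o)$: uniqueness comes from injectivity of $\partial'$ (since $H_1(M^o\sqcup\iota(M^o))=0$), and existence from $\mathrm{Image}(\partial')=\mathrm{Kernel}(i_*')$, together with a short verification that the preimage $x'$ can be chosen with $\iota_*(x')=-\widehat{x'}$. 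Your post-order traversal on the tree $M$ (vertices $\mathcal S$, edges $\mathcal M$) is a direct combinatorial substitute for these homological facts---simple connectedness of $M$ is precisely what makes $\partial'$ injective and what lets the traversal terminate without cycles. The homological route is shorter once the exact sequence is in place and handles the $\iota$--symmetry in one line; your route is more self-contained and makes the role of the tree structure explicit, but you should state explicitly that the $[q_{\iota(t^o)}]$--equations are redundant (they are the $\widehat{\cdot}$--images of the $[q_{t^o}]$--equations, since both $\beta$ and $\delta$ land in the $\iota_*=-\widehat{\cdot}$ eigenspace).
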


The proof of Theorem \ref{thm: hom} also requires the following lemma which, for any $\theta\in\Tc(\lambda,d;G)$, gives a formula for $K(\theta)$ in the standard generating set for $H_0(\partial_v N^o,G^{\Ac})$. This lemma will be used to relate the equations in the statement of Lemma \ref{lem: existence} to the equations $\clubsuit({\bf i})$ in the statement of Theorem \ref{thm: hom}.

\begin{lemma}\label{lem: Ktheta}
	For any $\lambda$--triangle data function $\theta\in\Tc(\lambda,d;G)$, the homology class $K(\theta)\in H_0(\partial_v N^o;G^{\Ac})$ is given by
	\begin{align*}
		K(\theta)&=\sum_{t\in\mathcal S^\ell} \left(\left(\sum_{\mathbf{j} \in \mathcal{B} : j_2 = i_2} \theta^{\bf j}(\mathbf{x}_{t^{cw}})\right)_{{\bf i}\in\Ac} \cdot [q_{t^o}]-\left(\sum_{\mathbf{j} \in \mathcal{B} : j_2 = i_1} \theta^{\bf j}(\mathbf{x}_{t^{cw}})\right)_{{\bf i}\in\Ac} \cdot [q_{\iota(t^o)}]\right)\\
		&+\sum_{t\in\mathcal S^r} \left(\left(\sum_{\mathbf{j} \in \mathcal{B} : j_2 = i_2} \theta^{\bf j}(\mathbf{x}_{t^{cw}})\right)_{{\bf i}\in\Ac} \cdot [q_{\iota(t^o)}]-\left(\sum_{\mathbf{j} \in \mathcal{B} : j_2 = i_1} \theta^{\bf j}(\mathbf{x}_{t^{cw}})\right)_{{\bf i}\in\Ac} \cdot [q_{t^o}]\right).
	\end{align*}
	In particular, if we set ${\bf w}=((w_t^{\bf i})_{{\bf i}\in\Ac})_{t\in\mathcal S}\in (G^{\Ac})^{\mathcal S}$ to be the element given by
	\[w_t^{\bf i}:=\left\{\begin{array}{ll}
		\displaystyle\sum_{\mathbf{j} \in \mathcal{B} : j_2 = i_2} \theta^{\bf j}(\mathbf{x}_{t^{cw}})&\text{if } t\in\mathcal S^\ell\\
		\displaystyle-\sum_{\mathbf{j} \in \mathcal{B} : j_2 = i_1} \theta^{\bf j}(\mathbf{x}_{t^{cw}})&\text{if } t\in\mathcal S^r,
	\end{array}\right.\]
	then $K(\theta)=\delta({\bf w})$. 
\end{lemma}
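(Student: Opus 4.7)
The plan is to compute $K(\theta)$ by unpacking its definition and expressing the result in the standard generating set of $H_0(\partial_v N^o; G^{\Ac})$. First I would group the defining sum by the fibers of the orientation cover, writing
\[
K(\theta) = -\sum_{t \in \mathcal{S}} \left( \mathfrak{s}_\theta({\bf x}_{t^o}) \cdot [q_{t^o}] + \mathfrak{s}_\theta({\bf x}_{\iota(t^o)}) \cdot [q_{\iota(t^o)}] \right),
\]
and then invoke equation \eqref{eqn: hs} to re-express $t^o$ as $\iota(t^{cw})$ when $t \in \mathcal{S}^\ell$ and as $t^{cw}$ when $t \in \mathcal{S}^r$. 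This partitions the sum in precisely the way the target formula requires, and reduces the lemma to computing $\mathfrak{s}_\theta({\bf x}_{t^{cw}})^{\bf i}$ and $\mathfrak{s}_\theta({\bf x}_{\iota(t^{cw})})^{\bf i}$ for each $t \in \mathcal{S}$.

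The geometric heart of the argument is a uniform statement about the cyclic order of ${\bf x}_{t^{cw}} = (x_{t^{cw},1}, x_{t^{cw},2}, x_{t^{cw},3})$: for every $t \in \mathcal{S}$, one has $x_{t^{cw},3} \prec x_{t^{cw},2} \prec x_{t^{cw},1}$, i.e.\ the triple is labeled in the clockwise direction around the plaque. This follows by tracing definitions: the orientation of $t^{cw}$ is induced by the clockwise traversal of $\partial P_t$ about the complementary region $P_t$, and in a lift to $\widetilde{S}$ the convention that the backward endpoint of the oriented tie lies on the edge $x_1 x_2$ and the forward endpoint on $x_2 x_3$ forces the labeling to run clockwise around the chosen plaque $T$. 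This is the step requiring the most care, since it amounts to keeping track of orientations through several cover projections, but once established the first branch of the piecewise definition of $\mathfrak{s}_\theta$ applies uniformly and gives
\[
\mathfrak{s}_\theta({\bf x}_{t^{cw}})^{\bf i} = \sum_{{\bf j} \in \mathcal{B} \colon j_2 = i_1} \theta^{\bf j}({\bf x}_{t^{cw}}).
\]

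For the companion computation, reversing the orientation of the tie swaps its backward and forward endpoints in the definition of the triple of vertices chosen for $\iota(t^{cw})$, so ${\bf x}_{\iota(t^{cw})}$ lies in the $\Gamma$--orbit of $\widehat{{\bf x}_{t^{cw}}}$; its cyclic order is therefore counterclockwise, so the second branch of $\mathfrak{s}_\theta$ applies. Combining the $\Gamma$--equivariance of $\theta$, the antisymmetry $\theta(\widehat{{\bf x}},\widehat{{\bf j}}) = -\theta({\bf x},{\bf j})$, and the observation that ${\bf j}\mapsto\widehat{{\bf j}}$ restricts to a bijection on $\{{\bf j}\in\mathcal{B} \colon j_2 = i_2\}$, I obtain
\[
\mathfrak{s}_\theta({\bf x}_{\iota(t^{cw})})^{\bf i} = -\sum_{{\bf j} \in \mathcal{B} \colon j_2 = i_2} \theta^{\bf j}({\bf x}_{t^{cw}}).
\]
Substituting both identities into the grouped expression for $K(\theta)$ and distributing signs yields the displayed formula of the lemma.

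For the final assertion $K(\theta) = \delta({\bf w})$, I would simply unwind the definition of $\delta$, using that $(\widehat{w_t})^{\bf i} = w_t^{\widehat{\bf i}}$ to read off the coefficients of $[q_{t^o}]$ and $[q_{\iota(t^o)}]$ from the case-by-case values of $w_t^{\bf i}$ on $\mathcal{S}^\ell$ and $\mathcal{S}^r$, and then compare with the coefficients obtained above. This is a routine bookkeeping step, so no further difficulty appears.
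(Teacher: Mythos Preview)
Your overall strategy matches the paper's proof: group the defining sum of $K(\theta)$ over the fibers of the orientation cover, compute $\mathfrak{s}_\theta$ at the two lifts using the cyclic order of ${\bf x}_{t^{cw}}$ and the symmetries of $\theta$, and then apply equation~\eqref{eqn: hs} to pass between $t^{cw}$ and $t^o$. (The paper performs these steps in the order $t^{cw}$-first, then \eqref{eqn: hs}; you do \eqref{eqn: hs} first---this difference is cosmetic.)

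There is, however, a genuine slip in your computation of $\mathfrak{s}_\theta({\bf x}_{\iota(t^{cw})})$. Reversing the orientation of the tie swaps the roles of $g_-$ and $g_+$ in the definition of ${\bf x}_{\tsf}$, which exchanges $x_1$ and $x_3$ while keeping the common vertex $x_2$ fixed. Thus ${\bf x}_{\iota(t^{cw})}$ lies in the $\Gamma$--orbit of $(x_3,x_2,x_1)$, \emph{not} of $\widehat{{\bf x}_{t^{cw}}}=(x_2,x_1,x_3)$. Correspondingly, the symmetry you need is
\[
\theta\big((x_3,x_2,x_1),(j_1,j_2,j_3)\big) \;=\; -\,\theta\big((x_1,x_2,x_3),(j_3,j_2,j_1)\big),
\]
and the relevant bijection on $\{{\bf j}\in\Bc: j_2=i_2\}$ is ${\bf j}\mapsto(j_3,j_2,j_1)$, which does preserve $j_2$. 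Your stated map ${\bf j}\mapsto\widehat{\bf j}=(j_2,j_1,j_3)$ does \emph{not} preserve the second coordinate and is not a bijection on that set, so as written the step fails. With this correction the argument goes through exactly as you outline and yields the same formula.
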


Assuming Lemmas \ref{lem: existence} and \ref{lem: Ktheta}, we now finish the proof of Theorem \ref{thm: hom}.

\begin{proof}[Proof of Theorem \ref{thm: hom}]
	Let
	\[Y':=\{ (c, \theta) \in H_1(N^o,\partial_v N^o;G^{\Ac}) \times \mathcal{T}(\lambda,d;G) \mid\iota_*(c)=-\widehat{c}\text{ and }\partial c = K(\theta) \},\]
	and let $I_1':Y'\to (G^{\Ac})^{\mathcal O  \sqcup \mathcal U } \times (G^{\Bc})^{\mathcal S }$ be the map given by
	\[I_1'\left(\sum_{R\in\mathcal N}\left(u_{R}\cdot[\ksf_{R^o}]+\widehat{u_{R}}\cdot[\ksf_{\iota(R^o)}]\right),\theta\right)=\big((u_{R})_{R\in\mathcal O \sqcup\mathcal U},(\theta({\bf x}_{t^{cw}}))_{t\in\mathcal S}\big).\]
	Proposition \ref{prop: Homological} gives an isomorphism $E:\mathcal Y(\lambda,d;G)\to Y'$. From the definition of $E$ we can see that $I_1=I_1'\circ E$. Since $I_1$ is a morphism of Lie groups, it suffices to show that $I_1'$ is injective, and its image is $Y$.
	
	As a preliminary step, we describe an explicit parameterization of $\Tc(\lambda,d;G)$. From the definition of ${\bf x}_{\tsf}$ in Section \ref{subsubsec:notation for hom}, we see that for every ${\bf x}\in\widetilde\Delta^o$, there is a unique $\tsf\in\mathcal S^o$ and a unique $\gamma\in\Gamma$ such that ${\bf x}=\gamma\,{\bf x}_{\tsf}$. Thus, we may define a map
	\[\phi:\{{\bf z}\in(G^{\Bc})^{\mathcal S }\mid\blacklozenge(t, {\bf j})\text{ for all }{\bf j} \in \Bc\text{ and } t \in \mathcal S \}\to G^{\widetilde\Delta^o\times\Bc}\]
	by setting $\phi({\bf z}):\wt\Delta^o  \times \mathcal{B} \to G$ to be the map given by \[\phi({\bf z})(\gamma\,{\bf x}_{t^{cw}},{\bf j})=z_t^{\bf j}\quad\text{and}\quad\phi({\bf z})(\gamma\,{\bf x}_{\iota(t^{cw})},{\bf j})=-z_t^{\widehat{\bf j}}\] 
	for all $\gamma\in\Gamma$, $t\in\mathcal S$, and ${\bf j}\in\Bc$. It is straightforward to verify that $\phi$ is a group isomorphism onto $\Tc(\lambda,d;G)\subset G^{\widetilde\Delta^o\times\Bc}$, and its inverse 
	\[\phi^{-1}:\Tc(\lambda,d;G)\to\{{\bf z}\in(G^{\Bc})^{\mathcal S }\mid\blacklozenge(t, {\bf j})\text{ for all }{\bf j} \in \Bc\text{ and } t \in \mathcal S \}\]
	is given by $\phi^{-1}(\theta)=(\theta({\bf x}_{t^{cw}}))_{t\in\mathcal S}$.
	
	We will now verify that the image of $I_1'$ lies in $Y$. Pick any $(c,\theta)\in Y'$. Then $\theta\in\Tc(\lambda,d;G)$, so the parameterization $\phi$ of $\Tc(\lambda,d;G)$ given above implies that $I_1'(c,\theta)$ satisfies $\blacklozenge(t, {\bf j})$ for all ${\bf j} \in \Bc\text{ and } t \in \mathcal S $. Also, by the definition of $Y'$ and $\beta$, there is some $({\bf v}',{\bf v})\in (G^\mathcal A)^{\mathcal M}\times(G^\mathcal A)^{\mathcal O \sqcup\mathcal U}$ such that $c=\beta({\bf v}',{\bf v})$. Then by Lemma~\ref{lem: Ktheta}, we have that:
	\[\partial(\beta({\bf v}',{\bf v}))=\partial c=K(\theta)=\delta({\bf w}),\] 
	where
	${\bf w}\in (G^{\Ac})^{\mathcal S}$ is the element given by
	\[w_t^{\bf i}:=\left\{\begin{array}{ll}
		\displaystyle\sum_{\mathbf{j} \in \mathcal{B} : j_2 = i_2} \theta^{\bf j}(\mathbf{x}_{t^{cw}})&\text{if } t\in\mathcal S^\ell\\
		\displaystyle-\sum_{\mathbf{j} \in \mathcal{B} : j_2 = i_1} \theta^{\bf j}(\mathbf{x}_{t^{cw}})&\text{if } t\in\mathcal S^r.
	\end{array}\right.\]
	Thus, we may apply Lemma \ref{lem: existence} to deduce that $I_1'(c,\theta)$ satisfies $\clubsuit({\bf i})$ for all ${\bf i}\in\Ac$. Hence, the image of $I_1'$ lies in $Y$. 
	
	To finish the proof, it suffices to show that $I_1'$ is a bijection onto $Y$. Let $({\bf v},{\bf z})\in Y$. Since ${\bf z}$ satisfies $\blacklozenge(t, {\bf j})$ for all ${\bf j} \in \Bc$ and $t \in \mathcal S $, $\phi({\bf z})\in \Tc(\lambda,d;G)$. Then by Lemma \ref{lem: Ktheta}, $K(\phi({\bf z}))=\delta({\bf w})$, where ${\bf w}\in (G^{\Ac})^{\mathcal S}$ is the group element given by
	\[w_t^{\bf i}:=\left\{\begin{array}{ll}
		\displaystyle\sum_{\mathbf{j} \in \mathcal{B} : j_2 = i_2} z_t^{\bf j}&\text{if } t\in\mathcal S^\ell,\\
		\displaystyle-\sum_{\mathbf{j} \in \mathcal{B} : j_2 = i_1} z_t^{\bf j}&\text{if } t\in\mathcal S^r.
	\end{array}\right.\]
	Finally, since $({\bf v},{\bf z})\in Y$, Lemma \ref{lem: existence} implies that there is a unique ${\bf v}'\in (G^\mathcal A)^{\mathcal M}$ such that $\partial\beta({\bf v}',{\bf v})=K(\phi({\bf z}))$. We may thus define the map
	\[(I_1')^{-1}:Y\to Y'\quad\text{given by}\quad(I_1')^{-1}:({\bf v},{\bf z})\mapsto (\beta({\bf v}',{\bf v}),\phi({\bf z})).\]
	One can then verify that $(I_1')^{-1}$ is indeed the inverse of $I_1'$, hence proving that $I_1$ is a bijection.
	\end{proof}
	
	It now remains to prove Lemmas \ref{lem: existence} and \ref{lem: Ktheta}. We will do so in the subsequent subsections.
	
	\subsection{Proof of Lemma \ref{lem: existence}} 
	
	Let 
	\[
	i : \partial_v N^o \to N^o\quad\text{and}\quad i': \partial_v N^o \to M^o \sqcup \iota(M^o)\]	
	denote the natural inclusion maps, and let
	\[i_* : H_0(\partial_v N^o;\star) \to H_0(N^o;\star)\quad\text{and}\quad i_*' : H_0(\partial_v N^o;\star) \to H_0(M^o \sqcup \iota(M^o);\star)\]
	denote the associated homomorphisms in homology. (We write $\star$ to mean any coefficient group.) 
	
	Notice that both $\partial_v N^o$ and $M^o \sqcup \iota(M^o)$ are disjoint unions of contractible, path-connected components, so one can see that
	\[H_1(\partial_v N^o; \star) = 0=H_1(M^o \sqcup \iota(M^o); \star).\] 
	Moreover, since every connected component of $N^o$ and $M^o \sqcup \iota(M^o)$ intersects $\partial_v N^o$, we have that
	\[H_0(N^o, \partial_v N^o;\star)=0=H_0(M^o \sqcup \iota(M^o),\partial_v N^o;\star)\] as well. Therefore, the long exact sequences of the pairs $(M^o \sqcup \iota(M^o),\partial_v N^o)$ and $(N^o,\partial_v N^o)$ reduce to the following exact sequences:
	\begin{gather}
		0 \to H_1(M^o \sqcup \iota(M^o), \partial_v N^o;\star) \stackrel{\partial'}{\to} H_0(\partial_v N^o;\star) \stackrel{i_*'}{\to} H_0(M^o \sqcup \iota(M^o);\star) \to 0 , \label{eq:exact sequence M} \\
		0 \to H_1(N^o;\star) \to H_1(N^o, \partial_v N^o;\star) \stackrel{\partial}{\to} H_0(\partial_v N^o;\star) \stackrel{i_*}{\to} H_0(N^o;\star) \to 0, \label{eq:exact sequence N}
	\end{gather}
	where $\partial$ and $\partial'$ denote the usual boundary maps.
	Furthermore, the inclusion of pairs 
	\[j : (M^o \sqcup \iota(M^o),\partial_v N^o) \to (N^o,\partial_v N^o)\] determines a morphism between these exact sequences. In other words, the following diagram commutes:
	{\small
		\begin{center}
			\begin{tikzpicture}[scale=1.7]
				\node (h1nv) at (0,0) {$0$};
				\node (h1n) at (0.8,0) {$H_1(N^o;\star)$};
				\node (h1nnv) at (2.3,0) {$H_1(N^o, \partial_v N^o;\star)$};
				\node (h0nv) at (4.3,0) {$H_0(\partial_v N^o;\star)$};
				\node (h0n) at (6,0) {$H_0(N^o;\star)$};
				\node (h0nnv) at (7.2,0) {$0$};
				\node (h1m) at (0.8,1) {$0$};
				\node (h1mmv) at (2.3,1) {$H_1(M^o \sqcup \iota(M^o), \partial_v N^o;\star)$};
				\node (h0mv) at (4.3,1) {$H_0(\partial_v N^o;\star)$};
				\node (h0m) at (6,1) {$H_0(M^o \sqcup \iota(M^o);\star)$};
				\node (h0mmv) at (7.2,1) {$0$};
				\draw[->] (h1nv) to node[above]{} (h1n);
				\draw[->] (h1n) to node[above]{} (h1nnv);
				\draw[->] (h1nnv) to node[above]{$\partial$} (h0nv);
				\draw[->] (h0nv) to node[above]{$i_*$} (h0n);
				\draw[->] (h0n) to node[above]{} (h0nnv);
				\draw[->] (h1m) to node[above]{} (h1mmv);
				\draw[->] (h1mmv) to node[above]{$\partial'$} (h0mv);
				\draw[->] (h0mv) to node[above]{$i_*'$} (h0m);
				\draw[->] (h0m) to node[above]{} (h0mmv);
				\draw[->] (h1mmv) to node[left]{$j_*$} (h1nnv);
				\draw[->] (h0mv) to node[left]{$\mathrm{id}$} (h0nv);
			\end{tikzpicture}
	\end{center}	}
	
	Using this, we will prove the following lemma, which is the key computation needed to prove Lemma \ref{lem: existence}.
	
	\begin{lemma}\label{lem: in coordinates}
		Let ${\bf w}\in (G^{\Ac})^{\mathcal S }$ and ${\bf v}\in (G^\mathcal A)^{\mathcal O \sqcup\mathcal U}$. Then
		\[ \sum_{R \in \mathcal U ^r} ( v_R^{\mathbf{i}} + v_R^{\widehat{\mathbf{i}}}) - \sum_{R \in \mathcal U ^\ell} (v_R^{\mathbf{i}} + v_R^{\widehat{\mathbf{i}}} ) = \sum_{t\in\mathcal S}w^{\bf i}_t\]
		if and only if $\delta({\bf w})-\partial(\beta({\bf 0},{\bf v}))$ lies in ${\rm Image}(\partial')$.
	\end{lemma}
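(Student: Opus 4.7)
}
The plan is to use the exact sequence \eqref{eq:exact sequence M} to identify $\mathrm{Image}(\partial') = \ker(i_*')$, which reduces the condition ``$\delta(\mathbf w) - \partial(\beta(\mathbf 0, \mathbf v)) \in \mathrm{Image}(\partial')$'' to verifying the identity $i_*'(\delta(\mathbf w)) = i_*'(\partial(\beta(\mathbf 0, \mathbf v)))$ inside $H_0(M^o \sqcup \iota(M^o); G^{\Ac})$. Since $M^o$ and $\iota(M^o)$ are each path--connected (being contractible), this group splits canonically as $G^{\Ac} \oplus G^{\Ac}$, with summands labeled by $[M^o]$ and $[\iota(M^o)]$. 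The strategy is to compute both sides in these coordinates and compare the resulting equations.

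For the left-hand side, I would observe that $q_{t^o} \in M^o$ and $q_{\iota(t^o)} \in \iota(M^o)$ by the very definitions of these points, giving
\[
i_*'(\delta(\mathbf w)) = \Big(\sum_{t \in \mathcal S} w_t\Big) [M^o] - \Big(\sum_{t \in \mathcal S} \widehat{w_t}\Big) [\iota(M^o)].
\]
For the right-hand side, I would analyze $\partial[\ksf_{R^o}]$ case by case. If $R \in \mathcal O$, then the orientation of $M^o$ extends to $R^o$, so both endpoints of $\ksf_{R^o}$ land in $M^o$ and $i_*'(\partial[\ksf_{R^o}]) = 0$. If $R \in \mathcal U$, exactly one endpoint is in $M^o$ and the other in $\iota(M^o)$. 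Moreover, since $\ksf_{\iota(R^o)}$ has opposite orientation to $\iota(\ksf_{R^o})$ (see Section~\ref{subsubsec:relative homology of No}) and $\iota$ interchanges $M^o$ and $\iota(M^o)$, one has $i_*'(\partial[\ksf_{\iota(R^o)}]) = i_*'(\partial[\ksf_{R^o}])$, so the joint contribution of $v_R \cdot \partial[\ksf_{R^o}]$ and $\widehat{v_R} \cdot \partial[\ksf_{\iota(R^o)}]$ collapses to $(v_R + \widehat{v_R}) \cdot i_*'(\partial[\ksf_{R^o}])$.

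The hard part will be pinning down the sign of $i_*'(\partial[\ksf_{R^o}])$ for $R \in \mathcal U$ and matching it to the $\mathcal U^\ell/\mathcal U^r$ dichotomy. My plan is to exploit that $\ksf_{R^o}$ is oriented to cross every tie of $R^o$ from right to left (with respect to the tie orientation of $R^o$) together with the defining property of $R \in \mathcal U^r$ (respectively $\mathcal U^\ell$) that $R$ lies to the right (respectively left) of both its vertical boundaries with respect to the $M$--induced tie orientations. Comparing $R^o$'s tie orientation with $M^o$'s one at each vertical boundary $\tau_i$ of $R$, and invoking equation~\eqref{eqn: hs} relating $t^o$ and $t^{cw}$, one can track which of the two lifts of the vertical boundary component of $N$ in $\tau_i$ lies in $M^o$ versus $\iota(M^o)$. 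This analysis should produce $i_*'(\partial[\ksf_{R^o}]) = [M^o] - [\iota(M^o)]$ for $R \in \mathcal U^r$ and $i_*'(\partial[\ksf_{R^o}]) = [\iota(M^o)] - [M^o]$ for $R \in \mathcal U^\ell$, so that
\[
i_*'\bigl(\partial\beta(\mathbf 0, \mathbf v)\bigr) = \Big(\sum_{R \in \mathcal U^r}(v_R + \widehat{v_R}) - \sum_{R \in \mathcal U^\ell}(v_R + \widehat{v_R})\Big)\bigl([M^o] - [\iota(M^o)]\bigr).
\]

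Once this is established, equating the $[M^o]$--coefficients of the two expressions and taking the $\mathbf i$--component (using $\widehat{v_R}^{\mathbf i} = v_R^{\widehat{\mathbf i}}$) yields precisely the displayed identity of the lemma. The $[\iota(M^o)]$--coefficient produces the same identity with $\mathbf i$ replaced by $\widehat{\mathbf i}$, which is automatic from the $\mathbf i$--case because the left-hand side is manifestly symmetric under $\mathbf i \leftrightarrow \widehat{\mathbf i}$. This will complete the proof of the ``if and only if''.
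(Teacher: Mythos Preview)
Your proposal is correct and follows essentially the same approach as the paper: both reduce via the exact sequence \eqref{eq:exact sequence M} to checking that $i_*'(\delta(\mathbf w))=i_*'(\partial\beta(\mathbf 0,\mathbf v))$ in $H_0(M^o\sqcup\iota(M^o);G^{\Ac})\cong G^{\Ac}\oplus G^{\Ac}$, then compute both sides by the orientable/left-unorientable/right-unorientable case split and compare coefficients. Your shortcut of noting $i_*'(\partial[\ksf_{\iota(R^o)}])=i_*'(\partial[\ksf_{R^o}])$ to collapse the $v_R$ and $\widehat{v_R}$ terms, and your observation that the $[\iota(M^o)]$-equation is redundant by the $\mathbf i\leftrightarrow\widehat{\mathbf i}$ symmetry, are minor streamlinings of the same computation the paper carries out term by term.
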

	
	\begin{proof}
		By the exactness of \eqref{eq:exact sequence M}, we need to show that 
		\[ \sum_{R \in \mathcal U ^r} ( v_R^{\mathbf{i}} + v_R^{\widehat{\mathbf{i}}}) - \sum_{R \in \mathcal U ^\ell} (v_R^{\mathbf{i}} + v_R^{\widehat{\mathbf{i}}} ) = \sum_{t\in\mathcal S}w^{\bf i}_t\]
		if and only if $\delta({\bf w})-\partial(\beta({\bf 0},{\bf v}))$ lies in ${\rm Kernel}(i_*')$. 
		
		For each rectangle $R$ of $N$ that is not in $M$, let $ \tsf_{R,+}, \tsf_{R,-}\in\mathcal S^o$ be the vertical boundary components of $N^o$ that contain the forward and backward endpoints of $\ksf_{R^o}$ respectively. By definition,
		\[\beta({\bf 0},{\bf v})=\sum_{R \in \mathcal O \sqcup\mathcal U} v_{R} \cdot [\mathsf{k}_{R^o}] + \widehat{v_{R}} \cdot [\mathsf{k}_{\iota(R^o)}]=\sum_{R \in \mathcal O \sqcup\mathcal U} v_{R} \cdot [\mathsf{k}_{R^o}] - \widehat{v_{R}} \cdot \iota_*([\mathsf{k}_{R^o}]),\]
		so
		\[\partial(\beta({\bf 0},{\bf v}))=\sum_{R \in \mathcal O \sqcup\mathcal U} v_{R} \cdot [q_{\tsf_{R,+}}]-v_{R} \cdot [q_{\tsf_{R,-}}] - \widehat{v_{R}} \cdot \iota_*([q_{\tsf_{R,+}}])+\widehat{v_{R}} \cdot \iota_*([q_{\tsf_{R,-}}]).\]
		
		Since $M^o\sqcup\iota(M^o)$ has two connected components, namely $M^o$ and $\iota(M^o)$, if we choose a base point $q_{M^o}\in M^o$ and set $q_{\iota(M^o)}:=\iota(q_{M^o})$, then
		\[H_0(M^o \sqcup \iota(M^o);G^{\mathcal A})=G^{\mathcal A}\cdot [q_{M^o}]\oplus G^{\mathcal A}\cdot [q_{\iota(M^o)}].\]
		Furthermore, for all vertical boundary components $\tsf\in\mathcal S^o$ of $N^o$ and $v\in G^{\mathcal A}$, we have \[i_*(v\cdot [q_{\tsf}])=\left\{\begin{array}{ll}
			v\cdot [q_{M^o}]&\text{if }\tsf\subset M^o,\\
			v\cdot [q_{\iota(M^o)}]&\text{if }\tsf\subset \iota(M^o).\\
		\end{array}\right.\]
		This implies that:
		\begin{itemize}
			\item If $R$ is an orientable rectangle, then both $\tsf_{R,+}$ and $\tsf_{R,-}$ lie in $M^o$, so
			\[i_*'(v_{R} \cdot [q_{\tsf_{R,+}}]-v_{R} \cdot [q_{\tsf_{R,-}}])=0=i_*'(\widehat{v_{R}} \cdot \iota_*([q_{\tsf_{R,+}}])-\widehat{v_{R}} \cdot \iota_*([q_{\tsf_{R,-}}])).\]
			\item If $R$ is a left unorientable rectangle, then $\tsf_{R,+}$ and $\tsf_{R,-}$ lie in $\iota(M^o)$ and $M^o$ respectively, so
			\[i_*'(v_{R} \cdot [q_{\tsf_{R,+}}]-v_{R} \cdot [q_{\tsf_{R,-}}])=v_R\cdot[q_{\iota(M^o)}]-v_R\cdot[q_{M^o}]\]
			and
			\[i_*'(-\widehat{v_{R}} \cdot \iota_*([q_{\tsf_{R,+}}])+\widehat{v_{R}} \cdot \iota_*([q_{\tsf_{R,-}}]))=\widehat{v_R}\cdot[q_{\iota(M^o)}]-\widehat{v_R}\cdot[q_{M^o}].\]
			\item If $R$ is a right unorientable rectangle, then $\tsf_{R,+}$ and $\tsf_{R,-}$ lie in $M^o$ and $\iota(M^o)$ respectively, so
			\[i_*'(v_{R} \cdot [q_{\tsf_{R,+}}]-v_{R} \cdot [q_{\tsf_{R,-}}])=v_R\cdot[q_{M^o}]-v_R\cdot[q_{\iota(M^o)}]\]
			and
			\[i_*'(-\widehat{v_{R}} \cdot \iota_*([ t^o_{R,+}])+\widehat{v_{R}} \cdot \iota_*([ t^o_{R,-}]))=\widehat{v_R}\cdot[q_{M^o}]-\widehat{v_R}\cdot[q_{\iota(M^o)}].\]
		\end{itemize}
		
		One can then compute that
		\[i_*'(\partial(\beta({\bf 0},{\bf v})))=\left(\sum_{R \in \mathcal U^\ell} (v_{R}+\widehat{v_{R}})-\sum_{R \in \mathcal U^r} (v_{R}+\widehat{v_{R}}) \right)\cdot ([q_{\iota(M^o)}]-[q_{M^o}]).\]
		At the same time,
		\[i_*'(\delta({\bf w}))=\sum_{t \in \mathcal S } w_t\cdot [q_{M^o}] - \sum_{t \in \mathcal S } \widehat{w_t} \cdot [q_{\iota(M^o)}].\]
		Thus, $\delta({\bf w})-\partial(\beta({\bf 0},{\bf v}))$ lies in ${\rm Kernel}(i_*')$ if and only if
		\[\sum_{t \in \mathcal S } w_t+\sum_{R \in \mathcal U^\ell} (v_{R}+\widehat{v_{R}})-\sum_{R \in \mathcal U^r} (v_{R}+\widehat{v_{R}}) =0.\qedhere\]
	\end{proof}
	
	Using Lemma \ref{lem: in coordinates}, we now finish the proof of Lemma \ref{lem: existence}. 
	\begin{proof}[Proof of Lemma \ref{lem: existence}]
		First, suppose that there is some ${\bf v}'\in (G^\mathcal A)^{\mathcal M}$ such that $\partial(\beta({\bf v}',{\bf v}))=\delta({\bf w})$. Notice that the image of 
		\[j_*:H_1(M^o \sqcup \iota(M^o), \partial_v N^o;G^{\mathcal A})\to H_1(N^o, \partial_v N^o;G^{\mathcal A})\] contains $\beta((G^{\mathcal A})^{\mathcal M}\times\{{\bf 0}\})$. Thus, there is some $x'\in H_1(M^o \sqcup \iota(M^o), \partial_v N^o;G^{\mathcal A})$ such that $j_*(x')=\beta({\bf v}',{\bf 0})$. In particular, 
		\[\delta({\bf w})-\partial(\beta({\bf 0},{\bf v}))=\partial(\beta({\bf v}',{\bf 0}))=(\partial\circ j_*)(x')=\partial'(x'),\]
		so Lemma \ref{lem: in coordinates} implies that 
		\[ \sum_{R \in \mathcal U ^r} ( v_R^{\mathbf{i}} + v_R^{\widehat{\mathbf{i}}}) - \sum_{R \in \mathcal U ^\ell} (v_R^{\mathbf{i}} + v_R^{\widehat{\mathbf{i}}} ) = \sum_{t\in\mathcal S}w^{\bf i}_t.\]
		
		Conversely, suppose that 
		\[ \sum_{R \in \mathcal U ^r} ( v_R^{\mathbf{i}} + v_R^{\widehat{\mathbf{i}}}) - \sum_{R \in \mathcal U ^\ell} (v_R^{\mathbf{i}} + v_R^{\widehat{\mathbf{i}}} ) = \sum_{t\in\mathcal S}w^{\bf i}_t.\]
		Then Lemma \ref{lem: in coordinates} implies that there is some $x'\in H_1(M^o \sqcup \iota(M^o), \partial_v N^o;G^{\mathcal A})$ such that $\partial'(x')=\delta({\bf w})-\partial(\beta({\bf 0},{\bf v}))$. To prove the first claim of the lemma, it now suffices to show that there is some ${\bf v'}\in (G^\mathcal A)^{\mathcal M}$ such that $j_*(x')=\beta({\bf v'},{\bf 0})$; indeed, if this were the case, then 
		\[\partial(\beta({\bf v}',{\bf v}))=\partial(\beta({\bf 0},{\bf v}))+\partial(\beta({\bf v}',{\bf 0}))
		=\partial(\beta({\bf 0},{\bf v}))+\partial'(x')=\delta({\bf w}).\]
		From the definition of $j_*$, it suffices to check that $j_*(x')$ lies in the image of $\beta$, i.e. that $\iota_*(j_*(x')) = - \widehat{j_*(x')}$. Notice that $j_*$ commutes with $\iota_*$ and $\widehat{\cdot}$, and $\partial'$ is injective, so it suffices to check that $\partial'(\iota_*(x') + \widehat{x'})=0$. We compute:
		\begin{align*}
			\partial'(\iota_*(x') + \widehat{x'}) 
			& = \iota_*(\partial'(x')) + \widehat{\partial'(x')} \\
			& = \iota_*(\delta(\mathbf{w}) - \partial(\beta({\bf 0},\mathbf{v}))) + \widehat{\delta(\mathbf{w})} - \partial(\widehat{\beta({\bf 0},\mathbf{v}))} \\
			& = \iota_*(\delta(\mathbf{w})) + \widehat{\delta(\mathbf{w})} - \partial\left(\iota_*(\beta({\bf 0},\mathbf{v}))+\widehat{\beta({\bf 0},\mathbf{v})}\right) = 0.
		\end{align*}
		
		Finally, we prove the uniqueness claim of the lemma. Let ${\bf v}',{\bf v}''\in (G^\mathcal A)^{\mathcal M}$ such that
		$\partial\beta({\bf v}',{\bf v})=\partial\beta({\bf v}'',{\bf v})$. We previously observed that the image of $j_*$ contains $\beta((G^{\mathcal A})^{\mathcal M}\times\{{\bf 0}\})$, so there are $x',x''\in H_1(M^o \sqcup \iota(M^o), \partial_v N^o;G^{\mathcal A})$ such that $j_*(x')=\beta({\bf v}',{\bf 0})$ and $j_*(x'')=\beta({\bf v}'',{\bf 0})$. Then
		\[\partial'(x')=(\partial\circ j_*)(x')=\partial(\beta({\bf v}',{\bf 0}))=\partial(\beta({\bf v}',{\bf v}))-\partial(\beta({\bf 0},{\bf v}))\]
		Similarly, $\partial'(x'')=\partial(\beta({\bf v}'',{\bf v}))-\partial(\beta({\bf 0},{\bf v}))$, so $\partial'(x')=\partial'(x'')$. By the exactness of equation \eqref{eq:exact sequence M}, we see that $\partial'$ is injective. Thus, $x'=x''$, and hence ${\bf v}'={\bf v}''$.
	\end{proof}
	
	\subsection{Computation of $K(\theta)$} Finally, we prove Lemma \ref{lem: Ktheta}.
	
	\begin{proof}[Proof of Lemma \ref{lem: Ktheta}]
		By definition (see Section \ref{subsubsec:homological interpretation of cocyclic pairs}), we have
		\[\mathfrak s _{\theta}({\bf x}_{ t^{cw}})=\left(\sum_{\mathbf{j} \in \mathcal{B} : j_2 = i_1} \theta^{\bf j}(\mathbf{x}_{t^{cw}})\right)_{{\bf i}\in\Ac}\quad\text{and}\quad\mathfrak s _{\theta}({\bf x}_{\iota(t^{cw})})=\left(\sum_{\mathbf{j} \in \mathcal{B} : j_2 = i_2} \theta^{\bf j}(\mathbf{x}_{\iota(t^{cw})})\right)_{{\bf i}\in\Ac}.\]
		So we see that
		\begin{align*}
			K(\theta)&=- \sum_{t\in\mathcal S} \left(\mathfrak s_{\theta}({\bf x}_{ t^{cw}}) \cdot [q_{ t^{cw}}]+\mathfrak s_{\theta}({\bf x}_{\iota( t^{cw})}) \cdot [q_{\iota( t^{cw})}]\right)\\
			&=-\sum_{t\in\mathcal S} \left(\left(\sum_{\mathbf{j} \in \mathcal{B} : j_2 = i_1} \theta^{\bf j}(\mathbf{x}_{t^{cw}})\right)_{{\bf i}\in\Ac} \cdot [q_{t^{cw}}]-\left(\sum_{\mathbf{j} \in \mathcal{B} : j_2 = i_2} \theta^{\bf j}(\mathbf{x}_{t^{cw}})\right)_{{\bf i}\in\Ac} \cdot [q_{\iota(t^{cw})}]\right)\\
			&=-\sum_{t\in\mathcal S^\ell} \left(\left(\sum_{\mathbf{j} \in \mathcal{B} : j_2 = i_1} \theta^{\bf j}(\mathbf{x}_{t^{cw}})\right)_{{\bf i}\in\Ac} \cdot [q_{\iota(t^o)}]-\left(\sum_{\mathbf{j} \in \mathcal{B} : j_2 = i_2} \theta^{\bf j}(\mathbf{x}_{t^{cw}})\right)_{{\bf i}\in\Ac} \cdot [q_{t^o}]\right)\\
			&-\sum_{t\in\mathcal S^r} \left(\left(\sum_{\mathbf{j} \in \mathcal{B} : j_2 = i_1} \theta^{\bf j}(\mathbf{x}_{t^{cw}})\right)_{{\bf i}\in\Ac} \cdot [q_{t^o}]-\left(\sum_{\mathbf{j} \in \mathcal{B} : j_2 = i_2} \theta^{\bf j}(\mathbf{x}_{t^{cw}})\right)_{{\bf i}\in\Ac} \cdot [q_{\iota(t^o)}]\right),
		\end{align*}
		where the first equality is by the definition of $K(\theta)$, the second equality is a consequence of the symmetry properties of $\theta$ and the third equality follows from equation \eqref{eqn: hs}. 
	\end{proof}

\section{Global topology of $\mathcal Y(\lambda,d;\Cb/2\pi i\Zb)$} \label{sec:topology_y}

In Theorem \ref{thm: hom}, we proved that the space $\Yc(\lambda,d;G)$ of $\lambda$--cocyclic pairs of dimension $d$ with values in an Abelian Lie group $G$ is isomorphic to the Lie group
\[
Y:= \left\{({\bf v}, {\bf z}) \in (G^{\Ac})^{\mathcal{O} \sqcup \mathcal{U}} \times (G^{\Bc})^{\mathcal{S}} \left| \,\,\, \parbox{5cm}{$\blacklozenge(t, {\bf j})$ for all ${\bf j} \in \Bc$ and $t \in \mathcal S$, and $\clubsuit({\bf i})$ for all ${\bf i} \in \Ac$.} \right. \right\} ,
\]
where 
\begin{gather*}
	\tag*{$\blacklozenge(t, {\bf j})$} {z}_t^{\bf j} = {z}_{t_+}^{{\bf j}_+} = {z}_{t_-}^{{\bf j}_-} , \\
	\tag*{$\clubsuit({\bf i})$} \sum_{R \in \mathcal{U}^r} ( v_R^{\mathbf{i}} + v_R^{\widehat{\mathbf{i}}}) - \sum_{R \in \mathcal{U}^\ell} (v_R^{\mathbf{i}} + v_R^{\widehat{\mathbf{i}}} ) = \sum_{t\in \mathcal{S}^\ell} \sum_{{\bf j} \in \Bc : j_2 = i_2} z^{\bf j}_t - \sum_{t\in \mathcal{S}^r} \sum_{{\bf j} \in \Bc : j_2 = i_1} z^{\bf j}_t.
\end{gather*}
Here, ${\bf v}=(v_R)_{R\in\mathcal O\sqcup\mathcal U}=((v_R^{\bf i})_{{\bf i}\in\Ac})_{R\in\mathcal O\sqcup\mathcal U}$ and ${\bf z}=(z_t)_{t\in\mathcal S}=((z_t^{\bf j})_{{\bf j}\in\Bc})_{t\in\mathcal S}$. (See Section \ref{subsubsec:notation for hom} for notation.) The purpose of this section is to determine the isomorphism class of $Y$ (see Theorem \ref{Thm-general} from the introduction). To describe the precise statement established here, let us start by introducing some notation.

For each plaque $T$ of $\lambda$, choose once and for all some vertical boundary component $t(T)\in\mathcal S$ that lies in the interior of $T$. Recall that $\Bc$ denotes the set of triples of positive integers that sum up to $d$. Let $\Bc^* \subset \Bc$ be
\begin{align*}
	\Bc^*&:=\left\{{\bf j}=(j_1,j_2,j_3)\in\Bc\mid j_1,j_2,j_3\le \left\lfloor \frac{d-1}{2}\right\rfloor\right\},
\end{align*}
see Figure \ref{fig:boundary-rectangles}, and define the map ${\rm tor}'_d:Y\to G$ as follows:
\begin{itemize}
	\item If $d$ is odd, then
	\begin{align*}
		{\rm tor}'_d({\bf v},{\bf z}): = - \sum_{T\in\Delta} \sum_{{\bf j} \in \Bc^*} z^{\bf j}_{t(T)}.
	\end{align*}
	\item If $d$ is even, then
	\begin{align*}
		{\rm tor}'_d({\bf v},{\bf z}) &:= - \sum_{T\in\Delta} \sum_{{\bf j} \in \Bc^*} z^{\bf j}_{t(T)} + \sum_{R \in \mathcal{U}^r} v_{R}^{{\bf i}^0} - \sum_{R \in \mathcal{U}^\ell} v_{R}^{{\bf i}^0} - \sum_{t\in\mathcal S^\ell}\sum_{{\bf j}\in\Bc^0}z_t^{\bf j},\\
		& = - \sum_{T\in\Delta} \sum_{{\bf j} \in \Bc^*} z^{\bf j}_{t(T)} - \sum_{R \in \mathcal{U}^r} v_{R}^{{\bf i}^0} + \sum_{R \in \mathcal{U}^\ell} v_{R}^{{\bf i}^0} - \sum_{t\in\mathcal S^r}\sum_{{\bf j}\in\Bc^0}z_t^{\bf j},
	\end{align*}
	where ${\bf i}^0:=(\frac{d}{2},\frac{d}{2})$ and $\Bc^0:=\{{\bf j}=(j_1,j_2,j_3)\in\Bc\mid j_2=\frac{d}{2}\}$. 
\end{itemize}
Note that ${\rm tor}_d'$ does not depend on the choice of the vertical boundary components $\{t(T) \mid T \in \Delta\}$, and that the second equality for $d$ even follows from equation $\clubsuit({\bf i}_0)$. The main result of this section states:

\begin{theorem}\label{thm: topology}
	If $G$ is an Abelian Lie group, then there is an isomorphism of groups
	\[I_2:Y \to G^{(d^2-1)(2g-2)} \times G_d,\]
	where $G_d:=\{g\in G \mid d\cdot g=e\}$ is the \emph{$d$--torsion} of $G$.
	Furthermore, if $$\pi_{\rm tor}:G^{(d^2-1)(2g-2)}\times G_d\to G_d$$ denotes the natural projection, then 
	\[\pi_{\rm tor}\circ I_2({\bf v},{\bf z})={\rm tor}_d'({\bf v},{\bf z}).\]
\end{theorem}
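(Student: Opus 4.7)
The proof proceeds by an explicit analysis of the defining equations of $Y$, carried out in three steps: a reduction via $\blacklozenge$, an identification of the $d$-torsion factor via the $\clubsuit$ equations, and a final dimension count. The key structural fact is that, after applying $\blacklozenge$, the $d-1$ equations $\clubsuit({\bf i})$ become integer-coefficient linear functionals whose Smith normal form has one invariant factor equal to $d$ and the rest equal to $1$, which is what produces the $G^{(d^2-1)(2g-2)}\times G_d$ decomposition.

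First, I would use $\blacklozenge$ to pass from the $z$-variables indexed by $\mathcal S$ to plaque-indexed variables. For each plaque $T\in\Delta$, the relations $\blacklozenge$ at $t=t(T)$ give $z_{t(T)_{\pm}}^{\bf j}=z_{t(T)}^{{\bf j}_{\mp}}$, so setting $w_T^{\bf j}:=z_{t(T)}^{\bf j}$ determines all the remaining $z$-values. This identifies $Y$ with the subgroup of $V:=(G^{\Ac})^{\mathcal{O}\sqcup\mathcal{U}}\times(G^{\Bc})^{\Delta}$ cut out solely by the $d-1$ rewritten equations $\clubsuit({\bf i})$, ${\bf i}\in\Ac$.

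The technical heart of the proof is to show that these equations imply $d\cdot{\rm tor}'_d({\bf v},{\bf w})=0$, which gives the well-definedness of $\pi_{\rm tor}\circ I_2:={\rm tor}'_d$ as a map into $G_d$. The key observation is that $\clubsuit({\bf i})$ and $\clubsuit(\widehat{\bf i})$ share the same LHS, so their difference (for ${\bf i}\ne\widehat{\bf i}$) yields a purely $z$-valued identity, which via $\blacklozenge$ translates into a relation among cyclic-orbit sums of the $w_T^{\bf j}$'s. For $d$ odd, chaining these relations produces a single identity of the form $d\cdot\sum_{T\in\Delta}\sum_{{\bf j}\in\Bc^*}w_T^{\bf j}=0$, i.e.\ $d\cdot{\rm tor}'_d=0$. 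For $d$ even, the non-self-dual pairs only yield a $2$-torsion relation on a certain sum of $w$'s; combining this with the self-dual equation $\clubsuit(({\bf i}^0))=\clubsuit((d/2,d/2))$ (which has no swap partner) produces the required $d$-torsion. The additional correction terms involving $v_R^{{\bf i}^0}$ and the sum over ${\bf j}\in\Bc^0$ in the formula for ${\rm tor}'_d$ are precisely those needed for the chain to close up in the even case.

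To finish, I would exhibit an explicit set of $(d^2-1)(2g-2)$ free coordinates on $Y$: all $v_R^{\bf i}$ for $R\in\mathcal{O}$, a suitable subset of the $v_R^{\bf i}$'s for $R\in\mathcal{U}$ (obtained by using $d-2$ of the $\clubsuit$ equations to solve for specific variables), and all $w_T^{\bf j}$'s except for one per plaque in $\Bc^*$. A direct dimension count confirms this gives exactly $(d^2-1)(2g-2)$ free coordinates, and together with the torsion coordinate ${\rm tor}'_d\in G_d$ they parameterize $Y$ bijectively via an isomorphism of Lie groups. The main obstacle is the chaining argument sketched above: the $d$-torsion identity does not come from a single $\Zb$-linear combination of the $\clubsuit$ equations but from a cascade whose structure depends on the parity of $d$, and tracking uniformly how the cyclic action on $\Bc$, the swap symmetry ${\bf i}\leftrightarrow\widehat{\bf i}$ on $\Ac$, and the $\ell/r$ partitions of $\mathcal{S}$ and $\mathcal{U}$ interact is the main combinatorial difficulty.
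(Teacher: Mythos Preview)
Your overall strategy matches the paper's: use $\blacklozenge$ to pass to plaque-indexed $z$-variables, observe that $\clubsuit({\bf i})$ and $\clubsuit(\widehat{\bf i})$ share a left-hand side so their difference $\spadesuit({\bf i})$ is a pure $z$-equation, extract from these a $d$-torsion constraint, and then exhibit free coordinates. But two points in your sketch are off.

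First, the claim that the $d$-torsion identity ``does not come from a single $\Zb$-linear combination of the $\clubsuit$ equations but from a cascade'' is false, and this is exactly where the real content lies. The paper shows (Lemma~\ref{lem:a nice combination of triangle invariants}) that the explicit weighted sum $\sum_{{\bf i}\in\Ac'} i_1\,\spadesuit({\bf i})$ equals $d\sum_{T\in\Delta}\sum_{{\bf j}\in\Bc^*}z_{t(T)}^{\bf j}$ when $d$ is odd, and when $d$ is even equals this plus a term with coefficient $\tfrac{d}{2}$; adding $\tfrac{d}{2}\clubsuit({\bf i}^0)$ then upgrades that $\tfrac{d}{2}$ to $d$ and brings in precisely the $v^{{\bf i}^0}$ and $\Bc^0$ correction terms appearing in ${\rm tor}'_d$. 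So the torsion equation \emph{is} a single integer combination of the $\clubsuit({\bf i})$'s, and discovering the weights $i_1$ is the heart of the argument. Your Smith-normal-form framing is correct in principle, but you have not actually found the row operation that produces the invariant factor $d$.

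Second, your free-coordinate description (``all $w_T^{\bf j}$'s except for one per plaque in $\Bc^*$'') does not give the right count: the $\clubsuit$ system has only $d-1$ equations, so only $d-1$ coordinates can be eliminated, not one per plaque. The paper fixes \emph{one} plaque $\overline{T}$ and \emph{one} right-unorientable rectangle $\overline{R}$, keeps all of $(v_R)_{R\ne\overline{R}}$ and $(z_{t(T)})_{T\ne\overline{T}}$ as free, and from $v_{\overline{R}}$ and $z_{t(\overline{T})}$ removes respectively the coordinates indexed by $\Ac'$ and by $\Bc''\cup\{{\bf j}'\}$. These missing coordinates are then recovered sequentially (Steps~1--4 in the proof) using $\clubsuit({\bf i})$ for ${\bf i}\in\Ac''$, $\spadesuit({\bf i})$ for ${\bf i}\in\Ac'\setminus\{(1,d-1)\}$, and finally the torsion equation. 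Without this explicit triangularization, neither the injectivity nor the surjectivity of your proposed $I_2$ is established.
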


For $G=\Rb$, Theorem \ref{thm: topology} was proven in Bonahon-Dreyer \cite[Proposition 8.2]{{BoD}}.

To establish the topology of the space of $d$--pleated surfaces, we specialize Theorem~\ref{thm: topology} to the case where $G=\Cb/2\pi i\Zb$, in which case the $d$--torsion subroup $G_d$ is generated by the element $\frac{2 \pi i}{d} \in \Cb/2\pi i \Zb$ and it is isomorphic to the finite cyclic group $\Zb_d$. Define the map
\[{\rm tor}_d:={\rm tor}_d'\circ I_1\circ \mathfrak{sb}_d:\mathfrak R(\lambda,d)\to \Zb_d,\]
where $I_1$ is the map from Theorem \ref{thm: hom} that sends the space of $\lambda$--cocyclic pair of dimension $d$ with values in $\Cb/2\pi i\Zb$ to a distinguished subgroup of $$((\Cb/2\pi i\Zb)^{\Ac})^{\mathcal O  \sqcup \mathcal U } \times ((\Cb/2\pi i\Zb)^{\Bc})^{\mathcal S }$$, and $\mathfrak{sb}_d$ is the shear-bend parameterization  given by Theorem~\ref{thm: parameterization}. Then, for $\rho\in\mathcal R(\lambda,d)$, set ${\rm tor}_d(\rho):={\rm tor}_d([\rho])$. 

The following is a consequence of Theorem~\ref{thm: topology}.

\begin{corollary}
	The space $\mathfrak R(\lambda,d)$ has $d$ connected components, each of which is real analytically diffeomorphic to
	\[\Rb^{(d^2-1)(2g-2)}\times (\Rb/2\pi \Zb)^{(d^2-1)(2g-2)}.\] 
	Furthermore, the map ${\rm tor}_d:\mathfrak R(\lambda,d)\to\Zb_d$ descends to a bijection between the set of connected components of $\mathfrak R(\lambda,d)$ and $\Zb_d$.
\end{corollary}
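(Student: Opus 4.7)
The plan is to assemble the corollary from three already-established results: the shear-bend biholomorphism $\mathfrak{sb}_d$ from Theorem~\ref{thm: parameterization}, the isomorphism $I_2\circ I_1$ from Theorem~\ref{thm: topology} applied to the Lie group $G=\Rb/2\pi\Zb$, and the real-analytic parametrization of the Hitchin component as the open convex polyhedral cone $\mathcal{C}(\lambda,d)$. The substantive work has already been carried out in these three inputs, so the corollary will follow by a direct assembly; no genuine obstacle is anticipated, and the only point that requires any care is checking that $\mathfrak{sb}_d$ is compatible with the decomposition of $\mathcal{Y}(\lambda,d;\Cb/2\pi i\Zb)$ into its real and imaginary parts.

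First I would invoke Theorem~\ref{thm: parameterization} to identify $\mathfrak R(\lambda,d)$, via $\mathfrak{sb}_d$, with the subset $\mathcal{C}(\lambda,d)+i\,\mathcal{Y}(\lambda,d;\Rb/2\pi\Zb)$ of the Abelian Lie group $\mathcal{Y}(\lambda,d;\Cb/2\pi i\Zb)$. Since the decomposition $\mathcal{Y}(\lambda,d;\Cb/2\pi i\Zb)=\mathcal{Y}(\lambda,d;\Rb)\oplus i\,\mathcal{Y}(\lambda,d;\Rb/2\pi\Zb)$ is an internal direct sum of Abelian Lie groups, the image is real-analytically diffeomorphic to the product manifold $\mathcal{C}(\lambda,d)\times\mathcal{Y}(\lambda,d;\Rb/2\pi\Zb)$. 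I would then treat the two factors separately. The second assertion of Theorem~\ref{thm: parameterization} provides that $\mathfrak{sb}_d|_{\mathrm{Hit}_d(S)}$ is a real-analytic diffeomorphism from $\mathrm{Hit}_d(S)$ onto $\mathcal{C}(\lambda,d)$, and combined with Hitchin's identification of $\mathrm{Hit}_d(S)$ with $\Rb^{(d^2-1)(2g-2)}$ \cite{hit_lie} this gives $\mathcal{C}(\lambda,d)\cong\Rb^{(d^2-1)(2g-2)}$ as real-analytic manifolds. For the second factor, applying Theorem~\ref{thm: topology} to $G=\Rb/2\pi\Zb$---whose $d$--torsion subgroup is cyclic of order $d$---yields the isomorphism of Lie groups
\[I_2\circ I_1\colon \mathcal{Y}(\lambda,d;\Rb/2\pi\Zb)\xrightarrow{\ \cong\ }(\Rb/2\pi\Zb)^{(d^2-1)(2g-2)}\times\Zb_d.\]

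Combining these two identifications produces a real-analytic diffeomorphism
\[\mathfrak R(\lambda,d)\xrightarrow{\ \cong\ }\Rb^{(d^2-1)(2g-2)}\times(\Rb/2\pi\Zb)^{(d^2-1)(2g-2)}\times\Zb_d.\]
Since the first two factors are connected and the last is discrete of cardinality $d$, the space $\mathfrak R(\lambda,d)$ has exactly $d$ connected components, each real-analytically diffeomorphic to $\Rb^{(d^2-1)(2g-2)}\times(\Rb/2\pi\Zb)^{(d^2-1)(2g-2)}$. For the assertion about $\mathrm{tor}_d$, recall that by definition $\mathrm{tor}_d=\mathrm{tor}_d'\circ I_1\circ\mathfrak{sb}_d$, and the second assertion of Theorem~\ref{thm: topology} says that $\mathrm{tor}_d'=\pi_{\mathrm{tor}}\circ I_2$. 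Hence under the identification just constructed, $\mathrm{tor}_d$ is exactly the projection onto the $\Zb_d$--factor; its fibers are precisely the connected components, and it therefore descends to the claimed bijection between $\pi_0(\mathfrak R(\lambda,d))$ and $\Zb_d$.
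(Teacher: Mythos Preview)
Your proof is correct and follows essentially the same approach as the paper's. The only cosmetic difference is that the paper applies $I_2\circ I_1$ with $G=\Cb/2\pi i\Zb$ to the whole of $\mathcal{Y}(\lambda,d;\Cb/2\pi i\Zb)$ in a single step (so that $\mathrm{tor}_d=\pi_{\mathrm{tor}}\circ I_2\circ I_1\circ\mathfrak{sb}_d$ directly), whereas you split into real and imaginary parts and apply $I_2\circ I_1$ with $G=\Rb/2\pi\Zb$ only to the imaginary factor; these agree by the evident naturality of $I_1$, $I_2$, and $\mathrm{tor}_d'$ in the coefficient group.
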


\begin{proof}
	By Theorem \ref{thm: parameterization}, the map 
	\[\mathfrak{sb}_d:\mathfrak{R}(\lambda,d)\to\Yc(\lambda,d;\Cb/2\pi i\Zb)=\mathcal{Y}(\lambda,d;\Rb) +i\mathcal{Y}(\lambda,d;\Rb/2\pi \Zb)\]
	is a biholomorphism onto $\mathcal{C}(\lambda, d) +i \mathcal{Y}(\lambda,d;\Rb/2\pi \Zb)$, where $\mathcal C(\lambda,d)$ is an open convex polyhedral cone in the real vector space $\mathcal{Y}(\lambda,d;\Rb)$. By specializing Theorem~\ref{thm: hom} and Theorem~\ref{thm: topology} to the case where $G=\Cb/2\pi i\Zb$, we see that the map 
	\[I_2\circ I_1:\mathcal Y(\lambda,d;\Cb/2\pi i\Zb)\to \Rb^{(d^2-1)(2g-2)}\times (\Rb/2\pi \Zb)^{(d^2-1)(2g-2)}\times\Zb_d\]
	is an isomorphism of Lie groups. Thus,
	\[I_2\circ I_1\circ\mathfrak{sb}_d:\mathfrak{R}(\lambda,d)\to \mathcal{C}(\lambda, d)\times (\Rb/2\pi \Zb)^{(d^2-1)(2g-2)}\times\Zb_d\]
	is a real analytic diffeomorphism, and ${\rm tor}_d=\pi_{\rm tor}\circ I_2\circ I_1\circ\mathfrak{sb}_d$. The corollary follows immediately.
\end{proof}

Before proving Theorem \ref{thm: topology}, we record the expression for ${\rm tor}_d$ in terms of the shear-bend $\lambda$--cocyclic pair of $\rho$.

\begin{remark}\label{tor formula}
	Let $\rho\in\mathcal R(\lambda,d)$, and let $(\alpha,\theta)=\mathfrak{sb}_d([\rho])$. In this case, we have:
	\begin{itemize}
		\item If $d$ is odd, then
		\[{\rm tor}_d(\rho):=-\sum_{T\in\Delta}\sum_{{\bf j}\in\mathcal B^*}\theta({\bf x}_T,{\bf j}),\]
		where ${\bf x}_T\in\widetilde\Delta^o$ is some (any) clockwise ordering of the vertices of some (any) lift to $\widetilde S$ of $T$. 
		\item If $d$ is even, then
		\begin{align*}
			{\rm tor}_d(\rho) &:= -\sum_{T\in\Delta} \sum_{{\bf j} \in \Bc^*} \theta({\bf x}_T,{\bf j}) + \sum_{R \in \mathfrak{U}^r} \alpha({\bf T}_{R},{\bf i}^0) - \sum_{R \in \mathfrak{U}^\ell} \alpha({\bf T}_{R},{\bf i}^0)-\sum_{t\in\mathcal S^\ell}\sum_{{\bf j}\in\Bc^0}\theta({\bf x}_{t},{\bf j})\\
			& = -\sum_{T\in\Delta} \sum_{{\bf j} \in \Bc^*} \theta({\bf x}_T,{\bf j}) - \sum_{R \in \mathfrak{U}^r} \alpha({\bf T}_{R},{\bf i}^0) + \sum_{R \in \mathfrak{U}^\ell} \alpha({\bf T}_{R},{\bf i}^0)-\sum_{t\in\mathcal S^r}\sum_{{\bf j}\in\Bc^0}\theta({\bf x}_{t},{\bf j}),
		\end{align*}
		where:
		\begin{itemize}
			\item[$\circ$] ${\bf x}_T\in\widetilde\Delta^o$ is as above.
			\item[$\circ$] ${\bf T}_R\in \wt\Delta^{2*}$ is some (any) ordering of the pair of plaques of $\widetilde\lambda$ that contain the horizontal boundary components of some (any) lift of $R$ to $\widetilde S$.
			\item[$\circ$] ${\bf x}_t=(x_{t,1},x_{t,2},x_{t,3})\in\widetilde\Delta^o$ is the clockwise ordering of the vertices of the plaque of $\widetilde\lambda$ that contains some (any) lift $\widetilde t$ to $\widetilde S$ of $t$, such that the geodesic with $x_{t,1}$ and $x_{t,3}$ as its endpoints does not intersect the switch of $\widetilde N$ that contains $\widetilde t$.
		\end{itemize}
	\end{itemize}
\end{remark}

\subsection{Proof of Theorem \ref{thm: topology}}

In order to describe the isomorphism class of the group $Y$, we will need to examine the equations $\blacklozenge(t, {\bf j})$ and $\clubsuit({\bf i})$ in great detail. As in the Gauss reduction algorithm for systems of linear equations, we aim to determine a minimal set of components in $(G^{\Ac})^{\mathcal{O} \sqcup \mathcal{U}} \times (G^{\Bc})^{\mathcal{S}}$ that uniquely characterizes every element $({\bf v}, {\bf z})$ in $Y$. The description of such minimal set of components is quite technical and it requires us to introduce suitable subsets of the set of indices $\Ac$ and $\Bc$. In particular, we set:
\begin{align*}
	\Ac'&:=\left\{{\bf i}=(i_1,i_2)\in\Ac\mid i_1\le \left\lfloor\frac{d-1}{2}\right\rfloor\right\},\\
	\Ac''&:=\left\{{\bf i}=(i_1,i_2)\in\Ac\mid i_1\le \left\lceil\frac{d-1}{2}\right\rceil\right\},\\
	\Bc'&:=\left\{{\bf j}=(j_1,j_2,j_3)\in\Bc\mid j_2=1\text{ and }j_3\le \left\lfloor\frac{d-3}{2}\right\rfloor\right\},\\
	\Bc''&:=\left\{{\bf j}=(j_1,j_2,j_3)\in\Bc\mid j_2=1\text{ and }j_3\le \left\lceil\frac{d-3}{2}\right\rceil\right\}.
\end{align*}
Observe that when $d$ is odd, $\Ac'=\Ac''$ and $\Bc'=\Bc''$. Instead, when $d$ is even, $\Ac''=\Ac'\cup\{{\bf i}^0\}$ and $\Bc''=\Bc'\cup\{{\bf j}^0\}$, where ${\bf i}^0=(\frac{d}{2},\frac{d}{2})$ and ${\bf j}^0=(\frac{d}{2},1,\frac{d-2}{2})$. See Figure \ref{fig:boundary-rectangles}.

Moreover, for each ${\bf i}\in\Ac$, we denote by $\spadesuit({\bf i})$ the equation on $(G^{\Ac})^{\mathcal{O} \sqcup \mathcal{U}} \times (G^{\Bc})^{\mathcal{S}}$ given by 
\[\tag*{$\spadesuit({\bf i})$}\label{eq:simple} \sum_{t \in \mathcal S} \sum_{{\bf j} \in \Bc : j_2 = i_2} z_t^{\bf j} = \sum_{t \in \mathcal S} \sum_{{\bf j} \in \Bc : j_2 = i_1} z_t^{\bf j},\]
where as usual, for any $({\bf v},{\bf z})\in (G^{\Ac})^{\mathcal{O} \sqcup \mathcal{U}} \times (G^{\Bc})^{\mathcal{S}}$, we denote ${\bf v}=(v_R)_{R\in\mathcal O\sqcup\mathcal U}=((v_R^{\bf i})_{{\bf i}\in\Ac})_{R\in\mathcal O\sqcup\mathcal U}$ and ${\bf z}=(z_t)_{t\in\mathcal S}=((z_t^{\bf j})_{{\bf j}\in\Bc})_{t\in\mathcal S}$.

\begin{figure}[h!]
	\centering
	\def\svgwidth{5.5cm}
	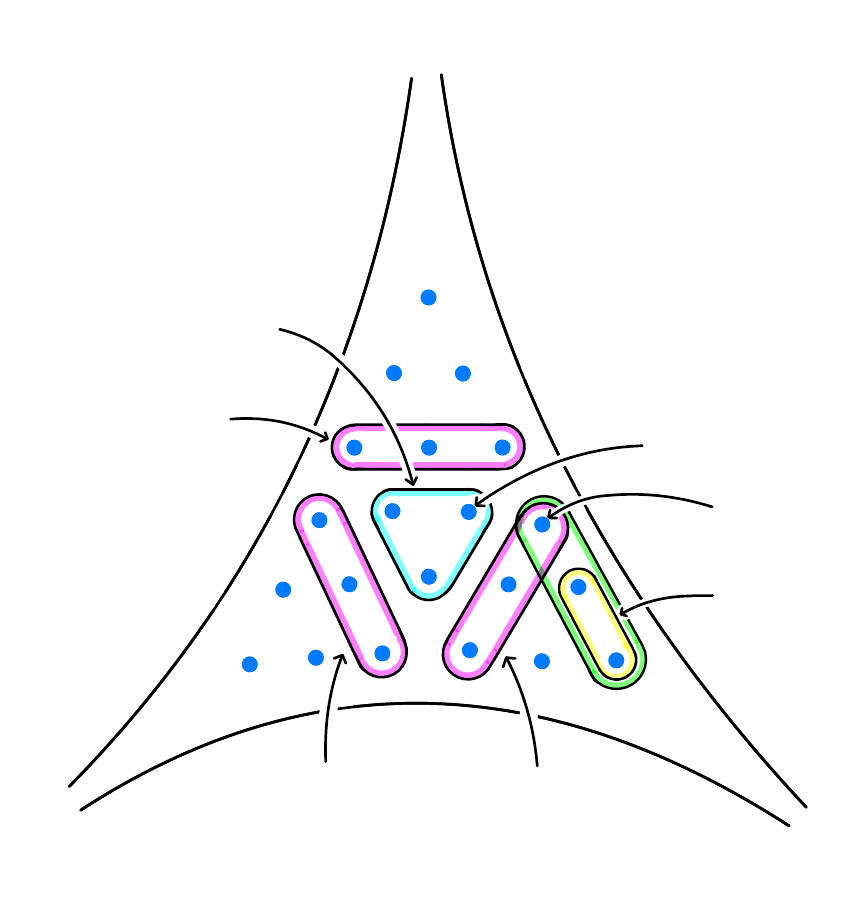
	\def\svgwidth{5.5cm}
	%% Creator: Inkscape 1.2.2 (b0a84865, 2022-12-01), www.inkscape.org
%% PDF/EPS/PS + LaTeX output extension by Johan Engelen, 2010
%% Accompanies image file '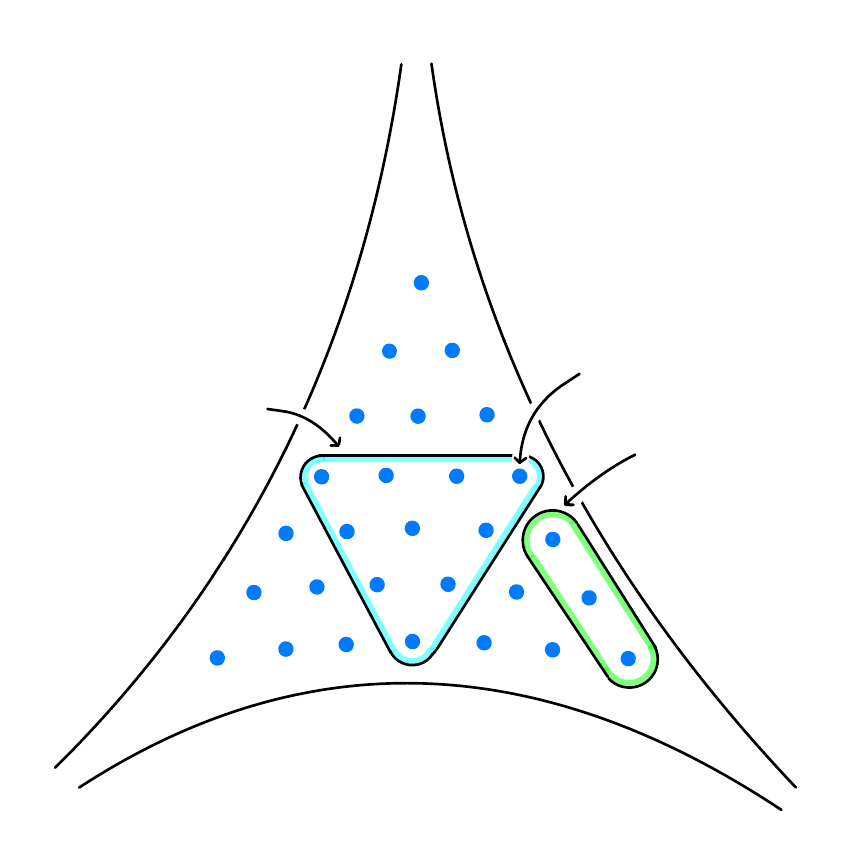' (pdf, eps, ps)
%%
%% To include the image in your LaTeX document, write
%%   \input{<filename>.pdf_tex}
%%  instead of
%%   \includegraphics{<filename>.pdf}
%% To scale the image, write
%%   \def\svgwidth{<desired width>}
%%   \input{<filename>.pdf_tex}
%%  instead of
%%   \includegraphics[width=<desired width>]{<filename>.pdf}
%%
%% Images with a different path to the parent latex file can
%% be accessed with the `import' package (which may need to be
%% installed) using
%%   \usepackage{import}
%% in the preamble, and then including the image with
%%   \import{<path to file>}{<filename>.pdf_tex}
%% Alternatively, one can specify
%%   \graphicspath{{<path to file>/}}
%% 
%% For more information, please see info/svg-inkscape on CTAN:
%%   http://tug.ctan.org/tex-archive/info/svg-inkscape
%%
\begingroup%
  \makeatletter%
  \providecommand\color[2][]{%
    \errmessage{(Inkscape) Color is used for the text in Inkscape, but the package 'color.sty' is not loaded}%
    \renewcommand\color[2][]{}%
  }%
  \providecommand\transparent[1]{%
    \errmessage{(Inkscape) Transparency is used (non-zero) for the text in Inkscape, but the package 'transparent.sty' is not loaded}%
    \renewcommand\transparent[1]{}%
  }%
  \providecommand\rotatebox[2]{#2}%
  \newcommand*\fsize{\dimexpr\f@size pt\relax}%
  \newcommand*\lineheight[1]{\fontsize{\fsize}{#1\fsize}\selectfont}%
  \ifx\svgwidth\undefined%
    \setlength{\unitlength}{409.43943787bp}%
    \ifx\svgscale\undefined%
      \relax%
    \else%
      \setlength{\unitlength}{\unitlength * \real{\svgscale}}%
    \fi%
  \else%
    \setlength{\unitlength}{\svgwidth}%
  \fi%
  \global\let\svgwidth\undefined%
  \global\let\svgscale\undefined%
  \makeatother%
  \begin{picture}(1,1.00576487)%
    \lineheight{1}%
    \setlength\tabcolsep{0pt}%
    \put(0,0){\includegraphics[width=\unitlength,page=1]{figure2b.pdf}}%
    \put(0.22685609,0.52163042){\color[rgb]{0,0,0}\makebox(0,0)[lt]{\lineheight{1.25}\smash{\begin{tabular}[t]{l}{\small $\mathcal{B}^*$}\end{tabular}}}}%
    \put(0.70212976,0.5581845){\color[rgb]{0,0,0}\makebox(0,0)[lt]{\lineheight{1.25}\smash{\begin{tabular}[t]{l}{\small $\mathbf{j}'$}\end{tabular}}}}%
    \put(0.766492,0.46026999){\color[rgb]{0,0,0}\makebox(0,0)[lt]{\lineheight{1.25}\smash{\begin{tabular}[t]{l}{\small $\mathcal{B}''$}\end{tabular}}}}%
    \put(0.01894726,0.05520355){\color[rgb]{0,0,0}\makebox(0,0)[lt]{\lineheight{1.25}\smash{\begin{tabular}[t]{l}{\small $x_2$}\end{tabular}}}}%
    \put(0.46561914,0.96639956){\color[rgb]{0,0,0}\makebox(0,0)[lt]{\lineheight{1.25}\smash{\begin{tabular}[t]{l}{\small $x_3$}\end{tabular}}}}%
    \put(0.9427928,0.03524864){\color[rgb]{0,0,0}\makebox(0,0)[lt]{\lineheight{1.25}\smash{\begin{tabular}[t]{l}{\small $x_1$}\end{tabular}}}}%
  \end{picture}%
\endgroup%

	\caption{\small Dots are elements of $\mathcal B$. $\mathcal B^*$ is the cyan triangle, and $\mathcal B''$ is the green rectangle. Left (resp. right) figure represents the even (resp. odd) case with $d = 8$ (resp. $d = 9$.) On the left $\mathcal B^0$, and $\mathcal B^0_\pm$ are in purple and $\mathcal B'$ is in yellow. For $d$ even, $\mathcal B''=\mathcal B'\cup \{{\bf j}^0\}$.}
	\label{fig:boundary-rectangles}
\end{figure}

The main technical ingredient for the proof of Theorem~\ref{thm: topology} is the following statement:

\begin{lemma}\label{lem: change equations}
	The group $Y$ from Theorem \ref{thm: hom} can be described as 
	\[
	Y = \{({\bf v}, {\bf z}) \in (G^{\Ac})^{\mathcal{O} \sqcup \mathcal{U}} \times (G^{\Bc})^{\mathcal{S}} \mid \text{$({\bf v}, {\bf z})$ verifies }(\star) \} ,
	\]
	where
	\[(\star)=\left\{\begin{array}{ll}
		\blacklozenge(t, {\bf j}), &{\bf j} \in \Bc\text{ and }t \in \mathcal S,\\
		\clubsuit({\bf i}), &{\bf i}\in\Ac'',\\
		\spadesuit({\bf i}), &{\bf i}\in\Ac'-\{(1,d-1)\},\\
		d\,{\rm tor}'_d({\bf v},{\bf z})=0.&
	\end{array}
	\right.\]
\end{lemma}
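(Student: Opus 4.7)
The proof will proceed by two successive reductions of the defining system of $Y$. The first is elementary: the LHS of $\clubsuit({\bf i})$, namely $\sum_{R\in\mathcal{U}^r}(v_R^{{\bf i}}+v_R^{\widehat{{\bf i}}})-\sum_{R\in\mathcal{U}^\ell}(v_R^{{\bf i}}+v_R^{\widehat{{\bf i}}})$, is manifestly invariant under ${\bf i}\leftrightarrow\widehat{{\bf i}}$, so the difference $\clubsuit({\bf i})-\clubsuit(\widehat{{\bf i}})$ coincides with $\spadesuit({\bf i})$. Since $\Ac=\Ac''\sqcup\{\widehat{{\bf i}}:{\bf i}\in\Ac'\}$, with $\widehat{{\bf i}^0}={\bf i}^0$ in the even-$d$ case, this immediately shows that $\{\clubsuit({\bf i}):{\bf i}\in\Ac\}$ and $\{\clubsuit({\bf i}):{\bf i}\in\Ac''\}\cup\{\spadesuit({\bf i}):{\bf i}\in\Ac'\}$ cut out the same subset, in a manner reversible by adding and subtracting.

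The second, more substantive reduction will trade the single equation $\spadesuit((1,d-1))$ for $d\cdot{\rm tor}'_d({\bf v},{\bf z})=0$, modulo $\blacklozenge$, the remaining $\spadesuit({\bf i})$ with ${\bf i}\in\Ac'-\{(1,d-1)\}$, and (when $d$ is even) $\clubsuit({\bf i}^0)$. The core input is the $\blacklozenge$-driven identity
\[
\sum_{t\in\mathcal{S}}\sum_{\substack{{\bf j}\in\Bc\\ j_2=i}}z_t^{{\bf j}}\ =\ \sum_{T\in\Delta}\sum_{{\bf j}\in\Bc}N_i({\bf j})\,z_{t(T)}^{{\bf j}},\qquad N_i({\bf j}):=\#\{k:j_k=i\},
\]
valid for each $i\in\{1,\dots,d-1\}$ and obtained by grouping the sum over $\mathcal{S}$ into the three vertical boundary components of each plaque and applying the cyclic symmetry $z_{t_\pm}^{{\bf j}}=z_t^{{\bf j}_\mp}$. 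This recasts $\spadesuit((i_1,d-i_1))$ as $\sum_T\sum_{\bf j}(N_{d-i_1}-N_{i_1})({\bf j})\,z_{t(T)}^{{\bf j}}=0$. Using $\sum_v v\,N_v({\bf j})=d$, a direct calculation produces the combinatorial identity
\[
\sum_{i_1=1}^{\lfloor(d-1)/2\rfloor}i_1\bigl(N_{d-i_1}-N_{i_1}\bigr)({\bf j})\ =\ d\cdot\#\{k:j_k>d/2\}\,-\,d\,+\,\tfrac{d}{2}\,N_{d/2}({\bf j}),
\]
which evaluates to $-d$ on $\Bc^*$, to $-\tfrac{d}{2}$ on $\{{\bf j}\in\Bc:N_{d/2}({\bf j})=1\}$, and to $0$ elsewhere (the middle case being vacuous for $d$ odd).

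For $d$ odd the $N_{d/2}$-term is absent, so $\sum_{i_1}i_1\,\spadesuit((i_1,d-i_1))$ equals $-d\sum_T\sum_{\Bc^*}z_{t(T)}^{{\bf j}}=d\cdot{\rm tor}'_d$ as a formal identity modulo $\blacklozenge$. For $d$ even, with $V:=\sum_{\mathcal{U}^r}v_R^{{\bf i}^0}-\sum_{\mathcal{U}^\ell}v_R^{{\bf i}^0}$ and $Z_\pm:=\sum_{t\in\mathcal{S}^{\pm}}\sum_{\Bc^0}z_t^{{\bf j}}$, multiplying $\clubsuit({\bf i}^0)$ by the integer $d/2$ yields $dV=\tfrac{d}{2}(Z_\ell-Z_r)$ in $G$, while the $\blacklozenge$-identity applied to $i=d/2$ gives $Z_\ell+Z_r=\sum_T\sum_{\{{\bf j}:N_{d/2}({\bf j})=1\}}z_{t(T)}^{{\bf j}}$; substituting both into the definition of $d\cdot{\rm tor}'_d$ reproduces exactly the pattern from the weighted $\spadesuit$-combination. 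In both parities one thus obtains the formal identity $\sum_{i_1=1}^{\lfloor(d-1)/2\rfloor}i_1\,\spadesuit((i_1,d-i_1))=d\cdot{\rm tor}'_d$, and isolating the $i_1=1$ summand yields the claimed trade. The main obstacle I anticipate is the even-parity bookkeeping: it will be necessary to verify carefully that the auxiliary $v_R^{{\bf i}^0}$-terms in ${\rm tor}'_d$ combine with $Z_\ell$, through the $(d/2)$-multiple of $\clubsuit({\bf i}^0)$ (a manipulation valid in any abelian group $G$), to produce precisely the coefficient $-\tfrac{d}{2}$ on $\{N_{d/2}({\bf j})=1\}$ predicted by the combinatorial identity, so that no residual terms remain.
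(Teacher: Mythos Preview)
Your proposal is correct and follows essentially the same approach as the paper. The paper splits the argument into two auxiliary lemmas (one for the $\clubsuit\leftrightarrow\spadesuit$ trade via the $\widehat{\bf i}$-symmetry of the left-hand side, and one computing the weighted combination $\sum_{{\bf i}\in\Ac'} i_1\,\spadesuit({\bf i})$ via the $\blacklozenge$ relations), arriving at the same combinatorial identity you state with $N_i({\bf j})$; in the even case it likewise adds $\tfrac{d}{2}\clubsuit({\bf i}^0)$ to recover $d\cdot{\rm tor}'_d$, and then isolates the coefficient-$1$ summand $\spadesuit((1,d-1))$.
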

	
The system of equations $(\star)$ in Lemma \ref{lem: change equations} should be considered as the analog of the row-echelon form for the system of linear equations that describes the group $Y$. Assuming temporarily this technical statement, we can now prove Theorem \ref{thm: topology}.
	
\begin{proof}[Proof of Theorem \ref{thm: topology}]
	We start by describing our candidate minimal set of components in $(G^{\Ac})^{\mathcal{O} \sqcup \mathcal{U}} \times (G^{\Bc})^{\mathcal{S}}$ that uniquely characterizes every element $({\bf v}, {\bf z})$ in $Y$. Let ${\bf j}'\in\Bc$ be the triple given by
		\[{\bf j}':=\left\{\begin{array}{ll}
			(\frac{d-1}{2},1,\frac{d-1}{2})&\text{if }d\text{ is odd},\\
			(\frac{d-2}{2},2,\frac{d-2}{2})&\text{if }d\text{ is even},
		\end{array}\right.\]
		and notice that ${\bf j}'\in\Bc-\Bc''$, see Figure \ref{fig:boundary-rectangles}. We fix once and for all a plaque $\overline{T}\in\Delta$ of $\lambda$ and a right unorientable rectangle $\overline{R}\in\mathcal U^r$. (Recall from Section \ref{ssub:max tree} that, given any maximal tree $M$ inside $N$, there exists at least one unorientable rectangle $\overline{R}$ for $M$. Then by reversing the orientation on the ties of $M$ if necessary, we can assume that $\overline{R}$ is a right unorientable rectangle for $M$.)
		We claim that the map
		\[I_2:Y \longrightarrow (G^{\Ac})^{\mathcal{O} \sqcup \mathcal{U}-\{\overline{R}\}}\times G^{\Ac-\Ac'}\times(G^{\Bc})^{\Delta-\{\overline{T}\}}\times G^{\Bc-(\Bc''\cup\{{\bf j}'\})} \times  G_d , \]
		that sends every $({\bf v},{\bf z})\in Y$ to
		\[
		\Big((v_R)_{R\in\mathcal O\sqcup\mathcal U-\{\overline{R}\}},(v_{\overline{R}}^{\bf i})_{{\bf i}\in\Ac-\Ac'},(z_{t(T)})_{T\in\Delta-\{\overline{T}\}},(z_{t(\overline{T})}^{\bf j})_{{\bf j}\in\Bc-(\Bc''\cup\{{\bf j}'\})},{\rm tor}'_d({\bf v},{\bf z})\Big) ,
		\]
		is the desired group isomorphism. 
		
		First, recall from Section \ref{background} that the number of plaques of any maximal lamination of $S$ is equal to $|\Delta|=4g-4$. Moreover, since $N$ is a train track neighborhood for $\lambda$ and $M$ is a maximal tree in $N$, the number of rectangles of $N$ that are not contained in $M$ is equal to $|\mathcal O\sqcup\mathcal U|=6g-5$. Furthermore, the subsets of indices $\Ac$, $\Ac'$, $\Bc$, and $\Bc''$ verify:
		\[
		|\Ac|=d-1, \quad |\Ac'|=\left\lfloor\frac{d-1}{2}\right\rfloor, \quad |\Bc|=\frac{(d-1)(d-2)}{2} , \quad |\Bc''|=\left\lceil\frac{d-3}{2}\right\rceil .
		\]
			In particular,
			\begin{align*}
				&|\Ac|(|\mathcal O\sqcup\mathcal U|-1)+|\Ac-\Ac'|+|\Bc|(|\Delta|-1)+|\Bc-(\Bc''\cup\{{\bf j}'\})|\\
				&=|\Ac||\mathcal O\sqcup\mathcal U|+|\Bc||\Delta|-(|\Ac'|+|\Bc''|)-1\\
				&=(d^2-1)(2g-2).
			\end{align*}
			It follows that the codomain of the map $I_2$ is isomorphic to the group $G^{(d^2-1)(2g-2)} \times G_d$, which is the target group in the statement of Theorem \ref{thm: topology}. In what follows, we will refer to the codomain of $I_2$ by $G^{(d^2-1)(2g-2)} \times G_d$, with abuse of notation.
			
			It is evident from the definition of ${\rm tor}'_d$ that $I_2$ is a group homomorphism, so it suffices to show that $I_2$ is a bijection. To do so, we need to show that it admits a (left and right) inverse. Notice that the projection of $I_2({\bf v}, {\bf z})$ onto $G^{(d^2-1)(2g-2)}$ does not depend on the components of $({\bf v}, {\bf z})$ given by:
			\begin{enumerate}
				\item  $z_{t(\overline{T})}^{\bf j}$, $z_{t(\overline{T})_+}^{{\bf j}_+}$, and $z_{t(\overline{T})_-}^{{\bf j}_-}$ for ${\bf j}={\bf j}^0$ if $d$ is even,
				\item  $z_{t(\overline{T})}^{\bf j}$, $z_{t(\overline{T})_+}^{{\bf j}_+}$, and $z_{t(\overline{T})_-}^{{\bf j}_-}$ for ${\bf j}={\bf j}'$,
				\item  $z_{t(\overline{T})}^{\bf j}$, $z_{t(\overline{T})_+}^{{\bf j}_+}$, and $z_{t(\overline{T})_-}^{{\bf j}_-}$ for ${\bf j}\in\Bc'$, and
				\item $v_{\overline{R}}^{\bf i}$ for ${\bf i}\in\Ac'$. 
			\end{enumerate}
			(Recall that $\Bc'' = \Bc'$ if $d$ is odd, and $\Bc'' = \Bc' \cup \{{\bf j}^0\}$ when $d$ is even.) 
			
			Set
			\[I_2^{-1}: G^{(d^2-1)(2g-2)} \times  G_d\longrightarrow Y \]
			to be the map that sends 
			\[\left((u_R)_{R\in\mathcal{O} \sqcup\mathcal{U}-\{\overline{R}\}},(u^{\bf i})_{{\bf i}\in\Ac-\Ac'},(w_T)_{T\in\Delta-\overline{T}},(w^{\bf j})_{{\bf j}\in\Bc-(\Bc''\cup\{{\bf j}'\})},\epsilon\right) \in G^{(d^2-1)(2g-2)} \times  G_d \]
			to the element $({\bf v},{\bf z})\in Y$ defined by the following steps:
			
			\medskip
			
			\noindent {\bf Step 0:} Define $v_R$ for all $R\in\mathcal{O} \sqcup\mathcal{U}-\{\overline{R}\}$, $v_{\overline{R}}^{\bf i}$ for all ${\bf i}\in\Ac-\Ac'$, $z_{t(T)}$ for all $T\in\Delta-\overline{T}$, and $z_{t(\overline{T})}^{\bf j}$ for all ${\bf j}\in\Bc-(\Bc''\cup\{{\bf j}'\})$ by 
			\begin{align*}
				v_R:=u_R&\quad\text{for all }R\in\mathcal{O} \sqcup\mathcal{U}-\{\overline{R}\},\\
				v_{\overline{R}}^{\bf i}:=u^{\bf i}&\quad\text{for all }{\bf i}\in\Ac-\Ac',\\
				z^{\bf j}_{t(T)}=z^{{\bf j}_+}_{t(T)_+}=z^{{\bf j}_-}_{t(T)_-}:=w_T^{\bf j}&\quad\text{for all }T\in\Delta-\{\overline{T}\}\text{ and for all }{\bf j}\in\Bc,\\
				z_{t(\overline{T})}^{\bf j}=z_{t(\overline{T})_+}^{{\bf j}_+}=z_{t(\overline{T})_-}^{{\bf j}_-}:=w^{\bf j}&\quad\text{for all }{\bf j}\in\Bc-(\Bc''\cup\{{\bf j}'\}).
			\end{align*}
			It remains to specify the components of $({\bf v}, {\bf z})$ listed in items (1)--(4) above. We will do so in Steps 1--4 below, following the order.
			
			\medskip
			
			\noindent {\bf Step 1:} If $d$ is odd, skip this step. On the other hand, if $d$ is even, notice that ${\bf i}^0\in\Ac-\Ac'$ and
			\[(\Bc^0\cup\Bc^0_+\cup\Bc^0_-)\cap (\Bc''\cup\{{\bf j}'\})=\left\{{\bf j}^0\right\},\]
			where $\Bc^0_{\pm}:=\{{\bf j}_\pm\in\Bc:{\bf j}\in\Bc^0\}$. Also, notice that equation $\clubsuit({\bf i}^0)$ is
			\[2\sum_{R \in \mathcal{U}^r} v_R^{\mathbf{i}_0} - 2\sum_{R \in \mathcal{U}^\ell} v_R^{\mathbf{i}_0}= \sum_{t\in \mathcal{S}^\ell} \sum_{{\bf j} \in \Bc^0} z^{\bf j}_t - \sum_{t\in \mathcal{S}^r} \sum_{{\bf j} \in \Bc^0} z^{\bf j}_t.\]
			Recall that ${\bf j}^0_-\in \mathcal B^0$. Thus, in Step 0, we have already specified all but one of the variables that appear in the equation $\clubsuit({\bf i}^0)$, namely $z_{t(\overline{T})_-}^{{\bf j}^0_-}$. We specify $z_{t(\overline{T})_-}^{{\bf j}^0_-}$ as follows:
			\begin{itemize}
				\item when $t(\overline{T})_-$ is a right vertical boundary component, let
				\begin{align*}
					z_{t(\overline{T})_-}^{{\bf j}^0_-} := 2\sum_{R\in\mathcal U^\ell}&v_R^{{\bf i}^0}-2\sum_{R\in\mathcal U^r}v_R^{{\bf i}^0}+\sum_{t\in\mathcal S^\ell}\sum_{{\bf j}\in\Bc^0}z_t^{\bf j}-\sum_{t\in\mathcal S^r-\{t(\overline{T})_-\}}\sum_{{\bf j}\in\Bc^0}z_t^{\bf j}-\sum_{{\bf j}\in\Bc^0-\{{\bf j}^0_-\}}z_{t(\overline{T})_-}^{\bf j};
				\end{align*}
				\item when $t(\overline{T})_-$ is a left vertical boundary component, let 
				\begin{align*}
					z_{t(\overline{T})_-}^{{\bf j}^0_-} := -2\sum_{R\in\mathcal U^\ell}&v_R^{{\bf i}^0}+2\sum_{R\in\mathcal U^r}v_R^{{\bf i}^0}-\sum_{t\in\mathcal S^\ell-\{t(\overline{T})_-\}}\sum_{{\bf j}\in\Bc^0}z_t^{\bf j}-\sum_{{\bf j}\in\Bc^0-\{{\bf j}^0_-\}}z_{t(\overline{T})_-}^{\bf j}+\sum_{t\in\mathcal S^r}\sum_{{\bf j}\in\Bc^0}z_t^{\bf j}.
				\end{align*}
			\end{itemize}
			Then note that equation $\clubsuit({\bf i}_0)$ holds. Also, set
			\begin{align*}
				z_{t(\overline{T})}^{{\bf j}^0}=z_{t(\overline{T})_+}^{{\bf j}^0_+}=z_{t(\overline{T})_-}^{{\bf j}^0_-}.
			\end{align*}
			
			\medskip
			
			\noindent {\bf Step 2:} In Steps 0 and 1, we have specified $z_{t(\overline{T})}^{\bf j}$, $z_{t(\overline{T})_+}^{{\bf j}_+}$, and $z_{t(\overline{T})_-}^{{\bf j}_-}$ for all ${\bf j}\in\Bc-(\Bc'\cup\{{\bf j}'\})$. Since
			\[{\bf i}^0\in\Ac-\Ac',\quad \Bc^*\cap\left(\Bc'\cup\{{\bf j}'\}\right)=\{{\bf j}'\},\] 
			and $\Bc^0\subset\Bc-(\Bc'\cup\{{\bf j}'\})$ when $d$ is even, we have already specified all but one of the variables that appear in the expression for ${\rm tor}_d'({\bf v},{\bf z})$, namely $z^{{\bf j}'}_{t(\overline{T})}$. Then:
			\begin{itemize}
				\item When $d$ is even, set
				\begin{align*}
					z^{{\bf j}'}_{t(\overline{T})} := - \epsilon - \sum_{T\in\Delta - \{\overline{T}\}} \sum_{{\bf j} \in \Bc^*} z^{\bf j}_{t(T)} - \sum_{{\bf j} \in \Bc^* - \{{\bf j}'\}} z^{\bf j}_{t(\overline{T})} + \sum_{R \in \mathcal{U}^r} v_{R}^{{\bf i}^0} - \sum_{R \in \mathcal{U}^\ell} v_{R}^{{\bf i}^0} - \sum_{t\in\mathcal S^\ell}\sum_{{\bf j}\in\Bc^0}z_t^{\bf j}.
				\end{align*}
				\item When $d$ is odd, set
				\begin{align*}
					z^{{\bf j}'}_{t(\overline{T})} := -\epsilon-\sum_{T\in\Delta-\{\overline{T}\}} \sum_{{\bf j} \in \Bc^*} z^{\bf j}_{t(T)}-\sum_{{\bf j} \in \Bc^*-\{{\bf j}'\}} z^{\bf j}_{t(\overline{T})}.
				\end{align*}
			\end{itemize}
			Then $\epsilon={\rm tor}_d'({\bf v},{\bf z})$, and in particular, $d\,{\rm tor}_d'({\bf v},{\bf z})=0$.
			Also, set
			\[z^{{\bf j}'}_{t(\overline{T})}=z^{{\bf j}'_+}_{t(\overline{T})_+}=z^{{\bf j}'_-}_{t(\overline{T})_-}.\]
			
			\medskip
			
			\noindent {\bf Step 3:} Notice that in Steps 0, 1, and 2, we have specified $z_{t(\overline{T})}^{\bf j}$, $z_{t(\overline{T})_+}^{{\bf j}_+}$, and $z_{t(\overline{T})_-}^{{\bf j}_-}$ for all ${\bf j}\in\Bc-\Bc'$. Now, for all ${\bf i}\in\Ac'-\{(1,d-1)\}$, rewrite the equation $\spadesuit({\bf i})$ so that the terms on the right hand side are already specified and the terms on the left hand side are not yet specified. Let $\Bc_{\pm}':=\{{\bf j}_\pm\in\Bc:{\bf j}\in\Bc'\}$. Then equation $\spadesuit(\lfloor\frac{d-1}{2}\rfloor,\lceil\frac{d+1}{2}\rceil)$ is rewritten as
			\begin{align*}z_{t(\overline{T})_-}^{(\lfloor\frac{d-3}{2}\rfloor,\lceil\frac{d+1}{2}\rceil,1)}=& \sum_{t \in \mathcal S} \sum_{{\bf j} \in \Bc : j_2 = \lfloor\frac{d-1}{2}\rfloor} z_t^{\bf j}\\
				&-\sum_{t \in \mathcal S-\{t(\overline{T})_-\}} \sum_{{\bf j} \in \Bc : j_2 =\lceil\frac{d+1}{2}\rceil} z_t^{\bf j}-\sum_{{\bf j} \in \Bc-\Bc'_- : j_2 = \lceil\frac{d+1}{2}\rceil} z_{t(\overline{T})_-}^{\bf j},
			\end{align*}
			and for all ${\bf i}\in\Ac'-\{(\lfloor\frac{d-1}{2}\rfloor,\lceil\frac{d+1}{2}\rceil),(1,d-1)\}$, equation $\spadesuit({\bf i})$ is rewritten as
			\begin{align*}z_{t(\overline{T})_-}^{(i_1-1,d-i_1,1)}-z_{t(\overline{T})_+}^{(1,i_1,d-1-i_1)}=& \sum_{t \in \mathcal S-\{t(\overline{T})_{+}\}} \sum_{{\bf j} \in \Bc : j_2 = i_1} z_t^{\bf j}+\sum_{{\bf j} \in \Bc-\Bc'_+ : j_2 = i_1} z_{t(\overline{T})_+}^{\bf j}\\
				&-\sum_{t \in \mathcal S-\{t(\overline{T})_-\}} \sum_{{\bf j} \in \Bc : j_2 = i_2} z_t^{\bf j}-\sum_{{\bf j} \in \Bc-\Bc'_- : j_2 = i_2} z_{t(\overline{T})_-}^{\bf j}.
			\end{align*}
			For all ${\bf i}\in\Ac'-\{(1,d-1)\}$, let $M_{\bf i}$ denote the expression on the right hand side of the rewritten version of equation $\spadesuit({\bf i})$.
			Set
			\[z_{t(\overline{T})_-}^{(\lfloor\frac{d-3}{2}\rfloor,\lceil\frac{d+1}{2}\rceil,1)}:=M_{\lfloor\frac{d-1}{2}\rfloor,\lceil\frac{d+1}{2}\rceil}\]
			and set
			\[z_{t(\overline{T})}^{(\lceil\frac{d+1}{2}\rceil,1,\lfloor\frac{d-3}{2}\rfloor)}=z_{t(\overline{T})_+}^{(1,\lfloor\frac{d-3}{2}\rfloor,\lceil\frac{d+1}{2}\rceil)}=z_{t(\overline{T})_-}^{(\lfloor\frac{d-3}{2}\rfloor,\lceil\frac{d+1}{2}\rceil,1)}.\]
			Then for all ${\bf i} =(i_1,i_2)\in\Ac'-\{(\lfloor\frac{d-1}{2}\rfloor,\lceil\frac{d+1}{2}\rceil),(1,d-1)\}$, iteratively define (in decreasing order of $i_1$)
			\[z_{t(\overline{T})_-}^{(i_1-1,d-i_1,1)}:=M_{\bf i}+z_{t(\overline{T})_+}^{(1,i_1,d-1-i_1)}\]
			and set
			\[z_{t(\overline{T})}^{(d-i_1,1,i_1-1)}=z_{t(\overline{T})_+}^{(1,i_1-1,d-i_1)}=z_{t(\overline{T})_-}^{(i_1-1,d-i_1,1)}.\]
			By definition, equation $\spadesuit({\bf i})$ holds for all ${\bf i}\in\Ac'-\{(1,d-1)\}$.
			
			\medskip
			
			\noindent {\bf Step 4:} Finally, notice that in Steps 0, 1, 2, and 3, we have specified, for each ${\bf i}\in\Ac'$, all but one of the variables that appear in the equation $\clubsuit({\bf i})$, namely $v_{\overline{R}}^{\bf i}$. Thus, for each ${\bf i}\in\Ac'$, set 
			\[v_{\overline{R}}^{\bf i}:=\sum_{t\in \mathcal{S}^\ell} \sum_{{\bf j} \in \Bc : j_2 = i_2} z^{\bf j}_t - \sum_{t\in \mathcal{S}^r} \sum_{{\bf j} \in \Bc : j_2 = i_1} z^{\bf j}_t+\sum_{R \in \mathcal{U}^\ell} (v_R^{\mathbf{i}} + v_R^{\widehat{\mathbf{i}}} )-\sum_{R \in \mathcal{U}^r-\{\overline{R}\}} ( v_R^{\mathbf{i}} + v_R^{\widehat{\mathbf{i}}})-v_{\overline{R}}^{\widehat{\bf i}},\]
			and notice that equation $\clubsuit({\bf i})$ holds.

			With this, we have completely specified $({\bf v},{\bf z})$.
			
			By definition, $({\bf v},{\bf z})$ satisfies the system of equations $(\star)$, and so Lemma \ref{lem: change equations} implies that $I_2^{-1}$ is well-defined. It is also straightforward to deduce from the definitions that $I_2^{-1}$ is the inverse of $I_2$. Indeed, for every $p\in G^{(d^2-1)(2g-2)} \times  G_d$, $I_2^{-1}(p)$ was defined by solving for the unique $({\bf v},{\bf z})\in Y$ such that $I_2({\bf v},{\bf z})=p$. It follows that $I_2$ is a bijection.
		\end{proof}
		
		\subsection{Another description of $Y$}
		The remainder of this section is the proof of Lemma \ref{lem: change equations}. We say that two systems of equations on $(G^{\Ac})^{\mathcal{O} \sqcup \mathcal{U}} \times (G^{\Bc})^{\mathcal{S}}$ are \emph{equivalent} if they have the same solution locus.

		First, we observe that we can replace ``half" of the equations of the form $\clubsuit({\bf i})$ with equations of the form $\spadesuit({\bf i})$.
		
		\begin{lemma}\label{rmk:equivalent description of Y}
			On $(G^{\Ac})^{\mathcal{O} \sqcup \mathcal{U}} \times (G^{\Bc})^{\mathcal{S}}$, the system of equations 
			\[\left\{\begin{array}{ll}
				\blacklozenge(t, {\bf j}), &{\bf j} \in \Bc\text{ and }t \in \mathcal S,\\
				\clubsuit({\bf i}),&{\bf i} \in \Ac
			\end{array}\right.\]
			is equivalent to the system of equations
			\[\left\{\begin{array}{ll}
				\blacklozenge(t, {\bf j}), &{\bf j} \in \Bc\text{ and }t \in \mathcal S,\\
				\clubsuit({\bf i}),&{\bf i} \in \Ac'',\\
				\spadesuit({\bf i}),&{\bf i} \in \Ac'.
			\end{array}
			\right.\]
		\end{lemma}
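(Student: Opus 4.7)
The plan is to exploit the fact that the left-hand side of $\clubsuit({\bf i})$ is invariant under the involution ${\bf i}\mapsto\widehat{\bf i}$, because $v_R^{\bf i}+v_R^{\widehat{\bf i}}$ is evidently symmetric in ${\bf i}$ and $\widehat{\bf i}$. This symmetry is the algebraic engine powering the entire equivalence.

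The first step is the following short computation. Subtracting $\clubsuit(\widehat{\bf i})$ from $\clubsuit({\bf i})$ cancels all the $v$--variables and, after collecting terms, yields
\[
0=\sum_{t\in\mathcal S^\ell}\left(\sum_{{\bf j}:j_2=i_2}z_t^{\bf j}-\sum_{{\bf j}:j_2=i_1}z_t^{\bf j}\right)+\sum_{t\in\mathcal S^r}\left(\sum_{{\bf j}:j_2=i_2}z_t^{\bf j}-\sum_{{\bf j}:j_2=i_1}z_t^{\bf j}\right).
\]
Since $\mathcal S=\mathcal S^\ell\sqcup \mathcal S^r$, this is precisely $\spadesuit({\bf i})$. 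In other words, one has the formal identity $\spadesuit({\bf i})=\clubsuit({\bf i})-\clubsuit(\widehat{\bf i})$ for every ${\bf i}\in\Ac$, and in particular $\spadesuit({\bf i})$ depends only on the unordered pair $\{i_1,i_2\}$.

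The second step is the combinatorial observation that in both parities of $d$ one has the disjoint decomposition
\[
\Ac=\Ac''\sqcup\{\widehat{\bf i}\mid{\bf i}\in\Ac'\}.
\]
When $d$ is odd, $\Ac'=\Ac''$, and every ${\bf i}\in\Ac'$ has $i_2\geq (d+1)/2$, so $\widehat{\bf i}\notin\Ac''$. When $d$ is even, $\Ac''\setminus\Ac'$ consists of the single self-dual index ${\bf i}^0=(d/2,d/2)$, for which $\spadesuit({\bf i}^0)$ is already a tautology (since $i_1=i_2$), and again $\widehat{\bf i}\notin\Ac''$ for every ${\bf i}\in\Ac'$.

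With these two ingredients the equivalence is then immediate. In the forward direction, the $\blacklozenge$ equations and the $\clubsuit({\bf i})$ with ${\bf i}\in\Ac''$ are a subset of the original system, and each $\spadesuit({\bf i})$ with ${\bf i}\in\Ac'$ is obtained as the difference $\clubsuit({\bf i})-\clubsuit(\widehat{\bf i})$ of equations in the original system. Conversely, starting from the new system, the equations $\clubsuit({\bf i})$ for ${\bf i}\in\Ac''$ are given, and for any ${\bf i}\in\Ac\setminus\Ac''$ we write ${\bf i}=\widehat{\bf j}$ for the unique ${\bf j}\in\Ac'$ furnished by the decomposition above and recover $\clubsuit({\bf i})=\clubsuit({\bf j})-\spadesuit({\bf j})$. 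There is no genuine obstacle here beyond the bookkeeping of the two cases $d$ odd and $d$ even; the content is just the elementary linear-algebraic fact that in the original system the ${\bf i}$ and $\widehat{\bf i}$ copies of $\clubsuit$ carry the same information about the $v$--part, so one copy can be traded for the purely $z$--linear constraint $\spadesuit({\bf i})$.
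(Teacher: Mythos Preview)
Your proof is correct and follows essentially the same approach as the paper. Both arguments hinge on the observation that the left-hand side of $\clubsuit({\bf i})$ is symmetric under ${\bf i}\mapsto\widehat{\bf i}$, so that $\clubsuit({\bf i})$ and $\clubsuit(\widehat{\bf i})$ together are equivalent to $\clubsuit({\bf i})$ and $\spadesuit({\bf i})$; your write-up is simply more explicit about the decomposition $\Ac=\Ac''\sqcup\{\widehat{\bf i}:{\bf i}\in\Ac'\}$, which the paper leaves to the reader.
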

		\begin{proof}
			Notice that for all ${\bf i}\in\Ac$ the left hand side of equations $\clubsuit({\bf i})$ and $\clubsuit(\widehat{\bf i})$ are equal, and setting their right hand sides to be equal gives the equation $\spadesuit({\bf i})$. Thus, equations $\clubsuit({\bf i})$ and $\clubsuit(\widehat{\bf i})$ both hold if and only if equations $\clubsuit({\bf i})$ and $\spadesuit({\bf i})$ both hold. The lemma follows.
		\end{proof}
		
		To finish the proof, we need to show that we can replace the equation $\spadesuit(1,d-1)$ with the equation $d\,{\rm tor}'_d({\bf v},{\bf z})=0$. The main computation that allows us to do so is the following lemma.
		
		\begin{lemma}\label{lem:a nice combination of triangle invariants}
			Let ${\bf z} = (z_t^{\bf j}) \in (G^{\Bc})^{\mathcal{S}}$ be an element satisfying equation $\blacklozenge(t, {\bf j})$ for every ${\bf j} \in \Bc$ and $t \in \mathcal S$. Then for every $t \in \mathcal S$, we have that
			\begin{align*}
				\sum_{{\bf i} \in \Ac'} i_1 \left( \sum_{{\bf j} \in \Bc : j_2 = i_1} (z^{\bf j}_t + z^{\bf j}_{t^+} + z^{\bf j}_{t^-}) - \sum_{{\bf j} \in \Bc : j_2 = i_2} (z^{\bf j}_t + z^{\bf j}_{t^+} + z^{\bf j}_{t^-}) \right) \\
				= \begin{cases}
					\displaystyle d \sum_{{\bf j} \in \Bc^*} z_t^{\bf j} &\quad\text{if } $d$ \text{ odd},\\
					\displaystyle d \sum_{{\bf j} \in \Bc^*} z_t^{\bf j} + \frac{d}{2}\sum_{{\bf j} \in \Bc^0} (z_t^{\bf j} + z_{t^+}^{\bf j} + z_{t^-}^{\bf j}) &\quad\text{if } $d$ \text{ even}.
				\end{cases}
			\end{align*}
		\end{lemma}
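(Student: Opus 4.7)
The plan is to expand the left-hand side of the claimed identity as a $\Zb$-linear combination of the basic variables $z_t^{\bf j}$, compute each coefficient explicitly, and match the result to the right-hand side. The key preliminary observation is that the condition $\blacklozenge$ forces
\[
z_{t_+}^{\bf j} \;=\; z_t^{{\bf j}_-}\quad\text{and}\quad z_{t_-}^{\bf j} \;=\; z_t^{{\bf j}_+}\quad\text{for every }{\bf j}\in\Bc ,
\]
as one verifies by applying $\blacklozenge(t,{\bf j}_-)$ and $\blacklozenge(t,{\bf j}_+)$ together with $({\bf j}_-)_+ = {\bf j} = ({\bf j}_+)_-$. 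Consequently, setting $\widetilde{z}({\bf j}):=z_t^{\bf j}+z_{t_+}^{\bf j}+z_{t_-}^{\bf j}$, we obtain
\[
\widetilde{z}({\bf j})\;=\;z_t^{\bf j}+z_t^{{\bf j}_+}+z_t^{{\bf j}_-} ,
\]
which is manifestly invariant under the cyclic permutation ${\bf j}\mapsto{\bf j}_\pm$.

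Next, for $\ell\in\{1,2,3\}$ and a positive integer $k$, set $g_\ell(k):=\sum_{{\bf j}\in\Bc,\,j_\ell=k} z_t^{\bf j}$. A change of summation variable ${\bf j}\mapsto{\bf j}_+$ (respectively ${\bf j}\mapsto{\bf j}_-$) in the second (respectively third) summand of $\widetilde{z}({\bf j})$ yields
\[
\sum_{{\bf j}\in\Bc,\,j_2=k}\widetilde{z}({\bf j})\;=\;g_1(k)+g_2(k)+g_3(k).
\]
Substituting this into the left-hand side and collecting, the coefficient of $z_t^{\bf j}$ for ${\bf j}=(j_1,j_2,j_3)\in\Bc$ equals $c(j_1)+c(j_2)+c(j_3)$, where $c\co\Zb_{>0}\to\Zb$ is defined by $c(m):=m$ if $m\leq\lfloor(d-1)/2\rfloor$, $c(m):=m-d$ if $m\geq\lceil(d+1)/2\rceil$, and $c(m):=0$ otherwise (the last case arising only when $d$ is even and $m=d/2$).

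Using $j_1+j_2+j_3=d$, this coefficient rewrites as
\[
d\;-\;d\cdot N_1({\bf j})\;-\;\tfrac{d}{2}\cdot N_2({\bf j}),
\]
where $N_1({\bf j})=\#\{\ell:j_\ell\geq\lceil(d+1)/2\rceil\}$ and $N_2({\bf j})=\#\{\ell:j_\ell=d/2\}$ (the latter contributing only when $d$ is even). Since $j_1+j_2+j_3=d$ and every $j_\ell\geq 1$, a short case analysis shows that $(N_1({\bf j}),N_2({\bf j}))\in\{(0,0),(1,0),(0,1)\}$: having two entries of size $\geq\lceil(d+1)/2\rceil$, or two entries equal to $d/2$, or one of each, would force the sum to exceed $d$ or the remaining entry to vanish. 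Hence the coefficient of $z_t^{\bf j}$ is $d$ when ${\bf j}\in\Bc^*$, equals $\tfrac{d}{2}$ in the even case precisely when ${\bf j}\in\Bc^0\cup\Bc^0_+\cup\Bc^0_-$, and vanishes otherwise.

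For $d$ odd this is exactly the claimed identity. For $d$ even, the remaining step is to rewrite the $\tfrac{d}{2}$ contribution in the form appearing on the right-hand side: using that $\Bc^0,\Bc^0_+,\Bc^0_-$ are pairwise disjoint and that $\Bc^0_\pm=\{{\bf j}_\pm:{\bf j}\in\Bc^0\}$,
\[
\sum_{{\bf j}\in\Bc^0\cup\Bc^0_+\cup\Bc^0_-}z_t^{\bf j}\;=\;\sum_{{\bf j}\in\Bc^0}\bigl(z_t^{\bf j}+z_t^{{\bf j}_+}+z_t^{{\bf j}_-}\bigr)\;=\;\sum_{{\bf j}\in\Bc^0}\widetilde{z}({\bf j}),
\]
and $\widetilde{z}({\bf j})=z_t^{\bf j}+z_{t_+}^{\bf j}+z_{t_-}^{\bf j}$ by definition. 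The whole argument is essentially careful bookkeeping; the main obstacle is tracking the coefficient function $c$ across parities and verifying the disjointness of the relevant cases from $j_1+j_2+j_3=d$.
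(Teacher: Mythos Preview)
Your proof is correct and follows essentially the same approach as the paper: use $\blacklozenge$ to convert $z_{t_\pm}^{\bf j}$ into $z_t^{{\bf j}_\mp}$, expand the left-hand side as a $\Zb$--linear combination of the $z_t^{\bf j}$, and compute each coefficient by a case analysis on how many entries of ${\bf j}$ exceed $\lfloor(d-1)/2\rfloor$ or equal $d/2$. Your packaging via the single coefficient function $c(m)$ and the identity $c(j_1)+c(j_2)+c(j_3)=d-d\,N_1({\bf j})-\tfrac{d}{2}N_2({\bf j})$ is a bit more streamlined than the paper's version, which writes out the six partial sums separately before recombining, but the underlying computation is the same.
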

		
		\begin{proof}
			From the relations $\blacklozenge(t, {\bf j})$ it follows that $z^{\bf j}_{t^+} = z^{{\bf j}^-}_t$ and $z^{\bf j}_{t^-} = z^{{\bf j}^+}_t$ for every ${\bf j} \in \Bc$ and $t \in \mathcal S$. Thus, using the facts that $j_1+j_2+j_3=d=i_1+i_2$ and $\lfloor\frac{d-1}{2}\rfloor+\lceil\frac{d-1}{2}\rceil=d$, we obtain
			\begin{align*}
				\sum_{{\bf i}\in\Ac'}\left(i_1\sum_{{\bf j}\in\Bc:j_2=i_1}z_{t^+}^{\bf j}\right)&=\sum_{{\bf i}\in\Ac'}\left(i_1\sum_{{\bf j}\in\Bc:j_3=i_1}z_t^{\bf j}\right)=\sum_{{\bf j}\in\Bc:j_3\le\lfloor\frac{d-1}{2}\rfloor}j_3z_t^{{\bf j}},\\
				\sum_{{\bf i}\in\Ac'}\left(i_1\sum_{{\bf j}\in\Bc:j_2=i_1}z_{t^-}^{\bf j}\right)&=\sum_{{\bf i}\in\Ac'}\left(i_1\sum_{{\bf j}\in\Bc:j_1=i_1}z_t^{\bf j}\right)=\sum_{{\bf j}\in\Bc:j_1\le\lfloor\frac{d-1}{2}\rfloor}j_1z_t^{{\bf j}},\\
				\sum_{{\bf i}\in\Ac'}\left(i_1\sum_{{\bf j}\in\Bc:j_2=i_2}z_{t^+}^{\bf j}\right)&=\sum_{{\bf i}\in\Ac'}\left(i_1\sum_{{\bf j}\in\Bc:j_3=i_2}z_t^{\bf j}\right)=\sum_{{\bf j}\in\Bc:j_3\ge\lceil\frac{d+1}{2}\rceil}(j_1+j_2)z_t^{{\bf j}},\quad\text{and}\\
				\sum_{{\bf i}\in\Ac'}\left(i_1\sum_{{\bf j}\in\Bc:j_2=i_2}z_{t^-}^{\bf j}\right)&=\sum_{{\bf i}\in\Ac'}\left(i_1\sum_{{\bf j}\in\Bc:j_1=i_2}z_t^{\bf j}\right)=\sum_{{\bf j}\in\Bc:j_1\ge\lceil\frac{d+1}{2}\rceil}(j_2+j_3)z_t^{{\bf j}}
			\end{align*}
			for all $t\in\mathcal S$. At the same time, notice that
			\begin{align*}
				\sum_{{\bf i}\in\Ac'}\left(i_1\sum_{{\bf j}\in\Bc:j_2=i_1}z_t^{\bf j}\right)&=\sum_{{\bf j}\in\Bc:j_2\le\lfloor\frac{d-1}{2}\rfloor}j_2z_t^{{\bf j}},\quad\text{and}\\
				\sum_{{\bf i}\in\Ac'}\left(i_1\sum_{{\bf j}\in\Bc:j_2=i_2}z_t^{\bf j}\right)&=\sum_{{\bf j}\in\Bc:j_2\ge\lceil\frac{d+1}{2}\rceil}(j_1+j_3)z_t^{{\bf j}}
			\end{align*}
			for all $t\in \mathcal S$, so we may deduce that
			\begin{align*}
				&\sum_{{\bf i} \in \Ac'} i_1 \left( \sum_{{\bf j} \in \Bc : j_2 = i_1} (z^{\bf j}_t + z^{\bf j}_{t^+} + z^{\bf j}_{t^-}) - \sum_{{\bf j} \in \Bc : j_2 = i_2} (z^{\bf j}_t + z^{\bf j}_{t^+} + z^{\bf j}_{t^-}) \right) \\
				=&\sum_{{\bf j} \in \Bc : j_2 \le\lfloor\frac{d-1}{2}\rfloor} j_2z^{\bf j}_t + \sum_{{\bf j}\in\Bc:j_3\le\lfloor\frac{d-1}{2}\rfloor}j_3z_t^{{\bf j}} + \sum_{{\bf j}\in\Bc:j_1\le\lfloor\frac{d-1}{2}\rfloor}j_1z_t^{{\bf j}} \\
				&\hspace{0.5cm}- \sum_{{\bf j} \in \Bc : j_2 \ge\lceil\frac{d+1}{2}\rceil} (j_1+j_3)z^{\bf j}_t - \sum_{{\bf j}\in\Bc:j_3\ge\lceil\frac{d+1}{2}\rceil}(j_1+j_2)z_t^{{\bf j}} -\sum_{{\bf j}\in\Bc:j_1\ge\lceil\frac{d+1}{2}\rceil}(j_2+j_3)z_t^{{\bf j}}.
			\end{align*}	
			
			We will now compute, for every ${\bf j}\in\Bc$, the coefficient for $z_t^{\bf j}$ in the expression on the right hand side. We will consider the following three cases separately:
			\begin{enumerate}
				\item[(I)] ${\bf j}\in\Bc$ satisfies $j_k\leq \lfloor\frac{d-1}{2}\rfloor$ for all $k\in\{1,2,3\}$,
				\item[(II)] ${\bf j}\in\Bc$ satisfies $j_k\ge\lceil\frac{d+1}{2}\rceil$ for some $k\in\{1,2,3\}$,
				\item[(III)] ${\bf j}\in\Bc$ satisfies $j_k=\frac{d}{2}$ for some $k\in\{1,2,3\}$ (this only happens when $d$ is even).
			\end{enumerate}
			
			If (I) holds, then the term $z_t^{\bf j}$ appears in each of the first three summands with coefficients $j_2$, $j_3$, and $j_1$ respectively, but not in the last three summands. Thus, the net coefficient of $z_t^{\bf j}$ is 
			\[j_1 + j_2 + j_3 = d.\]
			
			If (II) holds, then 
			\[j_{k-1}+j_{k+1}=d-j_k\le d-\left\lceil\frac{d+1}{2}\right\rceil=\left\lfloor\frac{d-1}{2}\right\rfloor\]
			(arithmetic in the subscripts are done modulo $3$), so $j_{k-1}$ and $j_{k+1}$ are both at most $\lfloor\frac{d-1}{2}\rfloor$. This implies that the term $z_t^{\bf j}$ appears in two of the first three summands with coefficients $j_{k-1}$ and $j_{k+1}$, and one of the last three summands with coefficients $-(j_{k-1}+j_{k+1})$. Thus, the net coefficient of $z_t^{\bf j}$ is 
			\[j_{k-1} + j_{k+1} -(j_{k-1}+j_{k+1}) = 0.\]
			
			Finally, if (III) holds, then $d$ is even and
			\[j_{k-1}+j_{k+1}=d-j_k=\frac{d}{2},\]
			so $j_{k-1}$ and $j_{k+1}$ are both at most $\frac{d}{2}-1=\lfloor\frac{d-1}{2}\rfloor$. At the same time, $j_k=\frac{d}{2}$ is strictly between $\lfloor\frac{d-1}{2}\rfloor$ and $\lceil\frac{d+1}{2}\rceil$. This implies that the term $z_t^{\bf j}$ appears in two of the first three summands with coefficients $j_{k-1}$ and $j_{k+1}$, but not in the last three summands. Thus, the net coefficient of $z_t^{\bf j}$ is 
			\[j_{k-1} + j_{k+1} = d-j_k= \frac{d}{2}.\]
			This concludes the proof of the lemma.
		\end{proof}
		
		Using Lemma \ref{lem:a nice combination of triangle invariants}, we prove Lemma \ref{lem: change equations}.
		
		\begin{proof}[Proof of Lemma \ref{lem: change equations}]
			By Lemma \ref{rmk:equivalent description of Y}, it suffices to prove that on $(G^{\Ac})^{\mathcal{O} \sqcup \mathcal{U}} \times (G^{\Bc})^{\mathcal{S}}$, the system of equations
			\[\left\{\begin{array}{ll}
				\blacklozenge(t,{\bf j}),&{\bf j} \in \Bc, t\in\mathcal S,\\
				\clubsuit({\bf i}),&{\bf i} \in \Ac'',\\
				\spadesuit({\bf i}),&{\bf i} \in \Ac'.
			\end{array}
			\right.\]
			is equivalent to 
			\[\left\{\begin{array}{ll}
				\blacklozenge(t,{\bf j}),&{\bf j} \in \Bc, t\in\mathcal S,\\
				\clubsuit({\bf i}),&{\bf i} \in \Ac'',\\
				\spadesuit({\bf i}),&{\bf i} \in \Ac'-\{(1,d-1)\},\\
				d\,{\rm tor}'_d({\bf v},{\bf z})=0.
			\end{array}
			\right.\]

			First note that for every ${\bf i}\in\Ac$, the equation $\spadesuit({\bf i})$ can be written as
			\[\sum_{T\in\Delta}\sum_{{\bf j} \in \Bc : j_2 = i_1} (z_{t(T)}^{\bf j}+z_{t(T)_+}^{\bf j}+z_{t(T)_-}^{\bf j} )-\sum_{T\in\Delta}\sum_{{\bf j} \in \Bc : j_2 = i_2} (z_{t(T)}^{\bf j}+z_{t(T)_+}^{\bf j}+z_{t(T)_-}^{\bf j} )=0.\]
			Thus, by Lemma \ref{lem:a nice combination of triangle invariants}, the equation $\sum_{{\bf i}\in\Ac'}i_1\spadesuit({\bf i})$ can be written as follows, depending on the parity of $d$:
			\begin{itemize}
				\item[(i)] If $d$ is odd, then equation $\sum_{{\bf i}\in\Ac'}i_1\spadesuit({\bf i})$ is of the form
				\[d\sum_{T\in\Delta} \sum_{{\bf j} \in \Bc^*} z^{\bf j}_{t(T)}=0.\]	
				\item[(ii)] If $d$ is even, then equation $\sum_{{\bf i}\in\Ac'}i_1\spadesuit({\bf i})$ is of the form
				\[d \sum_{T\in\Delta}\sum_{{\bf j} \in \Bc^*} z_{t(T)}^{\bf j} + \frac{d}{2}\sum_{T\in\Delta}\sum_{{\bf j} \in \Bc^0} (z_{t(T)}^{\bf j} + z_{t(T)^+}^{\bf j} + z_{t(T)^-}^{\bf j})=0,\]
				or equivalently, 
				\[d \sum_{T\in\Delta}\sum_{{\bf j} \in \Bc^*} z_{t(T)}^{\bf j} + \frac{d}{2}\sum_{t\in\mathcal S}\sum_{{\bf j} \in \Bc^0} z_t^{\bf j}=0.\]
			\end{itemize}
			
			From this, notice that when $d$ is odd, the equation $\sum_{{\bf i}\in\Ac'}i_1\spadesuit({\bf i})$ is exactly $-d\,{\rm tor}'_d({\bf v},{\bf z})=0$. On the other hand, if $d$ is even, the equation $\clubsuit({\bf i}_0)$ is
			\[\sum_{t\in \mathcal{S}^\ell} \sum_{{\bf j} \in \Bc^0} z^{\bf j}_t - \sum_{t\in \mathcal{S}^r} \sum_{{\bf j} \in \Bc^0} z^{\bf j}_t-2\sum_{R \in \mathcal{U}^r} v_R^{\mathbf{i}^0} + 2\sum_{R \in \mathcal{U}^\ell} v_R^{\mathbf{i}^0}=0 ,\]
			so the equation $\sum_{{\bf i}\in\Ac'}i_1\spadesuit({\bf i})+\frac{d}{2}\clubsuit({\bf i}_0)$ is
			\[d \sum_{T\in\Delta}\sum_{{\bf j} \in \Bc^*} z_{t(T)}^{\bf j}+d\sum_{t\in\mathcal S^\ell}\sum_{{\bf j} \in \Bc^0} z_t^{\bf j}-d\sum_{R \in \mathcal{U}^r} v_R^{\mathbf{i}^0} + d\sum_{R \in \mathcal{U}^\ell} v_R^{\mathbf{i}^0} =0,\]
			which is exactly $-d\, {\rm tor}'_d({\bf v},{\bf z})=0$.
			
			In either case, the equation $\spadesuit(1,d-1)$ appears exactly once with coefficient $1$ in both $\sum_{{\bf i}\in\Ac'}i_1\spadesuit({\bf i})$ when $d$ is odd and $\sum_{{\bf i}\in\Ac'}i_1\spadesuit({\bf i})+\frac{d}{2}\clubsuit({\bf i}_0)$ when $d$ is even. It follows that the lemma holds.
		\end{proof}
		
\section{Connected Components of the character variety}\label{sec:connected}

In \cite{Li}, Jun Li describes a bijection between the connected components of ${\rm Hom}(\Gamma,\PGL_d(\Cb))$ and the group $\Zb_d$ of $d$--th roots of unity in $\Cb/2\pi i\Zb$. In this section, we describe a map
\[
{\rm ob}_d\colon{\rm Hom}(\Gamma,\PGL_d(\Cb))\to\Zb_d
\] 
constant on connected components which gives an alternative definition of the bijection from \cite{Li} that is well-suited for our purposes and will be used in future sections, see Proposition \ref{ob bijection}.

Recall that $S$ is a closed, connected, oriented hyperbolic surface, and $\Gamma$ is the deck group of its universal cover. Let $\mathcal G$ be an embedded graph in $S$ such that $S\setminus\mathcal G$ is homeomorphic to an open disk, let $\mathcal{G}' \subset \mathcal{G}$ be a maximal tree, and let $q \in \mathcal{G}'$ be a basepoint.

\begin{figure}[h!]
	\centering
	\def\svgwidth{0.9\textwidth}
	%% Creator: Inkscape 1.2.2 (b0a84865, 2022-12-01), www.inkscape.org
%% PDF/EPS/PS + LaTeX output extension by Johan Engelen, 2010
%% Accompanies image file '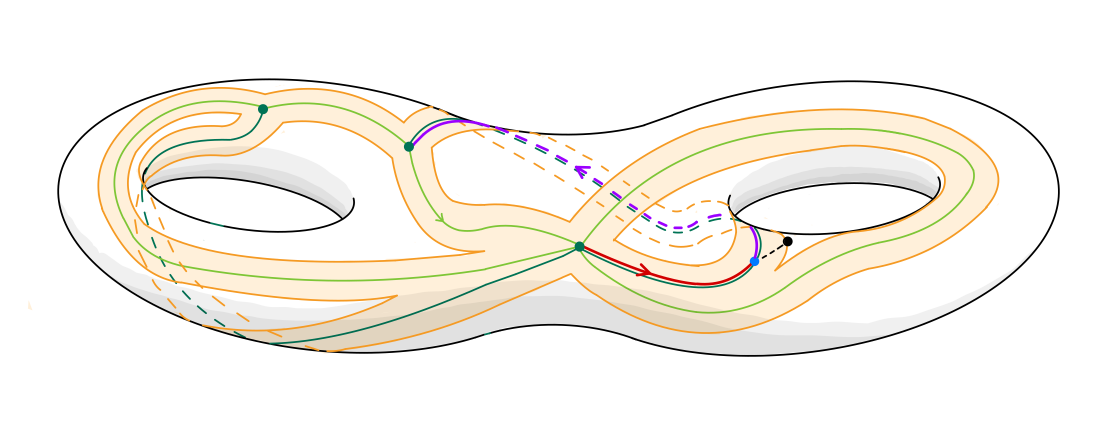' (pdf, eps, ps)
%%
%% To include the image in your LaTeX document, write
%%   \input{<filename>.pdf_tex}
%%  instead of
%%   \includegraphics{<filename>.pdf}
%% To scale the image, write
%%   \def\svgwidth{<desired width>}
%%   \input{<filename>.pdf_tex}
%%  instead of
%%   \includegraphics[width=<desired width>]{<filename>.pdf}
%%
%% Images with a different path to the parent latex file can
%% be accessed with the `import' package (which may need to be
%% installed) using
%%   \usepackage{import}
%% in the preamble, and then including the image with
%%   \import{<path to file>}{<filename>.pdf_tex}
%% Alternatively, one can specify
%%   \graphicspath{{<path to file>/}}
%% 
%% For more information, please see info/svg-inkscape on CTAN:
%%   http://tug.ctan.org/tex-archive/info/svg-inkscape
%%
\begingroup%
  \makeatletter%
  \providecommand\color[2][]{%
    \errmessage{(Inkscape) Color is used for the text in Inkscape, but the package 'color.sty' is not loaded}%
    \renewcommand\color[2][]{}%
  }%
  \providecommand\transparent[1]{%
    \errmessage{(Inkscape) Transparency is used (non-zero) for the text in Inkscape, but the package 'transparent.sty' is not loaded}%
    \renewcommand\transparent[1]{}%
  }%
  \providecommand\rotatebox[2]{#2}%
  \newcommand*\fsize{\dimexpr\f@size pt\relax}%
  \newcommand*\lineheight[1]{\fontsize{\fsize}{#1\fsize}\selectfont}%
  \ifx\svgwidth\undefined%
    \setlength{\unitlength}{535.19998169bp}%
    \ifx\svgscale\undefined%
      \relax%
    \else%
      \setlength{\unitlength}{\unitlength * \real{\svgscale}}%
    \fi%
  \else%
    \setlength{\unitlength}{\svgwidth}%
  \fi%
  \global\let\svgwidth\undefined%
  \global\let\svgscale\undefined%
  \makeatother%
  \begin{picture}(1,0.38161437)%
    \lineheight{1}%
    \setlength\tabcolsep{0pt}%
    \put(0,0){\includegraphics[width=\unitlength,page=1]{figure3.pdf}}%
    \put(0.71120239,0.18132928){\color[rgb]{0.08627451,0.49803922,0.39215686}\makebox(0,0)[lt]{\lineheight{1.25}\smash{\begin{tabular}[t]{l}{\small \color[HTML]{000000} $p$}\end{tabular}}}}%
    \put(0.6112539,0.14131174){\color[rgb]{0,0.45098039,0.33333333}\makebox(0,0)[lt]{\lineheight{1.25}\smash{\begin{tabular}[t]{l}{\small \color[HTML]{D20000} $\mathsf{a}_2^+$}\end{tabular}}}}%
    \put(0.40652378,0.18416677){\color[rgb]{0,0.45098039,0.33333333}\makebox(0,0)[lt]{\lineheight{1.25}\smash{\begin{tabular}[t]{l}{\small \color[HTML]{7EC636} $\mathsf{e}_2$}\end{tabular}}}}%
    \put(0.43350563,0.28333654){\color[rgb]{0,0.45098039,0.33333333}\makebox(0,0)[lt]{\lineheight{1.25}\smash{\begin{tabular}[t]{l}{\small \color[HTML]{9B00FF} $\mathsf{a}_2^-$}\end{tabular}}}}%
    \put(0.67243205,0.12732016){\color[rgb]{1,1,1}\makebox(0,0)[lt]{\lineheight{1.25}\smash{\begin{tabular}[t]{l}{\small \color[HTML]{007AFF} $q$}\end{tabular}}}}%
  \end{picture}%
\endgroup%

	\caption{\small The graph $\mathcal G$ (in green), the maximal tree $\mathcal G'$ (in dark green), the basepoint $q\in\mathcal G'$ (in blue), the neighborhood $U$ (in orange) and the basepoint $p\in \partial U$ (in black). Also pictured the paths $\mathsf e_2$, $\mathsf a_2^-$ and $\mathsf a_2^+$.}
	\label{fig:graph}
\end{figure}

Let $U\subset S$ be a neighborhood of $\mathcal G$ for which there is a strong deformation retract $H:\overline{U}\times[0,1]\to \overline{U}$ of $\overline U$ (the closure  of $U$ in $S$) onto $\mathcal G$, such that the fibers of the map $H(\cdot,1)|_{\partial \overline U}$ is finite-to-one at the vertices of $\mathcal G$, and two-to-one at all the other points of $\partial \overline U$. Let $p\in\partial\overline{U}$ be a point such that $H(p,1)=q$. Since $S\setminus\mathcal G$ is homeomorphic to an open disk, it follows that $S\setminus U$ is homeomorphic to a closed disk whose interior is $S\setminus \overline{U}$. Thus, $\partial\overline{U}=\partial (S\setminus U)$ is a topological circle in $S$, and we can select a counterclockwise parameterization $\csf$ for this circle about $S\setminus U$ that is based at $p$. 

We can then define 
\[\bsf:=H(\cdot,1)\circ\csf,\]
which is a loop in $\mathcal G$ based at $q$, so it defines an element $\delta\in\pi_1(\mathcal G,q)$. Also, $\bsf$ passes through every point in $\mathcal G$ finitely many times, and passes through every point in $\mathcal G$ that is not a vertex exactly twice. We may thus write the loop $\mathsf b$ as a concatenation
\[\mathsf b=\mathsf f_1\cdot\mathsf e_1\cdot \mathsf f_2\cdot \mathsf e_2\cdot \ldots \cdot \mathsf f_k\cdot \mathsf e_k\cdot \mathsf f_{k+1},\]
where for each $i$, $\mathsf f_i$ is a path inside $\mathcal G'$ and $\mathsf e_i$ is a parametrization of an edge in $\mathcal G\setminus\mathcal G'$. (Note that it is possible for $\mathsf f_i$ to be a single vertex of $\mathcal G$.) For future reference, we call this the \emph{graph decomposition of $\mathsf b$ associated to $(\mathcal G,\mathcal G')$}. 

We now want to define for each $\mathsf e_i$ above, an associated element in $\pi_1(\mathcal G,q)$. Let $\mathsf e_i^+$ and $\mathsf e_i^-$ be, respectively, the starting and ending point of the oriented segment $\mathsf e_i$, and let $\mathsf a_{i,+}$ and $\mathsf a_{i,-}$ be the oriented segments in $\mathcal G'$ from $\mathsf e_i^+$ to $q$ and from $q$ to $\mathsf e_i^-$. For all $i$, we define the loop
\[\mathsf b_i:=\mathsf a_{i,-}\cdot\mathsf e_i\cdot\mathsf a_{i,+}\]
based at $q$, and corresponding to an element $\gamma_i\in\pi_1(\mathcal G,q)$. 

Notice that $\mathsf b$ traverses each edge of $\mathcal G$ exactly twice, once in each direction. Thus, for each edge $e$ of $\mathcal G\setminus\mathcal G'$, there are unique indices $i,j\in\{1,\dots,k\}$ such that $\mathsf e_i$ and $\mathsf e_j$ parametrize the same edge $e$ with opposite orientations. In particular, $k=2|\mathcal G\setminus\mathcal G'|$. Furthermore, since $\mathcal G'$ is a maximal tree in $\mathcal G$, it follows that $\mathsf a_{i,-}$ is homotopic to the reverse of $\mathsf a_{j,+}$ and  $\mathsf a_{j,-}$ is homotopic to the reverse of $\mathsf a_{i,+}$. Thus,  $\gamma_i=\gamma_j^{-1}$. 

Also, since $\mathcal G'$ is a maximal tree in $\mathcal G$, the quotient of $\mathcal G$ that identifies $\mathcal G'$ to a point is homeomorphic to a wedge of circles, where each circle corresponds to an edge in $\mathcal G\setminus\mathcal G'$. Thus, if we choose a subset $L\subset \{\gamma_1,\dots,\gamma_k\}$ such that $L\cap L^{-1}$ is empty and $L\cup L^{-1}=\{\gamma_1,\dots,\gamma_k\}$, then $L$ is a minimal generating set for $\pi_1(\mathcal G,q)$. 

Observe that the path $\mathsf b$ is homotopic to the concatenation
\[\mathsf b_1\cdot \mathsf b_2\cdot \ldots\cdot \mathsf b_k,\]
so we may write the corresponding element in $\pi_1(\mathcal G,q)$ as
\[\delta=\gamma_1\dots\gamma_k.\]
By Van Kampen's theorem, $\langle L \mid \delta\rangle$ is a presentation of $\pi_1(S,q)$. Since $H_1(S,\mathbb{Z})=\Zb^{2g}$, it now follows that $|\mathcal G\setminus\mathcal G'|=|L|=2g$ and $k=4g$. We refer to the sequence $\gamma_1,\dots,\gamma_{4g}$ as the \emph{relation sequence} associated to $(\mathcal G,\mathcal G',\mathsf b)$. This concludes the description of the topological framework necessary to describe the map ${\rm ob}_d$.

Now, choose a point $\widetilde{q}\in\pi_S^{-1}(q)$. This defines an identification between $\pi_1(S,q)$ and the deck group $\Gamma$ of the universal cover of $S$. For any representation $\rho:\Gamma\to\PGL_d(\Cb)$, choose $A_1,\dots,A_{4g}\in\SL_d(\Cb)$ such that $\rho(\gamma_i)$ is the projectivization of $A_i$ for all $i$, and so that $A_i=A_j^{-1}$ whenever $\gamma_i=\gamma_j^{-1}$. Note then that the product $A_1\cdot\ldots\cdot A_{4g}$ does not depend on this choice of representatives. Since $\id=\rho(\delta)$ is the projectivization of the product $A_1\cdot\ldots\cdot A_{4g}$, we may define ${\rm ob}_d(\rho)\in\mathbb{Z}_d$ to be the element that satisfies
\[\exp\left({\rm ob}_d(\rho)\right)\id=A_1\cdot\ldots\cdot A_{4g}.\]
Notice that $\rho$ lifts to a representation from $\Gamma$ to $\SL_d(\Cb)$ if and only if ${\rm ob}_d(\rho)=0$, so one can think of ${\rm ob}_d(\rho)$ as the obstruction to the existence of such a lift.

Since ${\rm ob}_d$ is continuous, it is constant on each connected component of $\Hom(\Gamma,\PGL_d(\Cb))$. Furthermore:

\begin{proposition}\label{ob bijection}
	The map ${\rm ob}_d$ descends to a bijection between the set of connected components of $\Hom(\Gamma,\PGL_d(\Cb))$ and $\Zb_d$ which does not depend on the choice of the graph $\mathcal G$ and its maximal subtree $\mathcal G'$.
\end{proposition}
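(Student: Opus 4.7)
The strategy is to recognize the combinatorial formula defining $\mathrm{ob}_d$ as a concrete cocycle representative of the classical obstruction to lifting a representation through the central extension
\[
1 \to \Zb_d \to \SL_d(\Cb) \to \PGL_d(\Cb) \to 1,
\]
evaluated on the fundamental class of $S$. Once this identification is in place, all assertions of the proposition will reduce to Li's theorem that this obstruction distinguishes the $d$ connected components of $\mathfrak{X}(\Gamma, \PGL_d(\Cb))$.

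First I would verify that, given the data $(\mathcal G, \mathcal G', \mathsf b)$, the element $\mathrm{ob}_d(\rho)$ depends only on $\rho$ and not on the auxiliary choice of lifts $A_i$. Let $L \subset \{\gamma_1,\dots,\gamma_{4g}\}$ be the minimal generating set singled out in the construction. Pick a lift $B_\ell \in \SL_d(\Cb)$ of $\rho(\ell)$ for each $\ell \in L$ and set $B_{\ell^{-1}} := B_\ell^{-1}$; then $A_i := B_{\gamma_i}$ satisfies the required sign constraint, and the product $P(\rho) := A_1 \cdots A_{4g}$ projects to $\rho(\delta) = \id$, hence equals a $d$-th root of unity times the identity. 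Replacing $B_\ell$ by $\zeta_\ell B_\ell$ (with $\zeta_\ell$ a scalar) multiplies $P(\rho)$ by $\prod_{\ell \in L} \zeta_\ell^{e_\ell}$, where $e_\ell$ is the exponent sum of $\ell$ in the word $\delta = \gamma_1 \cdots \gamma_{4g}$; since each generator in $L$ appears exactly once as a $\gamma_i$ and once as its inverse, $e_\ell = 0$, so $P(\rho)$ is independent of the lifts.

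Next I would show that $\mathrm{ob}_d$ is locally constant and conjugation-invariant, hence descends to a well-defined locally constant map on $\mathfrak X(\Gamma, \PGL_d(\Cb))$. Locally near any $\rho_0$ one can choose the lifts $B_\ell$ continuously in $\rho$, so that $P$ is a continuous map into the discrete set $\{\exp(\zeta)\id : \zeta \in \Zb_d\} \subset \SL_d(\Cb)$. Conjugation-invariance is immediate: conjugating $\rho$ by $h \in \PGL_d(\Cb)$ with any lift $H \in \SL_d(\Cb)$ replaces each $A_i$ by $H A_i H^{-1}$, leaving the product scalar unchanged. The core identification is then that $\rho \mapsto P(\rho)$ is the standard Fox-calculus cocycle that computes the obstruction $o(\rho) \in H^2(\Gamma, \Zb_d) \cong \Zb_d$ associated with the one-relator presentation $\pi_1(S,q) = \langle L \mid \delta \rangle$ coming from the graph decomposition of $\mathsf b$. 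This identifies $\mathrm{ob}_d$ with Li's obstruction map, whose surjectivity and whose fibers being the connected components of $\mathfrak X(\Gamma, \PGL_d(\Cb))$ are \cite{Li}. Independence of $(\mathcal G, \mathcal G')$ then follows from the intrinsic topological nature of the obstruction class; alternatively it can be checked directly by observing that elementary moves on $(\mathcal G, \mathcal G')$ — subdivision of an edge, swap of a tree edge with a non-tree edge, change of basepoint $p$ or $q$ — alter the word $\delta$ only by cyclic permutation and conjugation, both of which preserve the scalar $P(\rho)$.

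The main obstacle will be making the identification with Li's obstruction precise. Li's class is most naturally described by pulling back the characteristic class of the central extension and pairing with $[S] \in H_2(\Gamma, \Zb)$, whereas $\mathrm{ob}_d$ is defined combinatorially from a graph decomposition. Reconciling the two amounts to interpreting $\mathsf b = H(\cdot,1)\circ \csf$ as the attaching map of the single $2$-cell in the CW decomposition of $S$ obtained by collapsing $\mathcal G$, so that evaluating the Fox cocycle on the relator $\delta$ realizes the pairing with the fundamental class; this step, while standard in spirit, is what ties the explicitly combinatorial $P(\rho)$ to Li's intrinsic invariant.
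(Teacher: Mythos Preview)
Your proposal is correct and follows essentially the same approach as the paper: identify the combinatorial quantity $\mathrm{ob}_d(\rho)$ with Li's obstruction class and then invoke \cite{Li}. The only notable difference is packaging: the paper phrases the identification via Steenrod's obstruction theory for sections of the flat principal $\PGL_d(\Cb)$-bundle $S\times_\rho\PGL_d(\Cb)$ (constructing an explicit section constant on $\mathcal G'$ and extended over $\mathcal G\setminus\mathcal G'$ by paths lifted from $\SL_d(\Cb)$, then evaluating the resulting $2$-cocycle on the unique $2$-cell), whereas you phrase it via the lifting obstruction for the central extension $1\to\Zb_d\to\SL_d(\Cb)\to\PGL_d(\Cb)\to 1$ and the Fox-calculus cocycle on the one-relator presentation $\langle L\mid\delta\rangle$. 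These are equivalent viewpoints on the same cohomology class, and your preliminary checks (independence of lifts via vanishing exponent sums, local constancy, conjugation-invariance) are correct and slightly more explicit than what the paper writes out before the proposition.
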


\begin{proof}
	Following Steenrod \cite[Section~35]{steenrod1999}, the primary obstruction to the existence of global sections for a principal $\PGL_d(\Cb)$--bundle $E$ over a closed oriented surface $S$ is a cohomology class $c(E) \in H^2(S;\Zb_d)$. For completeness, we briefly recall the definition of $c(E)$ by specifying a $2$--cocycle representing it.
	
	Fix any CW-decomposition $K$ of $S$. Since $\PGL_d(\Cb)$ is connected, any section of $E|_{K^{(0)}}$ extends to a section of $E|_{K^{(1)}}$, where $K^{(0)}$ and $K^{(1)}$ are the $0$--skeleton and $1$--skeleton of $K$, respectively. Select arbitrarily a section $\sigma$ of $E|_{K^{(1)}}$ (which exists because the lie group $\PGL_d(\Cb)$ is connected). The $2$--cocycle representing $c(E)$ will depend on this choice.
	
	Let $f_c : D^2 \to K$ be the map associated to a $2$--cell $c$ of the CW-complex $K$, with gluing map $\partial f_c : \partial D^2 \to K^{(1)}$. Since $D^2$ is contractible, the $\PGL_d(\Cb)$--bundle $f_c^*(E)$ admits a trivialization $f_c^* (E) \cong D^2 \times \PGL_d(\Cb)$. Consider now 
	\[\sigma \circ \partial f_c : S^1 \to f_c^*(E) \cong D^2 \times \PGL_d(\Cb).\] 
	Since $\partial f_c$ can be thought of as a based loop oriented counterclockwise about $c$, by projecting onto the second component of the trivialization, the map $\sigma \circ \partial f_c$ gives an element $\gamma_c \in \pi_1(\PGL_d(\Cb))\cong\Zb_d$. (Since $\Zb_d$ is abelian, $\gamma_c$ does not depend on the choice of the base point. Also, $\gamma_c$ is independent of the choice of the trivialization.) Explicitly, if we lift the based loop $\sigma\circ\partial f_c$ in $\PGL_d(\Cb)$ to a path in $\SL_d(\Cb)$, then the starting and ending points of this path are two elements $A_-$ and $A_+$ in $\SL_d(\Cb)$ that have the same projectivization, and $\gamma_c\in\Zb_d$ is the number such that $A_+=\exp(\gamma_c)A_-$. (Recall that we think of the cyclic group $\Zb_d$ as a subgroup of $\Cb/2 \pi i \Zb$.) The association $c \mapsto \gamma_c$ determines a cellular $2$--cocycle in $C^2(S;\Zb_d)$, and hence a cohomology class in $c(E)\in H^2(S;\Zb_d)$, which does not depend on the choice of $\sigma$ and of cellular decomposition of $S$, see \cite[Corollary~35.8]{steenrod1999}.
	
	For any representation $\rho : \Gamma \to \PGL_d(\Cb)$, the associated flat bundle
	\[S \times_\rho \PGL_d(\Cb) \to S\] 
	is the quotient of the trivial bundle $\widetilde S\times\PGL_d(\Cb)\to\widetilde S$ by the following $\Gamma$--action: for all $\gamma\in\Gamma$ and $(p,g)\in \widetilde S\times\PGL_d(\Cb)$, $\gamma\, (p,g)=(\gamma\,p,\rho(\gamma)g)$. Li \cite{Li} proved that the map
	\[{\rm ob}_d':{\rm Hom}(\Gamma,\PGL_d(\Cb))\to H^2(S;\Zb_d)\cong\Zb_d\]
	given by ${\rm ob}_d'(\rho):=c(S \times_\rho \PGL_d(\Cb))$ is a surjection whose fibers are the connected components of ${\rm Hom}(\Gamma,\PGL_d(\Cb))$. It now suffices to verify that for all representations $\rho : \Gamma \to \PGL_d(\Cb)$, ${\rm ob}_d'(\rho)={\rm ob}_d(\rho)$, where ${\rm ob}_d(\rho)$ was defined above.
	
	In order to do so, consider the cellular decomposition of $S$ whose $1$--skeleton is equal to $\mathcal{G}$, and that has a unique $2$--cell $c$ whose interior is $S \setminus \mathcal{G}$. To use the description of ${\rm ob}_d'(\rho)$ given above, we define a section $\sigma$ of $S \times_\rho \PGL_d(\Cb)|_{\mathcal{G}}$. Since $S \times_\rho \PGL_d(\Cb) \to S$ is a flat bundle and the maximal tree $\mathcal{G}'$ inside $\mathcal{G}$ is contractible, we can set the section $\sigma$ to be constant on $\mathcal{G}'$ (e.g. $\sigma(p) = \mathrm{id}$ for all $p \in \mathcal{G}'$, with respect to some fixed trivialization on $\mathcal{G}'$). Then, for every oriented edge $\mathsf{e}_i$ in $\mathcal{G} \setminus\mathcal{G}'$, we select a path inside $\SL_d(\Cb)$ from $\mathrm{id}$ to the chosen lift $A_i$ and project it to a path $\alpha_i$ inside $\PGL_d(\Cb)$. Selecting a trivialization of $S\times_\rho\PGL_d(\Cb)$  along $\mathsf{e}_i$ in which $\sigma(\mathsf{e}_i(0)) = \mathrm{id}$, we then set $\sigma(\mathsf{e}_i(t)) = \alpha_i(t) \in \PGL_d(\Cb)$ for all $t \in [0,1]$. One can then check that this defines a continuous section $\sigma$ on $\mathcal{G}$ and that the evaluation of the $2$--cocycle contructed with such section on the unique $2$--cell is equal to $\mathrm{ob}_d(\rho)$, proving that ${\rm ob}_d'(\rho)={\rm ob}_d(\rho)$, as desired.
	\end{proof}

\section{Outline of proof of Theorem \ref{ThmD}}\label{tor=ob}

In Sections \ref{sec:topology_y} and \ref{sec:connected}, we defined the maps
\[{\rm tor}_d:\mathcal R(\lambda,d)\to\mathbb{Z}_d\qquad\text{ and }\qquad
{\rm ob}_d:{\rm Hom}(\Gamma,\PGL_d(\Cb))\to\mathbb{Z}_d,\]
respectively. (Recall that $\Zb_d$ here denotes the $d$--th roots of unity in $\Cb/2\pi i\Zb$.) We will now describe the strategy to prove Theorem \ref{ThmD} (and hence Theorem \ref{ThmB}) in the Introduction, which we restate here. 

\begin{theorem}\label{thm: final}
	For any $d$--pleated surface $\rho:\Gamma\to\PGL_d(\Cb)$, we have 
	\[{\rm ob}_d(\rho)={\rm tor}_d(\rho).\]
	In particular, every connected component of the representation variety $\Hom(\Gamma,\PGL_d(\Cb))$ contains exactly one connected component of the space $\mathcal R(\lambda,d)$ of $d$--pleated surfaces with pleating locus $\lambda$.
\end{theorem}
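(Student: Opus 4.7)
The plan is to follow the roadmap sketched in the introduction, which reduces the comparison of ${\rm ob}_d(\rho)$ and ${\rm tor}_d(\rho)$ to an explicit computation with the slithering map $\Sigma$ compatible with the $\lambda$--limit map $\xi$. First I would use the maximal tree $M\subset N$ to build a cellular framework. The boundary of $M$, viewed as a based loop $\mathsf c$ in $S$ oriented counterclockwise about $M$, lifts (after choosing a lift of the basepoint) to a loop $\widetilde{\mathsf c}$ in $\widetilde S$; its pattern of intersections with $\widetilde\lambda$ determines a cutting sequence $g_0,g_1,\dots,g_\ell=g_0$ of leaves. Using this, I define the slithering holonomy
\[
\Sigma_\rho(\widetilde{\mathsf c}):=\Sigma(g_\ell,g_{\ell-1})\cdots\Sigma(g_2,g_1)\,\Sigma(g_1,g_0)\in\SL_d(\Cb).
\]

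The first nontrivial step is to prove that $\Sigma_\rho(\widetilde{\mathsf c})=\id$. I would argue by induction, decomposing $\widetilde{\mathsf c}$ into elementary arcs that cross a single plaque or run along a single switch of the lifted train track neighborhood $\widetilde N$, and using the cocycle property of Theorem~\ref{thm: slithering map}(1) to telescope the slithering product. The key point is that consecutive leaves in the cutting sequence either share an ideal endpoint (in which case $\Sigma$ is unipotent by Theorem~\ref{thm: slithering map}(3), and the factors contributed by going around a plaque assemble to the identity) or are separated by rectangles of $N$ (so that $\Sigma$ is controlled by $\xi$ at the ends), and the closing-up of $\widetilde{\mathsf c}$ forces the total product to be trivial.

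Next, I would convert ${\rm ob}_d(\rho)$ into a slithering computation. Using the graph $\mathcal G\subset S$ with one vertex in every plaque of $\lambda$ and one edge for every rectangle in $\mathcal O\sqcup\mathcal U$, together with a maximal subtree $\mathcal G'\subset\mathcal G$, the description of ${\rm ob}_d$ from Proposition~\ref{ob bijection} can be implemented by building a flat section of $S\times_\rho\PGL_d(\Cb)$ plaque by plaque along $M$ using slithering. I would then construct, for each leaf $g_j$ of the cutting sequence, a basis $\mathbf v(j)=(v_1(j),\dots,v_d(j))$ of $\Cb^d$ extracted from the flag $\xi(y_j)$ attached to a distinguished vertex $y_j$ of the plaque between $g_{j-1}$ and $g_j$, chosen so that: (I) $\exp({\rm ob}_d(\rho))\,\mathbf v(0)=\mathbf v(\ell)$, which is precisely the evaluation of the obstruction $2$--cocycle of Proposition~\ref{ob bijection} on the unique $2$--cell associated to $(\mathcal G,\mathcal G')$, and (II) $\Sigma(g_j,g_{j-1})\,v_m(j-1)=a_m(j)\,v_m(j)$ for scalars $a_m(j)\in\Cb\setminus\{0\}$. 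Combining (I), (II), and $\Sigma_\rho(\widetilde{\mathsf c})=\id$ yields
\[
{\rm ob}_d(\rho)=-\log\Bigl(\prod_{j=1}^\ell a_m(j)\Bigr)\quad\text{for every } m\in\{1,\dots,d\}.
\]

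The final and most delicate step, which I expect to be the main obstacle, is to compute $\log\bigl(\prod_{j=1}^\ell a_{\lfloor(d+1)/2\rfloor}(j)\bigr)$ explicitly in terms of the shear-bend pair $(\alpha,\theta)=\mathfrak{sb}_d([\rho])$. Each $a_m(j)$ records how the slithering across one plaque and one edge rescales the chosen vector, and via the Fock-Goncharov double and triple ratios it decomposes as a sum of contributions indexed by pairs in $\mathcal A$ and triples in $\mathcal B$. The specific choice $m=\lfloor(d+1)/2\rfloor$ should isolate precisely the triples in $\mathcal B^*$ (and, when $d$ is even, the extra pair ${\bf i}^0$ and triples in $\mathcal B^0$). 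After a careful bookkeeping organized around the stumps and truncated rectangles of $M$ and the subsets $\mathcal U^\ell,\mathcal U^r,\mathcal S^\ell,\mathcal S^r$, the resulting expression will match, term by term, the explicit formula for $-{\rm tor}_d(\rho)$ given in Remark~\ref{tor formula}, thereby completing the proof.
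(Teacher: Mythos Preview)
Your proposal is correct and follows essentially the same route as the paper: the triviality of $\Sigma_\rho(\widetilde{\mathsf c})$ is established by induction on the number of truncated rectangles in the tree (Proposition~\ref{prop: slithering is trivial}), the bases $\mathbf v(j)$ with properties (I) and (II) are constructed from triples of flags adapted to the enhanced cutting sequence (Section~\ref{sec: family}), and the final matching with Remark~\ref{tor formula} is carried out via the explicit slithering-coefficient computations of Lemmas~\ref{lem: final switch}, \ref{lem: amj on a rectangle}, and the parity Lemma~\ref{prop: count}. The only minor discrepancy is that the bases $\mathbf v(j)$ are not extracted from a single flag $\xi(y_j)$ but are adapted to the full triple $\xi({\bf x}(j))$ of flags at the vertices of the plaque $T_j$ (Definition~\ref{def: adapted}); this is what makes the crucial unipotent computation of Proposition~\ref{prop: unipotent} available.
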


We now give a brief description of the strategy of the proof of Theorem~\ref{thm: final} that we will implement in the remainder of this paper. Recall that we fixed a train track neighborhood $N$ of $\lambda$, and a maximal tree $M$ in $N$.

\medskip

\noindent \textbf{The graph $\mathcal G$.} Given the train track neighborhood $N$ and a maximal tree $M$ in $N$, we define the embedded graph $\mathcal G \subset S$ as follows. For each connected component $C$ of $S\setminus N$, choose a point $p_C \in C$.  For each rectangle $R$ of $N$ that does not lie in $M$, let $C_{R,1}$ and $C_{R,2}$ be the connected components of $S\setminus N$ whose closures contain the two horizontal boundary components of $R$ (it is possible that $C_{R,1}=C_{R,2}$), and let $\check R$ be the truncated rectangle in $R$ (see Section \ref{ssub:max tree} for the necessary terminology). Then let $e_R$ be an embedded arc in $C_{R,1}\cup C_{R,2}\cup \check R$ such that the endpoints of $e_R$ are $p_{C_{R,1}}$ and $p_{C_{R,2}}$, $e_R$ intersects the leaves of $\lambda$ transversely, and $e_R$ intersects each horizontal boundary component of $R$ exactly once. We will also assume that for all distinct rectangles $R_1,R_2\subset N\setminus M$, $e_{R_1}$ and $e_{R_2}$ can only intersect at their endpoints. Let $\mathcal G\subset S$ be the graph whose set of vertices is $\{p_C:C\in\pi_0(S\setminus N)\}$ and whose set of edges is $\{e_R:R\text{ is a rectangle in }N \text{ but not in } M\}$, see Figure \ref{fig:G}.

\smallskip

\noindent \textbf{The loops $\mathsf c$ and $\mathsf b$.} Notice that $U:=S\setminus M$ is a neighborhood of $\mathcal G$ for which there is a strong deformation retract $H:\overline{U}\times[0,1]\to \overline{U}$ of $\overline U$ onto $\mathcal G$, such that the fibers of $H(\cdot,1)|_{\partial M}$ are finite, and consist of two points away from the vertices of $\mathcal G$, see Figure \ref{fig: deformation}. As discussed in Section \ref{sec:connected}, we denote by $\csf$ some fixed counterclockwise parameterization of $\partial \overline U$ and by $\bsf=H\circ \csf$ its retraction to $\mathcal G$. Note that $\partial \overline U = \partial M$. 

\begin{figure}[h!]
	\includegraphics[scale=1]{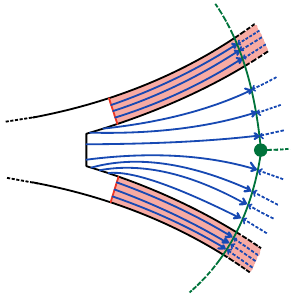}
	\caption{\label{fig: deformation}\small Maximal tree $M$ is in white, the graph $\mathcal G$ is in green, and the strong deformation retract $H$ is represented by the blue arrows.}
\end{figure}

Fix once and for all a maximal tree $\mathcal G'\subset \mathcal G$, and an endpoint $p$ of an exit of $M$ such that $q:=H(p,1)$ lies in $\mathcal G'$. Choose $\widetilde q\in\pi_S^{-1}(q)$; this defines an identification between the fundamental group $\pi_1(S,q)$ and the deck group $\Gamma$ of the universal cover $\pi_S:\widetilde S\to S$ as discussed in Sections \ref{sec: max geod} and \ref{sec:connected}. Let $\widetilde p\in\pi_S^{-1}(p)$ be the point such that the lift of the path $t\mapsto H(t,p)$ to $\widetilde S$ based at $\widetilde p$ has $\widetilde q$ as its other endpoint. Then let $\widetilde{\mathsf b}$ and $\widetilde{\mathsf c}$ be the lifts to $\widetilde S$ of $\mathsf b$ and $\mathsf c$ based at $\widetilde q$ and $\widetilde p$ respectively. 

\begin{figure}[h!]
	\includegraphics[width=\textwidth]{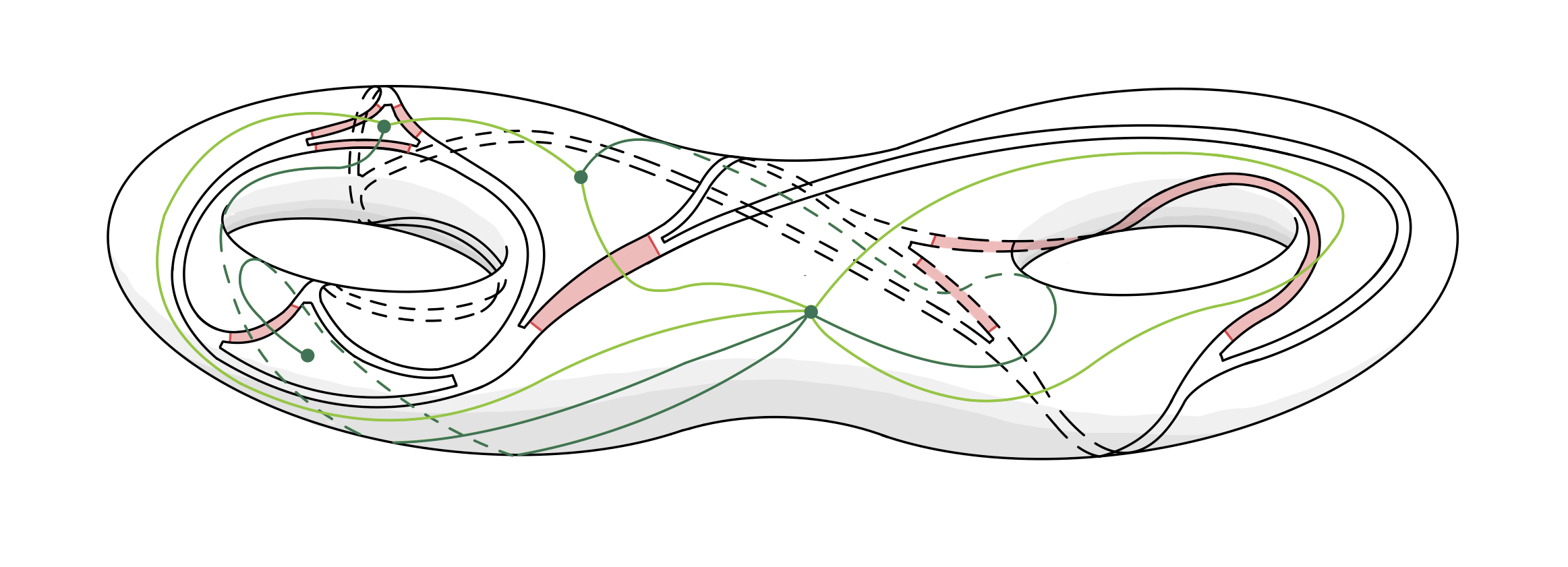}
	\caption{\small The graph $\mathcal G\subset S$ (drawn in green) with a maximal subtree $\mathcal{G}'$ (in dark green) from the choice of the maximal tree $M$ in $N$. Rectangles in $N$ but not in $M$ are drawn in pink.}
	\label{fig:G} 
\end{figure}

\smallskip

\noindent \textbf{The cutting sequence of $\widetilde{\csf}$ and the slithering coefficients.} Let $\Sigma$ denote the slithering map compatible with the $\lambda$--limit map of $\rho$ (see Section \ref{d-pleated}). We will use $\widetilde{\bsf}$ and $\widetilde{\csf}$ to specify a finite sequence of leaves $g_0,\dots,g_\ell$ of $\widetilde\lambda$ and a finite sequence of bases ${\bf v}(0),\dots,{\bf v}(\ell)$ of $\Cb^d$ with the following properties.
\begin{enumerate}
	\item[(I)] $\exp({\rm ob}_d(\rho))\,{\bf v}(0)={\bf v}(\ell)$.
	\item[(II)] For each $j\in\{1,\dots,\ell\}$ and $m\in\{1,\dots,d\}$, $\Sigma(g_j,g_{j-1})$ sends the vector $v_m(j-1)$ of the basis ${\bf v}(j-1)=(v_1(j-1),\dots,v_d(j-1))$ to a multiple $a_m(j)\in\Cb\setminus\{0\}$ of the vector $v_m(j)$ in the basis ${\bf v}(j)=(v_1(j),\dots,v_d(j))$, i.e.
	\begin{align}\label{eqn: slithering coefficients}
		\Sigma(g_j,g_{j-1})\, v_m(j-1)=a_m(j)\,v_m(j).
	\end{align}
\end{enumerate}
We refer to the sequence $g_0,\dots,g_\ell$ as the \emph{cutting sequence of $\widetilde{\csf}$}, the product 
\[\Sigma_\rho(\widetilde{\csf}):=\Sigma(g_\ell,g_{\ell-1})\cdot\ldots\cdot\Sigma(g_2,g_1)\cdot\Sigma(g_1,g_0)\in\SL_d(\Cb)\]
as the \emph{slithering along $\widetilde{\csf}$ associated to $\rho$}, the sequence ${\bf v}(0),\dots,{\bf v}(\ell)$ as a \emph{sequence of bases along $\widetilde{\csf}$ associated to $\rho$}, and the data 
\[\{a_m(j):j\in\{1,\dots,\ell-1\}\text{ and }m\in\{1,\dots,d\}\}\]
as the \emph{slithering coefficients of ${\bf v}(0),\dots,{\bf v}(\ell)$}. Notice that property (II) implies that for all $m\in\{1,\dots,d\}$, we have
\[\Sigma_\rho(\widetilde{\csf})\,v_m(0)=\left(\prod_{j=1}^\ell a_m(j)\right)v_m(\ell).\]

\smallskip

\noindent \textbf{Triviality of $\Sigma_\rho(\widetilde{\csf})$.} We will then prove that $\Sigma_\rho(\widetilde{\csf})=\id$, which together with property (I) implies that for all $m\in\{1,\dots,d\}$, we have
\[\exp({\rm ob}_d(\rho))\left(\prod_{j=1}^\ell a_m(j)\right)\,v_m(\ell)=\exp({\rm ob}_d(\rho))\,v_m(0)=v_m(\ell),\]
or equivalently, that for all $m\in\{1,\dots d\}$
\begin{equation}\label{eqn: op slithering}
	{\rm ob}_d(\rho)=-\log \left(\prod_{j=1}^\ell a_m(j)\right).
\end{equation}

\smallskip

\noindent \textbf{Slithering coefficients in terms on the bending data.} Finally, we compute 
\[\log\left(\prod_{j=1}^\ell a_{\lfloor\frac{d+1}{2}\rfloor}(j)\right)\] 
explicitly in terms of the data of the shear-bend $\lambda$--cocyclic pair of $\rho$, and observe that the expression we obtain is the expression for ${\rm tor}_d(\rho)$ given in Remark~\ref{tor formula}.

\medskip

The rest of this paper is organized as follows. In Section \ref{sec: cutting}, we define the cutting sequence of $\widetilde{\mathsf c}$ and prove that $\Sigma_\rho(\widetilde{\csf})=\id$, see Proposition \ref{prop: slithering is trivial} (In fact, we do this more generally for any tree in $N$.) Then, in Section~\ref{sec: family}, we define the bases along $\widetilde{\csf}$ associated to $\rho$ and observe from their definition that they satisfy properties (I) and (II), see Proposition \ref{prop: properties of sequences}. Finally, in Section \ref{sec: final}, we compute the slithering coefficients of ${\bf v}(0),\dots,{\bf v}(\ell)$ in terms of the bending data of $\rho$.

\section{Cutting sequences and slithering along boundaries of trees}\label{sec: cutting}

Recall that we fixed a train track neighborhood $N$ of the maximal geodesic lamination $\lambda$. In this section we define and study the notions of cutting sequence and slithering along the boundary of any tree inside $N$ associated with the choice of a $d$--pleated surface $\rho$ with pleating locus $\lambda$.

To provide a brief description of these objects, let us introduce the following terminology. Given a subtree $L$ of $N$ (see Section \ref{ssub:max tree}), we say that a point in $\partial L$ is a \emph{corner of $L$} if it is either an endpoint of an exit of $L$ or an endpoint of a vertical boundary component of $N$ that lies in $L$. Fix arbitrarily a corner $p_L$ of $L$ and let $\mathsf c_L$ be a loop based at $p_L$ that parametrizes the boundary of $L$ according to its counterclockwise orientation about $L$ (this makes sense because $L$ is topologically a disk and $S$ is oriented). For any choice of a lift $\widetilde{\mathsf c}_L$ of $\mathsf c_L$ to $\widetilde S$, the cutting sequence of $\widetilde{\mathsf c}_L$ is a specific ordered sequence of leaves $g_0,\dots,g_\ell$ of $\widetilde\lambda$ that are crossed by the loop $\widetilde{\mathsf c}_L$, and the slithering along $\widetilde{\mathsf{c}}_L$ associated to $\rho$ is the product
\[\Sigma_\rho(\widetilde{\mathsf c}_L):=\Sigma(g_\ell,g_{\ell-1})\cdot\ldots\cdot\Sigma(g_2,g_1)\cdot\Sigma(g_1,g_0)\in\SL_d(\Cb) ,\]
where $\Sigma$ denotes the slithering map compatible with the $\lambda$--limit map $\xi$ of $\rho$ (see Section \ref{d-pleated}). The main result of this section is Proposition \ref{prop: slithering is trivial}, which states that the linear transformation $\Sigma_\rho(\widetilde{\mathsf c}_L)$ is equal to the identity.

In particular, when the subtree $L$ is equal to the maximal tree $M$, Proposition \ref{prop: slithering is trivial} implies that $\Sigma_\rho(\widetilde{\csf}) = \mathrm{id}$, establishing one of the technical steps required for the proof of Theorem \ref{ThmD} (compare with Section \ref{tor=ob}).

\subsection{Type decomposition and cutting sequences}\label{sec: type}
Let $L\subset N$ be a tree. We write $\mathsf c_L$ as a concatenation of segments
\[\mathsf c_L=\mathsf k_1\cdot\ldots\cdot\mathsf k_\ell,\]
where the image of each $\mathsf k_j$ is one of the following:
\begin{enumerate}
	\item an exit of $L$, in which case $\mathsf k_j$ is of \emph{rectangle type};
	\item a vertical boundary component of $N$, in which case $\mathsf k_j$ is of \emph{switch type};
	\item a maximal subsegment of a horizontal boundary of $N$, in which case $\mathsf k_j$ is of \emph{leaf type}.
\end{enumerate}
We refer to the above decomposition of $\mathsf c_L$ as its \emph{type decomposition}, see Figure \ref{fig:type}. Notice that the cyclic sequence $\ksf_1,\dots,\ksf_\ell$ alternates between leaf type paths and paths that are either of rectangle or of switch type. Also, the type decomposition of $\mathsf c_L$ defines a type decomposition of the lift
\[
\widetilde{\mathsf c}_L= \widetilde{\mathsf k}_1\cdot\ldots\cdot\widetilde{\mathsf k}_\ell ,
\]
where for all $j\in\{1,\dots,\ell\}$, each segment $\widetilde{\mathsf k}_j$ is a lift of $\mathsf k_j$, see Figure \ref{fig:type}. If $\mathsf k_j$ is of rectangle, switch, or  leaf type, then we say the same for $\widetilde{\mathsf k}_j$.

For each $j\in\{1,\dots,\ell\}$, let $T_j$ be the plaque of $\widetilde{\lambda}$ that contains the forward endpoint $p_j$ of $\widetilde{\mathsf{k}}_j$ and let $s_j$ be the unoriented tie of $\widetilde{L}$ that contains $p_j$. Notice that we have four cases:
\begin{enumerate}
	\item $\widetilde\ksf_j$ is of rectangle type and $\widetilde\ksf_{j+1}$ is of leaf type,
	\item $\widetilde\ksf_j$ is of leaf type and $\widetilde\ksf_{j+1}$ is of rectangle type,
	\item $\widetilde\ksf_j$ is of switch type and $\widetilde\ksf_{j+1}$ is of leaf type,
	\item $\widetilde\ksf_j$ is of leaf type and $\widetilde\ksf_{j+1}$ is of switch type.
\end{enumerate}
In cases (1) and (2), the tie $s_j$ intersects exactly one edge of the plaque $T_j$. Denote that edge by $g_j$. In cases (3) and (4), the tie $s_j$ intersects exactly two edges of $T_j$, and we let $g_j$ be the edge of $T_j$ such that $p_j$ lies between $g_j\cap s_j$ and the other endpoint of the vertical boundary component of $N$ that lies in $s_j$, see Figure \ref{fig:type}. Finally, set $g_0:=g_\ell$. The \emph{cutting sequence} of $\widetilde{\mathsf c}_L$ is the finite sequence of leaves $g_0,\dots,g_\ell$ of $\widetilde\lambda$ specified above.

\begin{figure}[h!]
	\includegraphics[width=0.8\textwidth]{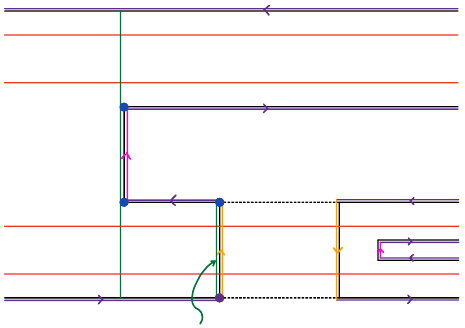}
	\put(-150,9){\small $\wt p_L$}
	\put(-223,6){\small $\wt{\ksf}_\ell$}
	\put(-178,-3){\small $s_1=s_\ell$}
	\put(-314,30){\small $g_\ell=g_0$}
	\put(-148,45){\small $\wt{\ksf}_1$}
	\put(-314,59){\small $g_1=g_2$}
	\put(-148,80){\small $p_1$}
	\put(-223,80){\small $p_2$}
	\put(-179,82){\small $\wt{\ksf}_2$}
	\put(-206,102){\small $\wt{\ksf}_3$}
	\put(-223,136){\small $p_3$}
	\put(-122,124){\small $\wt{\ksf}_4$}
	\put(-293,148){\small $g_3$}
	\put(-212,164){\small $s_2=s_3$}
	\caption{\small The type decompositions of $\widetilde{\mathsf c}_L=\widetilde{\ksf}_1\cdot\ldots\cdot\widetilde{\ksf}_\ell$. $\wt p_L$ is a lift of the corner $p_L$. The rectangle, switch, and leaf type segments are in orange, pink, and purple respectively. The ties $s_1=s_\ell$ and $s_2=s_3$ are in green, the corners $p_1$, $p_2$, and $p_3$ are in blue, and the first terms $g_0=g_\ell$, $g_1=g_2$ and $g_3$ of the cutting sequence of $\widetilde{\mathsf c}_L$ are in red.}
	\label{fig:type} 
\end{figure}

\subsection{Slithering along boundaries of trees}
Using the cutting sequence of $\widetilde{\mathsf c}_L$, we will now define the slithering along $\widetilde{\mathsf c}_L$ associated to $\rho$.

\begin{definition}\label{def:slithering along c}
	Let $\rho$ be a $d$--pleated surface with pleating locus $\lambda$, and let $\Sigma$ be the slithering map compatible with the $\lambda$--limit map of $\rho$. The \emph{slithering along $\widetilde{\mathsf c}_L$ associated to $\rho$} is the map
	\[\Sigma_\rho(\widetilde{\mathsf c}_L):=\Sigma(g_\ell,g_{\ell-1})\circ\dots\Sigma(g_3,g_2)\circ\Sigma(g_2,g_1)\circ\Sigma(g_1,g_0)\in\SL_d(\Cb).\]
\end{definition}

The next proposition states that the slithering along $\widetilde{\mathsf c}_L$ associated to $\rho$ is always trivial.

\begin{proposition}\label{prop: slithering is trivial} Let $\rho$ be a $d$--pleated surface with pleating locus $\lambda$ and let $L\subset N$ be a tree. Then, $\Sigma_\rho(\widetilde{\mathsf c}_L)=\id$.
\end{proposition}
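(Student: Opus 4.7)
My plan is to proceed by induction on the number $n$ of truncated rectangles contained in $L$. The base case is $n=0$, in which $L$ is a single stumpy switch, and the inductive step corresponds to attaching one truncated rectangle $\check R$ together with the unique new stumpy switch $S$ lying at its far end.

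When $L$ is a single stumpy switch centered at a switch of $N$, the loop $\widetilde{\mathsf c}_L$ decomposes into three rectangle-type segments (one along each exit) interleaved with three leaf-type segments (one along each horizontal boundary arc of $N$). Hence the cutting sequence $g_0,\dots,g_6=g_0$ consists of six leaves, all of which are edges of plaques of $\widetilde\lambda$ incident to the common ideal vertex of $\partial\widetilde S$ associated with the central switch. I plan to check directly that each consecutive pair $g_{j-1},g_j$ either coincides or shares that ideal vertex as a common endpoint, and then to combine properties (1)--(3) of Theorem \ref{thm: slithering map}---the cocycle relation, the symmetry $\Sigma(g,g')=\Sigma(g',g)^{-1}$, and the unipotency at shared endpoints---to telescope the product to the identity.

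For the inductive step I select an extremal truncated rectangle $\check R$ of $L$, meaning one whose far stumpy switch $S$ is incident to no other truncated rectangle of $L$, and let $L_0$ be the subtree obtained by removing $\check R$ and $S$. The loops $\widetilde{\mathsf c}_L$ and $\widetilde{\mathsf c}_{L_0}$ agree outside a neighborhood of the shared exit $e=\check R\cap L_0$: in $\widetilde{\mathsf c}_{L_0}$ the exit $e$ is traversed as a single rectangle-type segment, while in $\widetilde{\mathsf c}_L$ the corresponding piece is replaced by a detour crossing the two horizontal sides of $\check R$ and wrapping around the portion of $\partial S$ not meeting $\check R$. The key local claim is that the slithering product along this detour equals the single factor $\Sigma(g_j,g_{j-1})$ contributed by the original traversal of $e$ in $\widetilde{\mathsf c}_{L_0}$; granted this, the inductive hypothesis $\Sigma_\rho(\widetilde{\mathsf c}_{L_0})=\id$ immediately gives $\Sigma_\rho(\widetilde{\mathsf c}_L)=\id$.

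The main difficulty will be this local claim. To make it precise, one must identify the cutting sequence along the detour, which requires a case-by-case analysis driven by the four type transitions described in Section \ref{sec: type}, and then verify that the triples of consecutive leaves so obtained satisfy the separation condition required to invoke the cocycle identity $\Sigma(g_1,g_3)=\Sigma(g_1,g_2)\Sigma(g_2,g_3)$. I expect the telescoping to combine the cocycle relation, the unipotency property (3) of Theorem \ref{thm: slithering map} at each switch of $\check R$ and $S$ (where consecutive leaves in the cutting sequence share an endpoint at the corresponding ideal vertex), and the base case applied to $S$ viewed as a sub-tree, which already ensures that the portion of the detour going around $S$ alone contributes trivially up to unipotent corrections that cancel against the segments along the horizontal sides of $\check R$.
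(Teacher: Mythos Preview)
Your inductive framework matches the paper's, but your base case is wrong in a way that would derail the argument. A stumpy switch does \emph{not} have only three rectangle-type and three leaf-type boundary segments: its boundary also contains one vertical boundary component of $N$ (the ``gap'' in the central switch between the two short stumps), which gives a switch-type segment. The type decomposition therefore has $\ell=8$ pieces (three exits, one switch-type, four leaf-type), and the cutting sequence is $(g_0,\dots,g_8)$. More importantly, the leaves in this sequence are \emph{not} all asymptotic to a common ideal point. They are four distinct edges $h_0,h_1,h_2,h_3$ of the plaques touching the stumpy switch, arranged so that $h_1$ separates $h_0$ from $h_2$ and $h_0$ separates $h_3$ from $h_2$. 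The identity then follows purely from property~(1) of Theorem~\ref{thm: slithering map} (the cocycle/composition law and $\Sigma(g,g)=\id$); no appeal to unipotency is needed or helpful here, and invoking it suggests you are picturing the wrong configuration of leaves.

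For the inductive step your idea is workable but the paper's version is cleaner and avoids your ``local claim'' entirely. Rather than removing an extremal $\check R$ together with its terminal stumpy switch, remove an \emph{arbitrary} truncated rectangle $\check R$, so that $L=L_1\cup\check R\cup L_2$ with $L_1,L_2$ subtrees having strictly fewer truncated rectangles. A short bookkeeping of the cutting sequences (using that the two boundary ties of $\check R$ become exits of $L_1$ and $L_2$, and that the adjacent leaf-type segments determine coinciding leaves) yields directly
\[
\Sigma_\rho(\widetilde{\mathsf c}_L)=\Sigma_\rho(\widetilde{\mathsf c}_{L_2})\circ\Sigma_\rho(\widetilde{\mathsf c}_{L_1}),
\]
again using only property~(1). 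Induction finishes immediately. Your extremal-rectangle approach is the special case where one of $L_1,L_2$ is a single stumpy switch, so once the base case is fixed your scheme would also go through---but the factorization above is what actually replaces your ``detour $=$ single factor'' claim, and it requires no unipotent corrections.
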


\begin{proof} We prove this by induction on the number of truncated rectangles in $L$. First, note that by the $\rho$--equivariance of slithering maps, changing the choice of lift $\widetilde{\mathsf c}_L$ of $\mathsf c_L$ results in $\Sigma_\rho(\widetilde{\mathsf c}_L)$ being conjugated by $\rho(\gamma)$ for some $\gamma\in\Gamma$. Also, changing the corner of $L$ at which $\mathsf c_L$ is based results in $\Sigma_\rho(\widetilde{\mathsf c}_L)$ being conjugated by a subproduct of the product of slithering maps used to define $\Sigma_\rho(\widetilde{\mathsf c}_L)$. As such, it suffices to prove the base case and the inductive step for some fixed $\mathsf c_L$ and $\widetilde{\mathsf c}_L$, but we may assume that the inductive hypothesis holds for all $\mathsf c_L$ and $\widetilde{\mathsf c}_L$.
	
	In the base case $L$ has no truncated rectangles, i.e. $L$ is a stumpy switch. Let $\widetilde L$ be the connected component of $\pi_S^{-1}(L)$ that is bounded by $\widetilde{\mathsf c}_L$. We may assume that $\widetilde{\mathsf c}_L$ is based at the backward endpoint of the vertical boundary component in $\widetilde L$. In this case, the cutting sequence of $\widetilde{\mathsf c}_L$ is of the form $(g_0,\dots,g_8)$, where $h_0:=g_7=g_8=g_0$, $h_1:=g_1=g_2$, $h_2:=g_3=g_4$, $h_3:=g_5=g_6$, $h_1$ separates $h_0$ and $h_2$, and $h_0$ separates $h_3$ and $h_2$, see Figure \ref{fig: slithering is trivial}. Then by repeated applications of property (1) of Theorem \ref{thm: slithering map}, we have
	\begin{align*}
		\Sigma_\rho(\widetilde{\mathsf c}_L)&=\Sigma(h_0,h_3)\circ\Sigma(h_3,h_2)\circ\Sigma(h_2,h_1)\circ\Sigma(h_1,h_0)=\Sigma(h_0,h_2)\Sigma(h_2,h_0)=\id.
	\end{align*}
	This finishes the base case.
	\begin{figure}[h!]
		\centering
		\includegraphics[scale=1.2]{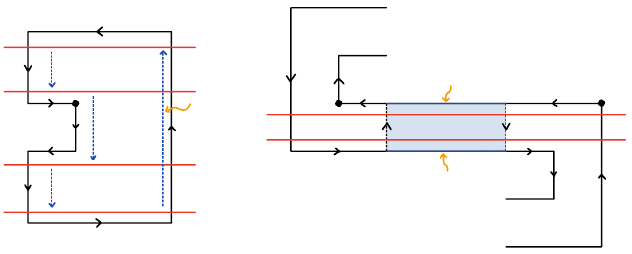}
		\put(-310,133){\small $L$}
		\put(-369,123){\small $h_3$}
		\put(-369,97){\small $h_0$}
		\put(-369,55){\small $h_1$}
		\put(-369,27){\small $h_2$}
		\put(-334,105){\small $\Sigma_{h_0,h_3}$}
		\put(-310,70){\small $\Sigma_{h_1,h_0}$}
		\put(-334,35){\small $\Sigma_{h_2,h_1}$}
		\put(-255,85){\small $\Sigma_{h_3,h_2}$}
		\put(-332,79){\small $\wt p_L$}
		\put(-161,69){\small $\ksf_{\ell-1}$}
		\put(-68,51){\small $\ksf_{3}'$}
		\put(-14,41){\small $\ksf_{m}'$}
		\put(-69,69){\small $\ksf_{2}'$}
		\put(-54,90){\small $\ksf_{1}'$}
		\put(-153,90){\small $\ksf_{\ell}$}
		\put(-112,69){\small $\check R$}
		\put(-155,130){\small $L_1$}
		\put(-178,104){\small $\ksf_1$}
		\put(-180,83){\small $\wt p_{L_1}$}
		\put(-14,88){\small $\wt p_{L_2}$}
		\put(0,78){\small $g_0$}
		\put(0,64){\small $g_{\ell-3}$}
		\put(-180,52){\small $\ksf_{\ell-2}$}
		\put(-112,99){\small $w_1$}
		\put(-112,43){\small $w_2$}
		\put(-70,10){\small $L_2$}
		\caption{\small On the left, the base case where $L$ is a stumpy switch. On the right, the inductive step with the truncated rectangle $\check R$ (blue) and the subtrees $L_1$ and $L_2$.}
	\label{fig: slithering is trivial}
\end{figure}

For the inductive step, let $\check R$ be an arbitrary truncated rectangle contained in $L$. There are exactly two connected components of $\partial_h N\cap L$ that intersect $\check R$, call them $w_1$ and $w_2$. Let $\mathsf{w}_1$ and $\mathsf{w}_2$ be $w_1$ and $w_2$ respectively, equipped with the orientation compatible with the orientation on $\mathsf c_L$, and we may assume that $\mathsf c_L$ is based at the forward endpoint of $\mathsf{w}_1$. Let $\widetilde L$ be the connected component of $\pi_S^{-1}(L)$ that is bounded by $\widetilde{\mathsf c}_L$.

Let $L_1$ and $L_2$ denote the two disjoint subtrees of $L$ such that $L=L_1\cup \check R\cup L_2$, and such that $L_1$ contains the forward endpoint of $\mathsf{w}_1$. Let $\mathsf c_{L_1}$ (respectively, $\mathsf c_{L_2}$) be a counterclockwise parameterization of the boundary of $L_1$ (respectively, $L_2$) that is based at the corner of $L_1$ (respectively, $L_2$) so that if $\mathsf c_{L_1}=\ksf_1\cdot\ldots\cdot\ksf_\ell$ (respectively, $\mathsf c_{L_2}=\ksf_1'\cdot\ldots\cdot\ksf_m'$) is its type decomposition, then the image of $\ksf_{\ell-1}$ (respectively, $\ksf_2'$) is $\check R\cap L_1$ (respectively, $\check R\cap L_2$). See Figure \ref{fig: slithering is trivial}. Then let $\widetilde{\mathsf c}_{L_1}$ and $\widetilde{\mathsf c}_{L_2}$ be the lifts of $\mathsf c_{L_1}$ and $\mathsf c_{L_2}$ whose images lie in $\widetilde L$, and let $(g_0,\dots,g_\ell)$ and $(g_0',\dots,g_m')$ be the cutting sequences of $\widetilde{\mathsf c}_{L_1}$ and $\widetilde{\mathsf c}_{L_2}$ respectively. 

Notice that $\ksf_{\ell-2}$, $\ksf_\ell$, $\ksf_1'$, and $\ksf_3'$ are of leaf type. It follows that $g_{\ell-3}=g_{\ell-2}=g_2'=g_3'$ and $g_{\ell-1}=g_{\ell}=g_0=g_m'=g_0'=g_1'$. Also, notice that
\[\mathsf c_L=\ksf_1\cdot\ldots\cdot\ksf_{\ell-3}\cdot\mathsf{w}_2\cdot \ksf_4'\cdot\ldots\cdot\ksf_m'\cdot\mathsf{w}_1\]
is the type decomposition of $\widetilde{\mathsf c}_L$, and $\mathsf w_1$ and $\mathsf w_2$ are both of leaf type. Thus, the cutting sequence of $\widetilde{\mathsf c}_L$ is $(g_0,\dots,g_{\ell-3},g_3',\dots,g_m',g_0)$. It follows that
\begin{align*}
	\Sigma_\rho(\widetilde{\mathsf c}_L)&=\Sigma(g_0,g_m')\circ\Sigma(g_m',g_{m-1}')\circ\cdots\circ\Sigma(g_4',g_3')\\
	&\quad\circ\Sigma(g_3',g_{\ell-3})\circ\Sigma(g_{\ell-3},g_{\ell-4})\circ\dots\circ\Sigma(g_1,g_0)\\
	&=\Sigma_\rho(\widetilde{\mathsf c}_{L_2})\circ(\Sigma(g_3',g_2')\circ\Sigma(g_2',g_1')\circ\Sigma(g_1',g_0'))^{-1}\\
	&\quad\circ(\Sigma(g_{\ell},g_{\ell-1})\circ\Sigma(g_{\ell-1},g_{\ell-2})\circ\Sigma(g_{\ell-2},g_{\ell-3}))^{-1}\circ \Sigma_\rho(\widetilde{\mathsf c}_{L_1})\\
	&=\Sigma_\rho(\widetilde{\mathsf c}_{L_2})\circ\Sigma(g_1',g_2')\circ\Sigma(g_2',g_1')\circ \Sigma_\rho(\widetilde{\mathsf c}_{L_1})\\
	&=\Sigma_\rho(\widetilde{\mathsf c}_{L_2})\circ \Sigma_\rho(\widetilde{\mathsf c}_{L_1}).
\end{align*}
Since $L_1$ and $L_2$ contain strictly fewer truncated rectangles than $L$, the inductive hypothesis implies that $\Sigma_\rho(\widetilde{\mathsf c}_{L_2})=\id= \Sigma_\rho(\widetilde{\mathsf c}_{L_1})$, so the inductive step follows.
\end{proof}

\section{A family of bases for $\widetilde{\mathsf c}$ associated to $\rho$} \label{sec: family}
Let $\rho$ be a $d$--pleated surface with pleating locus $\lambda$, and let $\widetilde{\csf}$ be the lift to $\widetilde{S}$ of the counterclockwise parametrization of $\partial M$, as defined in Section \ref{tor=ob}. In this section, we will specify a finite sequence ${\bf v}(0),\dots,{\bf v}(\ell)$ of bases of $\Cb^d$, called a \emph{sequence of bases along $\widetilde{\mathsf c}$ associated to $\rho$}. 
Then, we relate ${\bf v}(0),\dots,{\bf v}(\ell)$ to ${\rm ob}_d(\rho)$ and the slithering map of $\rho$ by showing that it satisfies properties (I) and (II) stated in Section \ref{tor=ob} (see Proposition~\ref{prop: properties of sequences}). This defines its slithering coefficients.
	
	\subsection{Bases of $\Cb^d$ adapted to triples of flags}\label{bases}
	
	As a preliminary step to specify a sequence of bases along $\widetilde{\mathsf c}$ associated to $\rho$, we take a small detour to set up some terminology for bases of $\Cb^d$ associated to triples of flags that are in general position.
	
	First, we define the notion of a basis of $\Cb^d$ adapted to a triple of flags in $\Fc(\Cb^d)$ that are in general position.
	
	We start by defining the notion of a basis of $\Cb^d$ adapted to a triple of flags in $\Fc(\Cb^d)$ in general position.
	
	\begin{definition}\label{def: adapted}
		Let ${\bf F}=(F_1,F_2,F_3)$ be a triple of flags in $\Fc(\Cb^d)$ in general position. A basis ${\bf g}=(g_1,\dots,g_d)$ of $\Cb^d$ is \emph{adapted to} ${\bf F}$ if $g_m\in F_1^m\cap F_3^{d-m+1}$ for all $m\in\{1,\dots,d\}$, and $g_1+\dots+g_d\in F_2^1$, see Figure \ref{fig:adapted}.
	\end{definition}
	
	\begin{figure}[h!]
		\centering
		\def\svgwidth{0.7\textwidth}
		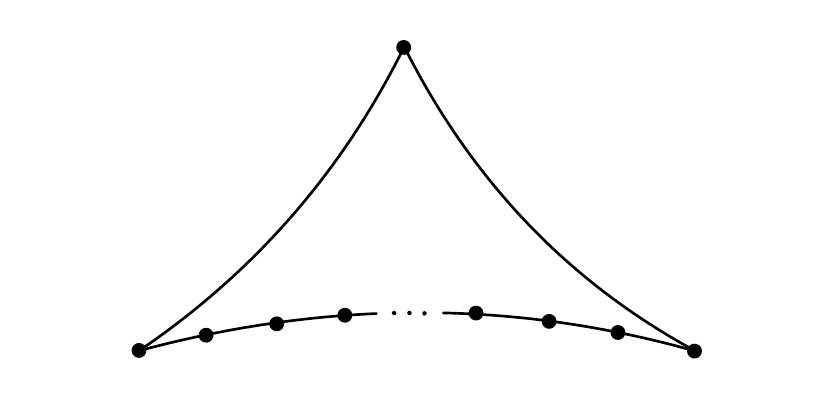 
		\caption{\small A basis $(g_1,\dots,g_d)$ of $\Cb^d$ adapted to $(F_1,F_2,F_3)$.}\label{fig:adapted} 
	\end{figure}
	
	Observe that any two bases adapted to the same triple of flags in general position are $\Cb^*$--scalar multiples of each other, where $\Cb^* = \Cb \setminus \{0\}$. 
	
	The following proposition describes how the unipotent element fixing $F_2$ and sending $F_1$ to $F_3$ relates the bases of $\Cb^d$ that are adapted to $(F_2,F_3,F_1)$ and $(F_3,F_1,F_2)$. Recall that $\Bc$ denotes the set of positive triples of integers ${\bf j}=(j_1,j_2,j_3)$ that sum to $d$. 
	
	\begin{proposition} \label{prop: unipotent}
		Let ${\bf F}=(F_1,F_2,F_3)$ be a triple of flags in $\Fc(\Cb^d)$ that are in general position, and let $u\in\mathsf{SL}_d(\mathbb C)$ be the unipotent linear transformation that fixes $F_2$ and sends $F_1$ to $F_3$. If ${\bf f}=(f_1,\dots, f_d)$ and ${\bf f}'=(f_1',\dots, f_d')$ are bases of $\Cb^d$ that are adapted to $(F_2,F_3,F_1)$ and $(F_3,F_1,F_2)$ respectively and such that $f_1=f_d'$ (see Figure \ref{fig:unipotent}), then for all $m\in\{1,\dots, d\}$ we have:
		\begin{equation}\label{eq:unipotent}
			u(f_m)=(-1)^{m-1}\exp\left(\sum_{{\bf j}\in\mathcal B\colon j_2<m}\tau^{\bf j}({\bf F})\right)f_{d-m+1}'.
		\end{equation}
	\end{proposition}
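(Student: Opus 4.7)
The plan is to reduce the identity to an explicit multilinear algebra computation by first showing $u(f_m)$ is a specific scalar multiple of $f_{d-m+1}'$, then identifying the scalar. The unipotent $u$ preserves $F_2$ as a flag (it lies in the unipotent radical of the Borel stabilizing $F_2$) and satisfies $u(F_1^k) = F_3^k$ for every $k$ since $u(F_1) = F_3$. The adapted-basis conditions $f_m \in F_2^m \cap F_1^{d-m+1}$ and $f_{d-m+1}' \in F_2^m \cap F_3^{d-m+1}$ then force both $u(f_m)$ and $f_{d-m+1}'$ to lie in the one-dimensional subspace $F_2^m \cap F_3^{d-m+1}$, giving $u(f_m) = c_m f_{d-m+1}'$ for some $c_m \in \Cb^*$. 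The base case $m = 1$ is immediate: since $u$ preserves the line $F_2^1$ and is unipotent, $u|_{F_2^1} = \id$, and combined with $f_1 = f_d'$, this yields $c_1 = 1 = (-1)^0\exp(0)$.

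For larger $m$, I would compute the product $c_1 c_2 \cdots c_m$ by wedging the equations $u(f_k) = c_k f_{d-k+1}'$ for $k = 1,\dots,m$. Since $u|_{F_2^m}$ is unipotent, its induced action on the one-dimensional space $\bigwedge^m F_2^m$ is trivial, so $u(f_1)\wedge\cdots\wedge u(f_m) = f_1\wedge\cdots\wedge f_m$. Comparing with the direct expansion $u(f_1)\wedge\cdots\wedge u(f_m) = c_1 \cdots c_m\, f_d' \wedge\cdots\wedge f_{d-m+1}'$ and identifying $\bigwedge^m F_2^m \cong \Cb$ via a choice of basis vector, one obtains
\[
c_1 c_2 \cdots c_m = (-1)^{m(m-1)/2}\,\lambda_m, \qquad \lambda_m := \frac{f_1 \wedge f_2 \wedge \cdots \wedge f_m}{f_{d-m+1}' \wedge f_{d-m+2}' \wedge \cdots \wedge f_d'}.
\]
Matching the conjectured $c_k = (-1)^{k-1}\exp\bigl(\sum_{{\bf j}:j_2<k}\tau^{\bf j}({\bf F})\bigr)$ on the product side reduces the statement to the scalar identity $\lambda_m = \prod_{{\bf j} \in \Bc} T^{\bf j}({\bf F})^{\max(0,\,m-j_2)}$, or equivalently to the single-step recurrence $\lambda_m / \lambda_{m-1} = \prod_{{\bf j}:j_2<m} T^{\bf j}({\bf F})$ for $m \geq 2$.

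The main obstacle is this final wedge-to-triple-ratio identity, which is a combinatorial bookkeeping calculation. To carry it out, I would fix the compatible normalizations $f_2^k := f_1 \wedge \cdots \wedge f_k$ (using ${\bf f}$'s adapted structure for $F_2$), $f_1^k := f_d \wedge f_{d-1} \wedge \cdots \wedge f_{d-k+1}$ (for $F_1$), and $f_3^k := f_1' \wedge \cdots \wedge f_k'$ (using ${\bf f}'$'s adapted structure for $F_3$) for the exterior powers appearing in the definition of $T^{\bf j}({\bf F})$. With these choices, $f_1\wedge\cdots\wedge f_m = f_2^m$, while $f_{d-m+1}' \wedge \cdots \wedge f_d'$ can be rewritten in terms of wedges of the $f_i^k$'s by iteratively applying the adapted-basis condition $\sum_k f_k' \in F_1^1$ to relate top forms in $F_3$ and $F_1$. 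The inductive step $m-1 \mapsto m$ then boils down to recognizing that the six-factor product defining each $T^{\bf j}$ telescopes across indices with $j_2$ fixed: all but two of the wedge factors per $T^{\bf j}$ cancel with factors from adjacent triples, leaving precisely the two wedge ratios that assemble into $\lambda_m / \lambda_{m-1}$, and fixing the sign requires tracking a single permutation rearranging the $f_k$ and $f_k'$ vectors.
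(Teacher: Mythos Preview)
Your reduction is correct and clean: the observation that $u(f_m)$ and $f_{d-m+1}'$ both lie in the line $F_2^m\cap F_3^{d-m+1}$, together with the unipotency of $u|_{F_2^m}$, does yield
\[
c_1\cdots c_m=(-1)^{m(m-1)/2}\,\lambda_m,\qquad \lambda_m=\frac{f_1\wedge\cdots\wedge f_m}{f'_{d-m+1}\wedge\cdots\wedge f'_d},
\]
and the statement is then equivalent to the scalar identity $\lambda_m/\lambda_{m-1}=\prod_{{\bf j}:j_2<m}T^{\bf j}({\bf F})$ for each $m$. This is a genuinely different route from the paper's proof, which instead proceeds by (i) verifying the case $d=2$ by hand, (ii) lifting to general $d$ with all triple ratios zero via the irreducible representation $\SL_2(\Cb)\to\SL(\mathrm{Sym}^{d-1}\Cb^2)$, and (iii) reducing the general case to the zero-triple-ratio case by factoring through an explicit diagonal element whose entries are computed in \cite[Proposition~9.4(2)]{MMMZ1}. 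The paper's argument never touches a wedge identity of your type; the triple-ratio contribution enters only through the known diagonal form of the correcting element in step~(iii).

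The gap in your proposal is that the entire content of the proposition has been pushed into the claimed telescoping identity $\lambda_m/\lambda_{m-1}=\prod_{{\bf j}:j_2<m}T^{\bf j}({\bf F})$, and you have not established it. Note that for each fixed $m$ this product ranges over \emph{all} ${\bf j}\in\Bc$ with $j_2\in\{1,\dots,m-1\}$, i.e.\ on the order of $(m-1)(d-m)$ triple ratios, each contributing six factors of the form $f_1^{a}\wedge f_2^{b}\wedge f_3^{c}$; the assertion that ``all but two of the wedge factors per $T^{\bf j}$ cancel with factors from adjacent triples'' is not obviously true and would need a careful two-parameter induction (in $j_1$ and $j_2$) to organize. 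Your proposed normalizations $f_2^k=f_1\wedge\cdots\wedge f_k$, $f_1^k=f_d\wedge\cdots\wedge f_{d-k+1}$, $f_3^k=f_1'\wedge\cdots\wedge f_k'$ mix the two bases ${\bf f}$ and ${\bf f}'$, so the mixed wedges $f_1^{a}\wedge f_2^{b}\wedge f_3^{c}$ do not simplify to monomials, and the telescoping cannot be read off by inspection. This computation can likely be carried out, but as written it is a plan rather than a proof; the paper's representation-theoretic reduction sidesteps it entirely.
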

	
	\begin{figure}[h!]
		\centering
		\def\svgwidth{0.6\textwidth}
		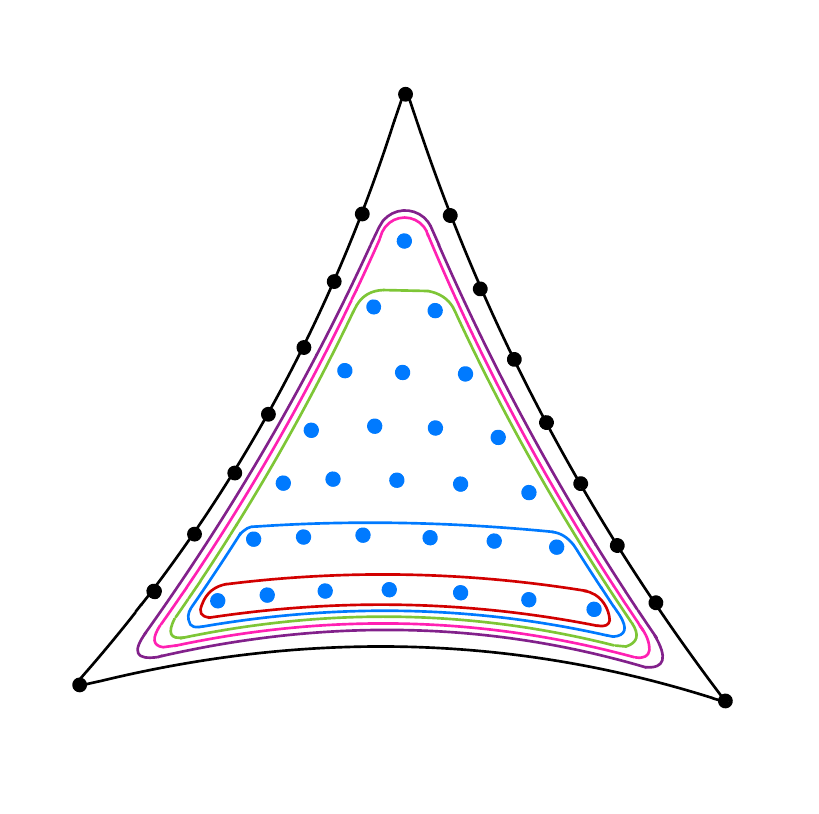
		\caption{\small The basis $(f_1,\dots,f_d)$ (resp. $(f'_1,\dots,f'_d)$) is adapted to $(F_2,F_3,F_1)$ (resp. $(F_3,F_1,F_2)$). For each pair of vectors $(f_m,f_{d-m+1}')$, the dots within the trapezoid of the same color describe the indices that appear in the sum in equation \eqref{eq:unipotent}.}
		\label{fig:unipotent} 
	\end{figure}
	
	\begin{proof} 
		We break down the proof into three steps. First, we establish the case $d=2$. Second, we use the first step to prove the statement for general $d$, but with the assumption that $\tau^{\bf j}({\bf F})=0$ for all ${\bf j}\in\Bc$. Finally, we use the second step to prove the proposition in general.
		
		\smallskip
		
		{\bf Step 1.} Assume that $d=2$. Since ${\bf f}$ and ${\bf f}'$ are adapted to $(F_2,F_3,F_1)$ and $(F_3,F_1,F_2)$ respectively, there exist $k,k'\in\Cb^*$ such that $f_1'=k'(f_1+f_2)$ and $kf_2=f_1'+f_2'$.
		Since $f_1=f_2'$, we have $k=k'=-1$. Since $u$ is a unipotent transformation that fixes $[f_1]$ and sends $[f_2]$ to $[f_1']$, it follows that $u(f_1)=f_1=f_2'$ and $u(f_2)=f_1+f_2=-f_1'$.
		
		\smallskip
		
		{\bf Step 2.} Assume that $\tau^{\bf j}({\bf F})=0$ for all ${\bf j}\in\mathcal B$. Let  $\text{Sym}^{d-1}(\mathbb C^2)$ denote the $d$--th symmetric power of $\mathbb C^2$, which is isomorphic to $\Cb^d$. The natural $\SL_2(\Cb)$--action on $\text{Sym}^{d-1}(\mathbb C^2)$ induces an irreducible representation $\phi\colon \mathsf{SL}_2(\mathbb C)\to \mathsf{SL}(\text{Sym}^{d-1}(\mathbb C^2))$, which in turn induces a $\phi$--equivariant embedding $\bar\phi:\Fc(\Cb^2)\to\Fc(\text{Sym}^{d-1}(\mathbb C^2))$. On the one hand, $\tau^{\bf j}({\bf F})=0$ for all ${\bf j}\in\mathcal B$ by hypothesis. On the other hand, every triple ${\bf F'} = (F_1',F_2',F_3') \in \mathcal{F}(\text{Sym}^{d-1}(\mathbb C^2))$ of pairwise distinct flags lying in the image of $\bar{\phi}$ satisfies $\tau^{\bf j}({\bf F}') = 0$ for all ${\bf j} \in \mathcal{B}$ (see for example, \cite[Section 5]{inagaki2021}). By Proposition \ref{prop: flag invariants}, $\{\tau^{\bf j}:{\bf j}\in\Bc\}$ is a complete collection of projective invariants for triples of flags, so by choosing an appropriate $\Cb$--linear isomorphism $\text{Sym}^{d-1}(\mathbb C^2)\cong\mathbb C^d$, we may ensure that there is a triple of flags ${\bf G}=(G_1,G_2,G_3)$ in $\Fc(\Cb^2)$ such that $\bar\phi({\bf G})={\bf F}\in \Fc(\text{Sym}^{d-1}(\mathbb C^2))\cong\Fc(\Cb^d)$. 
		
		Let $(g_1,g_2)$ and $(g_1',g_2')$ be bases of $\Cb^2$ that are adapted to $(G_2,G_3,G_1)$ and $(G_3,G_1,G_2)$, respectively, such that $g_1=g_2'.$ Let $w\in\SL_2(\Cb)$ be the unipotent element that fixes $G_2$ and sends $G_1$ to $G_3$. For each $m\in\{1,\dots,d\}$, let $h_m$ and $h_m'$ be the vectors in $\Cb^d$ that are identified with 
		\[{d-1 \choose m-1}g_1^{d-m}g_2^{m-1}\quad\text{and}\quad{d-1 \choose m-1}(g_1')^{d-m}(g_2')^{m-1}\]
		via our chosen isomorphism $\text{Sym}^{d-1}(\mathbb C^2)\cong\mathbb C^d$ and where $g^{k}$ denotes the $k$--th symmetric power of $g$. 
		
		As $\mathcal{F}(\Cb^2)$ is the set of projective classes of non-zero vectors of $\Cb^2$, we can describe the map $\bar{\phi} : \mathcal{F}(\Cb^2) \to \mathcal{F}(\text{Sym}^{d-1}(\mathbb C^2))$ as follows: for all $k\in\{1,\dots,d-1\}$ and all non-zero $v\in\Cb^2$, the $k$--dimensional subspace $\bar{\phi}([v])^k\subset \text{Sym}^{d-1}(\mathbb C^2)$ is equal to the subspace of symmetric tensors that are divided by $v^{d - k}$, i.e. of the form $v^{d - k} \, y$ for some $y \in \text{Sym}^{k - 1}(\mathbb C^2)$ (where by convention $\text{Sym}^{0}(\mathbb C^2) : = \Cb$). It follows that $h_m\in F_2^m\cap F_1^{d-m+1}$ and $h_m'\in F_3^m\cap F_2^{d-m+1}$ for all $m\in\{1,\dots,d\}$. 
		
		Note that $h_1=g_1^{d-1}=(g_2')^{d-1}=h_d'$. Furthermore,
		\[h_1'=(g_1')^{d-1}=(-1)^{d-1}(w(g_2))^{d-1}=(-1)^{d-1}(g_1+g_2)^{d-1}=(-1)^{d-1}\sum_{m=1}^{d-1}h_m,\]
		where the second equality holds by Step 1, and the third equality holds because $w$ is the unipotent tranformation of $\Cb^2$ that fixes $[g_1]$ and sends $[g_2]$ to $[g_1']=[g_1+g_2]$.
		Similarly, we have
		\[h_d=(-1)^{d-1}\sum_{m=1}^{d-1}h_m'.\]
		Thus, ${\bf h} = (h_1, \dots, h_d)$ and ${\bf h}' = (h_1', \dots, h_d')$ are bases of $\Cb^d$ that are adapted to $(F_2,F_3,F_1)$ and $(F_3,F_1,F_2)$ respectively, and they satisfy $h_1=h_d'$.
		
		Since the pair of bases $({\bf f},{\bf f}')$ from the statement of this proposition and $({\bf h},{\bf h}')$ are $\Cb^*$--multiples of each other, to finish Step 2, it suffices to verify that
		\[u(h_m)=(-1)^{m-1}h_{d-m+1}'\]
		for all $m\in\{1,\dots,d\}$ (recall that we assumed $\tau^{\bf j}({\bf F})=0$ for every ${\bf j} \in \Bc$). Indeed, since $u=\phi(w)$, 
		\begin{align*}
			u(h_m)&={d-1 \choose m-1}\phi(w)(g_1^{d-m}g_2^{m-1})={d-1 \choose d-m}w(g_1)^{d-m}w(g_2)^{m-1}\\
			&=(-1)^{m-1}{d-1 \choose d-m}(g_1')^{m-1}(g_2')^{d-m}=(-1)^{m-1}h'_{d-m+1} ,
		\end{align*}
		where the third equality holds because $w(g_1) = g_1 = g_2'$ and $w(g_2) = - g_1'$ by Step 1. This concludes the proof of Step 2.
		
		\smallskip
		
		{\bf Step 3:} Let $H_3\in\Fc(\Cb^d)$ be the flag such that $H_3^1=F_3^1$ and $\tau^{\bf j}(F_1,F_2,H_3)=0$ for all $\textbf{j}\in\mathcal B$. Let $a\in\mathsf{PGL}_d(\mathbb C)$ be the projective transformation that fixes $F_1^1$ and $F_2$, and sends $H_3$ to $F_3$. Then let $\bar{a}\in\GL_d(\Cb)$ be the linear representative of $a$ that fixes $f_1=f_d'$. Observe that if ${\bf f}'':=(f_1'',\dots,f_d'')$ is the basis of $\Cb^d$ that is adapted to $(H_3,F_1,F_2)$ such that $f_d''=f_1$, then $\bar{a}({\bf f}'')={\bf f}'$. Notice also that, if we decompose
		\[\bar{a}=\bar{s}\bar{u},\]
		where $\bar{u}$ is unipotent and $\bar{s}$ is diagonal in the basis ${\bf f}'$, then $\bar{u}$ fixes $F_2$ and sends $H_3$ to $F_3$.
		
		Let $u'\in\SL_d(\Cb)$ be the unipotent element that fixes $F_2$ and sends $F_1$ to $H_3$. By construction, the unipotent linear transformation that fixes $F_2$ and sends $F_1$ to $F_3$ is $u = \bar{u} u'$. Notice also that the basis ${\bf f}$, which by hypothesis is adapted to $(F_2,F_3,F_1)$, is also adapted to $(F_2,H_3,F_1)$ since $H_3^1 = F_3^1$. Since $\tau^{\bf j}(F_2,H_3,F_1) = 0$ for all ${\bf j}$, we can apply Step 2 to $u'$ and conclude that
		\[
		u'(f_m) = (-1)^{m-1} f_{d-m+1}'' 
		\]
		for all $m \in \{1,\dots,m\}$. We deduce that
		\[u(f_m)=\bar{u}u'(f_m)=(-1)^{m-1}\bar{s}^{-1}\bar{a}(f''_{d-m+1})=(-1)^{m-1}\bar{s}^{-1}(f'_{d-m+1}).\]
		
		It thus suffices to show that for all $m\in\{1,\dots,d\}$,
		\[\bar{s}(f'_{d-m+1})=\exp\left(-\sum_{{\bf j}\in\mathcal B\colon j_2<m}\tau^{\bf j}({\bf F})\right)f'_{d-m+1}.\]
		
		By \cite[Proposition 9.4(2)]{MMMZ1}, $a$ has a linear representative $\wh a$ in $\GL_d(\Cb)$ that is represented in the basis ${\bf f}'$ by a lower triangular matrix whose $(d-m+1)$--th diagonal entry down the diagonal is
		\[
		\exp\left(\sum_{{\bf j}\in\mathcal B:j_2\ge m}2r^{\bf j}-\sum_{{\bf j}\in\mathcal B:j_2<m}r^{\bf j}\right).
		\]
		where for all ${\bf j}\in\mathcal B$, $r^{\bf j}\in\Cb/2\pi i\Zb$ satisfies $3r^{\bf j}=\tau^{\bf j}({\bf F})$. (Notice that the different choices of $r^{\bf j}$'s produce matrices that differ by multiplication by a common multiple of $\exp(\frac{2 \pi i}{3})$.) Since $\bar{a}$ fixes $f_d'$, it follows that 
		\[\bar{a}=\exp\left(- 2 \sum_{{\bf j} \in \Bc} r^{\bf j} \right) \wh a,\]
		so $\bar s$ is represented in the basis ${\bf f}'$ by the diagonal matrix whose $(d-m+1)$--th diagonal entry down the diagonal is
		\[
		\exp\left(- 2 \sum_{{\bf j} \in \Bc} r^{\bf j} \right) \exp\left(\sum_{{\bf j}\in\mathcal B:j_2\ge m}2r^{\bf j}-\sum_{{\bf j}\in\mathcal B:j_2<m}r^{\bf j}\right) = \exp\left(-\sum_{{\bf j}\in\mathcal B:j_2<m}\tau^{\bf j}(\textbf{F})\right).
		\]
		Thus, the required equality holds.
	\end{proof}
	
	Motivated by Proposition \ref{prop: unipotent}, we make the following definition.
	
	\begin{definition}\label{def: compatible}
		Let ${\bf F}=(F_1,F_2,F_3)$ be a triple of flags that is in general position, and let $r\in\Cb/2\pi i\Zb$ be such that
		\[3r=\sum_{\textbf{j}\in\mathcal B}\tau^{\bf j}(\bf F).\]
		We say that a triple of bases 
		\[({\bf f},{\bf g},{\bf h})=((f_1,\dots,f_d),(g_1,\dots,g_d),(h_1,\dots,h_d))\] 
		of $\Cb^d$ is \emph{compatible} with $({\bf F},r)$ if:
		\begin{enumerate}
			\item  ${\bf f}$, ${\bf g}$, and ${\bf h}$ are adapted to $(F_3,F_1,F_2)$, $(F_1,F_2,F_3)$, and $(F_2,F_3,F_1)$ respectively, see Figure \ref{fig:adapted2}. 
			\item $\displaystyle \exp\left(2r\right)\,f_1=g_d$ and $\displaystyle \exp\left(2r\right)\,g_1=h_d$.
		\end{enumerate}
	\end{definition}
	
	\begin{figure}[h!]
		\centering
		\def\svgwidth{0.6\textwidth}
		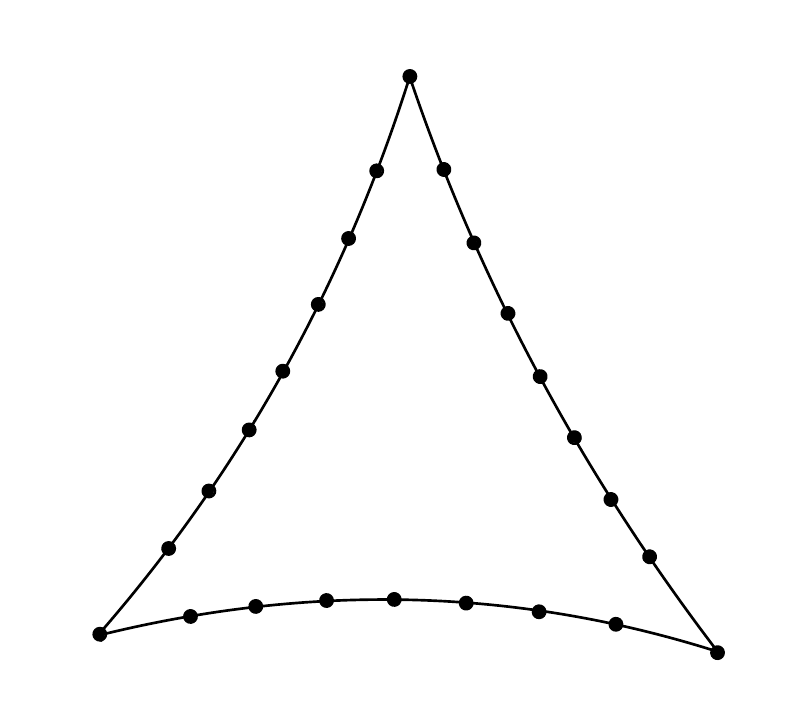
		\caption{\small Bases $({\bf f},{\bf g},{\bf h})$ of $\Cb^d$ compatible with $({\bf F},r)$.}
		\label{fig:adapted2} 
	\end{figure}
	
	The reason we impose Property (2) in Definition \ref{def: compatible} is to guarantee a certain cyclic symmetry in the choice of a compatible basis. More precisely, the implication (1) $\implies$ (2) in Corollary~\ref{cor: rotate} below can be reformulated as saying that Property (2) in Definition \ref{def: compatible} implies that the equality $\exp(2r)\, h_1 = f_d$ also holds.
	
	Let ${\bf F}=(F_1,F_2,F_3)$ be a triple of flags that is in general position. Notice that for any basis ${\bf g}$ of $\Cb^d$ that is adapted to ${\bf F}$ and any $r\in\Cb/2\pi i\Zb$ such that
	\[3r=\sum_{\textbf{j}\in\mathcal B}\tau^{\bf j}(\bf F),\] 
	there is a pair of bases ${\bf f}$ and ${\bf h}$ of $\Cb^d$ such that $({\bf f},{\bf g},{\bf h})$ is compatible with $({\bf F},r)$. Furthermore, any triple of basis of $\Cb^d$ which is compatible with $({\bf F},r)$ is a $\Cb^*$--multiple of $({\bf f},{\bf g},{\bf h})$.
	
	For any basis ${\bf f}=(f_1,\dots,f_d)$ of $\Cb^d$, denote
	\[{\bf f}^{\rm op}:=(f_d,\dots,f_1).\]
	
	\begin{corollary}\label{cor: rotate}
		Let $({\bf f},{\bf g},{\bf h})$ be a triple of bases of $\Cb^d$, let ${\bf F}=(F_1,F_2,F_3)$ be a triple of flags that is in general position, and let $r\in\Cb/2\pi i\Zb$ be such that
		\[3r=\sum_{\textbf{j}\in\mathcal B}\tau^{\bf j}(\bf F).\] 
		Then the following are equivalent:
		\begin{enumerate}
			\item $({\bf f},{\bf g},{\bf h})$ is compatible with $({\bf F},r)$. 
			\item $({\bf g},{\bf h},{\bf f})$ is compatible with $((F_2,F_3,F_1),r)$.
			\item $({\bf h}^{\rm op},{\bf g}^{\rm op},{\bf f}^{\rm op})$ is compatible with $((F_3,F_2,F_1),-r)$. 
		\end{enumerate}
	\end{corollary}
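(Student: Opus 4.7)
The proof splits naturally into the two equivalences (1) $\Leftrightarrow$ (3) and (1) $\Leftrightarrow$ (2), and these have rather different flavours. For (1) $\Leftrightarrow$ (3) I will proceed by direct verification. The antisymmetry $\tau^{\mathbf{j}}(\mathbf{F}) = -\tau^{\widehat{\mathbf{j}}}((F_3,F_2,F_1))$, immediate from the symmetry properties recorded in Definition \ref{def_cocycle}, gives $\sum_{\mathbf{j} \in \mathcal{B}} \tau^{\mathbf{j}}((F_3,F_2,F_1)) = -3r$, so $-r$ is the correct parameter attached to the reversed triple. A short check from Definition \ref{def: adapted} shows that reversing the order of a basis adapted to $(F_A, F_B, F_C)$ produces a basis adapted to $(F_C, F_B, F_A)$, which matches the adapted-ness prescribed by (3) for $\mathbf{h}^{\mathrm{op}}, \mathbf{g}^{\mathrm{op}}, \mathbf{f}^{\mathrm{op}}$. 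Finally, the two link equations of (3), namely $\exp(-2r)(\mathbf{h}^{\mathrm{op}})_1 = (\mathbf{g}^{\mathrm{op}})_d$ and $\exp(-2r)(\mathbf{g}^{\mathrm{op}})_1 = (\mathbf{f}^{\mathrm{op}})_d$, unfold to $h_d = \exp(2r)g_1$ and $g_d = \exp(2r)f_1$, which are precisely the link equations of (1).

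For (1) $\Leftrightarrow$ (2), the adapted-ness conditions match under the cyclic relabeling $(\mathbf{f},\mathbf{g},\mathbf{h}) \mapsto (\mathbf{g},\mathbf{h},\mathbf{f})$ paired with $(F_1, F_2, F_3) \mapsto (F_2, F_3, F_1)$, and compatibility of $r$ with the shifted triple is immediate from the cyclic invariance of $\sum_{\mathbf{j}} \tau^{\mathbf{j}}$. Writing $(\mathrm{P})$ for $\exp(2r)f_1 = g_d$, $(\mathrm{Q})$ for $\exp(2r)g_1 = h_d$, and $(\mathrm{R})$ for $\exp(2r)h_1 = f_d$, condition (1) asserts $(\mathrm{P}) \wedge (\mathrm{Q})$ while (2) asserts $(\mathrm{Q}) \wedge (\mathrm{R})$; the task therefore reduces to showing $(\mathrm{P}) \wedge (\mathrm{Q}) \Rightarrow (\mathrm{R})$, with the converse following by the same argument after a cyclic relabeling. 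I will apply Proposition \ref{prop: unipotent} to the unipotent $u$ fixing $F_2$ and sending $F_1$ to $F_3$, taking as input $\mathbf{h}$ (adapted to $(F_2,F_3,F_1)$) and the rescaled basis $\mathbf{f}' := (h_1/f_d)\mathbf{f}$ (adapted to $(F_3,F_1,F_2)$ and normalised so that $h_1 = f'_d$). The $m = d$ case of that proposition gives
\[
u(h_d) = (-1)^{d-1}\exp(3r)\,(h_1/f_d)\,f_1,
\]
and substituting $(\mathrm{Q})$ in the form $h_d = \exp(2r)g_1$ reduces this to $u(g_1) = (-1)^{d-1}\exp(r)(h_1/f_d)f_1$.

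The crucial remaining ingredient is the basis-independent identity
\[
u(g_1) = (-1)^{d-1}\exp(-3r)\,g_d,
\]
which depends only on $\mathbf{F}$ and on the chosen value of $r$. Granting this identity and combining it with $(\mathrm{P})$ in the form $g_d = \exp(2r)f_1$ gives $h_1/f_d = \exp(-2r)$, which is exactly $(\mathrm{R})$. I expect the main difficulty to lie in establishing this identity, and my plan is to extract it from the matrix description of the distinguished automorphism $a$ provided by \cite[Proposition 9.4(2)]{MMMZ1} and already invoked in the proof of Proposition \ref{prop: unipotent}: the lower triangularity of $a$ in a basis $\mathbf{f}'$ adapted to $(F_3,F_1,F_2)$ forces its action on both $F_3^1 = \langle f'_1 \rangle$ and $F_1^1 = \langle \sum_m f'_m \rangle$ to be multiplication by the top-left diagonal entry $\exp(-r)$; rescaling to the representative $\bar a = \exp(-2r)\hat a$ used in that proof multiplies both eigenvalues by $\exp(-2r)$, so $\bar a$ acts on $F_1^1$ as multiplication by $\exp(-3r)$. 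Tracing this through the decomposition $u = \bar u u'$ of the proof of Proposition \ref{prop: unipotent}, with $\bar u$ unipotent acting trivially on $F_3^1$ and $u'$ unipotent sending $F_1$ to the "balanced" flag $H_3$, should produce the factor $(-1)^{d-1}\exp(-3r)$, the sign emerging precisely as in Step 1 of that proof. The main obstacle will be keeping careful track of the three scaling factors and of the unipotent corrections entering this decomposition, but all the necessary ingredients have already been computed in the proof of Proposition \ref{prop: unipotent}.
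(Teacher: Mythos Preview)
Your treatment of (1) $\Leftrightarrow$ (3) is correct and essentially matches the paper's one-line argument.

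For (1) $\Rightarrow$ (2), your approach via the single unipotent $u=u_2$ (fixing $F_2$, sending $F_1\to F_3$) is a genuinely different route from the paper's, and the reduction to the identity $u_2(g_1)=(-1)^{d-1}\exp(-3r)\,g_d$ is correct. The gap is in your proposed proof of that identity. All the matrix information you extract from \cite[Proposition 9.4(2)]{MMMZ1}---the lower-triangularity of $\hat a$, its eigenvalue on $F_1^1$ and $F_3^1$, the decomposition $u=\bar u\,u'$---is expressed in a basis $\mathbf f'$ adapted to $(F_3,F_1,F_2)$. Tracing through, one finds $u(g_1)=u'(g_1)$ (since $\bar u$ acts trivially on $F_3^1$), and $u'(g_1)$ lands in $F_3^1$; but to write the answer as a multiple of $g_d$ you must compare a vector produced in a basis adapted to $(H_3,F_1,F_2)$ or $(F_3,F_1,F_2)$ with $g_d$, which lives in the basis $\mathbf g$ adapted to $(F_1,F_2,F_3)$. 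That comparison is exactly a ``cycle around the triangle'' and is equivalent to the relation $(\mathrm R)$ you are trying to prove. Note also that $\mathbf g$ is \emph{not} adapted to the balanced triple $(F_1,F_2,H_3)$ (only $H_3^1=F_3^1$ agrees), so you cannot invoke Step~2 of Proposition~\ref{prop: unipotent} directly on $\mathbf g$.

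The paper avoids this by using the other two unipotents $u_1$ (fixing $F_1$, sending $F_3\to F_2$) and $u_3$ (fixing $F_3$, sending $F_2\to F_1$), which are respectively upper- and lower-triangular \emph{in the basis $\mathbf g$}. This makes the key step elementary: since $u_1(g_d)$ and $u_3^{-1}(g_1)$ both lie in $F_2^1=\langle\sum_m g_m\rangle$ and the triangular shapes force the leading coefficients to be $1$, one gets $u_1(g_d)=\sum_m g_m=u_3^{-1}(g_1)$. Two applications of Proposition~\ref{prop: unipotent} to the cyclically shifted triples (using $(\mathrm P)$ and $(\mathrm Q)$ as normalisations) then give $h_1=(-1)^{d-1}\exp(-r)\,u_1(g_d)$ and $g_1=(-1)^{d-1}\exp(-r)\,u_3(f_d)$, from which $h_1=\exp(-2r)f_d$ follows immediately.
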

	
	\begin{proof}
		The equivalence between (1) and (3) is immediate from the observation that 
		\[\sum_{\textbf{j}\in\mathcal B}\tau^{\bf j}({\bf F})=-\sum_{\textbf{j}\in\mathcal B}\tau^{\bf j}((F_3,F_2,F_1)).\]
		
		To prove that (1) and (2) are equivalent, it suffices to show that (1) implies (2). Suppose that (1) holds. Since 
		\[\sum_{{\bf j}\in\mathcal B}\tau^{\bf j}({\bf F})=\sum_{{\bf j}\in\mathcal B}\tau^{\bf j}((F_2,F_3,F_1)),\]
		we need only to verify that $f_d=\exp(2r)h_1$. For $k=1,2,3$, let $u_k\in\SL_d(\Cb)$ be the unipotent element that fixes $F_k$ and sends $F_{k-1}$ to $F_{k+1}$ (arithmetic in the subscripts is done modulo $3$). Let ${\bf g}'=\exp(-2r)\,{\bf g}$  and ${\bf h}'=\exp(-2r)\,{\bf h}$, and note that $g_d'=f_1$ and $h_d'=g_1$. By applying Proposition \ref{prop: unipotent} twice (once on the pair of bases ${\bf g}$, ${\bf h}'$ and then on the pair ${\bf f}$, ${\bf g}'$) we deduce that
		\[h_1=\exp(2r)h_1'=(-1)^{d-1}\exp(-r)u_1(g_d)\quad\text{and}\quad g_1=(-1)^{d-1}\exp(-r)u_3(f_d).\]
		
		Since ${\bf g}$ is adapted to ${\bf F}$,
		\[[u_1(g_d)]=[h_1]=\left[\sum_{m=1}^dg_m\right]=[f_d]=[u_3^{-1}(g_1)].\]
		In particular, $u_1(g_d)$ and $u_3^{-1}(g_1)$ are both non-zero multiples of $\sum_{m=1}^dg_m$. Notice that in the basis ${\bf g}$, $u_1$ is unipotent and upper triangular while $u_3^{-1}$ is unipotent and lower triangular, so
		\[\displaystyle u_1(g_d)=\sum_{m=1}^dg_m=u_3^{-1}(g_1).\] 
		It now follows that 
		\[h_1=(-1)^{d-1}\exp(-r)u_1(g_d)=(-1)^{d-1}\exp(-r)u_3^{-1}(g_1)=\exp(-2r)f_d\]
		as required.
	\end{proof}
	
	\subsection{Enhanced cutting sequence of $\widetilde{\mathsf c}$}\label{ssec: bases}
	Now, we return to the problem of specifying a sequence of bases along $\widetilde{\mathsf c}$ associated to the $d$--pleated surface $\rho$ with pleating locus $\lambda$. Recall that $M$ denotes the fixed maximal tree in the train track neighborhood $N$ of $\lambda$, and $\pi_S:\widetilde S\to S$ denotes the covering map. Previously in Section \ref{tor=ob}, we constructed a particular graph $\mathcal G\subset S$ for which there is a strong deformation retract 
	\[H:\overline{S\setminus M}\times[0,1]\to\overline{S\setminus M}\] 
	onto $\mathcal G$ whose fibers are finite and consist of two points away from the vertices of $\mathcal G$. We also fixed once and for all a maximal tree $\mathcal G'\subset\mathcal G$, a point $p\in\partial M$ such that $q:=H(p,1)\in\mathcal G'$, and denote by $\widetilde q\in\pi_S^{-1}(q)$ and $\widetilde p\in\pi_S^{-1}(p)$ the lifts determined by our chosen  identification between $\pi_1(S,q)$ with the deck group $\Gamma$ of $\pi_S:\widetilde S\to S$, see Section \ref{tor=ob}. We then denoted by $\mathsf c$ the boundary of $M$ based at $p$ oriented counterclockwise about $M$, defined $\mathsf b:=H(\cdot,1)\circ\mathsf c$, and denoted by $\widetilde{\csf}$ and $\widetilde{\bsf}$  the lifts of $\csf$ and $\bsf$ to $\widetilde S$ based at $\widetilde p$ and $\widetilde q$ respectively. Also, let $\widetilde M$ denote the connected component of $\pi_S^{-1}(M)$ that is bounded by $\widetilde{\mathsf c}$, and notice that the chosen orientation on the ties of $M$ induces an orientation on the ties of $\widetilde M$.
	
	Let $\mathsf c=\mathsf k_1\cdot\ldots\cdot\mathsf k_\ell$ and $\widetilde{\mathsf c}=\widetilde{\mathsf k}_1\cdot\ldots\cdot\widetilde{\mathsf k}_\ell$ be the type decompositions, and let $g_0,\dots,g_\ell$ be the cutting sequence of $\widetilde{\mathsf c}$ (see Section \ref{sec: type} for definitions). Recall that $\widetilde{\Delta}^o$ denotes the set of triples of endpoints of leaves of $\widetilde\lambda$ that are the vertices of plaques of $\widetilde\lambda$, see Section~\ref{sec: max geod}. To define the sequence of bases ${\bf v}(0),\dots,{\bf v}(\ell)$, we will need to first specify a finite sequence of triples ${\bf x}(0),\dots,{\bf x}(\ell)\in\widetilde\Delta^o$, called the \emph{enhanced cutting sequence} of $\widetilde{\mathsf c}$. For each $j\in\{0,\dots,\ell\}$, the basis ${\bf v}(j)$ that we specify later will be adapted to the triple of flags $\xi({\bf x}(j))$, where $\xi$ is the $\lambda$--limit map of $\rho$.
		
		For each $j\in\{1,\dots,\ell\}$, let $T_j$ be the plaque of $\widetilde\lambda$ that contains the forward endpoint of $\widetilde{\mathsf k}_j$, and let $\gsf_j$ be the leaf $g_j$ equipped with the orientation so that the ties of $\widetilde{M}$, with the orientations induced by the chosen orientations on the ties of $M$, pass from the left to the right of $\gsf_j$, see Figure \ref{fig:x(j)}. Notice that $\gsf_j$ is an oriented edge of $T_j$, so we may denote by 
		\[{\bf x}(j)=(x_1(j),x_2(j),x_3(j))\in\widetilde\Delta^o\] 
		the triple such that $x_1(j)$ and $x_3(j)$ are respectively the forward and backward endpoints of $\gsf_j$, and $x_2(j)$ is the vertex of $T_j$ that is not an endpoint of $\gsf_j$. Set also $\gsf_0:=\gsf_\ell$, $T_0:=T_\ell$, and ${\bf x}(0) : = {\bf x}(\ell)$. We refer to the sequence of triples
		\[{\bf x}(0),\dots,{\bf x}(\ell)\in\widetilde\Delta^o\]
		as the \emph{enhanced cutting sequence} of $\widetilde{\mathsf c}$.
		
		\begin{figure}[h!]
			\centering
			\hspace*{-1.5cm}\resizebox{1.2\textwidth}{!}{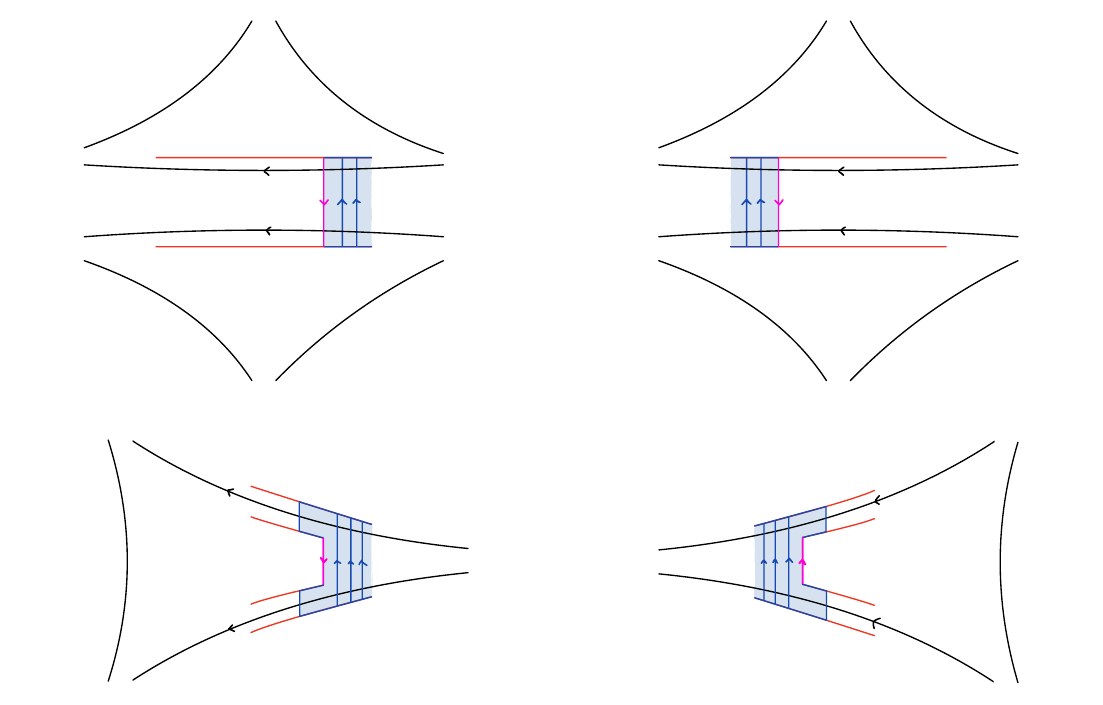}
			\caption{\small From top left to bottom right: $\mathsf k_j$ lies in a  left exit, right exit, left vertical boundary component, and right vertical boundary component. The blue shaded regions lie in $M$.}
			\label{fig:x(j)} 
		\end{figure}
		
		Since $\mathsf c$ is oriented counterclockwise about $M$, we observe:
		\begin{enumerate}
			\item If $\ksf_j$ is of rectangle type, then the image of $\ksf_j$ is a right (respectively, left) exit if and only if the orientation on $\ksf_j$ agrees with (respectively, opposes) the orientation on the ties of $M$, or equivalently, if and only if $\widetilde{\ksf}_j$ passes from the left to the right (respectively, the right to the left) of both the oriented leaves $\gsf_{j-1}$ and $\gsf_j$. See top two figures of Figure \ref{fig:x(j)}.
			\item If $\ksf_j$ is of switch type, then the image of $\ksf_j$ is a right (respectively, left) vertical boundary component of $N$ if and only if the orientation on $\ksf_j$ agrees with (respectively, opposes) the orientation on the ties of $M$, or equivalently, if and only if $\gsf_{j-1}$ and $\gsf_j$ share a common forward (respectively, backward) endpoint. See bottom two figures of Figure \ref{fig:x(j)}.
			\item If $\ksf_j$ is of leaf type, then $\gsf_{j-1}=\gsf_j$, $T_{j-1}=T_j$, and ${\bf x}(j-1)={\bf x}(j)$.
		\end{enumerate}
		
		\subsection{Families of bases along $\widetilde{\mathsf b}$}\label{sec: families}
		Since $M$ is a closed disk and since there is a strong deformation retract of $S\setminus M$ onto $\mathcal G$, it follows that $S\setminus \mathcal G$ is an open disk. Thus, we are now in the situation where we can apply the description of ${\rm ob}_d(\rho)$ given in Section~\ref{sec:connected}. Specifically, let $\gamma_1,\dots,\gamma_{4g}$ be the relation sequence associated to $(\mathcal G,\mathcal G',\mathsf b)$, and fix a choice of $A_1,\dots, A_{4g}\in\SL_d(\Cb)$ such that $\rho(\gamma_i)$ is the projectivization of $A_i$ for all $i$, and $A_i=A_j^{-1}$ whenever $\gamma_i=\gamma_j^{-1}$. Also, let $(\alpha,\theta)$ be the shear-bend $\lambda$--cocyclic pair for $\rho$, i.e. $(\alpha,\theta):=\mathfrak{sb}_d([\rho]) \in \mathcal{Y}(\lambda, d; \Cb/2\pi i\Zb)$, and fix a $\Gamma$--invariant map $r:\widetilde\Delta\to\Cb/2\pi i\Zb$ such that
		\[3r(T)= \sum_{\textbf{j}\in\mathcal B}\theta^{\bf j}({\bf x}_T),\]
		where we recall that $\widetilde\Delta$ is the set of plaques of $\widetilde\lambda$, and ${\bf x}_T\in\widetilde\Delta^o$ is some (any) clockwise ordering of the vertices of some (any) lift to $\widetilde S$ of $T$, and $\mathcal B$ denotes the set of triples of positive integers that sum to $d$. Recall that the right hand side does not depend on the choice of ${\bf x}_T$ because of the symmetry and $\Gamma$--invariance of $\theta$, see (2) and (4) of Definition \ref{def_cocycle}. Using these choices, we will now associate a family of bases of $\Cb^d$ to each connected component of $\pi_S^{-1}(\mathcal G')$ that $\widetilde{\bsf}$ passes through. 
		
		More precisely, consider the graph decomposition 
		\[\mathsf b=\mathsf f_1\cdot\mathsf e_1\cdot\mathsf f_2\cdot\mathsf e_2\cdot\ldots\cdot\mathsf f_{4g}\cdot\mathsf e_{4g}\cdot\mathsf f_{4g+1}\]
		of $\mathsf{b}$ associated to $(\mathcal G,\mathcal G')$, where $\mathsf{f}_{i}$ are in $\mathcal G'$, while $\mathsf{e}_{i}$ are in $\mathcal G \setminus \mathcal G'$. Since $\widetilde{\mathsf b}$ is the lift of $\mathsf b$ to $\widetilde S$ that is based at $\widetilde q$, we may decompose
		\[\widetilde{\mathsf b}=\widetilde{\mathsf f}_1\cdot\widetilde{\mathsf e}_1\cdot\widetilde{\mathsf f}_2\cdot\widetilde{\mathsf e}_2\cdot\ldots\cdot\widetilde{\mathsf f}_{4g}\cdot\widetilde{\mathsf e}_{4g}\cdot\widetilde{\mathsf f}_{4g+1}\]
		where each $\widetilde{\mathsf f}_i$ is a lift of $\mathsf f_i$ and each $\widetilde{\mathsf e}_i$ is a lift of $\mathsf e_i$. By construction, each $\widetilde{\mathsf f}_{i+1}$ lies in a connected component of $\pi_S^{-1}(\mathcal G')$, which we denote by $\mathcal C_i$. Since $\mathcal G'\subset \mathcal G$ is a maximal tree, the restriction of $\pi_S$ to each $\mathcal C_i$ is a homeomorphism onto $\mathcal G'$, see Figure \ref{fig: junk}.
		
		\begin{figure}[h!]
			\centering
			\def\svgwidth{0.9\textwidth}
			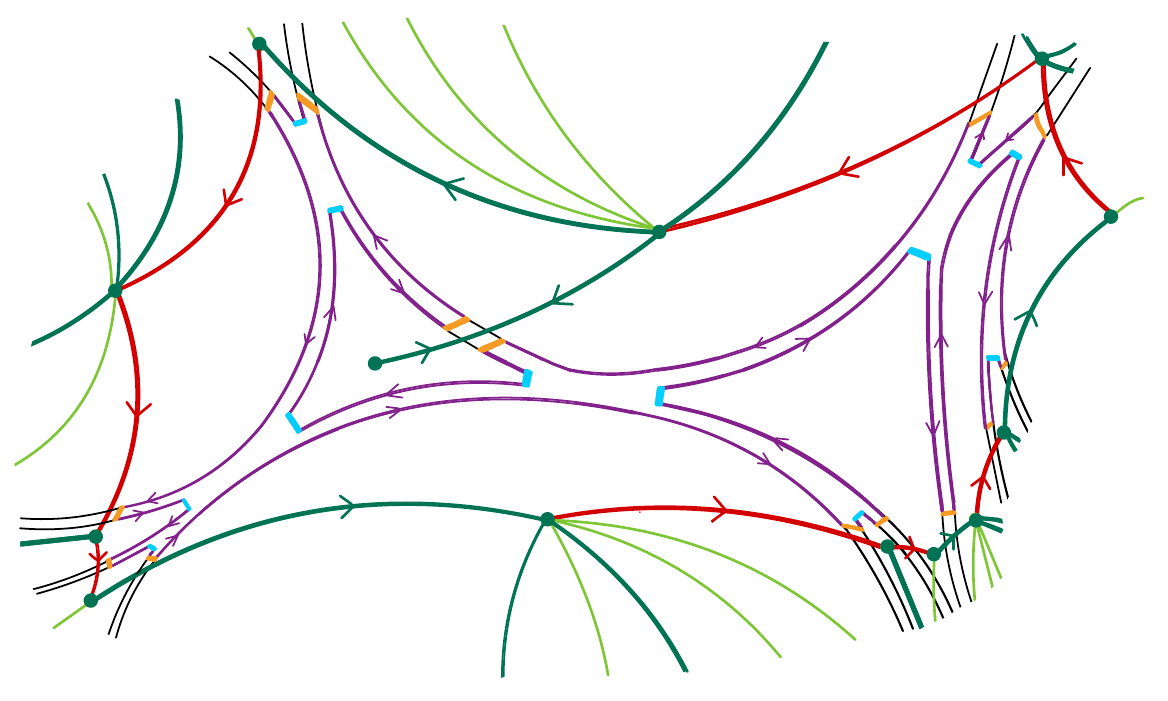
			\caption{\label{fig: junk} \small The path $\widetilde{\mathsf b}$ is a concatenation of $\widetilde{\mathsf e}_i$'s (in red) and the $\widetilde{\mathsf f}_i$'s (in dark green,  with arrows). Notice $\widetilde{\mathsf b}$ need not be injective. Each $\mathcal C_i$ is a connected union of dark green edges. The image of $\widetilde{\mathsf c}$ consists of the orange, cyan, and purple curves, which are respectively the rectangle type, switch type, and leaf type subsegments. }
		\end{figure}
		
		Set $\omega_0:=\id$, and for each $i\in\{1,\dots,4g\}$, let 
		\[\omega_i:=\gamma_1\ldots\gamma_i\quad\text{and}\quad\gamma_i':=\omega_{i-1}\gamma_i\omega_{i-1}^{-1}.\] 
		By definition, for all $i\in\{1,\dots,4g\}$, the loop ${\mathsf b}_i$ in $\mathcal G$ based at $q$ that represents $\gamma_i$ can be written as
		\[{\mathsf b}_i={\mathsf a}_{i,-}\cdot\mathsf e_i\cdot{\mathsf a}_{i,+},\]
		where ${\mathsf a}_{i,-}$ and ${\mathsf a}_{i,+}$ lie in $\mathcal G'$ (compare with Section~\ref{sec:connected}). Also, the backward and forward endpoints of the oriented edge $\widetilde{\mathsf e}_i$ are vertices of $\mathcal C_{i-1}$ and $\mathcal C_i$ respectively. Thus, the lift $\widetilde{\mathsf b}_i$ of $\mathsf b_i$ to $\widetilde S$ that contains $\widetilde{\mathsf e}_i$ as a subsegment has its backward and forward endpoints in $\mathcal C_{i-1}$ and $\mathcal C_i$ respectively. Note that the forward endpoint of $\widetilde{\mathsf b}_i$ is $\omega_i\,\widetilde q$ and so
		\[\mathcal C_i=\omega_i\,\mathcal C_0=\gamma_i'\,\mathcal C_{i-1}.\]
		
		For each $i\in\{0,\dots,4g\}$, let $\widetilde\Delta^o(\mathcal C_i)$ denote the set of ordered triples of vertices of plaques of $\widetilde\lambda$ that contain a vertex of $\mathcal C_i$.  Let $\mathfrak B$ denote the set of bases of $\Cb^d$. For all ${\bf x}=(x_1,x_2,x_3)$, we denote ${\bf x}_+:=(x_2,x_3,x_1)$, ${\bf x}_-:=(x_3,x_1,x_2)$, and ${\bf x}^{\rm op}:=(x_3,x_2,x_1)$. Choose a map 
		\[{\bf v}_0=(v_{0,1},\dots,v_{0,d}):\widetilde \Delta^o(\mathcal C_0)\to\mathfrak B\] 
		such that
		\begin{enumerate}
			\item For each ${\bf x}\in\widetilde\Delta^o(\mathcal C_0)$ that is oriented clockwise, $({\bf v}_0({\bf x}_-),{\bf v}_0({\bf x}),{\bf v}_0({\bf x}_+))$ is a triple of bases compatible with $(\xi({\bf x}),r)$ in the sense of Definition \ref{def: compatible}.
			\item For each ${\bf x}\in\widetilde\Delta^o(\mathcal C_0)$, ${\bf v}_0({\bf x}^{\rm op})={\bf v}_0({\bf x})^{\rm op}$, i.e.
			\[v_{0,m}(x_3,x_2,x_1)=v_{0,d-m+1}(x_1,x_2,x_3)\] 
			for all $m\in\{1,\dots,d\}$.
		\end{enumerate}
		By the equivalence between (1) and (2) of Corollary \ref{cor: rotate}, such a ${\bf v}_0$ exists. Then for each $i\in\{1,\dots,4g\}$, define the map
		\[{\bf v}_i:\widetilde \Delta^o(\mathcal C_i)\to\mathfrak B\]
		by ${\bf v}_i({\bf x}):=B_i\,{\bf v}_0(\omega_i^{-1}\,{\bf x})$, where $B_i:=A_1 A_2\ldots A_i$. The equivalence between (1) and (3) of Corollary \ref{cor: rotate} implies that for each ${\bf x}\in\widetilde\Delta^o(\mathcal C_i)$ that is oriented counterclockwise, $({\bf v}_i({\bf x}_-),{\bf v}_i({\bf x}),{\bf v}_i({\bf x}_+))$ is a triple of bases compatible with $(\xi({\bf x}),-r)$.
		
		\subsection{The sequence of bases along $\widetilde{\mathsf c}$ associated to $\rho$ and their slithering coefficients}\label{ssec:ctildetype} 
		To relate the enhanced cutting sequence ${\bf x}(0),\dots,{\bf x}(\ell)$ of $\widetilde{\mathsf c}$ and the families of bases ${\bf v}_0,\dots,{\bf v}_{4g}$ along $\widetilde{\mathsf b}$, we have to relate the type decomposition 
		\[\widetilde{\mathsf c}=\widetilde\ksf_1\cdot\ldots\cdot\widetilde\ksf_\ell\]
		of $\widetilde{\csf}$ defined in Section \ref{sec: type} to the graph decomposition 
		\[\widetilde{\mathsf b}=\widetilde{\mathsf f}_1\cdot\widetilde{\mathsf e}_1\cdot\widetilde{\mathsf f}_2\cdot\widetilde{\mathsf e}_2\cdot\ldots\cdot\widetilde{\mathsf f}_{4g}\cdot\widetilde{\mathsf e}_{4g}\cdot\widetilde{\mathsf f}_{4g+1}\]
		of $\widetilde{\bsf}$ described above. 
		
		Note that the type decomposition of $\widetilde{\csf}$ induces a type decomposition
		\[\widetilde{\mathsf b}=\widetilde\ksf'_1\cdot\ldots\cdot\widetilde\ksf'_\ell,\]
		where $\widetilde\ksf'_j:=\widetilde H(\cdot,1)\circ\widetilde{\ksf}_j$ for all $j\in\{1,\dots,\ell\}$, and $\widetilde H$ is the lift of the strong deformation retract $H$ to $\widetilde S\setminus \pi_S^{-1}(M)$. By choosing $H$ appropriately, we may further assume that if $x\in \overline{S\setminus M}$ is an endpoint of an exit of $M$, then for all $t\in[0,1]$, $H(x,t)$ lies in the horizontal boundary of $N$.  It then follows that for each $i\in\{1,\dots,4g\}$, there is a unique $j(i)\in\{1,\dots,\ell\}$ such that $\widetilde\ksf_{j(i)}$ is of rectangle type, and $\widetilde\ksf_{j(i)}'$ is an oriented subsegment of $\widetilde{\mathsf e}_i$, see Figure \ref{fig: junk}.
		Also, note that
		\[1\le  j(1)<j(2)<\dots<j(4g)\le\ell,\]
		and if we set $j(0):=0$ and $j(4g+1):=\ell+1$, then for all $i\in\{0,\dots,4g\}$, the plaques $T_{j(i)},T_{j(i)+1},\dots,T_{j(i+1)-1}$ all contain vertices of $\mathcal C_i$.
		
		With this, we may now define
		\begin{equation*}\label{eqn: v}
			{\bf v}=(v_1,\dots,v_d):\{0,\dots,\ell\}\to\mathfrak B
		\end{equation*} 
		by ${\bf v}(j):={\bf v}_{i}({\bf x}(j))$, where $i\in\{0,\dots,4g\}$ is the number such that $j(i)\leq j<j(i+1)$ (equivalently, such that ${\bf x}(j)$ is the triple of vertices of a plaque that contains a vertex of $\mathcal C_i$). Notice that the sequence ${\bf v}(0),\dots,{\bf v}(\ell)$ is not unique to $\widetilde{\mathsf c}$ and $\rho$; it depends on the choice of the lifts $A_1,\dots,A_{4g}$, the choice of the $\Gamma$--invariant function $r:\widetilde\Delta\to\Cb/2\pi i\Zb$, and the choice of the family of bases ${\bf v}_0$. 
		
		\begin{definition}\label{sequence of bases}
			A finite sequence ${\bf v}(0),\dots,{\bf v}(\ell)$ of bases of $\Cb^d$ is called a \emph{sequence of bases along $\widetilde{\mathsf c}$ associated to $\rho$} if it arises from the above construction for some choice of $A_1,\dots,A_{4g}$, $r$, and ${\bf v}_0$.
		\end{definition}
		
		We will now verify that the sequence of bases ${\bf v}(0),\dots,{\bf v}(\ell)$ satisfy conditions (I) and (II).
		
		\begin{proposition}\label{prop: properties of sequences}
			Let ${\bf v}(0),\dots,{\bf v}(\ell)$ be a sequence of bases along $\widetilde{\mathsf c}$ associated to $\rho$. Then
			\begin{enumerate}
				\item[(I)] $\exp\left({\rm ob}_d(\rho)\right)\,{\bf v}(0)={\bf v}(\ell)$.
				\item[(II)] For each $j\in\{1,\dots,\ell\}$ and $m\in\{1,\dots,d\}$, $\Sigma(g_j,g_{j-1})$ sends the vector $v_m(j-1)$ of the basis ${\bf v}(j-1)=(v_1(j-1),\dots,v_d(j-1))$ to a non-zero multiple of the vector $v_m(j)$ in the basis ${\bf v}(j)=(v_1(j),\dots,v_d(j))$.
			\end{enumerate}
		\end{proposition}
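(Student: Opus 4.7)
The plan is to establish (I) and (II) by unwinding the constructions of Sections \ref{sec: type}--\ref{ssec:ctildetype}. Property (I) will reduce to the fact that the loop $\mathsf c$ represents the trivial element of $\Gamma$, combined with the defining formula for ${\rm ob}_d(\rho)$. Property (II) will follow from a single uniform argument based on two inputs: that each ${\bf v}(j)$ is adapted to $\xi({\bf x}(j))$ in the sense of Definition \ref{def: adapted}, and property (4) of Theorem \ref{thm: slithering map} applied to the consecutive coherently oriented pair $\gsf_{j-1}, \gsf_j$.

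For (I), the computation is direct. The loop $\mathsf b$ represents $\delta = \gamma_1 \cdots \gamma_{4g} \in \pi_1(\mathcal{G}, q)$, and by the Van Kampen presentation of $\pi_1(S,q) \cong \Gamma$ recalled in Section \ref{sec:connected}, $\delta$ maps to the identity in $\Gamma$. Hence $\omega_{4g} = \id$ in $\Gamma$, so $\mathcal{C}_{4g} = \mathcal{C}_0$ and $\omega_{4g}^{-1}{\bf x}(\ell) = {\bf x}(\ell) = {\bf x}(0)$, the last equality being the convention ${\bf x}(0) := {\bf x}(\ell)$. Therefore
\[
{\bf v}(\ell) \,=\, B_{4g}\,{\bf v}_0(\omega_{4g}^{-1}\,{\bf x}(\ell)) \,=\, B_{4g}\,{\bf v}_0({\bf x}(0)) \,=\, B_{4g}\,{\bf v}(0) \,=\, \exp({\rm ob}_d(\rho))\,{\bf v}(0),
\]
where the last equality uses the defining relation $A_1 \cdots A_{4g} = \exp({\rm ob}_d(\rho))\,\id$ from Section \ref{sec:connected}.

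For (II), the first step is to verify that ${\bf v}(j)$ is adapted to $\xi({\bf x}(j))$ for every $j$. This holds for ${\bf v}_0$ by the ``adapted middle basis'' condition in Definition \ref{def: compatible} (clockwise case), together with the relation ${\bf v}_0({\bf x}^{\rm op}) = {\bf v}_0({\bf x})^{\rm op}$ (counterclockwise case); it then propagates to each ${\bf v}_i$ via the $\rho$--equivariance of $\xi$ and the identity $[B_i] = \rho(\omega_i)$. In particular, $v_m(j) \in \xi(x_1(j))^{m} \cap \xi(x_3(j))^{d-m+1}$, which is a one--dimensional subspace by the hyperconvexity of $\xi$. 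The second step is to observe that every consecutive pair $\gsf_{j-1}, \gsf_j$ in the enhanced cutting sequence is coherently oriented, since all $\gsf_j$'s are chosen so that the consistently oriented ties of $\widetilde M$ cross them from left to right. Property (4) of Theorem \ref{thm: slithering map} then yields that $\Sigma(g_j, g_{j-1})$ sends $\xi(\gsf_{j-1}) = (\xi(x_1(j-1)), \xi(x_3(j-1)))$ to $\xi(\gsf_j) = (\xi(x_1(j)), \xi(x_3(j)))$ as ordered pairs of flags, so
\[
\Sigma(g_j, g_{j-1})\bigl(\xi(x_1(j-1))^{m} \cap \xi(x_3(j-1))^{d-m+1}\bigr) \,=\, \xi(x_1(j))^{m} \cap \xi(x_3(j))^{d-m+1}.
\]
Evaluating on $v_m(j-1)$ shows that $\Sigma(g_j, g_{j-1})\,v_m(j-1)$ is a non--zero scalar multiple of $v_m(j)$, which is (II).

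The main step requiring honest verification is the coherent orientation of consecutive $\gsf_{j-1}, \gsf_j$ in the rectangle type (when the two leaves do not share an endpoint), but this is a straightforward consequence of the choice of orientations on the ties of $\widetilde M$; in the leaf type it is trivial ($\gsf_{j-1} = \gsf_j$), and in the switch type the common endpoint together with the shared tie orientation forces coherence.
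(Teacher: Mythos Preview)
Your proof is correct and follows essentially the same approach as the paper's: for (I) you use $\omega_{4g}=\id$, ${\bf x}(0)={\bf x}(\ell)$, and the definition of ${\rm ob}_d(\rho)$, and for (II) you use coherent orientation of $(\gsf_{j-1},\gsf_j)$ together with property (4) of Theorem~\ref{thm: slithering map} to see that $\Sigma(g_j,g_{j-1})$ carries the line $\xi(x_1(j-1))^m\cap\xi(x_3(j-1))^{d-m+1}$ to $\xi(x_1(j))^m\cap\xi(x_3(j))^{d-m+1}$. Your version is slightly more explicit than the paper's in spelling out why each ${\bf v}(j)$ is adapted to $\xi({\bf x}(j))$; one minor wording point is that the one-dimensionality of these intersections follows from $\lambda$--transversality of $\xi$ (the endpoints $x_1(j),x_3(j)$ lie on a leaf) rather than $\lambda$--hyperconvexity.
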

		
		\begin{proof}
			(I) Since $\omega_{4g}=\id$ and ${\bf x}(0)={\bf x}(\ell)$,
			\[{\bf v}(\ell)={\bf v}_{4g}({\bf x}(\ell))={\bf v}_{4g}(\omega_{4g}\, {\bf x}(0))=A_1\cdot\ldots\cdot A_{4g}\,{\bf v}_0({\bf x}(0))=\exp\left({\rm ob}_d(\rho)\right){\bf v}(0).\]
			The third equality holds by the definition of ${\bf v}_{4g}$, and the last equality holds by the definition of ${\rm ob}_d(\rho)$.

			(II) Recall that for all $j\in\{0,\dots,\ell\}$, if ${\bf x}(j)=(x_1(j),x_2(j),x_3(j))$, then $x_1(j)$ and $x_2(j)$ are respectively the forward and backward endpoints of $\mathsf g_j$, where $\mathsf g_j$ is the leaf $g_j$ of $\widetilde\lambda$ equipped with the orientation such that the ties of $\widetilde M$, with the orientation induced by the chosen orientation on the ties of $M$, pass from the left to the right of $\mathsf g_j$. Notice that for all $j\in\{1,\dots,\ell\}$, the pair $(\gsf_{j-1},\gsf_j)$ is coherently oriented (see Section \ref{sec: max geod}), so $\Sigma(g_j,g_{j-1})$ sends the flags $\xi(x_1(j-1))$ and $\xi(x_2(j-1))$ to $\xi(x_1(j))$ and $\xi(x_2(j))$ respectively, see (4) of Theorem \ref{thm: slithering map}. In particular, for all $m\in\{1,\dots,d\}$, $\Sigma(g_j,g_{j-1})$ sends the line $\xi(x_1(j-1))^m\cap\xi(x_2(j-1))^{d-m+1}$ to the line $\xi(x_1(j))^m\cap\xi(x_2(j))^{d-m+1}$. By construction, $v_m(j-1)$ and $v_m(j)$ are non-zero vectors in $\xi(x_1(j-1))^m\cap\xi(x_2(j-1))^{d-m+1}$ and $\xi(x_1(j))^m\cap\xi(x_2(j))^{d-m+1}$ respectively, so (II) holds.
		\end{proof}
		
\section{Computation of slithering coefficients}\label{sec: final}
Let $\rho$ be a $d$--pleated surface with pleating locus $\lambda$, let $N$ be our chosen train track neighborhood of $\lambda$ and let $M$ be our chosen maximal tree in $N$. 
Then let $\mathsf c$ be the boundary of $M$ based at the endpoint of an exit of $M$, oriented counterclockwise about $M$, and let $\widetilde{\mathsf c}$ be its lift to $\widetilde S$ that is based at the lift of the basepoint of $\mathsf c$ corresponding to our chosen identification between $\Gamma$ and the deck group of the universal cover $\pi_S:\widetilde S\to S$. See Section \ref{tor=ob} for more details on the definition of $\mathsf c$ and $\widetilde{\mathsf c}$.
Let ${\bf v}(0),\dots,{\bf v}(\ell)$ be a sequence of bases for $\widetilde{\mathsf c}$ associated to $\rho$, see Definition~\ref{sequence of bases}. Recall that in Section \ref{tor=ob}, we defined the \emph{slithering coefficients 
	\[\{a_m(j) \mid j\in\{1,\dots,\ell-1\}\text{ and }m\in\{1,\dots,d\}\}\]
	of ${\bf v}(0),\dots,{\bf v}(\ell)$}. 
We will now complete the final step of the proof of Theorem \ref{thm: final}, which is to show that the product
\[\log\left(\prod_{j=1}^\ell a_{\lfloor\frac{d+1}{2}\rfloor}(j)\right)\] 
is equal to the expression for ${\rm tor}_d(\rho)$ given in Remark \ref{tor formula}.

To state this more formally, recall the following:
\begin{itemize}
	\item $\Delta$ denotes the set of plaques of $\lambda$, and $\widetilde\Delta^o$ denotes the set of orderings of the vertices of the plaques of $\widetilde{\lambda}$ (see Section \ref{sec: max geod}),
	\item $\mathcal U^\ell$ and $\mathcal U^r$ respectively denote the sets of left unorientable rectangles and right unorientable rectangles for $M$ (see Section \ref{ssub:max tree}).
	\item $\mathcal S^\ell$ and $\mathcal S^r$ respectively denote the sets of  left vertical boundary components and right vertical boundary components of $M$ (see Section  \ref{ssub:max tree}).
	\item $\Bc^*:=\left\{{\bf j}=(j_1,j_2,j_3)\in\Bc:j_1,j_2,j_3\le \left\lfloor \frac{d-1}{2}\right\rfloor\right\}$ (see Section \ref{sec:topology_y} and Figure \ref{fig:boundary-rectangles}).
	\item  When $d$ is even,
	${\bf i}^0:=(\frac{d}{2},\frac{d}{2})$ and $\Bc^0:=\{{\bf j}=(j_1,j_2,j_3)\in\Bc:j_2=\frac{d}{2}\}$ (see Section \ref{sec:topology_y} and Figure \ref{fig:boundary-rectangles}).
\end{itemize}

We prove:
\begin{proposition}\label{cor final} Let $\rho$ be a $d$--pleated surface with pleating locus $\lambda$, and let $(\alpha,\theta)$ be the shear-bend $\lambda$--cocyclic pair associated to $\rho$, i.e. $(\alpha,\theta):=\mathfrak{sb}_d([\rho])$.
	\begin{enumerate}
		\item[i)] If $d$ is odd, then
		\[
		\log\left(\prod_{j=1}^{\ell}a_{\frac{d+1}{2}}(j)\right)=\sum_{{\bf j}\in\mathcal B^*}\ \sum_{T\in\Delta}\theta^{\bf j}({\bf x}_T),
		\]
		where ${\bf x}_T\in\widetilde\Delta^o$ is some (any) clockwise ordering of the vertices of some (any) lift to $\widetilde S$ of $T$.
		\item[ii)] If $d$ is even, then
		\begin{align*}
			\displaystyle \log\left(\prod_{j=1}^{\ell}a_{\frac{d}{2}}(j)\right)=&\sum_{R\in\mathcal U^\ell}\alpha^{{\bf i}^0}({\bf T}_R)-\sum_{R\in\mathcal U^r}\alpha^{{\bf i}^0}({\bf T}_R)\\&+\sum_{{\bf j}\in\mathcal B^*}\ \sum_{T\in\Delta}\theta^{\bf j}({\bf x}_T)+\sum_{{\bf j}\in \mathcal B^0}\sum_{t \in\mathcal S^\ell}\theta^{\bf j}({\bf x}_t),
		\end{align*}
	\end{enumerate}
	where:
	\begin{itemize}
		\item ${\bf x}_T\in\widetilde\Delta^o$ is as in part i),
		\item ${\bf T}_R\in \wt\Delta^{2*}$ is some (any) ordering of the pair of plaques of $\widetilde\lambda$ that contains the horizontal boundary components of some (any) lift of $R$ to $\widetilde S$,
		\item ${\bf x}_t=(x_{t,1},x_{t,2},x_{t,3})\in\widetilde\Delta^o$ is the clockwise ordering of the vertices of the plaque of $\widetilde\lambda$ that contains some (any) lift $\widetilde t$ to $\widetilde S$ of $t$, such that the geodesic with $x_{t,1}$ and $x_{t,3}$ as its endpoints does not intersect the switch of $\widetilde N$ that contains $\widetilde t$.
	\end{itemize}
\end{proposition}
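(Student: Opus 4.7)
The approach is to compute each slithering coefficient $a_m(j)$ individually and aggregate. Splitting $\sum_{j=1}^{\ell} \log a_m(j)$ according to the type decomposition of $\widetilde{\csf}$ from Section \ref{sec: type}, the leaf type steps satisfy $g_{j-1}=g_j$, giving $\Sigma(g_j,g_{j-1}) = \id$ and contributing $0$. So only switch type and rectangle type steps require analysis, and the contribution of each of these will be identified with triple and double ratios of the shear-bend cocyclic pair.

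For a switch type step $\widetilde{\ksf}_j$, the leaves $g_{j-1}, g_j$ are two edges of a common plaque $T$ sharing an endpoint $x$, and by Theorem \ref{thm: slithering map}(3), $\Sigma(g_j, g_{j-1})$ is the unipotent element fixing $\xi(x)$. The plan is to apply Proposition \ref{prop: unipotent} to this unipotent with $F_2 = \xi(x)$, and to use Corollary \ref{cor: rotate} together with the compatibility condition of Definition \ref{def: compatible} (which introduces factors of $\exp(2r(T))$) to relate $\mathbf v(j-1)$ and $\mathbf v(j)$ — which are adapted to two orderings of $T$'s vertices differing by a transposition fixing $x$ — to the bases $\mathbf f, \mathbf f'$ in Proposition \ref{prop: unipotent}. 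The outcome is that $\log a_m(j)$ is a partial sum of triple ratios $\tau^{\mathbf j}(\xi(\mathbf x_T))$ over $\mathbf j \in \mathcal B$ with one distinguished coordinate less than $m$ (the coordinate being determined by the position of $x$ in $\mathbf x_T$ and by whether $t \in \mathcal S^\ell$ or $\mathcal S^r$), plus a multiple of $r(T)$ and a sign. For a rectangle type step $\widetilde{\ksf}_j$ through $R \not\in M$, unpacking the definition of $\alpha_\rho$ as a logarithmic double ratio of a quadruple involving $\Sigma$, one can express $\log a_m(j)$ as a linear combination of $\alpha^{\mathbf i}(\mathbf T_R)$ terms for $\mathbf i$ near $(m, d-m)$, together with triple-ratio and $r$-corrections at the two adjacent plaques.

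The decisive step is the aggregation. For the switch contributions, each plaque $T \in \Delta$ has three corners hosting three distinct switches in $\mathcal S$, each crossed once by $\widetilde{\csf}$; by the cyclic symmetry $\theta^{\mathbf j}(\mathbf x) = \theta^{\mathbf j_+}(\mathbf x_+) = \theta^{\mathbf j_-}(\mathbf x_-)$ of Definition \ref{def_cocycle}(2), their three triple-ratio slices assemble into $\sum_{\mathbf j \in \mathcal B^*}\theta^{\mathbf j}(\mathbf x_T)$ — and, for $d$ even, additionally the $\mathcal B^0$ contribution from $\mathcal S^\ell$ switches. For the rectangle contributions, $\widetilde\csf$ crosses each $R \not\in M$ through both of its exits: for orientable $R$, these two crossings have opposite orientations and their $\alpha$-contributions cancel via $\alpha^{\mathbf i}(\mathbf T) = \alpha^{\widehat{\mathbf i}}(\widehat{\mathbf T})$; for unorientable $R$ they share orientation. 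When $d$ is odd, $(m, d-m)$ is not self-dual, and the surviving unorientable contributions will be shown to vanish after absorbing the triple-ratio corrections at the adjacent plaques via the cocycle boundary condition of Definition \ref{def_cocycle}(5); when $d$ is even, $(m, d-m) = \mathbf i^0$ is self-dual and the unorientable contributions combine as $\sum_{\mathcal U^\ell}\alpha^{\mathbf i^0}(\mathbf T_R) - \sum_{\mathcal U^r}\alpha^{\mathbf i^0}(\mathbf T_R)$. Finally, the accumulated $r(T)$-correction factors from switch and rectangle steps cancel globally via the defining identity $3r(T) = \sum_{\mathbf j \in \mathcal B}\theta^{\mathbf j}(\mathbf x_T)$.

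The main obstacle will be the careful bookkeeping of signs, orientations, and $r$-corrections across the many cases ($\mathcal S^\ell$ vs $\mathcal S^r$, $\mathcal O$ vs $\mathcal U^\ell$ vs $\mathcal U^r$, and odd vs even $d$), and in particular verifying the cancellation of the unorientable rectangle contributions in the odd case. This last cancellation is the most delicate point, as it relies on the precise interaction between the double-ratio corrections coming from a rectangle crossing and the triple-ratio corrections coming from the adjacent switch contributions, mediated by Definition \ref{def_cocycle}(5).
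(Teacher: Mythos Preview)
Your overall decomposition into leaf, switch, and rectangle types is correct, and your treatment of leaf and switch steps matches the paper's (Lemma~\ref{lem: final switch}(i),(ii)). The gap is in the rectangle-type contributions. You assert that for a single rectangle crossing $\widetilde{\ksf}_j$ one can express $\log a_m(j)$ entirely in terms of $\alpha^{\mathbf i}(\mathbf T_R)$ together with triple-ratio and $r$-corrections. This is not possible: unpacking the double ratio yields only a \emph{relative} formula (Lemma~\ref{lem: final switch}(iii)),
\[
a_m(j)=(-1)^{m-1}a_1(j)\prod_{\mathbf i:\,i_1\le m-1}\exp(\pm\alpha^{\mathbf i}(\mathbf T_j)),
\]
and the factor $a_1(j)$ depends on the normalization of $\mathbf v(j-1)$ and $\mathbf v(j)$, hence on the chosen lifts $A_1,\dots,A_{4g}\in\SL_d(\Cb)$, not on the shear--bend data. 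The paper eliminates these undetermined factors by pairing the two crossings $j,j'$ of the same rectangle and using the $\rho$-equivariance of $\Sigma$ together with the rule $\mathbf v_i(\mathbf x)=B_i\,\mathbf v_0(\omega_i^{-1}\mathbf x)$ to prove (Lemma~\ref{lem: blah}) that $a_m(j)=1/a_m(j')$ for $R$ orientable and $a_m(j)=1/a_{d-m+1}(j')$ for $R$ unorientable. This equivariance step, not the symmetry $\alpha^{\mathbf i}(\mathbf T)=\alpha^{\widehat{\mathbf i}}(\widehat{\mathbf T})$ alone, is what makes the pairing work.

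Two downstream claims then need correction. First, the vanishing of unorientable contributions for $d$ odd does \emph{not} use the cocycle boundary condition of Definition~\ref{def_cocycle}(5); once Lemma~\ref{lem: blah} gives $a_m(j)a_m(j')=a_m(j)/a_{d-m+1}(j)$, setting $m=\tfrac{d+1}{2}$ makes the $\alpha$-product range over the empty set $\{\tfrac{d+1}{2}\le i_1\le\tfrac{d-1}{2}\}$, and $(-1)^{d-1}=1$, so the product is $1$ trivially. Second, for $d$ even an overall sign $(-1)^{|\mathcal U|+|\mathcal S^r|}$ accumulates from the unorientable pairings and the right switches; the paper disposes of it by a separate parity count (Lemma~\ref{prop: count}) showing $|\mathcal U|+|\mathcal S^r|$ is even, which your outline does not mention.
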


\subsection{Proof of Proposition \ref{cor final}}
The proof of Proposition \ref{cor final} requires three main ingredients. 

The first is a formula for each slithering coefficient $a_m(j)$ in terms of $(\alpha,\theta)$, as described in the following lemma. Recall that to specify the sequence ${\bf v}(0),\dots,{\bf v}(\ell)$ of bases along $\widetilde{\mathsf c}$ associated to $\rho$, we had to choose a $\Gamma$--invariant map $r\colon \widetilde\Delta\to \mathbb C/2\pi i\mathbb Z$ such that
\[
3r(T)=\sum_{{\bf j}\in\mathcal B}\theta^{\bf j}({\bf x}_T),
\]
where ${\bf x}_T\in\widetilde\Delta^o$ is some (any) clockwise ordering of the vertices of some (any) lift to $\widetilde S$ of $T$. Recall also that $\mathcal E^\ell$ and $\mathcal E^r$ respectively denote the set of left and right exits of $M$ (see Section  \ref{ssub:max tree}), and that 
\[\mathsf c:=\ksf_1\cdot\ldots\cdot\ksf_\ell\]
is the type decomposition of $\mathsf c$ (see Section \ref{sec: type}).

\begin{lemma}\label{lem: final switch} 
	Let $j\in \{1,\dots, \ell\}$.  
	\begin{enumerate}
		\item[(i)] If $\ksf_j$ is of leaf type, then for every $m\in\{1,\dots, d\}$, $a_m(j)=1$.
		\item[(ii)] If $\ksf_j$ is of switch type, let $t_j$ be the vertical boundary component of $N$ parametrized by $\ksf_j$. Then for every $m\in\{1,\dots, d\}$ we have
		\[
		\displaystyle a_m(j)=\left\{\begin{array}{ll}
			\displaystyle(-1)^{m-1}\exp\left(-2r(T_j)+\sum_{{\bf j}\in\mathcal B\colon j_2\le m-1}\theta^{\bf j}({\bf x}_{t_j}) \right)&\text{if }t_j\in\mathcal S^r,\\
			\displaystyle(-1)^{d-m}\exp\left(-2r(T_j)+\sum_{{\bf j}\in\mathcal B\colon j_2\le d-m}\theta^{\bf j}({\bf x}_{t_j}) \right)&\text{if }t_j\in\mathcal S^\ell.
		\end{array}\right.
		\]
		\item[(iii)] If $\ksf_j$ is of rectangle type, let $s_j$ be the exit of $M$ parametrized by $\ksf_j$, and set ${\bf T}_j:=(T_{j-1},T_j)$, the pair of plaques of $\widetilde\lambda$ whose vertices are ${\bf x}(j-1)$ and ${\bf x}(j)$ respectively. Then for every $m\in\{1,\dots, d\}$ we have:
		\[a_m(j)=\left\{\begin{array}{ll}
			\displaystyle (-1)^{m-1}a_1(j)\prod_{{\bf i}\in \Ac\colon i_1\le m-1}\exp(\alpha^{\bf i}({\bf T}_j))&\text{if }s_j\in\mathcal E^r,\\
			\displaystyle (-1)^{m-1}a_1(j)\prod_{{\bf i}\in \Ac\colon i_2\le m-1}\exp(-\alpha^{\bf i}({\bf T}_j))&\text{if }s_j\in\mathcal E^\ell.
		\end{array}\right.
		\]
	\end{enumerate}
\end{lemma}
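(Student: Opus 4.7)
The plan is to treat the three cases separately, exploiting the block decomposition of $\widetilde{\mathsf c}$ from Section~\ref{sec: families} together with Proposition~\ref{prop: unipotent} and the interpretation of $\alpha^{\bf i}({\bf T})$ as a double ratio. Case (i) is the easiest: if $\mathsf k_j$ is of leaf type, then by construction $g_{j-1} = g_j$, $T_{j-1} = T_j$, and ${\bf x}(j-1) = {\bf x}(j)$. Since the block index changes only at rectangle-type indices $j(i)$, both $j-1$ and $j$ lie in the same block $\mathcal C_i$, so ${\bf v}(j-1) = {\bf v}_i({\bf x}(j)) = {\bf v}(j)$. Combining this with $\Sigma(g,g) = \id$ from Theorem~\ref{thm: slithering map}(1) gives $a_m(j) = 1$.

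For case (ii), the plan is to reduce the computation to Proposition~\ref{prop: unipotent}. Here $T_{j-1} = T_j =: T$, both $j-1$ and $j$ lie in the same block $\mathcal C_i$, and $\Sigma(g_j, g_{j-1})$ is unipotent by Theorem~\ref{thm: slithering map}(3). Treating first the subcase $t_j \in \mathcal{S}^r$, the shared vertex from Figure~\ref{fig:x(j)} is $x_1(j-1) = x_1(j)$, and a direct inspection yields ${\bf x}(j) = ({\bf x}(j-1)_+)^{\mathrm{op}}$, so the opposite-symmetry ${\bf v}_0({\bf y}^{\mathrm{op}}) = {\bf v}_0({\bf y})^{\mathrm{op}}$ built into ${\bf v}_0$ in Section~\ref{sec: families} yields $v_m(j) = h_{d-m+1}$, where $({\bf f}, {\bf g}, {\bf h}) := ({\bf v}_i({\bf x}(j-1)_-),\, {\bf v}(j-1),\, {\bf v}_i({\bf x}(j-1)_+))$ is the compatible triple at ${\bf x}(j-1)$. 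Proposition~\ref{prop: unipotent} applied with $F_2 = \xi(x_1(j-1))$, $F_1 = \xi(x_3(j-1))$, $F_3 = \xi(x_2(j-1))$ then gives an explicit expression for $\Sigma(g_j, g_{j-1}) g_m$ in terms of $h_{d-m+1}$. Under the identification ${\bf F} = \xi({\bf x}_{t_j})$, the exponent $\sum_{j_2 < m} \tau^{\bf j}({\bf F})$ becomes $\sum_{j_2 \le m-1} \theta^{\bf j}({\bf x}_{t_j})$. The factor $\exp(-2r(T))$ in the statement arises from the mismatch between the proposition's normalization $g_1 = h_d$ and the compatibility relation $\exp(2r(T))\, g_1 = h_d$ from Definition~\ref{def: compatible}. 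The subcase $t_j \in \mathcal{S}^\ell$ follows by reversing the tie orientation, which swaps $x_1 \leftrightarrow x_3$ and replaces $m$ by $d - m + 1$.

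Case (iii) will be the most delicate. The starting observation is that by Theorem~\ref{thm: slithering map}(4) and the coherent orientation of $\gsf_{j-1}, \gsf_j$, the map $\Sigma(g_j, g_{j-1})$ sends $\xi(x_1(j-1))$ to $\xi(x_1(j))$ and $\xi(x_3(j-1))$ to $\xi(x_3(j))$. Hence the transformed basis $w_m := \Sigma(g_j, g_{j-1}) v_m(j-1)$ lies in $\xi(x_1(j))^m \cap \xi(x_3(j))^{d-m+1}$, so it is a nonzero scalar multiple of $v_m(j)$, justifying the definition of $a_m(j)$. To compute these scalars, the plan is to compare the two adapted bases: $(w_1, \ldots, w_d)$ is adapted to the triple $(\xi(x_1(j)),\, \Sigma(g_j, g_{j-1}) \xi(x_2(j-1)),\, \xi(x_3(j)))$, while ${\bf v}(j)$ is adapted to $(\xi(x_1(j)),\, \xi(x_2(j)),\, \xi(x_3(j)))$. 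Both share the same edge pair of flags, so the ratios $w_m/v_m(j)$ are controlled by the double ratios $D^{\bf i}(\xi(x_1(j)), \xi(x_3(j)), \xi(x_2(j)), \Sigma(g_j, g_{j-1}) \xi(x_2(j-1)))$; but this quadruple of flags is precisely the one that appears in the definition of $\alpha^{\bf i}({\bf T}_j)$ in Section~\ref{d-pleated}, with ${\bf T}_j = (T_{j-1}, T_j)$, separating pair of edges $(g_{j-1}, g_j)$, and $z = x_2(j-1)$. A wedge-product expansion of the highest-weight vectors in terms of these four flags then yields a telescoping identity producing the stated product over ${\bf i}$ with $i_1 \le m-1$ for right exits; for left exits, reversing the tie orientation swaps $i_1 \leftrightarrow i_2$ and flips the sign of $\alpha^{\bf i}$, giving the second formula.

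The main obstacle will be the careful sign and index bookkeeping in case~(iii): matching the ordering $(y_1, y_2, y_3)$ entering the definition of $\alpha^{\bf i}({\bf T}_j)$ to the cutting-sequence triples $(x_1(j-1), x_2(j-1), x_3(j-1))$ and $(x_1(j), x_2(j), x_3(j))$, separately for the right- and left-exit subcases, while simultaneously tracking the alternating signs $(-1)^{m-1}$ inherited from Proposition~\ref{prop: unipotent}. The multiplicative scalar $a_1(j)$ is deliberately left implicit in (iii); it depends on the choices of ${\bf v}_0$ and $r$, and will cancel once the coefficients are assembled into the global product expression of Proposition~\ref{cor final}.
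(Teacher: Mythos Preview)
Your approach matches the paper's proof in all three cases: case~(i) is identical; case~(ii) is the same reduction to Proposition~\ref{prop: unipotent} via the compatible triple and the $\exp(-2r)$ rescaling; and case~(iii) is the same direct computation of the double ratio in the basis ${\bf v}(j)$, followed by the telescoping product over ${\bf i}$.

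Two small points of care in case~(iii). First, the alternating sign $(-1)^{m-1}$ is \emph{not} inherited from Proposition~\ref{prop: unipotent}, which plays no role once you are in the rectangle case. It comes from the minus sign built into the definition of $D^{\bf i}$: the wedge computation gives $\exp(\alpha^{\bf i}({\bf T}_j)) = -\,a_{i_1+1}(j)/a_{i_1}(j)$ (right exit) and $\exp(\alpha^{\bf i}({\bf T}_j)) = -\,a_{i_2}(j)/a_{i_2+1}(j)$ (left exit), and the product over $i_1\le m-1$ (respectively $i_2\le m-1$) produces the $(-1)^{m-1}$. Second, watch the order of the last two flags in your double ratio: the paper's convention (from Section~\ref{d-pleated}) places $\Sigma(g_j,g_{j-1})\,\xi(x_2(j-1))$ in the third slot and $\xi(x_2(j))$ in the fourth, whereas you wrote them swapped. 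This only flips the sign of $\alpha^{\bf i}$, but it will interact with the right/left-exit sign bookkeeping you flag as the main obstacle, so it is worth fixing at the outset.
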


The second required ingredient is the following lemma, which relates $a_m(j)$ and $a_m(j')$ when $\ksf_j$ and $\ksf_j'$ are of rectangle type and lie in the same rectangle of $N$ that does not lie in $M$.

\begin{lemma}\label{lem: amj on a rectangle}
	If $j,j'\in\{1,\dots,\ell\}$ are distinct integers such that $\ksf_j$ and $\ksf_{j'}$ are both of rectangle type and lie in a common rectangle $R$ of $N$, then for all $m\in\{1,\dots,d\}$,
	\begin{align*}
		a_m(j)a_m(j')=\left\{\begin{array}{ll}
			1&\text{if }R\in\mathcal O,\\
			(-1)^{d-1}\displaystyle\prod_{{\bf i}\in\mathcal A\colon m\leq i_1\le d-m}\exp\left(-\alpha_\rho^{\bf i}({\bf T}_R)\right)&\text{if }R\in\mathcal U^r\text{ and }m\le\lfloor\frac{d+1}{2}\rfloor,\\
			(-1)^{d-1}\displaystyle\prod_{{\bf i}\in\mathcal A\colon d-m+1\leq i_1\le m-1}\exp\left(\alpha_\rho^{\bf i}({\bf T}_R)\right)&\text{if }R\in\mathcal U^r\text{ and }m\ge\lceil\frac{d+1}{2}\rceil,\\
			(-1)^{d-1}\displaystyle\prod_{{\bf i}\in\mathcal A\colon m\le i_1\leq d-m}\exp\left(\alpha_\rho^{\bf i}({\bf T}_R)\right)&\text{if }R\in\mathcal U^\ell\text{ and }m\le\lfloor\frac{d+1}{2}\rfloor,\\
			(-1)^{d-1}\displaystyle\prod_{{\bf i}\in\mathcal A\colon d-m+1\le i_1\leq m-1}\exp\left(-\alpha_\rho^{\bf i}({\bf T}_R)\right)&\text{if }R\in\mathcal U^\ell\text{ and }m\ge\lceil\frac{d+1}{2}\rceil,\\
		\end{array}\right.
	\end{align*} 
	where ${\bf T}_R$ is some (any) ordering of the pair of plaques that contain the horizontal boundary components of $R$. In particular, if $d$ is odd, then
	\[a_{\frac{d+1}{2}}(j)a_{\frac{d+1}{2}}(j')=1,\]
	and if $d$ is even, then
	\begin{align*}
		a_{\frac{d}{2}}(j)a_{\frac{d}{2}}(j')=\left\{\begin{array}{ll}
			1&\text{if }R\in\mathcal O,\\
			-\exp\left(-\alpha^{\bf i_0}({\bf T}_R)\right)&\text{if }R\in\mathcal U^r,\\
			-\exp\left(\alpha^{\bf i_0}({\bf T}_R)\right)&\text{if }R\in\mathcal U^\ell.
		\end{array}\right.
	\end{align*} 
\end{lemma}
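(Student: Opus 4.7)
The plan is to apply Lemma \ref{lem: final switch}(iii) to express both $a_m(j)$ and $a_m(j')$, and then to compute the residual factor $a_1(j) a_1(j')$ using the triviality of slithering along tree boundaries (Proposition \ref{prop: slithering is trivial}) together with Proposition \ref{prop: unipotent}. The key combinatorial observation is that, in every case, the pair ${\bf T}_{j'}$ is a $\Gamma$--translate of $\widehat{\bf T}_j$, since $\widetilde{\mathsf c}$ traverses the two exits in opposite directions relative to the orientation of $\widetilde R$ (respectively, of the two $\Gamma$--translated lifts of $R$) induced by $\widetilde{\mathsf c}$ running counterclockwise around $\widetilde M$. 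By $\Gamma$--invariance and the symmetry of $\alpha$, this gives $\alpha^{\bf i}({\bf T}_{j'}) = \alpha^{\widehat{\bf i}}({\bf T}_R)$.

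Plugging this identity into Lemma \ref{lem: final switch}(iii), in the orientable case $s_j$ is a right exit and $s_{j'}$ is a left exit, so after reindexing ${\bf i} \mapsto \widehat{\bf i}$ in the product from $a_m(j')$ the $\alpha$--products cancel and one obtains $a_m(j) a_m(j') = a_1(j) a_1(j')$. In the unorientable right case, both $s_j, s_{j'}$ are right exits, and the same reindexing now makes the two products reinforce over the union $\{{\bf i}: i_1 \le m-1\} \sqcup \{{\bf i}: i_2 \le m-1\} = \mathcal A \setminus \{{\bf i}: m \le i_1 \le d-m\}$, giving $a_m(j) a_m(j') = a_1(j) a_1(j') \prod_{{\bf i} \in \mathcal A} \exp(\alpha^{\bf i}({\bf T}_R)) \prod_{m \le i_1 \le d-m} \exp(-\alpha^{\bf i}({\bf T}_R))$, with the analogous identity for $R \in \mathcal U^\ell$ having the opposite sign in the exponents.

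To compute the residual $a_1(j) a_1(j')$, for $R \in \mathcal O$ the enlarged set $M \cup R$ is again a (non-maximal) tree, so Proposition \ref{prop: slithering is trivial} applies to its boundary; comparing with $\widetilde{\mathsf c}$ -- which differs only by replacing the two exit crossings with a detour through the horizontal boundaries of $\widetilde R$ -- and matching the compatibility normalizations of the bases ${\bf v}(j-1), {\bf v}(j), {\bf v}(j'-1), {\bf v}(j')$ given by Definition \ref{def: compatible}, one concludes $a_1(j) a_1(j') = 1$. For $R$ unorientable, $M \cup R$ is not simply connected, so I would directly invoke Proposition \ref{prop: unipotent} to track the unipotent slithering across leaves shared by plaques adjacent to $\widetilde R$: after accumulating the $(-1)^{m-1}$ sign contributions in equation \eqref{eq:unipotent} and the triple--ratio normalizations $r(T)$ from Definition \ref{def: compatible} over all indices, one obtains $a_1(j) a_1(j') = (-1)^{d-1} \prod_{{\bf i} \in \mathcal A} \exp(-\alpha^{\bf i}({\bf T}_R))$ in the $\mathcal U^r$ case, and the corresponding identity with opposite signs for $\mathcal U^\ell$. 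Combining the two steps yields the formula; the ``in particular'' claim follows by specializing to $d$ odd with $m = (d+1)/2$ (range empty, $(-1)^{d-1} = 1$) and $d$ even with $m = d/2$ (range equal to $\{{\bf i}_0\}$). The main obstacle I anticipate is the unorientable computation of $a_1(j) a_1(j')$, where delicate bookkeeping is required to combine the compatibility normalizations with the explicit formula of Proposition \ref{prop: unipotent}.
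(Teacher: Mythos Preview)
Your reduction to computing $a_1(j)a_1(j')$ via Lemma~\ref{lem: final switch}(iii) and the symmetry $\alpha^{\bf i}({\bf T}_{j'})=\alpha^{\widehat{\bf i}}({\bf T}_j)$ is correct and leads to the right intermediate expressions. The problem is in how you propose to evaluate $a_1(j)a_1(j')$.

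In the orientable case you claim that $M\cup R$ is a tree, so that Proposition~\ref{prop: slithering is trivial} applies. This is false: $M$ is a \emph{maximal} tree in $N$, so adding any truncated rectangle $\check R$ with $R\notin\mathcal M$ creates a cycle (the two stumpy switches meeting $\check R$ are already joined by a path in $M$). Thus $M\cup R$ is not simply connected and Proposition~\ref{prop: slithering is trivial} does not apply. In the unorientable case you acknowledge the difficulty and only sketch that Proposition~\ref{prop: unipotent} plus the normalizations of Definition~\ref{def: compatible} should yield the answer; the target formula you write is correct, but no argument is given, and the ``delicate bookkeeping'' you anticipate is essentially the whole content of the lemma.

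The paper bypasses this entirely. Rather than computing $a_1(j)a_1(j')$, it first proves the clean relation (Lemma~\ref{lem: blah})
\[
a_m(j)=\begin{cases}1/a_m(j')&R\in\mathcal O,\\ 1/a_{d-m+1}(j')&R\in\mathcal U,\end{cases}
\]
using only the $\rho$--equivariance of the slithering map and the explicit relation $B_\gamma\,{\bf v}(j-1)={\bf v}(j')$ (or ${\bf v}(j')^{\rm op}$) between the bases. The orientable case is then immediate, and in the unorientable case one gets $a_m(j)a_m(j')=a_m(j)/a_{d-m+1}(j)$, a ratio at a \emph{single} index~$j$, which is read off directly from Lemma~\ref{lem: final switch}(iii). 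This avoids any need to control $a_1(j)$ and $a_1(j')$ separately or to analyze $M\cup R$.
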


The third and final ingredient is the following lemma about the parity of $\abs{\mathcal U}+\abs{\mathcal S^r}$. Recall that $\mathcal U$ denotes the set of unorientable rectangles for $M$. 

\begin{lemma}\label{prop: count} $\abs{\mathcal U}+\abs{\mathcal S^r}$ is even.
\end{lemma}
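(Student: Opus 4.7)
My plan is to prove $|\mathcal U|+|\mathcal S^r|\equiv 0\pmod 2$ by reducing it in two algebraic steps to a purely topological statement about the lift of $\partial M$ to the orientation cover $N^o$, which I will then establish by a degree-theoretic argument.

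\emph{Step 1 (algebraic reduction via exit counting).} Since each plaque of $\lambda$ has three vertical boundary components, $|\mathcal S|=12g-12$ is even, so $|\mathcal S^r|\equiv|\mathcal S^\ell|\pmod 2$. It therefore suffices to show $|\mathcal U|\equiv|\mathcal S^\ell|\pmod 2$. First I will count exits of $M$ by type. Each orientable rectangle $R\in\mathcal O$ is attached to $M$ at its two opposite stumps through two exits whose tie orientations extend continuously across $R$; a direct local check shows that one of these is a left exit and the other a right exit. In contrast, each $R\in\mathcal U^\ell$ (resp.\ $R\in\mathcal U^r$) has its two stump orientations reversed across $R$, so both of its exits are left (resp.\ right) exits. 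Hence $|\mathcal E^\ell|=|\mathcal O|+2|\mathcal U^\ell|$, and since $|\mathcal O|+|\mathcal U|=6g-5$ is odd, $|\mathcal E^\ell|\equiv|\mathcal U|+1\pmod 2$. The lemma is therefore equivalent to $B:=|\mathcal S^\ell|+|\mathcal E^\ell|$ being odd.

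\emph{Step 2 (geometric interpretation of $B$).} Since $M$ is contractible, $\pi_N^{-1}(M)=M^o\sqcup\iota(M^o)$, and $\mathsf c$ lifts to a unique loop $\mathsf c^o\subset\partial M^o$ traversing $\partial M^o$ once in the boundary orientation of the disk $M^o$. At each tie-type segment of $\mathsf c^o$, the underlying tie of $N^o$ carries a canonical orientation. Using Equation \eqref{eqn: hs} for switch-type segments and a direct check for rectangle-type segments — based on the fact that a counterclockwise loop around $M$ keeps $M$ on its left, together with the definitions of left/right exits and vertical boundary components — I will show that $\mathsf c^o$ traverses the tie \emph{forward} (matching the canonical orientation) precisely when the underlying feature lies in $\mathcal S^r$ or $\mathcal E^r$. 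Consequently $B$ equals the total number of tie-type segments traversed \emph{backward} by $\mathsf c^o$.

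\emph{Step 3 (degree argument).} The natural unit vector field $V$ along the oriented ties of $N^o$ is non-vanishing on the disk $M^o$. After a small smoothing of the corners of $\mathsf c^o$, the angle $\theta\colon\partial M^o\to S^1$ between the tangent of $\mathsf c^o$ and $V$ becomes a smooth map whose winding number equals $\chi(M^o)=1$, by the classical turning-tangent theorem applied to a simple closed curve bounding a disk equipped with a non-vanishing frame. Backward tie-type traversals are exactly the points where $\theta=\pi$, which is a regular value of the smoothed map. Since the signed preimage count of a regular value equals the degree $1$, and since $\pm 1\equiv 1\pmod 2$, the unsigned count $B=|\theta^{-1}(\pi)|$ is odd. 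The main technical obstacle will be to justify the smoothing: I must check that the local perturbation at each corner can be chosen so that no new preimages of $\theta=\pi$ are introduced on the smoothed arc. This should be immediate from the fact that at every corner $\theta$ jumps by $\pm\pi/2$ between a tie value in $\{0,\pi\}$ and an adjacent leaf value in $\{\pi/2,3\pi/2\}$, so a small transverse perturbation confines the smoothed angle to the short arc between those two values and avoids $\pi$ unless the adjacent tie segment was already backward. Once $B$ is odd, combining with Step 1 yields $|\mathcal S^\ell|\equiv|\mathcal U|\pmod 2$, completing the proof.
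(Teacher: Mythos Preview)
Your Step~1 is essentially the same algebraic reduction the paper carries out (just with left/right swapped). The real divergence is in how you establish the combinatorial core. The paper proves the exact identity $|\mathcal E^r(L)| = 1 + |\mathcal S^r(L)|$ for \emph{any} tree $L\subset N$ by a very short induction on the number of truncated rectangles: the base case is a single stumpy switch, and removing a truncated rectangle splits $L$ into $L_1\cup L_2$ with $|\mathcal E^r(L)|=|\mathcal E^r(L_1)|+|\mathcal E^r(L_2)|-1$ and $|\mathcal S^r(L)|=|\mathcal S^r(L_1)|+|\mathcal S^r(L_2)|$. This is entirely elementary and yields the exact equality, not just the parity you need. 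Your winding-number route is more conceptual---it explains the ``$1$'' as $\chi(M^o)$---but buys no extra strength for this lemma and costs more machinery.

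Your argument is essentially correct, but Step~3 has a gap you should close. After smoothing corners, $\theta$ is still constant at~$\pi$ on each backward tie interval, so $\pi$ is not a regular value; a generic perturbation of such a plateau yields $0$ or $2$ crossings (not~$1$) whenever the two flanking leaf values coincide. What actually saves you---and what you have not verified---is that the two leaf $\theta$-values adjacent to any tie segment are always \emph{distinct}: a local check shows that both corner turns at an exit are $+\pi/2$ (convex corners of the protruding stump) while both corner turns at a vertical boundary component are $-\pi/2$ (concave corners of the dent in $\partial M$), so the leaf angle jumps by $\pm\pi$ across every tie. With this in hand, each backward tie segment has local degree $\pm 1$ and contributes exactly one preimage, making $B=|\theta^{-1}(\pi)|$ legitimate. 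Tracking signs (left exits give $+1$, left vertical boundary components give $-1$) in fact recovers the paper's exact identity $|\mathcal E^\ell|-|\mathcal S^\ell|=1$, not just its parity.
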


Assuming Lemmas \ref{lem: final switch}, \ref{lem: amj on a rectangle}, and \ref{prop: count}, we prove Proposition \ref{cor final}.

\begin{proof}[Proof of Proposition \ref{cor final}] 
	First, we decompose $\{1,\dots,\ell\}= \mathfrak L \sqcup \mathfrak S \sqcup \mathfrak R$, where $\mathfrak L$, $\mathfrak S$, and $\mathfrak R$ are the sets of integers $j\in\{1,\dots,\ell\}$ such that $\ksf_j$ is of leaf type, switch type, and rectangle type respectively. By (i) of Lemma \ref{lem: final switch}, for any $m\in\{1,\dots,d\}$, 
	\begin{align}\label{eqn: product}
		\prod_{j=1}^{\ell}a_m(j)=\left(\prod_{j\in\mathfrak R}a_m(j)\right)\left(\prod_{j\in\mathfrak S}a_m(j)\right).
	\end{align}
	When $m=\lfloor\frac{d+1}{2}\rfloor$ (i.e.  $m=\frac{d+1}{2}$ and $\frac{d}{2}$ when $d$ is odd and even respectively), we will compute the two products on the right hand side of equation \eqref{eqn: product} separately.
	
	First, we compute $\prod_{j\in\mathfrak R}a_{\lfloor\frac{d+1}{2}\rfloor}(j)$. Observe that for any rectangle $R$ of $N$ that does not lie in $M$, there are exactly two integers $j,j'\in\{1,\dots,\ell\}$ such that $\ksf_j$ and $\ksf_{j'}$ are both of rectangle type and lie in $R$. It then follows from Lemma \ref{lem: amj on a rectangle} that when $d$ is odd, we have
	\begin{align}\label{eqn: odd3}
		\prod_{j\in\mathfrak R}a_{\frac{d+1}{2}}(j)=1,
	\end{align}
	and when $d$ is even, we have
	\begin{align}\label{eqn: even3}
		\prod_{j\in\mathfrak R}a_{\frac{d}{2}}(j)=(-1)^{|\mathcal U|}\exp\left(\sum_{R\in\mathcal U^\ell}\alpha^{{\bf i}_0}({\bf T}_R)-\sum_{R\in\mathcal U^r}\alpha^{{\bf i}_0}({\bf T}_R)\right).
	\end{align}
	
	Next, we compute $\prod_{j\in\mathfrak S}a_{\lfloor\frac{d+1}{2}\rfloor}(j)$. Lemma~\ref{lem: final switch} part (ii) implies that when $d$ is odd and $\ksf_j$ is of switch type, we have
	\[a_{\frac{d+1}{2}}(j)=(-1)^{\frac{d-1}{2}}\exp\left(-2r(T_j)+\sum_{{\bf j}\in\mathcal B\colon j_2\le\frac{d-1}{2}} \theta^{\bf j}({\bf x}_{t_j}) \right),\]
	and when $d$ is even and $\ksf_j$ is of switch type, we have
	\[a_{\frac{d}{2}}(j)=\left\{\begin{array}{ll}
		\displaystyle (-1)^{\frac{d}{2}-1}\exp\left(-2r(T_j)+\sum_{{\bf j}\in\mathcal B\colon j_2\le\frac{d}{2}-1} \theta^{\bf j}({\bf x}_{t_j}) \right)&\text{if }t_j\in\mathcal S^r,\\
		\displaystyle (-1)^{\frac{d}{2}}\exp\left(-2r(T_j)+\sum_{{\bf j}\in\mathcal B\colon j_2\le\frac{d}{2}} \theta^{\bf j}({\bf x}_{t_j}) \right)&\text{if }t_j\in\mathcal S^\ell.
	\end{array}\right.
	\]
	Observe that the path $\mathsf c$ passes through every vertical boundary component of $N$ exactly once. In particular, $|\mathfrak S|=|\mathcal S|=12g-12$ and
	\begin{align*}
		\sum_{j\in\mathfrak S}r(T_j)=3\sum_{T\in\Delta}r(T)=\sum_{T\in\Delta}\sum_{{\bf j}\in\mathcal B}\theta^{\bf j}({\bf x}_T).
	\end{align*} 
	Therefore, when $d$ is odd, we have
	\begin{align}\label{eqn: odd1}
		\prod_{j\in\mathfrak S}a_{\frac{d+1}{2}}(j)&=\exp\left(-2\sum_{T\in\Delta}\sum_{{\bf j}\in\mathcal B}\theta^{\bf j}({\bf x}_T)+\sum_{t\in\mathcal S}\sum_{{\bf j}\in\mathcal B\colon j_2\le\frac{d-1}{2}} \theta^{\bf j}({\bf x}_t) \right),
	\end{align}
	and when $d$ is even, we have	
	\begin{align}\label{eqn: even1}
		\prod_{j\in\mathfrak S}a_{\frac{d}{2}}(j)
		&=(-1)^{|\mathcal S^r|}\exp\left(-2\sum_{T\in\Delta}\sum_{{\bf j}\in\mathcal B}\theta^{\bf j}({\bf x}_T)+\sum_{t\in\mathcal S}\sum_{{\bf j}\in\mathcal B\colon j_2\le\frac{d}{2}-1} \theta^{\bf j}({\bf x}_t)\right.\\
		&\qquad+\left.\sum_{t\in\mathcal S^\ell}\sum_{{\bf j}\in\mathcal B\colon j_2=\frac{d}{2}} \theta^{\bf j}({\bf x}_t) \right).\nonumber
	\end{align}
	\begin{figure}[h!]
		\includegraphics[width=0.5\textwidth]{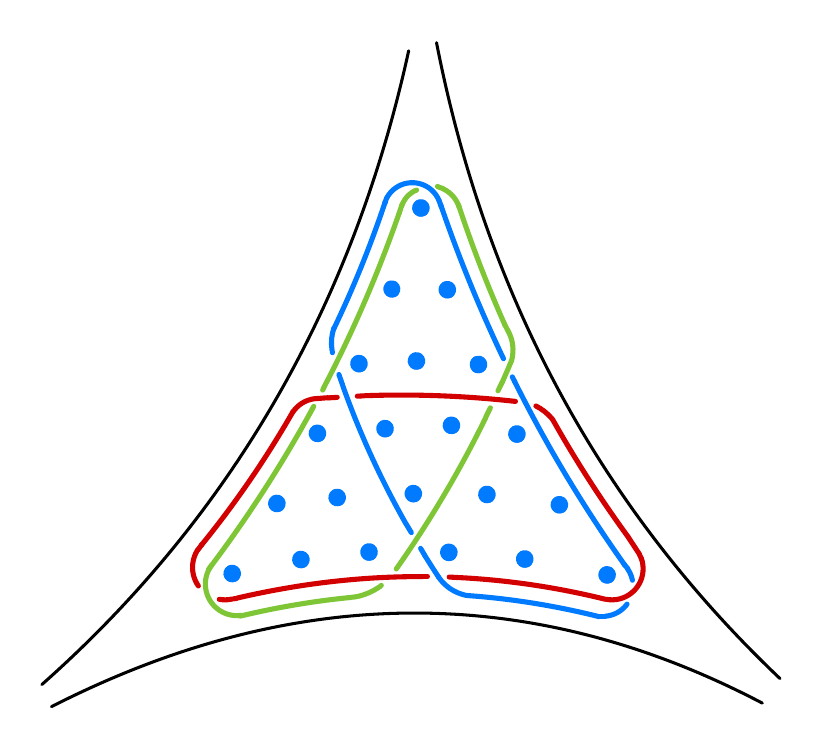}\includegraphics[width=0.5\textwidth]{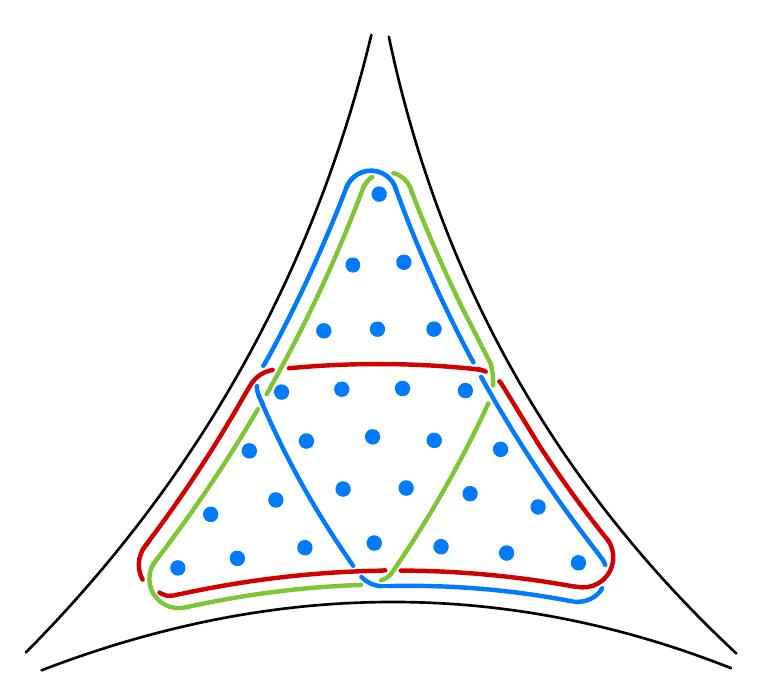}
		\caption{\label{fig: trapezoids} \small
			The dots in the blue, red, and green trapezoids are the triples $(j_1,j_2,j_3)\in\mathcal B$ with $j_1\le\lfloor\frac{d-1}{2}\rfloor$, $j_2\le\lfloor\frac{d-1}{2}\rfloor$, and $j_3\le\lfloor\frac{d-1}{2}\rfloor$ respectively ($d = 8$ on the left, $d=9$ on the right).}
	\end{figure}
	Notice that if $t, t', t''\in\mathcal S$ are the three vertical boundary components of $N$ that lie in a plaque $T$ of $\lambda$, then we have:
	\begin{align*}
		&\quad\sum_{{\bf j}\in\mathcal B\colon j_2\le\lfloor\frac{d-1}{2}\rfloor}\left(\theta^{\bf j}({\bf x}_{t})+\theta^{\bf j}({\bf x}_{t'})+\theta^{\bf j}({\bf x}_{t''})\right)\\
		&=\sum_{{\bf j}\in\mathcal B\colon j_1\le\lfloor\frac{d-1}{2}\rfloor}\theta^{\bf j}({\bf x}_T)+\sum_{{\bf j}\in\mathcal B\colon j_2\le\lfloor\frac{d-1}{2}\rfloor}\theta^{\bf j}({\bf x}_T)+\sum_{{\bf j}\in\mathcal B\colon j_3\le\lfloor\frac{d-1}{2}\rfloor}\theta^{\bf j}({\bf x}_T)\\
		&=3\sum_{\bf j\in\Bc}\theta^{\bf j}({\bf x}_T)-\sum_{{\bf j}\in\mathcal B\colon j_1>\lceil\frac{d+1}{2}\rceil}\theta^{\bf j}({\bf x}_T)-\sum_{{\bf j}\in\mathcal B\colon j_2>\lceil\frac{d+1}{2}\rceil}\theta^{\bf j}({\bf x}_T)-\sum_{{\bf j}\in\mathcal B\colon j_3>\lceil\frac{d+1}{2}\rceil}\theta^{\bf j}({\bf x}_T)\\
		&=2\sum_{\bf j\in\Bc}\theta^{\bf j}({\bf x}_T)+\sum_{{\bf j}\in\mathcal B^*}\theta^{\bf j}({\bf x}_T),
	\end{align*}
	see Figure \ref{fig: trapezoids}. It follows that
	\[\sum_{t\in\mathcal S}\sum_{{\bf j}\in\mathcal B\colon j_2\le\lfloor\frac{d-1}{2}\rfloor} \theta^{\bf j}({\bf x}_t) =\sum_{T\in\Delta}\left(2\sum_{\bf j\in\Bc}\theta^{\bf j}({\bf x}_T)+\sum_{{\bf j}\in\mathcal B^*}\theta^{\bf j}({\bf x}_T)\right),\]
	so the equations \eqref{eqn: odd1} and \eqref{eqn: even1} respectively reduce to
	\begin{align}\label{eqn: odd2}
		\prod_{j\in\mathfrak S}a_{\frac{d+1}{2}}(j)&=\exp\left(\sum_{T\in\Delta}\sum_{{\bf j}\in\mathcal B^*}\theta^{\bf j}({\bf x}_T) \right),
	\end{align}
	and
	\begin{align}\label{eqn: even2}
		\prod_{j\in\mathfrak S}a_{\frac{d}{2}}(j)
		&=(-1)^{|\mathcal S^r|}\exp\left(\sum_{T\in\Delta}\sum_{{\bf j}\in\mathcal B^*}\theta^{\bf j}({\bf x}_T)+\sum_{t\in\mathcal S^\ell}\sum_{{\bf j}\in\mathcal B\colon j_2=\frac{d}{2}} \theta^{\bf j}({\bf x}_t) \right).
	\end{align}
	
	Together, equations \eqref{eqn: product}, \eqref{eqn: odd3}, and \eqref{eqn: odd2} imply that when $d$ is odd,
	\[\prod_{j=1}^{\ell}a_{\frac{d+1}{2}}(j)=\exp\left(\sum_{{\bf j}\in\mathcal B^*}\ \sum_{T\in\Delta}\theta^{\bf j}({\bf x}_T)\right),\]
	while equations \eqref{eqn: product}, \eqref{eqn: even3}, and \eqref{eqn: even2} imply that when $d$ is even,
	\begin{align*}
		\displaystyle \prod_{j=1}^{\ell}a_{\frac{d}{2}}(j)&=(-1)^{|\mathcal U|+|\mathcal S^r|}\exp\Bigg(\sum_{R\in\mathcal U^\ell}\alpha^{{\bf i}^0}({\bf T}_R)-\sum_{R\in\mathcal U^r}\alpha^{{\bf i}^0}({\bf T}_R)\\&\quad+\sum_{{\bf j}\in\mathcal B^*}\ \sum_{T\in\Delta}\theta^{\bf j}({\bf x}_T)+\sum_{{\bf j}\in \mathcal B^0}\sum_{t \in\mathcal S^\ell}\theta^{\bf j}({\bf x}_t)\Bigg)\\
		&=\exp\Bigg(\sum_{R\in\mathcal U^\ell}\alpha^{{\bf i}^0}({\bf T}_R)-\sum_{R\in\mathcal U^r}\alpha^{{\bf i}^0}({\bf T}_R)\\&\quad+\sum_{{\bf j}\in\mathcal B^*}\ \sum_{T\in\Delta}\theta^{\bf j}({\bf x}_T)+\sum_{{\bf j}\in \mathcal B^0}\sum_{t \in\mathcal S^\ell}\theta^{\bf j}({\bf x}_t)\Bigg),
	\end{align*}
	where the second equality follows from Lemma \ref{prop: count}.
\end{proof}

It remains to prove the three main ingredients of the proof of Proposition \ref{cor final}. 

\subsection{Proof of Lemma \ref{lem: final switch}}
Let $(g_0,\dots,g_\ell)$ and $({\bf x}(0),\dots,{\bf x}(\ell))$ respectively denote the cutting sequence and enhanced cutting sequence of $\widetilde{\mathsf c}$, and for all $j\in\{0,\dots,\ell\}$, let  ${\bf x}(j)=(x_1(j),x_2(j),x_3(j))$. Recall that in Section \ref{tor=ob}, we used $M$ to construct a graph $\mathcal G\subset S$, and chose a maximal tree $\mathcal G'\subset\mathcal G$. Recall also that in Section \ref{sec: families} we constructed, using $\widetilde{\mathsf c}$, a sequence $(\mathcal C_0,\dots,\mathcal C_{4g})$ of connected components of $\pi_S^{-1}(\mathcal G')$, and defined for each $i\in\{0,\dots,4g\}$, a map ${\bf v}_i$ that assigns a basis of $\Cb^d$ to every ordering of the vertices of a plaque of $\widetilde\lambda$ that contains a vertex of $\mathcal C_i$. We will now prove the three parts of Lemma \ref{lem: final switch} separately.

\begin{proof}[Proof of (i) of Lemma \ref{lem: final switch}] Notice that if $\ksf_j$ is of leaf type, then ${\bf x}(j-1)={\bf x}(j)$. Thus, if $T_j$ and $T_{j-1}$ denote the plaques of $\widetilde\lambda$ whose vertices are ${\bf x}(j)$ and ${\bf x}(j-1)$ respectively, then $T_j=T_{j-1}$. Let $i\in\{1,\dots,4g\}$ be the unique integer such that $T_j=T_{j-1}$ contains a vertex of $\mathcal C_i$. Then by definition,
	\[{\bf v}(j-1)={\bf v}_i({\bf x}(j-1))={\bf v}_i({\bf x}(j))={\bf v}(j).\]
	Also, note that $g_{j-1}=g_j$, so $\Sigma(g_j,g_{j-1})=\id$. It now follows from the definition of the slithering coefficients that $a_m(j)=1$ for all $m\in\{1,\dots,d\}$, see equation \eqref{eqn: slithering coefficients}. 
\end{proof}

\begin{proof}[Proof of (ii) of Lemma \ref{lem: final switch}]
	Let $\mathsf g_{j-1}$ be the oriented geodesic in $\widetilde S$ whose forward and backward endpoints are $x_1(j-1)$ and $x_3(j-1)$ respectively, and let $\mathsf g_j$ be the oriented geodesic in $\widetilde S$ whose forward and backward endpoints are $x_1(j)$ and $x_3(j)$ respectively.  Recall that $\pi_S:\widetilde S\to S$ is the covering map, and $\widetilde M$ is the connected component of $\pi_S^{-1}(M)$ bounded by $\widetilde{\mathsf c}$. By the definition of the enhanced cutting sequence, the ties of $\widetilde M$, with the orientations induced by the chosen orientation on the ties of $M$, pass from left to right of both $\mathsf g_{j-1}$ and $\mathsf g_j$, so $x_{t_j,2}$ is the common forward (respectively, backward) endpoint of $\mathsf g_{j-1}$ and $\mathsf g_j$ if and only if $t_j$ is a right (respectively, left) vertical boundary component of $M$. Set 
	\[{\bf F}=(F_1,F_2,F_3):=\xi({\bf x}_{t_j})=(\xi(x_{t_j,1}),\xi(x_{t_j,2}),\xi(x_{t_j,3})).\]
	
	\begin{figure}[h!]
		\resizebox{0.95\textwidth}{!}{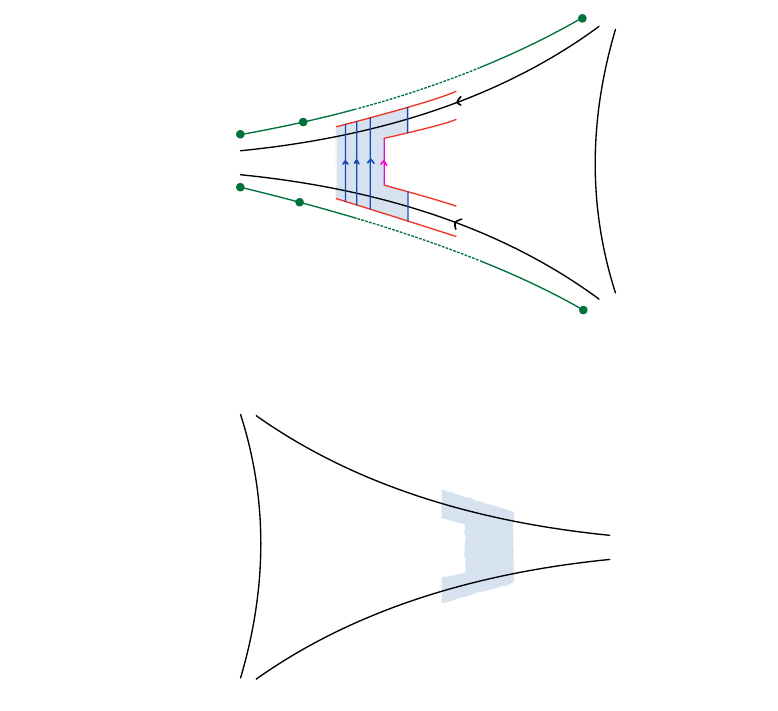}
		\caption{\small In the top (respectively, bottom) picture, $T_{j-1} = T_j$, $\gsf_{j-1}$, $\gsf_j$, ${\bf x}(j-1)$, ${\bf x}(j)$, and ${\bf x}_{t_j}$ are drawn in the case when $\ksf_j$ is a right (respectively, left) vertical boundary component of $M$.}
		\label{fig:dfjkg} 
	\end{figure}
	
	Since ${\bf x}_{t_j}$ is oriented clockwise, it follows that (see Figure \ref{fig:dfjkg}):
	\begin{itemize}
		\item if $t_j$ is a right vertical boundary component of $M$, then $\xi({\bf x}(j-1))=(F_2,F_3,F_1)$ and $\xi({\bf x}(j))=(F_2,F_1,F_3)$,
		\item if $t_j$  is a left vertical boundary component of $M$, then $\xi({\bf x}(j-1))=(F_1,F_3,F_2)$ and $\xi({\bf x}(j))=(F_3,F_1,F_2)$.
	\end{itemize}
	
	Thus, from the definitions of ${\bf v}(j-1)=(v_1(j-1),\dots,v_d(j-1))$ and ${\bf v}(j)=(v_1(j),\dots,v_d(j))$, we see that (see Figure \ref{fig:dfjkg}):
	\begin{itemize}
		\item if $t_j$ is a right vertical boundary component of $M$, then the basis ${\bf v}(j-1)$ is adapted to $(F_2,F_3,F_1)$, the basis ${\bf v}(j)^{\rm op} = (v_d(j),\dots,v_1(j))$ is adapted to $(F_3,F_1,F_2)$, and 
		$v_1(j)=\exp(2r(T_j))\,v_1(j-1)$.
		\item if $t_j$  is a left vertical boundary component of $M$, then the basis ${\bf v}(j-1)^{\rm op} = (v_d(j-1),\dots,v_1(j-1))$ is adapted to $(F_2,F_3,F_1)$, the basis ${\bf v}(j)$ is adapted to $(F_3,F_1,F_2)$, and $v_d(j)=\exp(2r(T_j))\,v_d(j-1)$.
	\end{itemize}

	Set
	\[{\bf f}=(f_1,\dots,f_d):=\left\{\begin{array}{ll}
		(v_1(j-1),\dots,v_d(j-1))&\text{if }t_j\in\mathcal S^r,\\
		(v_d(j-1),\dots,v_1(j-1))&\text{if }t_j\in\mathcal S^\ell,
	\end{array}\right.\]
	and set
	\[{\bf f}'=(f'_1,\dots,f'_d):=\left\{\begin{array}{ll}
		(\exp(-2r(T_j)) v_d(j),\dots,\exp(-2r(T_j)) v_1(j))&\text{if }t_j\in\mathcal S^r,\\
		(\exp(-2r(T_j)) v_1(j),\dots,\exp(-2r(T_j)) v_d(j))&\text{if }t_j\in\mathcal S^\ell.
	\end{array}\right.\]
	Then ${\bf f}$ and ${\bf f}'$ are adapted to $(F_2,F_3,F_1)$ and $(F_3,F_1,F_2)$ respectively, and $f_d'=f_1$. Let $u$ denote the unique unipotent matrix that fixes $F_2$ and sends $F_1$ to $F_3$. Applying Proposition \ref{prop: unipotent} to the triple of flags ${\bf F}$, we obtain that for all $m\in\{1,\dots, d\}$,
	\[u(f_m)=(-1)^{m-1}\exp\left(\sum_{{\bf j}\in\mathcal  B\colon j_2\le m-1}\tau^{\bf j}({\bf F})\right)f_{d-m+1}'.\]
	
	Notice that $\Sigma(g_j,g_{j-1})=u$ (see (3) and (4) of Theorem \ref{thm: slithering map}), and that $\theta^{\bf j}({\bf x}) = \tau^{\bf j}(\xi({\bf x}))$ for any ${\bf x}\in\widetilde\Delta^o$ and ${\bf j}\in\Bc$. Thus, from the definition of ${\bf f}$ and ${\bf f}'$, we have
	\[\Sigma(g_j,g_{j-1})\, v_m(j-1)=\left\{\begin{array}{ll}
		\displaystyle(-1)^{m-1}\exp\left(-2r(T_j)+\sum_{{\bf j}\in\mathcal  B\colon j_2\le m-1}\theta^{\bf j}({\bf x}_{t_j})\right)v_m(j)&\text{if }t_j\in\mathcal S^r,\\
		\displaystyle(-1)^{d-m}\exp\left(-2r(T_j)+\sum_{{\bf j}\in\mathcal  B\colon j_2\le d-m}\theta^{\bf j}({\bf x}_{t_j})\right)v_m(j)&\text{if }t_j\in\mathcal S^\ell.
	\end{array}\right.\]
	Now, (ii) follows from the definition of the slithering coefficients, see equation \eqref{eqn: slithering coefficients}.
\end{proof}

\begin{proof}[Proof of (iii) of Lemma \ref{lem: final switch}]
	Recall that for all ${\bf i}=(i_1,i_2)$ such that $i_1+i_2 = d$, the quantity $\alpha^{\bf i}({\bf T}_j)$ is the logarithm of the $\bf i$--double ratio, denoted $\sigma^{\bf i}$, of the flags obtained by applying the $\lambda$--limit map $\xi$ to the vertices of the plaques in ${\bf T}_j$ labeled according to the conventions established in  Section \ref{d-pleated}. More precisely, let $g_j$ (resp. $g_{j-1}$) denote the leaf of $\widetilde\lambda$ whose endpoints are $x_1(j-1)$ and $x_3(j-1)$ (resp. $x_1(j)$ and $x_3(j)$). Then we have the following two cases (see Figure \ref{fig:doubleratio}):
	\begin{itemize}
		\item if $s_j$ is a right exit, then for all $\bf i\in\mathcal A$
		\[
		\alpha^{\bf i}({\bf T}_j)=\sigma^{\bf i}\big(\xi(x_1(j)),\xi(x_3(j)), \Sigma(g_j,g_{j-1})\,\xi(x_2(j-1)),\xi(x_2(j))\big)
		\]
		\item if $s_j$ is a left exit, then for all $\bf i\in\mathcal A$
		\[
		\alpha^{\bf i}({\bf T}_j)=\sigma^{\bf i}\big(\xi(x_3(j)),\xi(x_1(j)), \Sigma(g_j,g_{j-1})\,\xi(x_2(j-1)),\xi(x_2(j))\big).
		\]
	\end{itemize}
	
	\begin{figure}[h!]
		\resizebox{.7\textwidth}{!}{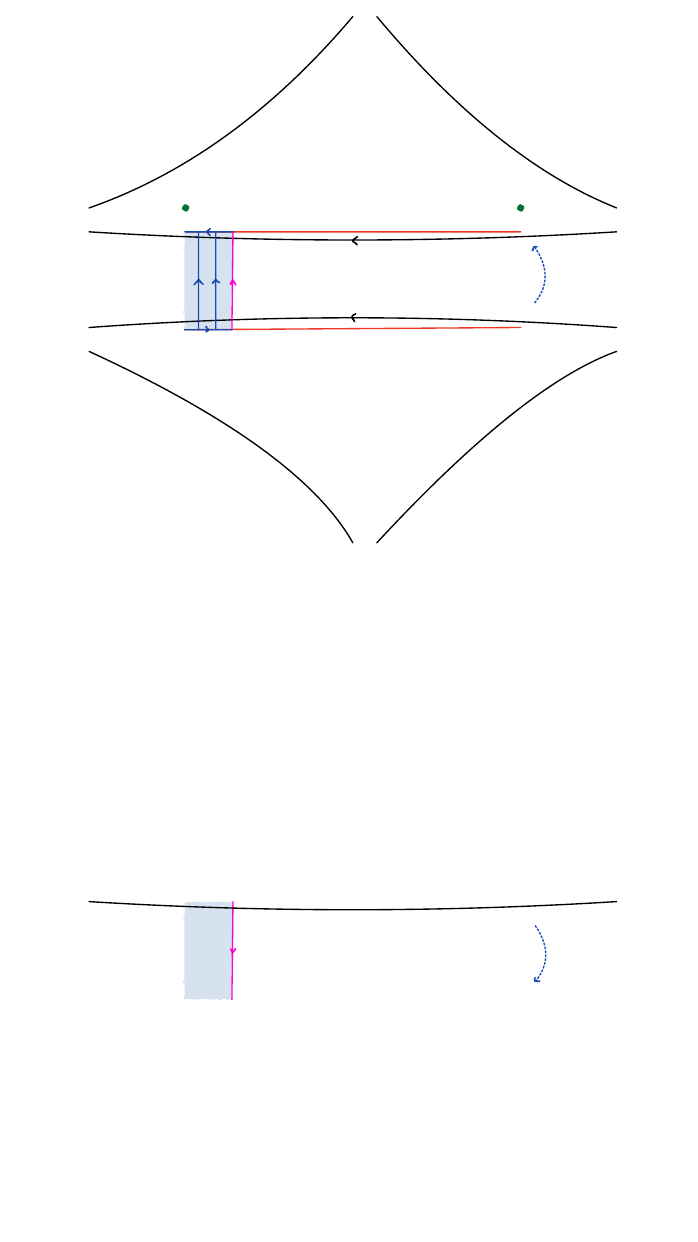}
		\caption{\label{fig:doubleratio} \small In the top (respectively, bottom) picture, $T_{j-1}$, $T_j$, $\gsf_{j-1}$, $\gsf_j$, ${\bf x}(j-1)$, and ${\bf x}(j)$ are drawn in the case when $\ksf_j$ is a right (respectively, left) exit of $M$.}
	\end{figure}
	
	Let ${\bf v}(j)=(v_1(j),\dots,v_d(j))$. Notice that for all $m\in\{1,\dots,d\}$, $(v_1(j),\dots,v_m(j))$ is a basis for $\xi(x_1(j))^m$, $(v_d(j),\dots,v_{d-m+1}(j))$ is a basis for $\xi(x_3(j))^m$, and $\sum_{m=1}^dv_m(j)$ spans $\xi(x_2(j))^1$. Also, from the definition of the slithering coefficients, see equation \eqref{eqn: slithering coefficients}, we have that
	\[
	\Sigma(g_j,g_{j-1})\left(\sum_{m=1}^dv_m(j-1)\right)=\sum_{m=1}^da_m(j)v_m(j)
	\] 
	spans $\Sigma(g_j,g_{j-1})\,\xi(x_2(j-1))^1$. Using the definition of $\sigma^{\bf i}$ given in Section \ref{sec: flags}, we may thus compute, that if $s_j$ is a right exit of $M$,
	\begin{align*}
		\exp\left(\alpha^{{\bf i}}({\bf T}_j)\right)
		&=-\frac{v_{1}(j)\wedge\dots\wedge v_{i_1}(j)\wedge v_{d}(j)\wedge\dots\wedge v_{i_1+2}(j)\wedge\displaystyle\sum_{m=1}^da_m(j)v_m(j)}{v_{1}(j)\wedge\dots\wedge v_{i_1}(j)\wedge v_{d}(j)\wedge\dots\wedge v_{i_1+2}(j)\wedge\displaystyle\sum_{m=1}^dv_m(j)}\\
		&\quad\quad  \cdot\frac{v_{1}(j)\wedge\dots\wedge v_{i_1-1}(j)\wedge v_{d}(j)\wedge\dots\wedge v_{i_1+1}(j)\wedge\displaystyle\sum_{m=1}^dv_m(j)}{v_{1}(j)\wedge\dots\wedge v_{i_1-1}(j)\wedge v_{d}(j)\wedge\dots\wedge v_{i_1+1}(j)\wedge\displaystyle\sum_{m=1}^da_m(j)v_m(j)}\\
		&=-\frac{a_{i_1+1}(j)}{a_{i_1}(j)}.
	\end{align*}
	Similarly, if $s_j$ is a left exit of $M$, then
	\begin{align*}
		\exp\left(\alpha^{\bf i}({\bf T}_j)\right)
		&=-\frac{v_{d}(j)\wedge\dots\wedge v_{i_2+1}(j)\wedge v_{1}(j)\wedge\dots\wedge v_{i_2-1}(j)\wedge\displaystyle\sum_{m=1}^da_m(j)v_m(j)}{v_{d}(j)\wedge\dots\wedge v_{i_2+1}(j)\wedge v_{1}(j)\wedge\dots\wedge v_{i_2-1}(j)\wedge\displaystyle\sum_{m=1}^dv_m(j)}\\
		&\quad\quad  \cdot\frac{v_{d}(j)\wedge\dots\wedge v_{i_2+2}(j)\wedge v_{1}(j)\wedge\dots\wedge v_{i_2}(j)\wedge\displaystyle\sum_{m=1}^dv_m(j)}{v_{d}(j)\wedge\dots\wedge v_{i_2+2}(j)\wedge v_{1}(j)\wedge\dots\wedge v_{i_2}(j)\wedge\displaystyle\sum_{m=1}^da_m(j)v_m(j)}\\
		&=-\frac{a_{i_2}(j)}{a_{i_2+1}(j)}.
	\end{align*}
	Taking products of the above equations over all $i_1\in\{1,\dots,m-1\}$ (respectively, $i_2\in\{1,\dots,m-1\}$) when $s_j$ is a right exit (respectively, left exit) of $M$ proves (iii).
\end{proof}

\subsection{Proof of Lemma \ref{lem: amj on a rectangle}} To prove Lemma \ref{lem: amj on a rectangle}, we will use the following.

\begin{lemma}\label{lem: blah}
	If $j,j'\in\{1,\dots,\ell\}$ are distinct integers such that $\ksf_j$ and $\ksf_{j'}$ are both of rectangle type and lie in a common rectangle $R$, then for all $m\in\{1,\dots,d\}$,
	\[a_m(j)=\left\{\begin{array}{ll}
		1/a_m(j')&\text{if }R\text{ is orientable},\\
		1/a_{d-m+1}(j')&\text{if }R\text{ is unorientable}.
	\end{array}\right.\] 
\end{lemma}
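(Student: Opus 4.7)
The plan is to derive the lemma from three facts: a geometric identification of the two crossings of $R$, a slithering inversion identity, and a coherence property of the family of bases ${\bf v}(0),\dots,{\bf v}(\ell)$.

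First, I would observe that both rectangle-type segments $\widetilde{\ksf}_j, \widetilde{\ksf}_{j'}$ are exits of the same lift $\widetilde{R}$ of $R$ adjacent to $\widetilde{M}$. Because each horizontal boundary of the truncated rectangle $\check{R}$ lies in a single plaque of $\widetilde{\lambda}$ (a consequence of the trivalent train track structure), the four corners of $\check{\widetilde{R}}$ distribute across just two plaques of $\widetilde{\lambda}$: $T_j = T_{j'-1}$ and $T_{j-1} = T_{j'}$. In particular, as unoriented leaves of $\widetilde{\lambda}$, $g_j = g_{j'-1}$ and $g_{j-1} = g_{j'}$, and property (1) of Theorem~\ref{thm: slithering map} gives the crucial identity $\Sigma(g_{j'}, g_{j'-1}) = \Sigma(g_j, g_{j-1})^{-1}$.

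Next I would compare the oriented leaves $\gsf_j$ and $\gsf_{j'-1}$ by tracking the $M$--orientations on the ties at the two exits, which by definition extend continuously across $\widetilde{R}$ if and only if $R$ is orientable. For orientable $R$, the oriented leaves match: $\gsf_j = \gsf_{j'-1}$, forcing ${\bf x}(j) = {\bf x}(j'-1)$ and ${\bf x}(j-1) = {\bf x}(j')$ in $\widetilde{\Delta}^o$. For unorientable $R$, the $M$--orientations flip between the two exits, yielding $\gsf_j = \gsf_{j'-1}^{\rm op}$ and hence ${\bf x}(j) = {\bf x}(j'-1)^{\rm op}$ and ${\bf x}(j-1) = {\bf x}(j')^{\rm op}$.

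The main technical step is to relate ${\bf v}(j'-1)$ to ${\bf v}(j)$. Since the plaque $T_j = T_{j'-1}$ contains a unique vertex of the graph $\mathcal{G}$, the component $\mathcal{C}_{i_b}$ indexing ${\bf v}(j) = B_{i_b}{\bf v}_0(\omega_{i_b}^{-1}{\bf x}(j))$ coincides with the component $\mathcal{C}_{i_c}$ indexing ${\bf v}(j'-1) = B_{i_c}{\bf v}_0(\omega_{i_c}^{-1}{\bf x}(j'-1))$, so $\omega_{i_b} = \omega_{i_c}$ in $\Gamma$ and consequently $B_{i_b} B_{i_c}^{-1} = \exp(\eta)\,\id$ for some scalar $\eta\in\Cb/2\pi i\Zb$. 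Combining this with the symmetry ${\bf v}_i({\bf x}^{\rm op}) = {\bf v}_i({\bf x})^{\rm op}$ (inherited from property (2) of ${\bf v}_0$), I obtain ${\bf v}(j'-1) = \exp(\eta)\cdot{\bf v}(j)$ if $R$ is orientable, and ${\bf v}(j'-1) = \exp(\eta)\cdot{\bf v}(j)^{\rm op}$ if unorientable. The analogous argument at the plaques $T_{j-1} = T_{j'}$ yields ${\bf v}(j') = \exp(\eta')\cdot{\bf v}(j-1)$ (resp. $\exp(\eta')\cdot{\bf v}(j-1)^{\rm op}$). The crucial claim is that $\eta = \eta'$: this is immediate when $e_R\in\mathcal{G}'$ (since then $i_a = i_b$ and $i_c = i_d$), while for $e_R\in\mathcal{G}\setminus\mathcal{G}'$ one has $j = j(k)$, $j' = j(l)$ with $A_l = A_k^{-1}$, and the identity $B_l B_{k-1}^{-1} = B_{l-1}A_lA_kB_k^{-1} = B_{l-1}B_k^{-1}$ shows that the two central scalars agree.

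Finally, substituting these basis relations into $\Sigma(g_{j'},g_{j'-1})v_m(j'-1) = a_m(j')v_m(j')$ and applying the slithering inversion identity from the first step, the common factor $\exp(\eta)$ cancels and the defining relation for $a_m(j)$ collapses the equation to $a_m(j)a_m(j') = 1$ when $R$ is orientable, and to $a_m(j)a_{d-m+1}(j') = 1$ when $R$ is unorientable. The hardest part will be the orientation tracking in the second step (carefully distinguishing orientable from unorientable $R$ via the continuity of the induced $M$--orientations at the two exits) together with the verification of the centrality identity $\eta = \eta'$ in the $\mathcal{G}\setminus\mathcal{G}'$ subcase, which is the mechanism by which the contribution of the indexing freedom in the family $\{{\bf v}_i\}$ cancels out of the final formula.
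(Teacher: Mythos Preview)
Your opening geometric claim is incorrect, and everything downstream inherits the error. The segments $\widetilde{\ksf}_j$ and $\widetilde{\ksf}_{j'}$ both lie on $\partial\widetilde M$, but they are \emph{not} the two boundary ties of a single lift of $\check R$: the copy of $\check R$ adjacent to $\widetilde M$ along $\widetilde{\ksf}_j$ has its other boundary tie on a \emph{different} lift $\gamma\cdot\widetilde M$, where $\gamma\in\Gamma$ is the (in general nontrivial) deck transformation $\omega_{i'}\omega_i^{-1}$ or $\omega_{i'-1}\omega_i^{-1}$ in the paper's notation. Consequently one only has $\gamma\cdot T_j=T_{j'-1}$ and $\gamma\cdot g_j=g_{j'-1}$; your equalities $T_j=T_{j'-1}$ and $g_j=g_{j'-1}$ are false, and the ``crucial identity'' $\Sigma(g_{j'},g_{j'-1})=\Sigma(g_j,g_{j-1})^{-1}$ must be replaced by the $\rho$--equivariance relation $\Sigma(g_{j'},g_{j'-1})=B_\gamma\,\Sigma(g_j,g_{j-1})^{-1}\,B_\gamma^{-1}$ for a linear lift $B_\gamma\in\SL_d(\Cb)$ of $\rho(\gamma)$.

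The same defect propagates to your basis comparison: $\xi({\bf x}(j))$ and $\xi({\bf x}(j'-1))$ are distinct triples of flags (related by $\rho(\gamma)$), so ${\bf v}(j'-1)$ cannot be a scalar multiple of ${\bf v}(j)$ or ${\bf v}(j)^{\rm op}$. The paper instead proves $B_\gamma\, v_m(j)=v_m(j'-1)$ (orientable case) and $B_\gamma\, v_m(j)=v_{d-m+1}(j'-1)$ (unorientable case) \emph{on the nose}, with no scalar ambiguity; this is exactly where the normalization $A_k=A_{k'}^{-1}$ whenever $\gamma_k=\gamma_{k'}^{-1}$ is used, via the definition $B_\gamma:=B_{i'}B_i^{-1}$ when $e_R\in\mathcal G'$ and $B_\gamma:=B_{i'-1}B_i^{-1}=B_{i'}B_{i-1}^{-1}$ when $e_R\in\mathcal G\setminus\mathcal G'$. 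Your cancellation $\eta=\eta'$ is a vestige of this mechanism, but it is placed at the wrong point in the argument because the deck transformation was omitted from the outset.
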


\begin{proof}
	Recall that for all $j\in\{1,\dots,\ell\}$, $T_j$ denotes the plaque of $\widetilde\lambda$ whose vertices are ${\bf x}(j)$. Also, recall that $(\gamma_1,\dots,\gamma_{4g})$ denotes the relation sequence associated to $(\mathcal G,\mathcal G',\mathsf b)$, and $(\mathcal C_0,\dots,\mathcal C_{4g})$ denotes the sequence of connected component of $\pi_S^{-1}(\mathcal G')$ that we constructed from $\widetilde{\mathsf c}$ in Section \ref{sec: families}. If we denote by $\omega_0$ the identity element in $\Gamma$ and $\omega_i:=\gamma_1\ldots\gamma_i$ for each $i\in\{1,\dots,4g\}$, then we observed in Section \ref{sec: families} that $\mathcal C_i=\omega_i\,\mathcal C_0$. By definition, the sequence of bases ${\bf v}(0),\dots,{\bf v}(\ell)$ for $\widetilde{\mathsf c}$ associated to $\rho$ comes from a choice of linear maps $A_1,\dots,A_{4g}\in\SL_d(\Cb)$ such that $\rho(\gamma_i)$ is the projectivization of $A_i$ for all $i$, and $A_i=A_j^{-1}$ whenever $\gamma_i=\gamma_j^{-1}$. Set $B_0$ to be the identity in $\SL_d(\Cb)$ and $B_i:=A_1\dots A_i$ for each $i\in\{1,\dots,4g\}$.
	
	Let $i,i'\in\{0,\dots,4g\}$ be the integers such that $T_j$ contains a vertex of $\mathcal C_i$ and $T_{j'}$ contains a vertex of $\mathcal C_{i'}$. If both $T_{j-1}$ and $T_j$ contain a vertex of the same connected component of $\pi_S^{-1}(\mathcal G')$, then $\mathcal C_{i'}=\gamma\,\mathcal C_i$, where $\gamma=\omega_{i'}\omega_i^{-1}$. In this case, set
	\[B_\gamma:=B_{i'}B_i^{-1}\in\SL_d(\Cb).\]
	On the other hand, if $T_{j-1}$ and $T_j$ contain vertices of different connected components of $\pi_S^{-1}(\mathcal G')$, we have that $\mathcal C_{i'-1}=\gamma\,\mathcal C_i$ and $\mathcal C_{i'}=\gamma\,\mathcal C_{i-1}$, and so $\gamma=\omega_{i'-1}\omega_i^{-1}=\omega_{i'}\omega_{i-1}^{-1}$. Also, since $\ksf_j$ and $\ksf_{j'}$ are both of rectangle type and lie in a common rectangle, we have that $\gamma_i^{-1}=\gamma_{i'}$, and hence $A_i^{-1}=A_{i'}$, so we may set
	\[B_\gamma:=B_{i'-1}B_i^{-1}=B_{i'}B_{i-1}^{-1}\in\SL_d(\Cb).\]
	Since $B_k$ is a linear representative of $\rho(\omega_k)$ for all $k$ and $\gamma=\omega_{i'-1}\omega_i^{-1}=\omega_{i'}\omega_{i-1}^{-1}$, in both cases, we have that $B_\gamma$ is a linear representative of $\rho(\gamma)\in\PGL_d(\Cb)$.
	
	\begin{figure}[h!]
		\hspace*{-1.5cm}\resizebox{1.2\textwidth}{!}{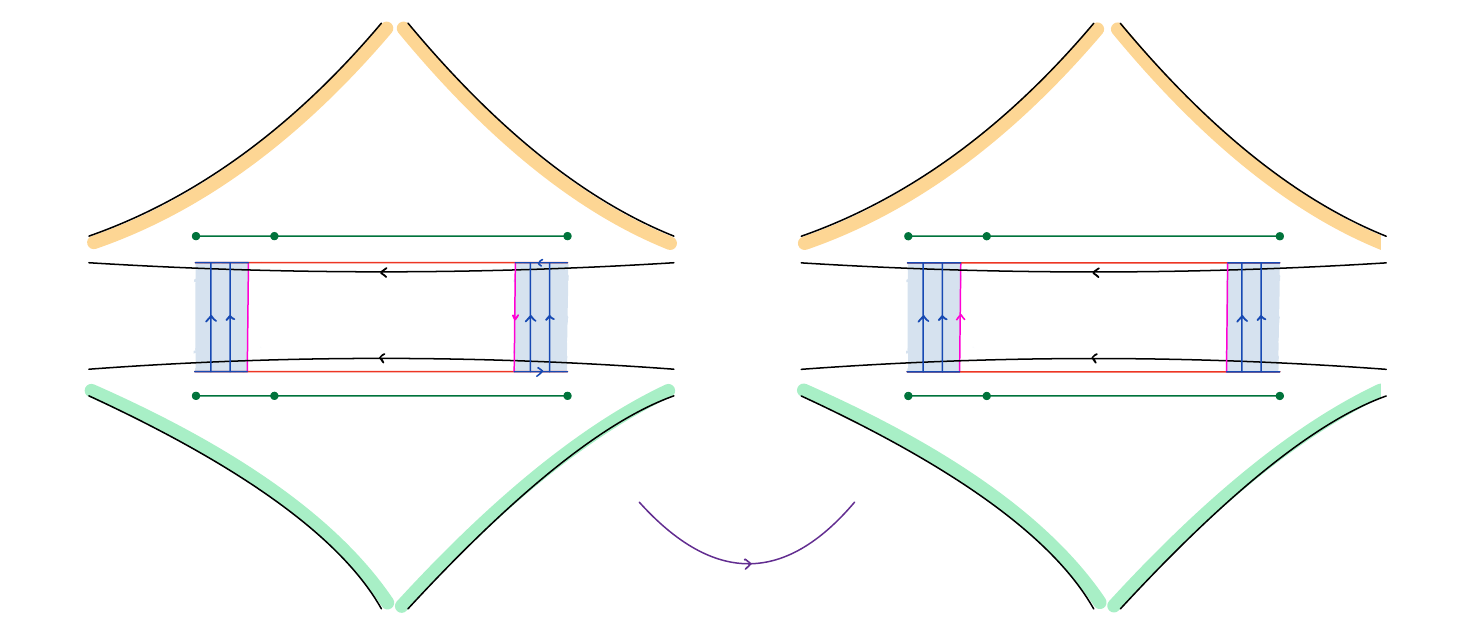}
		\vspace{.3cm}
		
		\hspace*{-1.5cm}\resizebox{1.2\textwidth}{!}{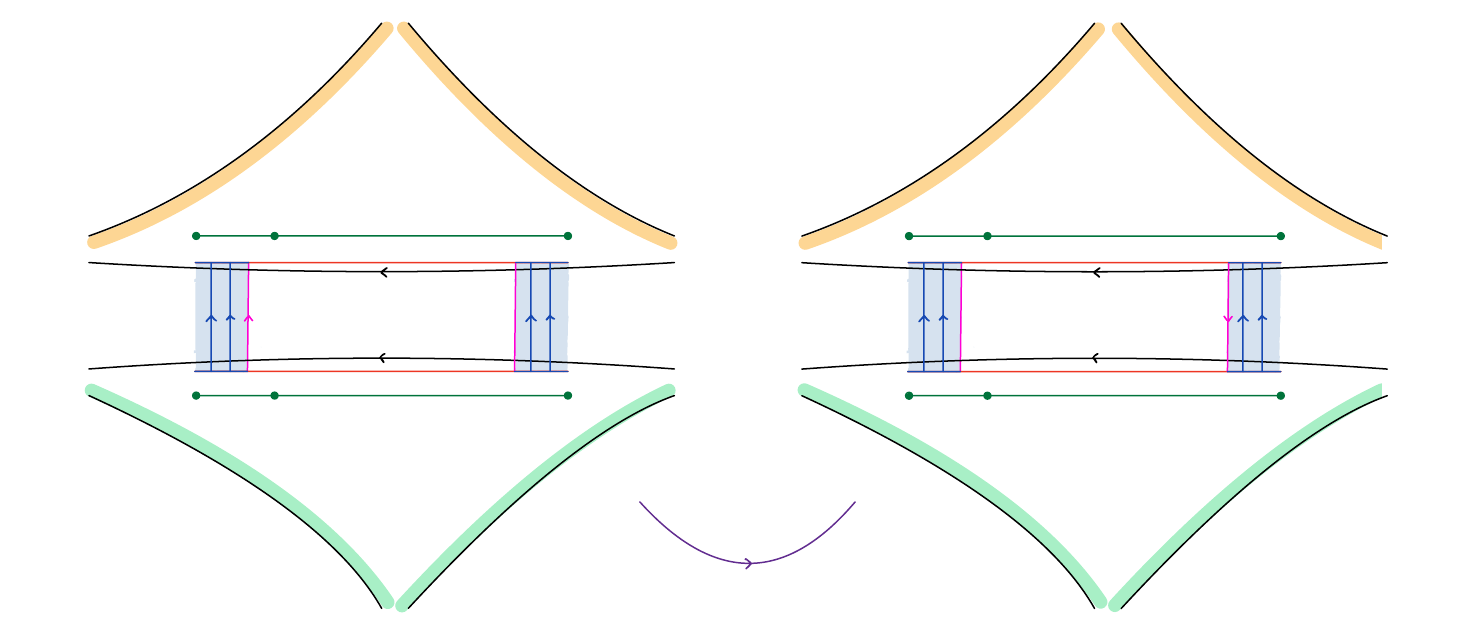}
		\caption{\label{fig: final cases} \small The rectangle $R$ is orientable and the maximal tree $M$ is shaded in blue. In the top (respectively, bottom) picture, $T_h$, $\gsf_h$, and ${\bf x}(h)$ are drawn for all $h\in\{j-1,j,j'-1,j'\}$ when $\ksf_j$ is a left (respectively, right) exit and $\ksf_{j'}$ is a right (respectively, left) exit of $M$.}
	\end{figure}

	For all $h\in\{j-1,j,j'-1,j'\}$, let $\mathsf g_h$ be the oriented geodesic in $\widetilde S$ whose forward and backward endpoints are $x_1(h)$ and $x_3(h)$ respectively. Recall that $\pi_S:\widetilde S\to S$ is the universal cover, and $\widetilde M$ is the connected component of $\pi_S^{-1}(M)$ bounded by $\widetilde{\mathsf c}$. By the definition of the enhanced cutting sequence ${\bf x}(0),\dots,{\bf x}(\ell)$ of $\widetilde{\mathsf c}$, the ties of $\widetilde M$, equipped with the orientation induced by the chosen orientation on the ties of $M$, pass from the left to the right of $\mathsf g_h$. Thus, 
	\begin{itemize}
		\item if $R$ is orientable, then (see Figure \ref{fig: final cases})
		\[\gamma\,{\bf x}(j-1)={\bf x}(j')\quad\text{and}\quad\gamma\, {\bf x}(j)={\bf x}(j'-1),\] 
		\item if $R$ is unorientable, then (see Figure \ref{fig: final cases2})
		\[\gamma\,{\bf x}(j-1)={\bf x}(j')^{\rm opp}\quad\text{and}\quad\gamma\, {\bf x}(j)={\bf x}(j'-1)^{\rm op},\] 
		where for any ${\bf x}=(x_1,x_2,x_3)\in\widetilde\Delta^o$, we denote ${\bf x}^{\rm op}:=(x_3,x_2,x_1)$. 
	\end{itemize}

	\begin{figure}[h!]
		\hspace*{-1.5cm}\resizebox{1.2\textwidth}{!}{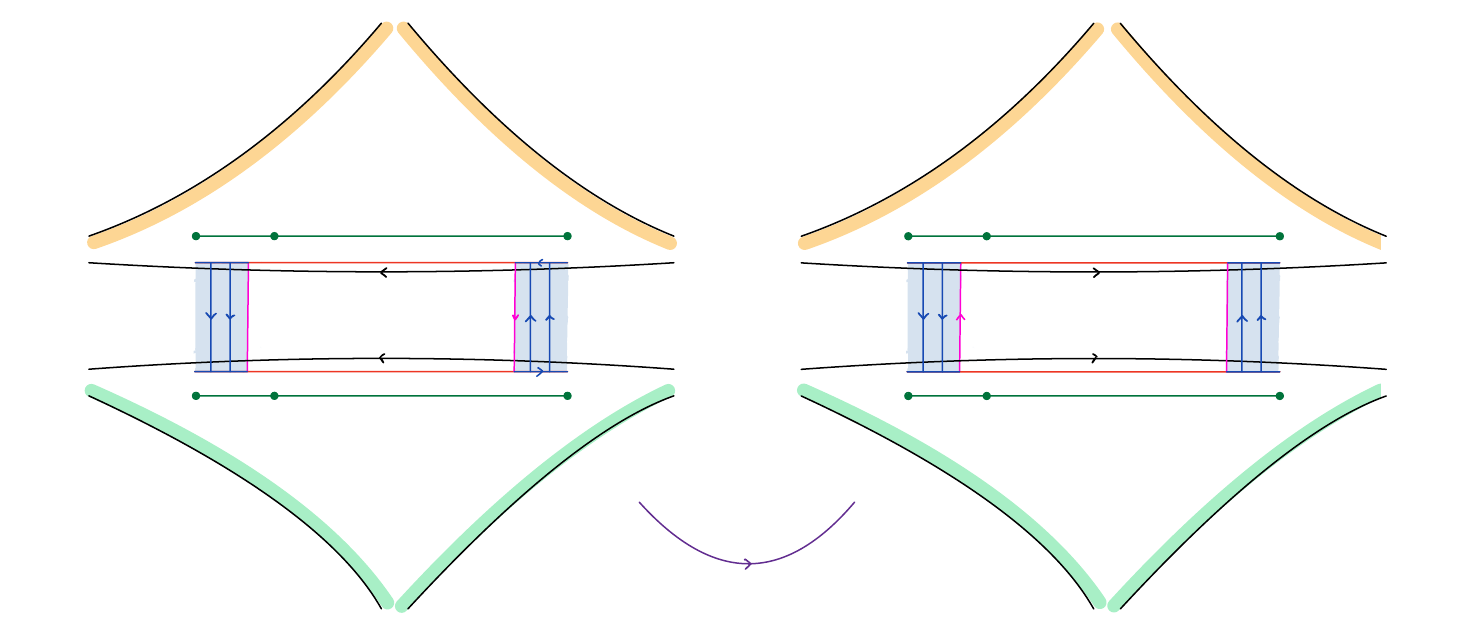}
		\vspace{.3cm}
		
		\hspace*{-1.5cm}\resizebox{1.2\textwidth}{!}{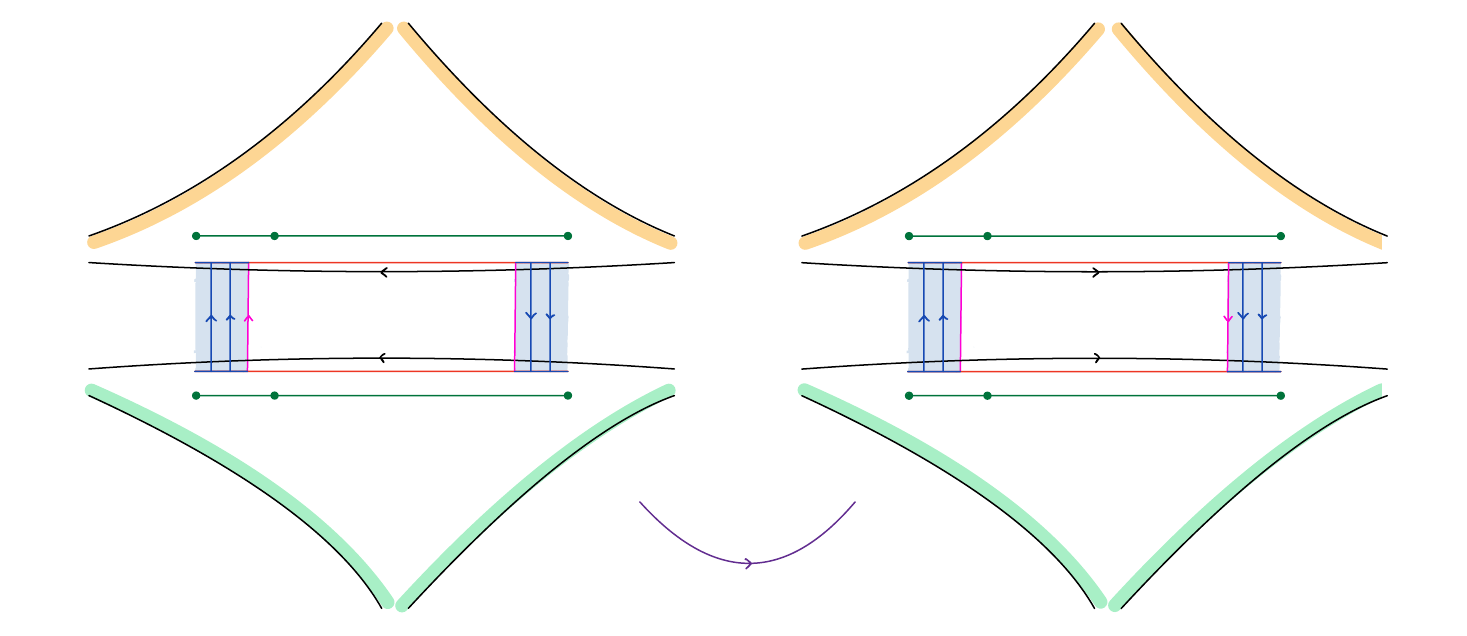}		
		\caption{\label{fig: final cases2} \small The rectangle $R$ is unorientable and the maximal tree $M$ is shaded in blue. In the top (respectively, bottom) picture, $T_h$, $\gsf_h$, and ${\bf x}(h)$ are drawn for all $h\in\{j-1,j,j'-1,j'\}$ when $R$ is a right (respectively, left) unorientable rectangle of $N$ that does not lie in $M$.}
	\end{figure}
	
	One can now verify from the definition of ${\bf v}$ that for all $m\in\{1,\dots,d\}$
	\[B_\gamma\, v_m(j-1)=\left\{\begin{array}{ll}
		v_m(j')&\text{if }R\text{ is orientable},\\
		v_{d-m+1}(j')&\text{if }R\text{ is unorientable},
	\end{array}\right.\]
	and
	\[B_\gamma\, v_m(j)=\left\{\begin{array}{ll}
		v_m(j'-1)&\text{if }R\text{ is orientable},\\
		v_{d-m+1}(j'-1)&\text{if }R\text{ is unorientable}
	\end{array}\right.\]
	We will verify the first equality; the argument for the second is similar.
	
	Recall that for each $i\in\{0,\dots,4g\}$, ${\bf v}_i=(v_{i,1},\dots,v_{i,d})$ is the map defined in Section \ref{sec: families} that assigns, to a triple in $\widetilde\Delta^o(\mathcal C_i)$ (which is the set of orderings of the vertices of the plaques of $\widetilde\lambda$ that contain a vertex of $\mathcal C_i$), a basis of $\Cb^d$. By definition, ${\bf v}_i({\bf x})=B_i\,{\bf v}_0(\omega_i\,{\bf x})$ for all ${\bf x}\in \widetilde\Delta^o(\mathcal C_i)$. Thus, by the definition of ${\bf v}(j-1)=(v_1(j-1),\dots,v_d(j-1))$ and ${\bf v}(j)=(v_1(j),\dots,v_d(j))$, we have that for all $m\in\{1,\dots,d\}$,  
	\begin{align*}
		B_\gamma\, v_m(j-1)&=\left\{\begin{array}{ll}
			B_\gamma\, v_{i,m}({\bf x}(j-1))&\text{if }T_{j-1}\text{ contains a vertex of }\mathcal C_i\\
			B_\gamma\, v_{i-1,m}({\bf x}(j-1))&\text{if }T_{j-1}\text{ contains a vertex of }\mathcal C_{i-1}\\
		\end{array}\right.\\
		&=v_{i',m}(\gamma\,{\bf x}(j-1))\\
		&=\left\{\begin{array}{ll}
			v_{i',m}({\bf x}(j'))&\text{if }R\text{ is orientable},\\
			v_{i',m}({\bf x}(j')^{\rm op})&\text{if }R\text{ is unorientable},
		\end{array}\right.\\
		&=\left\{\begin{array}{ll}
			v_m(j')&\text{if }R\text{ is orientable},\\
			v_{d-m+1}(j')&\text{if }R\text{ is unorientable}.
		\end{array}\right.
	\end{align*}
	
	Notice that $\gamma\, g_{j-1}=g_{j'}$ and $\gamma\, g_j=g_{j'-1}$, so by the $\rho$--equivariance of the slithering map,
	\[\Sigma(g_j,g_{j-1})
	=\rho(\gamma)^{-1}\circ\Sigma(g_{j'-1},g_{j'})\circ\rho(\gamma)=B_\gamma^{-1}\Sigma(g_{j'},g_{j'-1})^{-1} B_\gamma.\]
	Thus, by the definition of the slithering coefficients,
	\begin{align*}
		a_m(j)v_m(j)&=\Sigma(g_j,g_{j-1})\,v_m(j-1)\\
		&=\left\{\begin{array}{ll}
			B_\gamma^{-1}\Sigma(g_{j'},g_{j'-1})^{-1}\,v_m(j')&\text{if }R\text{ is orientable},\\
			B_\gamma^{-1}\Sigma(g_{j'},g_{j'-1})^{-1}\,v_{d-m+1}(j')&\text{if }R\text{ is unorientable},
		\end{array}\right.\\
		&=\left\{\begin{array}{ll}
			B_\gamma^{-1}\,v_m(j'-1)/a_m(j')&\text{if }R\text{ is orientable},\\
			B_\gamma^{-1}\,v_{d-m+1}(j'-1)/a_{d-m+1}(j')&\text{if }R\text{ is unorientable},
		\end{array}\right.\\
		&=\left\{\begin{array}{ll}
			v_m(j)/a_m(j')&\text{if }R\text{ is orientable},\\
			v_m(j)/a_{d-m+1}(j')&\text{if }R\text{ is unorientable}.
		\end{array}\right.
	\end{align*}
	This proves the lemma.
\end{proof}

We will now finish the proof of Lemma \ref{lem: amj on a rectangle}. 

\begin{proof}[Proof of Lemma \ref{lem: amj on a rectangle}] If $R$ is orientable, then Lemma \ref{lem: blah} implies that $a_m(j)a_m(j')=1$. Thus, we now only need to focus on the case when $R$ is unorientable. 
	
	If the rectangle $R$ is right unorientable, then the two exits of $M$ that lie in $R$ are both right exits. Thus, by Lemma \ref{lem: blah} and (iii) of Lemma \ref{lem: final switch}, we have that for all $m\in\{1,\dots, d\}$
	\begin{align*}
		a_m(j)a_m(j')&=\frac{a_m(j)}{a_{d-m+1}(j)}\\
		&=\frac{(-1)^{m-1}a_1(j)\displaystyle\prod_{{\bf i}\in\mathcal A\colon i_1\le m-1}\exp(\alpha_\rho^{\bf i}({\bf T}_j))}{(-1)^{d-m}a_1(j)\displaystyle\prod_{{\bf i}\in\mathcal A\colon i_1\le d-m}\exp(\alpha_\rho^{\bf i}({\bf T}_j))}\\
		&=(-1)^{d-1}\prod_{{\bf i}\in\mathcal A\colon i_1\le m-1}\exp(\alpha_\rho^{\bf i}({\bf T}_j))\prod_{{\bf i}\in\mathcal A\colon i_1\le d-m}\exp(-\alpha_\rho^{\bf i}({\bf T}_j))\\
		&=\begin{cases}
			(-1)^{d-1}\displaystyle\prod_{{\bf i}\in\mathcal A\colon m\leq i_1\le d-m}\exp\left(-\alpha_\rho^{\bf i}({\bf T}_j)\right)&\text{ if }m\le \lfloor\frac{d+1}{2}\rfloor,\\
			(-1)^{d-1}\displaystyle\prod_{{\bf i}\in\mathcal A\colon d-m+1\leq i_1\le m-1}\exp\left(\alpha_\rho^{\bf i}({\bf T}_j)\right)&\text{ if }m\ge \lceil\frac{d+1}{2}\rceil.
		\end{cases}
	\end{align*}
	Similarly, if $R$ is left unorientable, then the two exits of $M$ that lie in $R$ are both left exits, and a similar computation gives that for all $m\in\{1,\dots,d\}$, 
	\begin{align*}
		a_m(j)a_m(j')
		&=\begin{cases}
			(-1)^{d-1}\displaystyle\prod_{{\bf i}\in\mathcal A\colon m\leq i_1\leq d-m}\exp\left(\alpha_\rho^{\bf i}({\bf T}_{j})\right)&\text{ if }m \le \lfloor\frac{d+1}{2}\rfloor,\\
			(-1)^{d-1}\displaystyle\prod_{{\bf i}\in\mathcal A\colon d-m+1\leq i_1\leq m-1}\exp\left(-\alpha_\rho^{\bf i}({\bf T}_{j})\right)&\text{ if }m \ge \lceil\frac{d+1}{2}\rceil.
		\end{cases}
	\end{align*}
\end{proof}

\subsection{Proof of Lemma \ref{prop: count}} Recall that if $L\subset N$ is a tree with a continuous orientation on its ties, then $\mathcal S^r(L)$ and $\mathcal S^\ell(L)$ respectively denote the set of right and left vertical boundary components of $N$ that lie in $L$, and $\mathcal E^r(L)$ and $\mathcal E^\ell(L)$ respectively denote the set of right and left exits of $L$. To prove Lemma \ref{prop: count}, we will use the following.

\begin{lemma}\label{lem: count} Let $L\subset N$ be a tree equipped with a continuous orientation on the ties of $L$. Then $\abs{\mathcal E^r(L)}=1+\abs{\mathcal S^r(L)}$. In particular, $\abs{\mathcal E^r}=1+\abs{\mathcal S^r}$.
\end{lemma}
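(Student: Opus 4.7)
The plan is to prove Lemma \ref{lem: count} by induction on the number $n$ of truncated rectangles contained in the tree $L$. At each step I would decompose $L = L' \cup \check R \cup K$, where $K$ is a ``leaf'' stumpy switch attached to the smaller subtree $L'$ by a single truncated rectangle $\check R$, and track the effect of this decomposition on the quantity $|\mathcal{E}^r(L)| - |\mathcal{S}^r(L)|$.

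For the base case, $n = 0$, the tree $L$ is a single stumpy switch: three stumps meeting at one switch, carrying three exits and exactly one vertical boundary component of $N$ (the segment of the switch that lies on $\partial N$, i.e., the ``gap'' between the two aligned rectangles on the side of the switch that hosts two stumps). I would verify $|\mathcal{E}^r(L)| - |\mathcal{S}^r(L)| = 1$ by a direct case check, organized on (a) which of the two tie orientations is chosen and (b) which side of the switch carries the isolated stump; in each of the four resulting configurations one of the two count patterns $(|\mathcal{E}^r(L)|, |\mathcal{S}^r(L)|) = (1,0)$ or $(2,1)$ arises, both giving the value $1$.

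For the inductive step with $n \geq 1$, since $L$ is contractible I can pick a leaf stumpy switch $K \subset L$ that is incident to a unique truncated rectangle $\check R$ of $L$, and set $L' := \overline{L \setminus (\check R \cup K)}$, a tree with $n-1$ truncated rectangles. Let $e_1$ denote the tie forming the interface of $L'$ with $\check R$ and $e_2$ the tie forming the interface of $K$ with $\check R$. These are two distinct parallel ties of the rectangle containing $\check R$, endowed with the same orientation; moreover $L'$ lies on the side of $e_1$ away from $\check R$ while $K$ lies on the side of $e_2$ away from $\check R$, and these two sides are opposite relative to the common tie orientation. Consequently $e_1$ is a right exit of $L'$ if and only if $e_2$ is a left exit of $K$, which yields the bookkeeping identities
\[
|\mathcal{E}^r(L)| = |\mathcal{E}^r(L')| + |\mathcal{E}^r(K)| - 1, \qquad |\mathcal{S}^r(L)| = |\mathcal{S}^r(L')| + |\mathcal{S}^r(K)|.
\]
Combining the inductive hypothesis on $L'$ with the base case applied to the stumpy switch $K$ then gives $|\mathcal{E}^r(L)| - |\mathcal{S}^r(L)| = 1 + 1 - 1 = 1$, closing the induction. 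The ``in particular'' statement is the specialization $L = M$.

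The main obstacle I expect is the base case: it requires a careful local picture of a trivalent switch in order to locate the vertical boundary component of $N$ (which is most naturally described as the gap between the two adjacent rectangles on the ``two-stump'' side of the switch) and to determine its left/right type from the chosen tie orientation. Once this local analysis is carried out cleanly, the inductive step reduces to the elementary counting identity above, and the only ingredient really at play is the sign flip of the left/right type across the two parallel interface ties at the two ends of $\check R$.
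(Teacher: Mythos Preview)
Your proposal is correct and follows essentially the same approach as the paper: induction on the number of truncated rectangles, with the base case of a single stumpy switch yielding $(|\mathcal E^r|,|\mathcal S^r|)\in\{(1,0),(2,1)\}$, and the inductive step using the sign-flip of the left/right type at the two interface ties of a removed truncated rectangle to obtain $|\mathcal E^r(L)|=|\mathcal E^r(L_1)|+|\mathcal E^r(L_2)|-1$ and $|\mathcal S^r(L)|=|\mathcal S^r(L_1)|+|\mathcal S^r(L_2)|$. The only organizational difference is that the paper removes an \emph{arbitrary} truncated rectangle and applies the inductive hypothesis to both resulting subtrees, whereas you peel off a leaf stumpy switch and apply the base case to one piece; both variants close the induction identically.
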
 

\begin{proof} 
	The proof proceeds by induction on the number of truncated rectangles in $L$. First, consider the base case, when there are no truncated rectangles in $L$, i.e. $L$ is a stumpy switch. In this case, $L$ contains a unique vertical boundary component of $N$. If it is a left vertical boundary component, then $\abs{\mathcal E^r(L)}=1$ and $\abs{\mathcal S^r(L)}=0$. If it is a right vertical boundary component, then $\abs{\mathcal E^r(L)}=2$ and $\abs{\mathcal S^r(L)}=1$. In either case, the required equality holds.
	
	For the inductive step, we suppose that $L$ contains at least one truncated rectangle $\check R$. Let $L_1$ and $L_2$ be the disjoint trees in $N$ such that $L=L_1\cup L_2\cup \check R$. Notice that the orientation on the ties of $L$ restricts to continuous orientations on the ties of both $L_1$ and $L_2$. For both $i=1,2$, $L_i$ has fewer rectangles than $L$, so we may apply the inductive hypothesis to deduce that 
	\begin{equation}\label{eqn:1}
		\abs{\mathcal E^r(L_i)}=1+\abs{\mathcal S^r(L_i)}\quad\text{for both}\quad i=1,2.
	\end{equation}
	
	Notice that for both $i=1,2$, $\check R\cap L_i$ is an exit of $L_i$. Furthermore, $\check R\cap L_1$ is a left (respectively, right) exit of $L_1$ if and only $\check R\cap L_2$ is a right (respectively, left) exit of $L_2$. This implies that
	\begin{equation}\label{eqn:2}
		\abs{\mathcal E^r(L)}=\abs{\mathcal E^r(L_1)}+\abs{\mathcal E^r(L_2)}-1.
	\end{equation}
	On the other hand, notice that
	\begin{equation}\label{eqn:3}
		\abs{\mathcal S^r(L)}=\abs{\mathcal S^r(L_1)}+\abs{\mathcal S^r(L_2)}.
	\end{equation}
	Combining equations \eqref{eqn:1}, \eqref{eqn:2}, and \eqref{eqn:3} proves the inductive step.
\end{proof}

We will now prove Lemma \ref{prop: count}.

\begin{proof}[Proof of Lemma \ref{prop: count}] 
	Recall that $\mathcal N$ denotes the set of rectangles in $N$, $\mathcal M$ denotes the set of rectangles in $M$, and $\mathcal O$ and $\mathcal U$ denote the sets of orientable and unorientable rectangles in $N$ but not in $M$. In Section \ref{ssub:tt_neighborhood}, we observed that $\abs{\mathcal N}=18g-18$ and $\abs{\mathcal S}=12g-12$. Since $M$ is a maximal tree, the fact that $\abs{\mathcal S}=12g-12$ also implies that $\abs{\mathcal M}=12g-13$. Hence, 
	\begin{equation}\label{eqn:4}
		\abs{\mathcal O}+\abs{\mathcal U}=\abs{\mathcal N}-\abs{\mathcal M}=6g-5.
	\end{equation} 
	
	If $R$ is an orientable rectangle in $N\setminus M$, then note that $\check R\cap M$ is the union of one left exit and one right exit of $M$. On the other hand, if $R$ is a right (respectively, left) unorientable rectangle in $N\setminus M$, then $\check R\cap M$ is the union of two right (respectively, left) exits of $M$. From these observations, we deduce that 
	\begin{equation}\label{eqn:5}
		\abs{\mathcal E^r} = \abs{\mathcal O}+2\abs{\mathcal U^r}.
	\end{equation}
	Together, equations \eqref{eqn:4}, \eqref{eqn:5}, and Lemma \ref{lem: count} give
	\[\abs{\mathcal U}+\abs{\mathcal S^r}=\abs{\mathcal U}+\abs{\mathcal E^r}-1=\abs{\mathcal U}+\abs{\mathcal O}+2\abs{\mathcal U^r}-1=6g-6+2\abs{\mathcal U^r}\]
	which is even.
\end{proof}

\bibliographystyle{amsalpha}
\bibliography{biblio}
\end{document}